\newcommand{\+}{\protect\nobreakdash-}
\renewcommand{\:}{\colon}
\newcommand{\rarrow}{\longrightarrow}
\newcommand{\ot}{\otimes}
\DeclareFontFamily{U}{mathx}{\hyphenchar\font45}
\DeclareFontShape{U}{mathx}{m}{n}{
      <5> <6> <7> <8> <9> <10>
      <10.95> <12> <14.4> <17.28> <20.74> <24.88>
      mathx10
      }{}
\DeclareSymbolFont{mathx}{U}{mathx}{m}{n}
\DeclareMathAccent{\widecheck}{0}{mathx}{"71}
\DeclareMathOperator{\Mor}{Mor}
\DeclareMathOperator{\Hom}{Hom}
\DeclareMathOperator{\Cohom}{Cohom}
\newcommand{\id}{\mathrm{id}}
\newcommand{\rop}{{\mathrm{op}}}
\newcommand{\sop}{{\mathsf{op}}}
\newcommand{\Modl}{{\operatorname{\mathsf{--Mod}}}}
\newcommand{\Modr}{{\operatorname{\mathsf{Mod--}}}}
\newcommand{\Bimod}{{\operatorname{\mathsf{--Bimod--}}}}
\newcommand{\Contra}{{\operatorname{\mathsf{--Contra}}}}
\newcommand{\Contrar}{{\operatorname{\mathsf{Contra--}}}}
\newcommand{\Comodl}{{\operatorname{\mathsf{--Comod}}}}
\newcommand{\Comodr}{{\operatorname{\mathsf{Comod--}}}}
\newcommand{\Simodl}{{\operatorname{\mathsf{--Simod}}}}
\newcommand{\Simodr}{{\operatorname{\mathsf{Simod--}}}}
\newcommand{\Sicntr}{{\operatorname{\mathsf{--Sicntr}}}}
\newcommand{\SicntrR}{{\operatorname{\mathsf{Sicntr--}}}}
\newcommand{\Bicomod}{{\operatorname{\mathsf{--Bicomod--}}}}
\newcommand{\Vect}{{\operatorname{\mathsf{--Vect}}}}
\newcommand{\nBimodn}{{\operatorname{\mathsf{--{}^n Bimod^n--}}}}
\newcommand{\Modln}{{\operatorname{\mathsf{--{}^n Mod}}}}
\newcommand{\Modrn}{{\operatorname{\mathsf{Mod^n--}}}}
\newcommand{\tBimodt}{{\operatorname{\mathsf{--{}^t Bimod^t--}}}}
\newcommand{\Modlt}{{\operatorname{\mathsf{--{}^t Mod}}}}
\newcommand{\Modrt}{{\operatorname{\mathsf{Mod^t--}}}}
\newcommand{\Modlc}{{\operatorname{\mathsf{--{}^c Mod}}}}
\newcommand{\Modlz}{{\operatorname{\mathsf{--{}^0 Mod}}}}
\newcommand{\Modls}{{\operatorname{\mathsf{--{}^s Mod}}}}
\newcommand{\Modrs}{{\operatorname{\mathsf{Mod^s--}}}}
\newcommand{\flattBimodtint}{{\operatorname{\mathsf{
   --{}^{\mskip6.1\thinmuskip t}_{flat} Bimod^t_{\C-int}--}}}}
\newcommand{\flattBimodt}{{\operatorname{\mathsf{
   --{}^{\mskip6.1\thinmuskip t}_{flat} Bimod^t--}}}}
\newcommand{\projtBimodtint}{{\operatorname{\mathsf{
   --{}^{\mskip7.4\thinmuskip t}_{proj} Bimod^t_{\C-int}--}}}}
\newcommand{\projtBimodt}{{\operatorname{\mathsf{
   --{}^{\mskip7.4\thinmuskip t}_{proj} Bimod^t--}}}}
\newcommand{\injBicomod}{{\operatorname{\mathsf{--{}_{inj}Bicomod--}}}}
\newcommand{\Sets}{\mathsf{Sets}}
\newcommand{\Ab}{\mathsf{Ab}}
\newcommand{\Br}{\mathsf{Br}}
\newcommand{\TL}{\mathsf{TL}}
\newcommand{\oc}{\mathbin{\text{\smaller$\square$}}}
\newcommand{\bul}{*=0{\bullet}}
\newcommand{\lrarrow}{\mskip.5\thinmuskip\relbar\joinrel\relbar
   \joinrel\rightarrow\mskip.5\thinmuskip\relax}
\newcommand{\LL}{\mathcal L}
\newcommand{\M}{\mathcal M}
\newcommand{\N}{\mathcal N}
\newcommand{\B}{\mathcal B}
\newcommand{\C}{\mathcal C}
\newcommand{\D}{\mathcal D}
\newcommand{\E}{\mathcal E}
\newcommand{\J}{\mathcal J}
\newcommand{\R}{\mathcal R}
\newcommand{\fP}{\mathfrak P}
\newcommand{\fQ}{\mathfrak Q}
\newcommand{\bS}{\boldsymbol{\mathcal S}}
\newcommand{\bL}{\boldsymbol{\mathcal L}}
\newcommand{\bM}{\boldsymbol{\mathcal M}}
\newcommand{\bN}{\boldsymbol{\mathcal N}}
\newcommand{\bP}{\boldsymbol{\mathfrak P}}
\newcommand{\bQ}{\boldsymbol{\mathfrak Q}}
\newcommand{\bm}{\mathbf m}
\newcommand{\be}{\mathbf e}
\newcommand{\bn}{\mathbf n}
\newcommand{\bp}{\mathbf p}
\newcommand{\fg}{\mathfrak g}
\newcommand{\fh}{\mathfrak h}
\newcommand{\fa}{\mathfrak a}
\newcommand{\sC}{\mathsf C}
\newcommand{\sE}{\mathsf E}
\newcommand{\sF}{\mathsf F}
\newcommand{\sG}{\mathsf G}
\newcommand{\sH}{\mathsf H}
\newcommand{\sM}{\mathsf M}
\newcommand{\sN}{\mathsf N}
\newcommand{\sP}{\mathsf P}
\newcommand{\sB}{\mathsf B}
\newcommand{\boZ}{\mathbb Z}
\newcommand{\boQ}{\mathbb Q}
\newcommand{\boR}{\mathbb R}
\newcommand{\Section}[1]{\bigskip\section{#1}\medskip}
\theoremstyle{plain}
\newtheorem{thm}{Theorem}[section]
\newtheorem{prop}[thm]{Proposition}
\newtheorem{lem}[thm]{Lemma}
\newtheorem{cor}[thm]{Corollary}
\theoremstyle{definition}
\newtheorem{rem}[thm]{Remark}
\newtheorem{ex}[thm]{Example}
\begin{document}

\title{Semialgebras associated with nonunital algebras \\
and $k$-linear subcategories}

\author{Leonid Positselski}

\address{Institute of Mathematics, Czech Academy of Sciences \\
\v Zitn\'a~25, 115~67 Prague~1 \\ Czech Republic} 

\email{positselski@math.cas.cz}

\begin{abstract}
 This paper is a sequel to~\cite{Plfin,Pnun}.
 A construction associating a semialgebra with an algebra, subalgebra,
and a coalgebra dual to the subalgebra played a central role in
the author's book~\cite{Psemi}.
 In this paper, we extend this construction to certain nonunital
algebras.
 The resulting semialgebra is still semiunital over a counital
coalgebra.
 In particular, we associate semialgebras to locally finite
subcategories in $k$\+linear categories.
 Examples include the Temperley--Lieb and Brauer diagram categories
and the Reedy category of simplices in a simplicial set.
\end{abstract}

\maketitle

\tableofcontents

\section*{Introduction}
\medskip

\setcounter{subsection}{-1}
\subsection{{}}
 \emph{Semialgebras} can be informally defined as ``algebras over
coalgebras''.
 Given a coassociative, counital coalgebra $\C$ over a field~$k$,
the category of $\C$\+$\C$\+bicomodules $\C\Bicomod\C$ is a monoidal
category with respect to the functor~$\oc_\C$ of cotensor product
over~$\C$.
 A (semiassociative, semiunital) \emph{semialgebra} over $\C$ is
a monoid object in the monoidal category $\C\Bicomod\C$.

 Semialgebras appeared under the name of ``internal categories'' in
Aguiar's dissertation~\cite{Agu}.
 The terminology ``semialgebras'' was introduced by the present author
in the monograph on semi-infinite homological algebra~\cite{Psemi},
where semialgebras were the main object of study.
 The key discovery in~\cite{Psemi} was that the semi-infinite homology
and cohomology of associative algebraic structures are properly
assigned to module objects (semimodules and semicontramodules) over
semialgebras.
 In subsequent literature, the concept and the terminology 
``semialgebra'' was used in the paper of Holstein and Lazarev 
on categorical Koszul duality~\cite{HL}.

 The generality level in~\cite{Psemi} included semialgebras over
corings over noncommutative, nonsemisimple rings.
 Natural examples of semialgebras over, say, coalgebras over
commutative rings certainly arise in many contexts; so the complicated
generality of three-story towers (ring, coalgebra/coring, semialgebra)
is quite useful.
 But it made the exposition in~\cite{Psemi} somewhat heavy and full
of technical details.
 In the present paper, we restrict ourselves to the less complicated
setting of coalgebras over fields and semialgebras over such coalgebras.

\subsection{{}} \label{introd-semialgebra-construction}
 All the examples of semialgebras discussed in~\cite{Psemi} were
produced by one and the same construction, spelled out
in~\cite[Chapter~10]{Psemi}.
 Restricted to our present context of coalgebras over a field~$k$, this
construction works as follows.
 Let $K$ and $R$ be associative algebras over~$k$ and $f\:K\rarrow R$
be a $k$\+algebra homomorphism.
 Suppose that we are given a coalgebra $\C$ over~$k$ which is
``dual to~$K$'' in a weak sense.
 Specifically, this means that a multiplicative, unital bilinear
pairing $\phi\:\C\times_k K\rarrow k$ is given as a part of the data.
 Assume also that $f$~makes $R$ a flat left $K$\+module.
 Then, under a certain further assumption (called ``integrability''
in this paper), the tensor product $\bS=\C\ot_KR$ becomes a semialgebra
over~$\C$.

 It was also explained in~\cite[Chapter~10]{Psemi} that some abelian
module categories over the semialgebra $\C\ot_KR$ can be described
in terms of $R$\+modules and $\C$\+comodules or $\C$\+contramodules.
 Specifically, the module categories admitting such a description in
this generality are the categories of \emph{right\/ $\bS$\+semimodules}
and \emph{left\/ $\bS$\+semi\-con\-tra\-mod\-ules}.
 Roughly speaking, the answer is that the datum of a right
$\bS$\+semimodule is equivalent to that of a right $R$\+module whose
underlying right $K$\+module structure can be integrated to a right
$\C$\+comodule structure.
 The datum of a left $\bS$\+semicontramodule is equivalent to that of
a left $R$\+module whose underlying left $K$\+module structure can be
(or has been) integrated to a left $\C$\+contramodule structure.
 The latter assertion assumes that $R$ is a projective left $K$\+module.

\subsection{{}}
 The terminology ``integrable'' or ``integrated'' in the preceding
paragraphs comes from Lie theory.
 In fact, finite- and infinite-dimensional Lie theory provided
the thematic examples of semialgebras which motivated the exposition
in~\cite{Psemi}.

 For example, if $H$ is an algebraic group over a field~$k$ of
characteristic~$0$, then the universal enveloping algebra
$K=U(\fh)$ of the Lie algebra of~$H$ can be described as the algebra
of distributions on $H$ supported at the unit element $e\in H$, with
respect to the convolution multiplication.
 On the other hand, the convolution comultiplication defines
a coalgebra (in fact, a Hopf algebra) structure on the algebra
$\C=\C(H)$ of regular functions on~$H$.
 The algebra $K$ and coalgebra $\C$ come together with a natural
multiplicative, unital bilinear pairing $\phi\:\C\times_k K\rarrow k$
as above~\cite[Section~2.7]{Prev}.

 Furthermore, suppose that $H$ is a closed subgroup in an algebraic
group~$G$.
 Assume for the sake of a technical simplification that the algebraic
group $H$ is \emph{unimodular}, i.~e., a nonzero biinvariant
differential top form exists on $H$ (and has been chosen).
 Then the vector space $\bS=\bS(G,H)$ of distributions on $G$,
supported in $H$ and regular along $H$, becomes a semialgebra
over the coalgebra $\C=\C(H)$ \,\cite[Section~C.4]{Psemi}.
 The semialgebra $\bS$ can be also produced as the tensor product
$\bS=\C\ot_{U(\fh)}U(\fg)$, where $R=U(\fg)$ is the enveloping
algebra of the Lie algebra of~$G$.

 Notice that the construction of the semialgebra of distributions
$\bS(G,H)$ is left-right symmetric, but the construction of
the tensor product semialgebra $\C\ot_KR$ is \emph{not}.
 Actually, in our Lie theory context we could consider
two semialgebras $\bS^l=\bS^r(\fg,H)=\C\ot_{U(\fh)}U(\fg)$ and
$\bS^r=\bS^l(\fg,H)=U(\fg)\ot_{U(\fh)}\C$, which are naturally
left-right opposite to each other.
 Under the unimodularity assumption above, we have isomorphisms of
semialgebras $\bS^r(\fg,H)\simeq\bS(G,H)\simeq\bS^l(\fg,H)$
\,\cite[Remark~C.4.4]{Psemi}.
 When $H$ is not unimodular, the two semialgebras $\bS^r$ and $\bS^l$
are \emph{no longer} isomorphic, but only Morita
equivalent~\cite[Sections~C.4.3\+-4]{Psemi}.

\subsection{{}}
 What does one achieve by having two equivalent constructions (up to
isomorphism or Morita equivalence) of the same semialgebra $\bS$,
viz., $\bS^r=\C\ot_KR$ and $\bS^l=R\ot_K\C$\,?
 Actually, one achieves a lot.
 The point is that the semialgebra $\bS^r$ comes together with
a natural description of \emph{right} $\bS$\+semimodules and
left $\bS$\+semi\-con\-tra\-mod\-ules (as explained
in Section~\ref{introd-semialgebra-construction}).
 Similarly, the semialgebra $\bS^l$ comes together with a natural
description of \emph{left} $\bS$\+semimodules (and right
$\bS$\+semicontramodules).

 If one wants to know simultaneously what the left and the right
$\bS$\+semimodules are (e.~g., for the purposes of semi-infinite
homology~\cite[Chapter~2]{Psemi}), or if one wants to know
simultaneously what the left $\bS$\+semimodules and the left
$\bS$\+semicontramodules are (e.~g., for the purposes of
the semimodule-semicontramodule correspondence~\cite[Chapter~6]{Psemi},
\cite[Sections~8 and~10.3]{PS1}), then one needs to have both
a ``left'' and a ``right'' description of one's semialgebra~$\bS$.
 The two descriptions can possibly arise from two different algebras
$K$, of course; or in any event, from two different algebras~$R$.
 One is lucky if one obtains isomorphisms (or Morita equivalences)
of semialgebras like $\C\ot_KR^r\simeq\bS\simeq R^l\ot_K\C$, where
$R^r$ and $R^l$ are two different algebras sharing a common
subalgebra~$K$.
 Then right $\bS$\+semimodules are described as right $R^r$\+modules
with a right $\C$\+comodule structure and left $\bS$\+semicontramodules
as left $R^r$\+modules with a left $\C$\+contramodule structure,
while left $\bS$\+semimodules are described as left $R^l$\+modules
with a left $\C$\+comodule structure.

\subsection{{}}
 A prime example of the latter situation occurs in infinite-dimensional
Lie theory.
 Let $\fg$~be a Tate (locally linearly compact) Lie algebra over
a field~$k$, such that, e.~g., the Lie algebra of vector fields on
the punctured formal circle $\fg=k((t))d/dt$, or the Lie algebra of
currents on the punctured formal circle $\fg=\fa((t))$, where $\fa$~is
a finite-dimensional Lie algebra.
 Let $\fh\subset\fg$ be a pro-nilpotent compact open Lie subalgebra
in~$\fg$, such as $\fh=t^nk[[t]]t/dt$ with some $n\ge1$ in the former
(Virasoro) case or $\fh=t^n\fa[[t]]$ with some $n\ge1$ in the latter
(Kac--Moody) case.
 Assume that the action of~$\fh$ in the quotient space $\fg/\fh$ is
ind-nilpotent.

 One considers a certain kind of central extensions~$\varkappa$ of
the pairs~$(\fg,\fh)$.
 In particular, there is the \emph{canonical central
extension}~$\varkappa_0$ (in the case of the Virasoro, this means
the central charge $\varkappa_0=-26$).
 Let $R^r=U_{\varkappa+\varkappa_0}(\fg)$ and $R^l=U_\varkappa(\fg)$ be
the corresponding two enveloping algebras describing respresentations
of the respective central extensions of~$\fg$ with the central charge
fixed as a given (generally nonzero) scalar from~$k$.
 Put $K=U(\fh)$, and let $\C$ be the coenveloping coalgebra of
the conilpotent Lie coalgebra dual to~$\fh$
\,\cite[Section~D.6.1]{Psemi}.
 Then one has an isomorphism $\C\ot_KR^r\simeq R^l\ot_K\C$ of
semialgebras over~$\C$ \,\cite[Theorem~D.3.1]{Psemi},
\cite[Sections~2.7\+-2.8]{Prev}.
 The classical semi-infinite homology and cohomology of Tate Lie
algebras arise in this context~\cite[Section~D.6.2]{Psemi}.

\subsection{{}}
 The aim of the present paper is to extend the construction of
the semialgebra $\C\ot_KR$ given in~\cite[Chapter~10]{Psemi} from
\emph{algebras} to \emph{categories}.
 To any small $k$\+linear category $\sE$ one assigns a $k$\+algebra
$R=R_\sE$, but when the set of objects of $\sE$ is infinite,
the algebra $R$ \emph{has no unit}.
 The algebra $R_\sE$ only has ``local units''.
 The rings/algebras $K$ and $R$ were, of course, assumed to be unital
in~\cite[Chapter~10]{Psemi}.
 In this paper we work out the case of \emph{nonunital} algebras $K$
and~$R$.
 The coalgebra $\C$ is still presumed to be counital, and
the resulting semialgebra $\bS$ which we construct is semiunital.
 
 Intuitively, one can explain the situation with local and global
units as follows.
 A unit in a ring is an element, and as such, it is a global datum:
one cannot really glue it from local pieces.
 If a unital algebra $R$ is a union of its vector subspaces, then
the unit has be an element of one of such subspaces.
 But the counit of a coalgebra $\C$ is a linear function $\epsilon\:\C
\rarrow k$, and as such, it can be obtained by gluing a compatible
family of linear functions defined on vector subspaces of~$\C$.
 Likewise, the semiunit of a semialgebra $\bS$ is
a $\C$\+$\C$\+bicomodule map $\be\:\C\rarrow\bS$, and it can be also
possibly obtained by gluing a compatible family of maps defined on
vector subspaces or subbicomodules of~$\C$.
 This vague and perhaps not necessarily very convincing wording
expresses the intuition behind our setting of nonunital algebras,
but counital coalgebras and semiunital semialgebras in this paper.

 Striving to approach the natural generality, we do not restrict
ourselves in this paper to algebras with enough idempotents or
``locally unital algebras'' (which correspond precisely to small
$k$\+linear categories).
 Rather, our main results are stated for more general classes of what
we call \emph{left flat t\+unital} or \emph{left projective
t\+unital} algebras, and much of the exposition is written in
an even wider generality.

\subsection{{}}
 The modest aims of the exposition in this paper do not include any
attempt to construct isomorphisms of Morita equivalences of ``left''
and ``right'' semialgebras.
 Presumably, such constructions always arise in more or less specific
contexts, like the Lie theory context discussed above.
 The principal examples which motivated this paper are different:
these are the Temperley--Lieb and Brauer diagram categories and
the $k$\+linearization of the Reedy category of simplices in
a simplicial set, with the natural triangular decompositions which
these categories possess.
 Our understanding of these examples is not sufficient to attempt any
constructions of isomorphisms of ``left'' and ``right'' semialgebras.

 In fact, the ``right'' semialgebra $\bS=\C\ot_KR$ always has
a technically important property of being an injective \emph{left}
$\C$\+comodule (essentially, because $R$ is presumed to be
a flat left $K$\+module).
 This property guarantees that the categories of right
$\bS$\+semimodules and left $\bS$\+semicontramodules are abelian.
 One may (and should) wonder whether $\bS$ turns out to be
an injective \emph{right} $\C$\+comodule in the specific examples
one is interested in.
 But for the examples discussed in this paper (viz.,
the Temperley--Lieb, Brauer, and simplicial set categories),
we do \emph{not} know that.
 
\subsection{{}}
 This paper is a third installment in a series.
 It is largely based on two previous preprints~\cite{Plfin}
and~\cite{Pnun}.
 Coalgebras associated to (what we call) locally finite
$k$\+linear categories, together with comodules and contramodules
over such coalgebras, were studied in~\cite{Plfin}.
 Nonunital rings were discussed in~\cite{Pnun}, with an emphasis
on the tensor/Hom formalism for nonunital rings and other preparatory
results for the present paper.
 Much of the content of~\cite{Pnun} is covered by the much earlier
(1996) unpublished manuscript of Quillen~\cite{Quil}, which has
somewhat different aims and emphasis.
 We refer either to~\cite{Quil} or to~\cite{Pnun}, or to both
whenever appropriate.

\subsection*{Acknowledgement}
 This paper owes its existence to Catharina Stroppel, who kindly
invited me to give a talk at her seminar in Bonn in July~2023,
suggested the Temperley--Lieb (as well as Brauer) diagram category
example, and explained me the triangular decomposition of
the Temperley--Lieb category.
 I~wish to thank Jan \v St\!'ov\'\i\v cek for numerous helpful
discussions, and in particular for suggesting the Reedy
category/simplicial set example.
 The author is supported by the GA\v CR project 23-05148S and
the Czech Academy of Sciences (RVO~67985840).

\Section{Preliminaries on Nonunital Algebras}
\label{prelim-nonunital-secn}

 Throughout this paper, $k$~denotes a fixed ground field.
 All \emph{rings}, \emph{algebras} and \emph{modules} are associative,
usually noncommutative, but \emph{nonunital} by default.
 Still all \emph{coalgebras}, \emph{comodules}, and \emph{contramodules}
are coassociative/contraassociative and counital/contraunital, while
all \emph{semialgebras}, \emph{semimodules}, and
\emph{semicontramodules} are semiassociative/semicontraassociative
and semiunital/semicontraunital.

 We denote by by $\Ab$ the category of abelian groups and by $k\Vect$
the category of $k$\+vector spaces.
 The notation $A\Modl$ and $\Modr A$ stands for the abelian categories
of \emph{unital} left $A$\+modules and right $A$\+modules,
respectively; so it presumes $A$ to be a unital ring.
 In all \emph{bimodules} (unital or nonunital), the left and right
actions of the field~$k$ are presumed to agree.
 So the notation $A\Bimod B$ presumes $A$ and $B$ to be unital
$k$\+algebras and stands for the category
$A\Bimod B=(A\ot_k B^\rop)\Modl$.

 This section presents a discussion of nonunital $k$\+algebras, mostly
following the manuscript~\cite{Quil} and the preprint~\cite{Pnun}.
 The concepts related to coalgebras and semialgebras will be defined
in the next section.

\subsection{Nonunital modules over $k$-algebras}
 The preprint~\cite{Pnun} is written in the generality of abstract
rings.
 There is no ground field or ground ring in~\cite{Pnun}.
 The exposition in the (much earlier) manusript~\cite{Quil} is more
general, and makes room for a ground field or ground ring.

 One of the aims of the whole Section~\ref{prelim-nonunital-secn} is
to discuss, largely following~\cite{Quil}, the comparison between
abstract nonunital rings and nonunital $k$\+algebras.
 The point is that, when one views a nonunital $k$\+algebra $K$ as
an abstract nonunital ring, a $K$\+module \emph{need not} be
a $k$\+vector space in general: for example, the group/ring of
integers $\boZ$ is a $K$\+module with the zero action of~$K$.
 But all the $K$\+modules we are interested in \emph{turn out} to be
$k$\+vector spaces, as explained in the next
Section~\ref{prelim-t-unital-c-unital-subsecn}.

 Given a nonunital ring $K$, the \emph{unitalization} of $K$ is
the unital ring $\widetilde K=\boZ\oplus K$.
 So the ring of integers $\boZ$ is a subring in $\widetilde K$ and
$K$ is a two-sided ideal in~$\widetilde K$.

 Given a nonunital $k$\+algebra $K$, the \emph{$k$\+unitalization} of
$K$ is the unital $k$\+algebra $\widecheck K=k\oplus K$.
 So the $k$\+algebra~$k$ is a subalgebra in $\widecheck K$ and
$K$ is a two-sided ideal in~$\widecheck K$.
 There is a natural unital ring homomorphism $\widetilde K\rarrow
\widecheck K$ acting by the identity on~$K$.

 Both the unital $\widetilde K$\+modules and the unital
$\widecheck K$\+modules can (and should) be simply thought of
as nonunital $K$\+modules.
 The difference is that the unital $\widecheck K$\+modules are
$k$\+vector spaces (with the actions of~$k$ and $K$ compatible in
the obvious sense), while the unital $\widetilde K$\+modules
are just abelian groups~\cite[\S1]{Quil}.

\subsection{t-Unital and c-unital modules}
\label{prelim-t-unital-c-unital-subsecn}
 Let $K$ be a nonunital ring.
 The notation $KM\subset M$ and $NK\subset N$ for a left $K$\+module
$M$ and a right $K$\+module $N$ stands for the subgroups/submodules
spanned by the products, as usual.
 The notation ${}_KM$ means the $K$\+submodule consisting of all
elements $m\in M$ such that $Km=0$ in~$M$.
 The following lemma is due to Quillen~\cite[proof of
Proposition~9.2]{Quil}.

\begin{lem}[\cite{Quil}] \label{hom-tensor-over-tilde-check-agree}
 Let $K$ be a nonunital $k$\+algebra, $M$ and $P$ be unital left
$\widecheck K$\+modules, and $N$ be a unital right
$\widecheck K$\+module.
 Then \par
\textup{(a)} if either $NK=N$ or $KM=M$, then the natural surjective
map of tensor products
$$
 N\ot_{\widetilde K}M\lrarrow N\ot_{\widecheck K}M
$$
is an isomorphism; \par
\textup{(b)} if either $KM=M$ or ${}_KP=0$, then the natural injective
map of\/ $\Hom$ groups/vector spaces
$$
 \Hom_{\widecheck K}(M,P)\lrarrow\Hom_{\widetilde K}(M,P)
$$
is an isomorphism.
\end{lem}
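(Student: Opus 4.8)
The plan is to realize both natural maps as coming from the ring homomorphism $\widetilde K\rarrow\widecheck K$ that is the identity on $K$ and sends $\boZ$ into the subalgebra $k\subset\widecheck K$, and then to exploit the single structural fact that $K$ is a two-sided ideal in $\widecheck K$, so that $cK\subset K$ and $Kc\subset K$ for every $c\in k$. This is precisely what will let a scalar action of $k$ be traded for the action of an element of $K$, which is already compatible over $\widetilde K$.

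For part~(a), I would first observe that the map is surjective because the tensors $n\ot m$ generate $N\ot_{\widecheck K}M$; more precisely, $N\ot_{\widecheck K}M$ is the quotient of $N\ot_{\widetilde K}M$ by the subgroup generated by the relations $nc\ot m-n\ot cm$ with $c\in k$, the remaining defining relations of $N\ot_{\widecheck K}M$ already being imposed over $\widetilde K$. It therefore suffices to show that every such element already vanishes in $N\ot_{\widetilde K}M$. Assuming $KM=M$, I would write $m=\sum_i k_im_i$ with $k_i\in K$ and, moving the elements $k_i$ and $ck_i\in K$ across the tensor sign over $\widetilde K$ and using associativity of the $\widecheck K$\+action on $N$, check that both $nc\ot m$ and $n\ot cm$ equal $\sum_i n(ck_i)\ot m_i$. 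The case $NK=N$ is symmetric: one writes $n=\sum_j n_jk_j$ and absorbs $c$ on the right into $k_jc\in K$.

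For part~(b), the map is injective because it is restriction of scalars along $\widetilde K\rarrow\widecheck K$: a $\widecheck K$\+linear map is in particular $\widetilde K$\+linear, and the underlying function is unchanged. The real content is surjectivity, namely that every $\widetilde K$\+linear map $f\:M\rarrow P$ is automatically $k$\+linear, i.e.\ $f(cm)=cf(m)$ for all $c\in k$. Under the hypothesis $KM=M$, I would again write $m=\sum_i k_im_i$, so that $cm=\sum_i(ck_i)m_i$ with $ck_i\in K$, and compute $f(cm)=\sum_i(ck_i)f(m_i)=\sum_i c\,f(k_im_i)=cf(m)$, using the $K$\+linearity of $f$ and the $\widecheck K$\+module structure of $P$. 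Under the alternative hypothesis ${}_KP=0$, I would instead show that $f(cm)-cf(m)$ is annihilated by every $k'\in K$: both $k'f(cm)$ and $k'(cf(m))$ reduce to $(k'c)f(m)$, whence $f(cm)-cf(m)\in{}_KP=0$.

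The formal steps — producing the two maps and their surjectivity, respectively injectivity — are immediate, and I do not expect a genuine obstacle. The only point requiring care is the bookkeeping of the absorption of $c\in k$ into $K$, so that in part~(a) every manipulation stays over $\widetilde K$, and in part~(b) every manipulation uses only the $K$\+linearity and additivity that $f$ is assumed to have. The four hypotheses are tailored so that exactly one of the two symmetric absorption arguments applies in each case.
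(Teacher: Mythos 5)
Your proof is correct and takes essentially the same approach as the paper's: in both parts the key move is to absorb the scalar $c\in k$ into elements of the ideal $K$ via a decomposition $m=\sum_i k_im_i$ (and, in the second case of part~(b), to use ${}_KP=0$ to kill the discrepancy $f(cm)-cf(m)$ after hitting it with elements of $K$), which is exactly the paper's manipulation with $\check rr_i\in K\subset\widetilde K$. The only cosmetic differences are that you make explicit the presentation of $N\ot_{\widecheck K}M$ as a quotient of $N\ot_{\widetilde K}M$ by the relations indexed by $c\in k$, and that you spell out the symmetric case $NK=N$, which the paper leaves implicit.
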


\begin{proof}
 Part~(a): consider the case when $KM=M$; so for every $m\in M$
there exist elements $r_1$,~\dots, $r_j\in K$ and $m_1$,~\dots,
$m_j\in M$ such that $m=\sum_{i=1}^jr_im_i$ in~$M$.
 Then, for all $n\in N$ and $\check r\in\widecheck K$ (e.~g,
$\check r\in k$), one has $n\check r\ot_{\widetilde K}m=
\sum_{i=1}^j n\check r\ot_{\widetilde K}r_im_i=
\sum_{i=1}^jn\check rr_i\ot_{\widetilde K}m_i=
\sum_{i=1}^jn\ot_{\widetilde K}\check rr_im_i=
n\ot_{\widetilde K}\check rm$ (since
$\check rr_i\in K\subset\widetilde K$), as desired.

 Part~(b): let $f\:M\rarrow P$ be a $\widetilde K$\+linear map.
 Assuming $M=KM$, for any $\check r\in\widecheck K$ and $m\in M$
we have $m=\sum_{i=1}^jr_im_i$ as above and $f(\check rm)=
\sum_{i=1}^jf(\check rr_im_i)=\sum_{i=1}^j\check rr_if(m_i)=
\sum_{i=1}^j\check rf(r_im_i)=\check rf(m)$.
 Assuming ${}_KP=0$, for any $r\in K$ we compute
$rf(\check rm)=f(r\check rm)=r\check rf(m)$, hence
$K(f(\check rm)-\check rf(m))=0$ and it follows that
$f(\check rm)-\check rf(m)=0$ in~$P$.
\end{proof}

 Let $K$ be a nonunital ring.
 Following the terminology in~\cite[Section~1]{Pnun}, we say that
a unital left $\widetilde K$\+module $M$ is \emph{t\+unital} if
the natural $\widetilde K$\+module map $K\ot_{\widetilde K}M\rarrow M$
is an isomorphism.
 Similarly, a unital right $\widetilde K$\+module $N$ is
\emph{t\+unital} if the natural $\widetilde K$\+module map
$N\ot_{\widetilde K}K\rarrow N$ is an isomorphism.

 Dual-analogously, a unital left $\widetilde K$\+module $P$ is said
to be \emph{c\+unital}~\cite[Section~3]{Pnun} if the natural
$\widetilde K$\+module map $P\rarrow\Hom_{\widetilde K}(K,P)$ is
an isomorphism.

 Clearly, for any t\+unital left $\widetilde K$\+module $M$ one has
$KM=M$, and for any t\+unital right $\widetilde K$\+module $N$ one has
$NK=N$.
 Similarly, for any c\+unital left $\widetilde K$\+module $P$ one has
${}_KP=0$.

\begin{lem}[{\cite[Proposition~9.2]{Quil}}]
\label{module-structure-extended-lemma}
 Let $K$ be a nonunital $k$\+algebra.  Then \par
\textup{(a)} for any t\+unital $\widetilde K$\+module $M$,
the $\widetilde K$\+module structure on $M$ arises from a unique
$\widecheck K$\+module structure; \par
\textup{(b)} for any c\+unital $\widetilde K$\+module $P$,
the $\widetilde K$\+module structure on $P$ arises from a unique
$\widecheck K$\+module structure.
\end{lem}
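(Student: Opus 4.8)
The plan is to construct the desired $\widecheck K$\+module structures by transporting the natural left $\widecheck K$\+module structures that live on the source and target of the defining isomorphisms of t\+unitality and c\+unitality. The essential observation is that, since $K$ is a two\+sided ideal in its $k$\+unitalization $\widecheck K$, the vector space $K$ carries a $\widecheck K$\+$\widetilde K$\+bimodule structure (left $\widecheck K$\+action by multiplication in $\widecheck K$, right $\widetilde K$\+action restricted along the homomorphism $\widetilde K\rarrow\widecheck K$) and, symmetrically, a $\widetilde K$\+$\widecheck K$\+bimodule structure. In either case the $\widecheck K$\+action restricts along $\widetilde K\rarrow\widecheck K$ to the given $\widetilde K$\+action on~$K$, which is what will guarantee that the transported module structures extend the original ones.

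For part~(a), I would use the $\widecheck K$\+$\widetilde K$\+bimodule structure on~$K$ to promote $K\ot_{\widetilde K}M$ to a left $\widecheck K$\+module. The t\+unitality isomorphism $\mu_M\:K\ot_{\widetilde K}M\rarrow M$ is left $\widetilde K$\+linear, so transporting the $\widecheck K$\+structure along~$\mu_M$ endows $M$ with a left $\widecheck K$\+module structure; since the left $\widecheck K$\+action on $K$ restricts to its left $\widetilde K$\+action, the resulting $\widecheck K$\+action on $M$ restricts to the original one. Concretely this amounts to setting $\lambda m=\sum_i(\lambda r_i)m_i$ whenever $m=\sum_i r_im_i$ with $r_i\in K$ and $m_i\in M$ (such a presentation exists because $KM=M$), the transport guaranteeing independence of the chosen presentation.

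Part~(b) is dual: the $\widetilde K$\+$\widecheck K$\+bimodule structure on $K$ makes $\Hom_{\widetilde K}(K,P)$ a left $\widecheck K$\+module via $(c\cdot\phi)(r)=\phi(rc)$, and transporting this along the c\+unitality isomorphism $\nu_P\:P\rarrow\Hom_{\widetilde K}(K,P)$, which is left $\widetilde K$\+linear, yields a $\widecheck K$\+module structure on $P$ restricting to the given $\widetilde K$\+structure. Explicitly, $\lambda p$ is the unique element $q\in P$ with $rq=(\lambda r)p$ for all $r\in K$; existence comes from the surjectivity of $\nu_P$ and uniqueness from ${}_KP=0$.

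For both parts uniqueness is immediate from the $\widecheck K$\+module compatibility axiom: in~(a) the relation $\lambda(rm)=(\lambda r)m$ together with $KM=M$ forces the formula above, and in~(b) the relation $r(\lambda p)=(\lambda r)p$ together with ${}_KP=0$ pins down $\lambda p$. The one genuinely substantive point — the main obstacle to a bare\+hands argument — is the well\+definedness of the $k$\+action in part~(a); this is exactly what the isomorphism $K\ot_{\widetilde K}M\simeq M$ buys us, and it rests on the centrality of~$k$ in the $k$\+algebra~$K$ (i.e.\ $\lambda(sr)=(\lambda s)r=s(\lambda r)$), which is precisely what makes left $k$\+multiplication on $K$ a map of right $\widetilde K$\+modules and hence makes $\lambda\ot\id_M$ descend to the tensor product.
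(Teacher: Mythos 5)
Your proof is correct and takes essentially the same route as the paper's: existence by transporting the natural left $\widecheck K$\+module structures on $K\ot_{\widetilde K}M$ and $\Hom_{\widetilde K}(K,P)$ through the t\+unitality and c\+unitality isomorphisms, with uniqueness forced by $KM=M$ in~(a) and ${}_KP=0$ in~(b). The only cosmetic difference is that the paper obtains uniqueness by applying Lemma~\ref{hom-tensor-over-tilde-check-agree}(b) to the identity map viewed as a morphism between two candidate $\widecheck K$\+structures, whereas you unfold that same computation directly.
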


\begin{proof}
 Part~(a): the isomorphism $K\ot_{\widetilde K}M\simeq M$ endows $M$
with the left $\widecheck K$\+module structure induced by the left
$\widecheck K$\+module structure on~$K$.
 This proves the existence.
 The uniqueness holds for any $\widetilde K$\+module $M$ such that
$KM=M$ (apply the first case of
Lemma~\ref{hom-tensor-over-tilde-check-agree}(b) to $M$ and $P$
denoting two different $\widecheck K$\+module structures on one and
the same $\widetilde K$\+module).
 Part~(b): the isomorphism $P\simeq\Hom_{\widetilde K}(K,P)$ endows $P$
with the left $\widecheck K$\+module structure induced by the right
$\widecheck K$\+module structure on~$K$.
 This proves the existence.
 The uniqueness holds for any $\widetilde K$\+module $P$ such that
${}_KP=0$ (apply the second case of
Lemma~\ref{hom-tensor-over-tilde-check-agree}(b) to $M$ and $P$
denoting two different $\widecheck K$\+module structures on one and
the same $\widetilde K$\+module).
\end{proof}

\begin{lem}[{\cite[Proposition~9.2]{Quil}}]
\label{t-c-unitality-equivalence-lemma}
 Let $K$ be a nonunital $k$\+algebra.  Then \par
\textup{(a)} a unital left $\widecheck K$\+module $M$ is t\+unital
as a $\widetilde K$\+module if and only if the natural
$\widecheck K$\+module map $K\ot_{\widecheck K}M\rarrow M$ is
an isomorphism; \par
\textup{(b)} a unital left $\widecheck K$\+module $P$ is c\+unital
as a $\widetilde K$\+module if and only if the natural
$\widecheck K$\+module map $P\rarrow\Hom_{\widecheck K}(K,P)$ is
an isomorphism.
\end{lem}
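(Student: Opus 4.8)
The plan is to treat both parts in parallel, reducing each ``iff'' to the bridging isomorphism provided by Lemma~\ref{hom-tensor-over-tilde-check-agree} together with a short commutative-triangle argument. The essential observation is that, in each direction, the very hypothesis that one of the two natural maps is an isomorphism already forces the side condition needed to invoke Lemma~\ref{hom-tensor-over-tilde-check-agree}: an isomorphism (in particular a surjection) $K\ot_?M\rarrow M$ forces $KM=M$, while an isomorphism (in particular an injection) $P\rarrow\Hom_?(K,P)$ forces ${}_KP=0$.

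For part~(a), first I would record the factorization of the multiplication map $K\ot_{\widetilde K}M\rarrow M$ as the composite of the natural surjection $K\ot_{\widetilde K}M\rarrow K\ot_{\widecheck K}M$ (the map appearing in Lemma~\ref{hom-tensor-over-tilde-check-agree}(a)) followed by the multiplication map $K\ot_{\widecheck K}M\rarrow M$; all three are the obvious action maps, so commutativity is immediate. Now suppose $M$ is t\+unital as a $\widetilde K$\+module, i.e.\ $K\ot_{\widetilde K}M\rarrow M$ is an isomorphism. Since its image is $KM$, surjectivity gives $KM=M$, so the $KM=M$ case of Lemma~\ref{hom-tensor-over-tilde-check-agree}(a) makes the surjection $K\ot_{\widetilde K}M\rarrow K\ot_{\widecheck K}M$ an isomorphism; by the commutative triangle, $K\ot_{\widecheck K}M\rarrow M$ is then an isomorphism. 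Conversely, if $K\ot_{\widecheck K}M\rarrow M$ is an isomorphism, its image is again $KM$, so $KM=M$, and the same triangle (with the now-established isomorphism $K\ot_{\widetilde K}M\rarrow K\ot_{\widecheck K}M$) shows $K\ot_{\widetilde K}M\rarrow M$ is an isomorphism, i.e.\ $M$ is t\+unital.

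Part~(b) is formally dual, using the injection of $\Hom$ groups instead. I would factor the evaluation map $P\rarrow\Hom_{\widetilde K}(K,P)$, $p\mapsto(r\mapsto rp)$, as the composite $P\rarrow\Hom_{\widecheck K}(K,P)\rarrow\Hom_{\widetilde K}(K,P)$, the second arrow being the natural injection of Lemma~\ref{hom-tensor-over-tilde-check-agree}(b); these are again the tautological maps, so the triangle commutes. The kernel of either evaluation map $P\rarrow\Hom_?(K,P)$ is exactly ${}_KP$, so if one of them is an isomorphism then ${}_KP=0$, which is precisely the side condition needed to apply the ${}_KP=0$ case of Lemma~\ref{hom-tensor-over-tilde-check-agree}(b). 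Thus the second arrow is an isomorphism, and the commutative triangle transfers the isomorphism property between $P\rarrow\Hom_{\widetilde K}(K,P)$ and $P\rarrow\Hom_{\widecheck K}(K,P)$ in both directions, establishing the equivalence.

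The only point requiring care---and the reason the argument cannot be shortened---is that $K$ need not be idempotent, so one may not simply invoke the $NK=N$ (i.e.\ $K^2=K$) hypothesis of Lemma~\ref{hom-tensor-over-tilde-check-agree} with $N=K$. Instead the correct side condition ($KM=M$ in part~(a), ${}_KP=0$ in part~(b)) must be extracted afresh in each direction from surjectivity, respectively injectivity, of the map assumed to be an isomorphism. Once that is done, everything is formal: checking that the factorizations above are genuinely commutative, and that the bridging maps are $\widecheck K$\+linear, is routine and follows from associativity of the module actions.
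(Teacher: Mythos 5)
Your proof is correct and follows essentially the same route as the paper: extract the side condition ($KM=M$ from surjectivity in part~(a), ${}_KP=0$ from injectivity in part~(b)) from whichever map is assumed to be an isomorphism, then invoke the second case of Lemma~\ref{hom-tensor-over-tilde-check-agree}(a) with $N=K$ (respectively, of Lemma~\ref{hom-tensor-over-tilde-check-agree}(b) with $M=K$) and conclude via the commutative triangle. The paper's proof is just a terser statement of exactly this argument, including your closing observation that one cannot use the $NK=N$ case since $K$ need not be idempotent.
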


\begin{proof}
 Part~(a): any one of the two maps $K\ot_{\widetilde K}M\rarrow M$
or $K\ot_{\widecheck K}M\rarrow M$ being an isomorphism implies $KM=M$.
 Then it remains to apply the second case of
Lemma~\ref{hom-tensor-over-tilde-check-agree}(a) (for $N=K$).
 Part~(b): any one of the two maps $P\rarrow\Hom_{\widetilde K}(K,P)$
or $P\rarrow\Hom_{\widecheck K}(K,P)$ being an isomorphism implies
${}_KP=0$.
 Then it remains to apply the second case of
Lemma~\ref{hom-tensor-over-tilde-check-agree}(b) (for $M=K$).
\end{proof}

 Let $K$ be a nonunital $k$\+algebra.
 In view of Lemmas~\ref{module-structure-extended-lemma}(a)
and~\ref{t-c-unitality-equivalence-lemma}(a), one can equivalently
speak of ``t\+unital $\widetilde K$\+modules'' or
``t\+unital $\widecheck K$\+modules''.
 Hence we will simply call them \emph{t\+unital $K$\+modules}.
 The category of t\+unital left $K$\+modules will be denoted by
$K\Modlt$, and the category of t\+unital right $K$\+modules by
$\Modrt K$.

 Similarly, in view of Lemmas~\ref{module-structure-extended-lemma}(b)
and~\ref{t-c-unitality-equivalence-lemma}(b), one can equivalently
speak of ``c\+unital $\widetilde K$\+modules'' or
``c\+unital $\widecheck K$\+modules''.
 Hence we will simply call them \emph{c\+unital $K$\+modules}.
 The category of c\+unital left $K$\+modules will be denoted by
$K\Modlc$.

\begin{lem} \label{duality-preserves-reflects-t-c}
 Let $K$ be a nonunital $k$\+algebra and $N$ be a unital right
$\widecheck K$\+module.
 For any $k$\+vector space $V$, we endow the vector space\/
$\Hom_k(N,V)$ with the induced structure of unital left
$\widecheck K$\+module.
 Then the following conditions are equivalent:
\begin{enumerate}
\item $N$ is a t\+unital right $K$\+module;
\item $\Hom_k(N,k)$ is a c\+unital left $K$\+module;
\item $\Hom_k(N,V)$ is a c\+unital left $K$\+module for
every $k$\+vector space~$V$.
\end{enumerate}
\end{lem}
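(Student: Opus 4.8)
The plan is to reduce all three conditions to a single statement about one $k$\+linear map and its $k$\+dual. Write
$$
 \mu\:N\ot_{\widecheck K}K\rarrow N
$$
for the $\widecheck K$\+module map induced by the right action of $K\subset\widecheck K$ on~$N$, namely $n\ot x\mapsto nx$. By the right\+module analogue of Lemma~\ref{t-c-unitality-equivalence-lemma}(a), condition~(1) holds precisely when $\mu$ is an isomorphism of $k$\+vector spaces; so the task is to connect the c\+unitality of $\Hom_k(N,V)$ to the map~$\mu$.

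To this end I would fix a $k$\+vector space $V$, write $P=\Hom_k(N,V)$ with its induced left $\widecheck K$\+module structure $(\check r f)(n)=f(n\check r)$, and invoke the tensor--Hom adjunction
$$
 \Hom_{\widecheck K}(K,\Hom_k(N,V))\;\simeq\;\Hom_k(N\ot_{\widecheck K}K,V),
$$
natural in~$V$, which sends a $\widecheck K$\+linear map $\psi\:K\rarrow\Hom_k(N,V)$ to the $k$\+linear map $n\ot x\mapsto\psi(x)(n)$. The one step requiring care is to trace the c\+unitality map $P\rarrow\Hom_{\widecheck K}(K,P)$ of Lemma~\ref{t-c-unitality-equivalence-lemma}(b), namely $f\mapsto(x\mapsto xf)$ with $(xf)(n)=f(nx)$, through this adjunction: it becomes the map $n\ot x\mapsto f(nx)$, which is exactly the $k$\+dual $\Hom_k(\mu,V)\:\Hom_k(N,V)\rarrow\Hom_k(N\ot_{\widecheck K}K,V)$. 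Granting this identification, Lemma~\ref{t-c-unitality-equivalence-lemma}(b) says that $\Hom_k(N,V)$ is c\+unital if and only if $\Hom_k(\mu,V)$ is an isomorphism.

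With these translations the equivalences follow formally. For (1)$\Rightarrow$(3): if $\mu$ is an isomorphism, then so is $\Hom_k(\mu,V)$ for every~$V$, whence each $\Hom_k(N,V)$ is c\+unital. The implication (3)$\Rightarrow$(2) is the special case $V=k$. For (2)$\Rightarrow$(1): c\+unitality of $\Hom_k(N,k)$ makes the $k$\+dual $\Hom_k(\mu,k)$ an isomorphism, and since the contravariant functor $\Hom_k(-,k)$ on $k$\+vector spaces is exact and annihilates no nonzero space, it reflects isomorphisms, so $\mu$ itself is an isomorphism and $N$ is t\+unital. I expect the main obstacle to be the compatibility asserted in the middle paragraph---the identification of the c\+unitality map with the $k$\+dual of~$\mu$---which is pure bookkeeping but must be carried out carefully; the only nonformal input elsewhere is that $k$ is a field, used to reflect isomorphisms in~(2)$\Rightarrow$(1).
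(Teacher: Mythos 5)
Your proof is correct. Reducing all three conditions to the single map $\mu\:N\ot_{\widecheck K}K\rarrow N$ is sound: the adjunction identification of the c\+unitality map of $\Hom_k(N,V)$ with $\Hom_k(\mu,V)$ is exactly the right bookkeeping, and the reflection of isomorphisms by $\Hom_k({-},k)$ (exact, and annihilating no nonzero space) legitimately yields (2)$\,\Rightarrow\,$(1). The paper organizes the argument differently. For (1)$\,\Leftrightarrow\,$(2) and (1)$\,\Rightarrow\,$(3) it cites the character-dual analogue \cite[Lemma~8.17]{Pnun} --- morally the same duality computation you carry out, but left implicit --- and it then proves (2)$\,\Rightarrow\,$(3) by a structural argument involving no explicit maps: the full subcategory $K\Modlc\subset K\Modln$ is closed under kernels and direct products, hence under direct summands, and $\Hom_k(N,V)$ is a direct summand of a product of copies of $\Hom_k(N,k)$ because $V$ is a direct summand of a product of copies of~$k$. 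Your route replaces that closure argument by the cycle (2)$\,\Rightarrow\,$(1)$\,\Rightarrow\,$(3), at the cost of the reflection-of-isomorphisms step. What the paper's version buys is robustness: it needs only the closure properties of $K\Modlc$, not any identification of the unitality maps with duals, and it is the pattern that survives in the abstract-ring setting where $\Hom_\boZ({-},\boQ/\boZ)$ plays the role of $k$\+duality. What yours buys is a self-contained, fully explicit proof in which all three conditions are visibly equivalent to the single statement that $\mu$ is an isomorphism.
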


\begin{proof}
 (1)\,$\Longleftrightarrow$\,(2) and
 (1)\,$\Longrightarrow$\,(3) Similar to~\cite[Lemma~8.17]{Pnun}.

 (3)\,$\Longrightarrow$\,(2) Obvious.
 
 (2)\,$\Longrightarrow$\,(3) By~\cite[Proposition~5.5]{Quil}
or~\cite[Lemma~3.5]{Pnun}, the full subcategory $K\Modlc$ is closed
under kernels and direct products in $K\Modln$.
 Hence it is closed also under direct summands.
 The left $K$\+module $\Hom_k(N,V)$ is a direct summand of a product
of copies of the left $K$\+module $\Hom_k(N,k)$ (since any vector
space $V$ is a direct summand of a product of one-dimensional vector
spaces).
\end{proof}

 A ring or $k$\+algebra $K$ is called \emph{t\+unital} if it is
t\+unital as a left $K$\+module, or equivalently, as a right
$K$\+module~\cite[Section~1]{Pnun}.
 Clearly, any t\+unital ring is \emph{idempotent}, i.~e., $K^2=K$.

\subsection{Null-modules and the abelian category equivalence}
\label{null-modules-prelim-subsecn}
 We will say that a unital left $\widetilde K$\+module $N$ is
a \emph{null-module} if $KN=0$.
 The full subcategory of null-modules will be denoted by
$\widetilde K\Modlz\subset\widetilde K\Modl$.
 The full subcategory of (similarly defined) null-modules over
$\widecheck K$ is denoted by $\widecheck K\Modlz\subset
\widecheck K\Modl$.

 For an idempotent ring/$k$\+algebra $K$, the full subcategory
$\widetilde K\Modlz$ is closed under submodules, quotients, direct
sums, direct products, and extensions in $\widetilde K\Modl$
\,\cite[Lemma~5.1]{Pnun}; so $\widetilde K\Modlz\subset
\widetilde K\Modl$ and $\widecheck K\Modlz\subset\widecheck K\Modl$
are Serre subcategories.
 Hence one can form the abelian quotient categories
$\widetilde K\Modl/\widetilde K\Modlz$ and
$\widecheck K\Modl/\widecheck K\Modlz$.
 There are natural equivalences of categories
\begin{equation} \label{four-category-equivalence}
 K\Modlt\simeq\widetilde K\Modl/\widetilde K\Modlz\simeq
 \widecheck K\Modl/\widecheck K\Modlz\simeq K\Modlc
\end{equation}
\cite[Theorems~4.5 and~5.6, and Proposition~9.1]{Quil},
\cite[Theorem~5.8]{Pnun}.
 Thus the categories $K\Modlt$ and $K\Modlc$ are abelian.

 The equivalences of categories~\eqref{four-category-equivalence}
make the fully faithful inclusion functors $K\Modlt\rarrow
\widetilde K\Modl$ and $K\Modlc\rarrow\widetilde K\Modl$ adjoint on
the left and on the right, respectively, to the Serre quotient functor
$\widetilde K\Modl\rarrow\widetilde K\Modl/\widetilde K\Modlz$.
 The same applies to the inclusion functors
$K\Modlt\rarrow\widecheck K\Modl$
and $K\Modlc\rarrow\widecheck K\Modl$ (and the Serre quotient functor
$\widecheck K\Modl\rarrow\widecheck K\Modl/\widecheck K\Modlz$).

 The full subcategory $K\Modlt$ is closed under colimits and
extensions in $\widetilde K\Modl$ or $\widecheck K\Modl$
\cite[Lemma~1.5]{Pnun}, but it need not be closed under
kernels~\cite[Example~4.7]{Pnun}.
 Dual-analogously, the full subcategory $K\Modlc$ is closed under
limits and extensions in $\widetilde K\Modl$ or $\widecheck K\Modl$
\cite[Lemma~3.5]{Pnun}, but it need not be closed under
cokernels~\cite[Examples~4.2 and~4.8]{Pnun}.

\subsection{t-Flat, c-projective, and t-injective modules}
\label{t-c-flat-proj-inj-prelim-subsecn}
 In this section we assume $K$ to be a t\+unital $k$\+algebra.

 A unital left $\widetilde K$\+module $Q$ is called
\emph{c\+projective}~\cite[Definition~8.1]{Pnun} if the covariant
$\Hom$ functor $\Hom_{\widetilde K}(Q,{-})$ takes cokernels
\emph{computed within the full subcategory} $K\Modlc\subset
\widetilde K\Modl$ to cokernels in~$\Ab$.
 It is clear from the second case of
Lemma~\ref{hom-tensor-over-tilde-check-agree}(b) that a unital left
$\widecheck K$\+module $Q$ is c\+projective (as
a $\widetilde K$\+module) if and only if the functor
$\Hom_{\widecheck K}(Q,{-})$ takes cokernels computed within the full
subcategory $K\Modlc\subset\widecheck K\Modl$ to cokernels in $k\Vect$.

 A unital $\widetilde K$\+module is c\+projective if and only if it
represents a projective object in the quotient category
$\widetilde K\Modl/\widetilde K\Modlz$ \,\cite[Remark~8.5]{Pnun}.
 By~\cite[Corollary~8.6]{Pnun}, a t\+unital $K$\+module is c\+projective
if and only if it is a projective unital $\widetilde K$\+module.
 Similarly one proves that a t\+unital $K$\+module is c\+projective if
and only if it is a projective unital $\widecheck K$\+module (just do
the same argument over a field instead of over~$\boZ$).
 We arrive to the following corollary.

\begin{cor} \label{projectivity-over-tilde-check-equivalence}
 Let $K$ be a t\+unital $k$\+algebra and $Q$ be a t\+unital $K$\+module.
 Then $Q$ is a projective $\widetilde K$\+module if and only if it is
a projective $\widecheck K$\+module.  \qed
\end{cor}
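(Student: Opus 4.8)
The plan is to use \emph{c\+projectivity} as a common bridge between the two notions of projectivity. Just above, it was recorded that, for a t\+unital $K$\+module, being c\+projective is equivalent to being projective as a unital $\widetilde K$\+module (by \cite[Corollary~8.6]{Pnun}), and also equivalent to being projective as a unital $\widecheck K$\+module (by the same argument carried out over the field~$k$ in place of~$\boZ$). Thus the corollary will follow immediately once I check that \emph{c\+projective} designates one and the same property of $Q$, regardless of whether it is tested over $\widetilde K$ or over $\widecheck K$.

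To see this, first note that the testing subcategory $K\Modlc$ is literally the same abelian category in both settings: by Lemmas~\ref{module-structure-extended-lemma}(b) and~\ref{t-c-unitality-equivalence-lemma}(b) the c\+unital $\widetilde K$\+modules and the c\+unital $\widecheck K$\+modules are the very same objects, and $K\Modlc$ is abelian, so the cokernels ``computed within $K\Modlc$'' that occur in the definition of c\+projectivity are intrinsic and do not depend on the ambient category. Next, for our t\+unital $Q$ (so that $KQ=Q$) and any $P$ in $K\Modlc$ (so that ${}_KP=0$), Lemma~\ref{hom-tensor-over-tilde-check-agree}(b) provides a natural isomorphism $\Hom_{\widecheck K}(Q,P)\simeq\Hom_{\widetilde K}(Q,P)$; here either hypothesis of part~(b) applies. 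This isomorphism is functorial in~$P$, so the functors $\Hom_{\widetilde K}(Q,{-})$ and $\Hom_{\widecheck K}(Q,{-})$ agree, up to natural isomorphism, on the subcategory $K\Modlc$.

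Consequently the two conditions defining c\+projectivity---that $\Hom_{\widetilde K}(Q,{-})$ carry cokernels in $K\Modlc$ to cokernels in $\Ab$, and that $\Hom_{\widecheck K}(Q,{-})$ carry the same cokernels to cokernels in $k\Vect$---are one and the same requirement, since the $\Hom$ groups in question are $k$\+vector spaces and a $k$\+linear map is surjective in $\Ab$ exactly when it is surjective in $k\Vect$. Therefore c\+projectivity is a well\+defined property of $Q$ alone, and chaining the two equivalences recalled above yields the desired biconditional.

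The only genuine content---and the step to handle with care---is the identification of the two c\+projectivity conditions. Everything rests on the applicability of Lemma~\ref{hom-tensor-over-tilde-check-agree}(b) to the pair $(Q,P)$ with $Q$ t\+unital and $P$ c\+unital, and on insisting that the cokernels be formed \emph{inside} $K\Modlc$ rather than in $\widetilde K\Modl$ or $\widecheck K\Modl$ (where $K\Modlt$ and $K\Modlc$ are known to fail closure under kernels and cokernels). Once this reformulation of c\+projectivity is secured, the corollary is a formal consequence.
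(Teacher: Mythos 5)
Your proof is correct and takes essentially the same route as the paper: the paper also identifies c\+projectivity over $\widetilde K$ with c\+projectivity over $\widecheck K$ using the second case of Lemma~\ref{hom-tensor-over-tilde-check-agree}(b), and then combines \cite[Corollary~8.6]{Pnun} with the same argument carried out over~$k$ instead of~$\boZ$. Your extra verifications (that $K\Modlc$ is literally the same category in both settings, that its cokernels are intrinsic, and that the $\Hom$ isomorphism is natural in~$P$) merely spell out what the paper treats as clear.
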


 A unital left $\widetilde K$\+module $J$ is called
\emph{t\+injective}~\cite[Definition~8.7]{Pnun} if the contravariant
$\Hom$ functor $\Hom_{\widetilde K}({-},J)$ takes kernels
\emph{computed within the full subcategory} $K\Modlt\subset
\widetilde K\Modl$ to cokernels in~$\Ab$.
 It is clear from the first case of
Lemma~\ref{hom-tensor-over-tilde-check-agree}(b) that a unital
left $\widecheck K$\+module $J$ is t\+injective (as
a $\widetilde K$\+module) if and only if the functor
$\Hom_{\widecheck K}({-},J)$ takes kernels computed within the full
subcategory $K\Modlt\subset\widecheck K\Modl$ to cokernels in $k\Vect$.

 A unital $\widetilde K$\+module is t\+injective if and only if it
represents an injective object in the quotient category
$\widetilde K\Modl/\widetilde K\Modlz$ \,\cite[Remark~8.11]{Pnun}.
 By~\cite[Corollary~8.12]{Pnun}, a c\+unital $K$\+module is t\+injective
if and only if it is an injective unital $\widetilde K$\+module.
 Similarly one proves that a c\+unital $K$\+module is t\+injective if
and only if it is an injective unital $\widecheck K$\+module.
 We arrive to the following corollary, which is a particular case
of~\cite[last assertion of Proposition~9.2]{Quil}.

\begin{cor}[\cite{Quil}]
\label{injectivity-over-tilde-check-equivalence}
 Let $K$ be a t\+unital $k$\+algebra and $J$ be a c\+unital $K$\+module.
 Then $J$ is an injective $\widetilde K$\+module if and only if it is
an injective $\widecheck K$\+module.  \qed
\end{cor}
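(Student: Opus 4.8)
The plan is to use \emph{t\+injectivity} as an intrinsic pivot linking the two injectivity notions, exactly paralleling the treatment of c\+projectivity in Corollary~\ref{projectivity-over-tilde-check-equivalence}. The key observation is that, for a c\+unital $K$\+module $J$, the property of being t\+injective does not depend on the choice of unitalization: by the first case of Lemma~\ref{hom-tensor-over-tilde-check-agree}(b), the functors $\Hom_{\widetilde K}({-},J)$ and $\Hom_{\widecheck K}({-},J)$ agree on the full subcategory $K\Modlt$ (every object $M$ of $K\Modlt$ satisfies $KM=M$), so t\+injectivity can be tested over either $\widetilde K$ or $\widecheck K$ and yields the same class of objects.

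I would assemble the statement from two equivalences. First, I invoke \cite[Corollary~8.12]{Pnun}: a c\+unital $K$\+module is t\+injective if and only if it is an injective unital $\widetilde K$\+module. Second, I reprove the analogue over the $k$\+unitalization, namely that a c\+unital $K$\+module is t\+injective if and only if it is an injective unital $\widecheck K$\+module; the argument is the one already indicated above, repeating the proof of \cite[Corollary~8.12]{Pnun} verbatim with the field~$k$ in place of~$\boZ$. Concretely, t\+injectivity is equivalent to representing an injective object of the Serre quotient $\widecheck K\Modl/\widecheck K\Modlz$, and this quotient is identified with $\widetilde K\Modl/\widetilde K\Modlz$ through the four\+category equivalence~\eqref{four-category-equivalence}, so the two injective-object conditions coincide.

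Chaining the two equivalences then yields the claim: for a c\+unital $K$\+module $J$, injectivity over $\widetilde K$ and injectivity over $\widecheck K$ are each equivalent to t\+injectivity of $J$, hence to one another. The only step demanding care—the main, and admittedly minor, obstacle—is verifying that the proof of \cite[Corollary~8.12]{Pnun}, written over~$\boZ$, transfers unchanged to the field~$k$. This hinges only on the $\widecheck K$\+versions of the relevant injectivity criteria, all of which are already furnished by the first case of Lemma~\ref{hom-tensor-over-tilde-check-agree}(b) together with the category equivalences of Section~\ref{null-modules-prelim-subsecn}.
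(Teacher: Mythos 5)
Your proposal is correct and follows essentially the same route as the paper: the paper also pivots through t\+injectivity, using the first case of Lemma~\ref{hom-tensor-over-tilde-check-agree}(b) to see that t\+injectivity can be tested equivalently over $\widetilde K$ or $\widecheck K$, citing \cite[Corollary~8.12]{Pnun} for the equivalence with injectivity over $\widetilde K$, and noting that the same argument carried out over~$k$ instead of~$\boZ$ gives the equivalence with injectivity over $\widecheck K$. The only cosmetic difference is that your ``concretely'' sentence re-derives the unitalization-independence of t\+injectivity via the Serre quotient identification~\eqref{four-category-equivalence} rather than directly from the Hom comparison, which is harmless.
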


 A unital left $\widetilde K$\+module $F$ is called
\emph{t\+flat}~\cite[Definition~8.13]{Pnun} if the tensor product
functor ${-}\ot_{\widetilde K}F$ takes kernels \emph{computed
within the full subcategory} $\Modrt K\subset\Modr\widetilde K$
to kernels in~$\Ab$.
 It is clear from the first case of
Lemma~\ref{hom-tensor-over-tilde-check-agree}(a) that a unital left
$\widecheck K$\+module $F$ is t\+flat (as a $\widetilde K$\+module)
if and only if the tensor product functor ${-}\ot_{\widecheck K}F$
takes kernels computed within the full subcategory
$\Modrt K\subset\Modr\widecheck K$ to kernels in $k\Vect$.

 By~\cite[Corollary~8.20]{Pnun}, a t\+unital $K$\+module is t\+flat
if and only if it is a flat unital $\widetilde K$\+module.
 Similarly one proves that a t\+unital $K$\+module is t\+flat if and
only if it is a flat unital $\widecheck K$\+module.
 We arrive to the next corollary, which is also a particular case
of~\cite[last assertion of Proposition~9.2]{Quil}.

\begin{cor}[\cite{Quil}] \label{flatness-over-tilde-check-equivalence}
 Let $K$ be a t\+unital $k$\+algebra and $F$ be a t\+unital $K$\+module.
 Then $F$ is a flat $\widetilde K$\+module if and only if it is
a flat $\widecheck K$\+module.  \qed
\end{cor}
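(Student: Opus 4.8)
The plan is to deduce the corollary by interposing the intrinsic notion of \emph{t\+flatness} between the two flatness conditions, exactly as in the proofs of Corollaries~\ref{projectivity-over-tilde-check-equivalence} and~\ref{injectivity-over-tilde-check-equivalence}. The crucial preliminary observation, already recorded in the paragraph preceding the statement, is that t\+flatness does not depend on the choice of unitalization: by the first case of Lemma~\ref{hom-tensor-over-tilde-check-agree}(a), for a t\+unital module $F$ the functors ${-}\ot_{\widetilde K}F$ and ${-}\ot_{\widecheck K}F$ agree on the relevant full subcategory $\Modrt K$ of t\+unital right $K$\+modules, so the property that $F$ takes kernels computed within $\Modrt K$ to kernels is a single, ground-ring-independent condition on~$F$.

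First I would invoke \cite[Corollary~8.20]{Pnun}, which asserts that a t\+unital $K$\+module $F$ is t\+flat if and only if it is a flat unital $\widetilde K$\+module. Next I would establish the $k$\+linear analogue, namely that such an $F$ is t\+flat if and only if it is a flat unital $\widecheck K$\+module. Granting these two equivalences, the corollary is immediate: being a flat $\widetilde K$\+module and being a flat $\widecheck K$\+module are each equivalent to the common intermediate condition of t\+flatness, hence to one another.

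The real work, and the step I expect to be the main obstacle, is the $k$\+linear analogue of \cite[Corollary~8.20]{Pnun}. I would prove it by transcribing the argument given for \cite[Corollary~8.20]{Pnun} over the unitalization $\widetilde K=\boZ\oplus K$, replacing $\widetilde K$ by the $k$\+unitalization $\widecheck K=k\oplus K$ and the target category $\Ab$ by $k\Vect$ throughout. The point to verify is that the original argument used no special arithmetic feature of~$\boZ$. This is indeed the case: the category $\Modrt K$ of t\+unital right $K$\+modules is literally the same whether regarded inside $\Modr\widetilde K$ or inside $\Modr\widecheck K$, by Lemmas~\ref{module-structure-extended-lemma}(a) and~\ref{t-c-unitality-equivalence-lemma}(a); the tensor functors $\ot_{\widetilde K}$ and $\ot_{\widecheck K}$ coincide on t\+unital modules by the first case of Lemma~\ref{hom-tensor-over-tilde-check-agree}(a); and every auxiliary fact invoked in the original proof, in particular the realization of t\+unital modules through the Serre quotient in~\eqref{four-category-equivalence}, has been supplied earlier in this section with $\widecheck K$ in place of $\widetilde K$. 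Hence the passage from $\boZ$ to~$k$ is purely formal, and the two bi-implications of the previous paragraph combine to give the corollary.
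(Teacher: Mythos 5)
Your proposal is correct and follows exactly the paper's own route: the paper likewise notes (via the first case of Lemma~\ref{hom-tensor-over-tilde-check-agree}(a)) that t\+flatness can be tested equivalently over $\widetilde K$ or $\widecheck K$, invokes \cite[Corollary~8.20]{Pnun} for the $\widetilde K$\+side, and obtains the $\widecheck K$\+side by repeating that argument over the field~$k$ instead of~$\boZ$, so that both flatness conditions coincide with t\+flatness. There is nothing to add; your identification of the ``transcription over~$k$'' step as the only real work matches the paper's ``similarly one proves'' remark.
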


\begin{prop} \label{t-unital-kernels-flat-ring}
 Let $K$ be a t\+unital $k$\+algebra.  Then the following conditions
are equivalent:
\begin{enumerate}
\item $K$ is a flat right $\widetilde K$\+module;
\item $K$ is a flat right $\widecheck K$\+module;
\item the full subcategory $K\Modlt$ is closed under kernels in
$\widetilde K\Modl$;
\item the full subcategory $K\Modlt$ is closed under kernels in
$\widecheck K\Modl$.
\end{enumerate}
\end{prop}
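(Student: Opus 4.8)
The plan is to establish the two ``vertical'' equivalences (1)$\Leftrightarrow$(2) and (3)$\Leftrightarrow$(4) by reduction to results already in hand, and then to prove the substantive equivalence (1)$\Leftrightarrow$(3); together these close the loop. For (1)$\Leftrightarrow$(2) I would note that $K$ is t\+unital as a right $K$\+module (because $K$ is a t\+unital $k$\+algebra) and apply the right\+module version of Corollary~\ref{flatness-over-tilde-check-equivalence} to $F=K$, which says that a t\+unital module is flat over $\widetilde K$ if and only if it is flat over $\widecheck K$. For (3)$\Leftrightarrow$(4) I would use that, for a morphism $f$ between t\+unital $K$\+modules, the kernel computed in $\widetilde K\Modl$ and the kernel computed in $\widecheck K\Modl$ share one and the same underlying abelian group, with compatible $\widetilde K$\+ and $\widecheck K$\+actions; since t\+unitality is an intrinsic property of a $K$\+module (Lemmas~\ref{module-structure-extended-lemma} and~\ref{t-c-unitality-equivalence-lemma}), this kernel is t\+unital in one description exactly when it is in the other.

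The core of the matter is (1)$\Leftrightarrow$(3), which I would treat using the functor $T=K\ot_{\widetilde K}{-}\:\widetilde K\Modl\rarrow\widetilde K\Modl$. Since $K$ is t\+unital, $T$ is the coreflection of $\widetilde K\Modl$ onto the subcategory $K\Modlt$: its counit is the action map $\epsilon_M\:K\ot_{\widetilde K}M\rarrow M$, an isomorphism precisely when $M$ is t\+unital. Being a tensor product functor, $T$ is always right exact, and condition~(1) is by definition the assertion that $T$ is exact.

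For (1)$\Rightarrow$(3): given $f\:M\rarrow N$ in $K\Modlt$ with kernel $L=\ker f$ formed in $\widetilde K\Modl$, exactness of $T$ gives $TL=\ker(Tf)$. As $\epsilon_M$ and $\epsilon_N$ are isomorphisms intertwining $Tf$ with~$f$, the map $\epsilon_M$ carries $\ker(Tf)$ isomorphically onto $\ker f=L$; naturality of $\epsilon$ along the inclusion $L\rarrow M$ then identifies this isomorphism with $\epsilon_L$, so $\epsilon_L\:TL\rarrow L$ is an isomorphism and $L$ is t\+unital. For (3)$\Rightarrow$(1): the content of~(3) is exactly that the fully faithful inclusion $\iota\:K\Modlt\rarrow\widetilde K\Modl$ preserves kernels, i.e.\ is left exact. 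Recalling from~\eqref{four-category-equivalence} that $\iota$ is left adjoint to the Serre quotient functor $Q\:\widetilde K\Modl\rarrow\widetilde K\Modl/\widetilde K\Modlz$ and that the coreflection factors as $T=\iota Q$, I would argue that, $Q$ being exact and $\iota$ left exact, the composite $T=\iota Q$ is left exact; as $T$ is also right exact, it is exact, which is~(1). The same reasoning applies verbatim with $\widecheck K$ in place of $\widetilde K$, also yielding (2)$\Leftrightarrow$(4) and showing the whole chain to be insensitive to the choice of unitalization.

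I expect (3)$\Rightarrow$(1) to be the main obstacle, since it asks one to promote a property of kernels of maps \emph{between t\+unital modules} to exactness of $T$ on \emph{all} of $\widetilde K\Modl$. The step that unlocks this is the identification of ``closed under kernels'' with left exactness of the inclusion $\iota$, combined with the factorization $T=\iota Q$ through the exact Serre quotient $Q$. Without it, a naive long exact sequence computation only shows that $\ker(K\ot_{\widetilde K}A\rarrow K\ot_{\widetilde K}B)$ is t\+unital for an injection $A\rarrow B$, not that it vanishes, and hence falls short of flatness.
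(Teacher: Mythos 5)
Your proposal is correct, and its overall skeleton coincides with the paper's: (1)$\Leftrightarrow$(2) by applying Corollary~\ref{flatness-over-tilde-check-equivalence} to the t\+unital right $K$\+module $K$, and (3)$\Leftrightarrow$(4) by the straightforward observation that morphisms and kernels of t\+unital modules agree over the two unitalizations. The genuine divergence is in the substantive equivalence (1)$\Leftrightarrow$(3): the paper simply cites \cite[Corollary~7.4]{Pnun} (and, for (1)$\Leftrightarrow$(2), alternatively the Wodzicki--Quillen theorem \cite[Corollary~9.5]{Quil}), whereas you give a self-contained argument through the coreflection $T=K\ot_{\widetilde K}{-}$ and its factorization $T\simeq\iota Q$ through the exact Serre quotient functor. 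What your route buys is independence from the external preprint and a transparent conceptual reason for the equivalence; what the citation buys is brevity. Your direction (1)$\Rightarrow$(3) is also consistent with the paper's remark that this implication does not use t\+unitality of~$K$.

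One step in your (3)$\Rightarrow$(1) should be made explicit, since the paper's Section~\ref{null-modules-prelim-subsecn} states only the adjunction $\iota\dashv Q$ and not the identification $T\simeq\iota Q$. This identification does require t\+unitality of~$K$ and a short verification: either check directly that $\Hom_{\widetilde K}(X,N)\simeq\Hom_{\widetilde K}(X,\>K\ot_{\widetilde K}N)$ for every t\+unital $X$ (one direction by composing with the action map $K\ot_{\widetilde K}N\rarrow N$, the other via $X\simeq K\ot_{\widetilde K}X\rarrow K\ot_{\widetilde K}N$), so that $K\ot_{\widetilde K}{-}$ is right adjoint to $\iota$ and uniqueness of adjoints applies; or observe that $K\ot_{\widetilde K}N$ is t\+unital (as $K\ot_{\widetilde K}K\simeq K$) while the kernel and cokernel of the action map $K\ot_{\widetilde K}N\rarrow N$ are null-modules, so this map becomes an isomorphism in $\widetilde K\Modl/\widetilde K\Modlz$ and hence $K\ot_{\widetilde K}N$ represents $Q(N)$ under the equivalence~\eqref{four-category-equivalence}. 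With either two-line justification inserted, your argument is complete and correct.
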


\begin{proof}
 (1)\,$\Longleftrightarrow$\,(2) is a particular case of
the Wodzicki--Quillen theorem~\cite[Corollary~9.5]{Quil}.
 It is also a particular case of
Corollary~\ref{flatness-over-tilde-check-equivalence}.

 (1)\,$\Longleftrightarrow$\,(3) is~\cite[Corollary~7.4]{Pnun}.
 The implication (1)\,$\Longrightarrow$\,(3) does not depend on
the assumption of t\+unitality of~$K$ \,\cite[Remark~1.6]{Pnun}.
 
 (2)\,$\Longleftrightarrow$\,(4) is similar to
(1)\,$\Longleftrightarrow$\,(3).
 The implication (2)\,$\Longrightarrow$\,(4) does not depend on
the assumption of t\+unitality of~$K$.

 (3)\,$\Longleftrightarrow$\,(4) is straightforward and also does not
depend on the assumption of t\+unitality of~$K$.
\end{proof}

\begin{prop} \label{c-unital-cokernels-projective-ring}
 Let $K$ be a t\+unital $k$\+algebra.  Then the following conditions
are equivalent:
\begin{enumerate}
\item $K$ is a projective left $\widetilde K$\+module;
\item $K$ is a projective left $\widecheck K$\+module;
\item the full subcategory $K\Modlc$ is closed under cokernels in
$\widetilde K\Modl$;
\item the full subcategory $K\Modlc$ is closed under cokernels in
$\widecheck K\Modl$.
\end{enumerate}
\end{prop}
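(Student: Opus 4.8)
The plan is to prove Proposition~\ref{c-unital-cokernels-projective-ring} by dualizing the proof of the preceding Proposition~\ref{t-unital-kernels-flat-ring}, replacing flatness by projectivity, kernels by cokernels, the functor ${-}\ot_{\widetilde K}{-}$ by $\Hom_{\widetilde K}({-},{-})$, and t\+unital right modules by c\+unital left modules throughout. The structural corollary needed is already in place: Corollary~\ref{projectivity-over-tilde-check-equivalence} plays the role that Corollary~\ref{flatness-over-tilde-check-equivalence} played before.

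First I would dispose of the two ``passage between $\widetilde K$ and $\widecheck K$'' equivalences. For (1)\,$\Longleftrightarrow$\,(2): since $K$ is assumed to be a t\+unital $k$\+algebra, the left $K$\+module $K$ is t\+unital, so Corollary~\ref{projectivity-over-tilde-check-equivalence} applied to $Q=K$ gives at once that $K$ is projective over $\widetilde K$ if and only if it is projective over $\widecheck K$. For (3)\,$\Longleftrightarrow$\,(4): every c\+unital left $K$\+module is a unital $\widecheck K$\+module, hence a $k$\+vector space, and for a $k$\+linear map between two such modules the cokernel formed in $\widetilde K\Modl$ and the cokernel formed in $\widecheck K\Modl$ have the same underlying space, carrying compatible $\widetilde K$\+ and $\widecheck K$\+module structures. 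Since c\+unitality is an intrinsic property of this common module, closedness under cokernels in $\widetilde K\Modl$ is equivalent to closedness under cokernels in $\widecheck K\Modl$; this argument does not use t\+unitality of~$K$.

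It remains to establish (1)\,$\Longleftrightarrow$\,(3), the equivalence (2)\,$\Longleftrightarrow$\,(4) being entirely analogous with $\widecheck K$ in place of $\widetilde K$. The forward implication (1)\,$\Longrightarrow$\,(3) I would argue directly, and it requires no t\+unitality assumption. If $K$ is a projective left $\widetilde K$\+module, then the functor $\Hom_{\widetilde K}(K,{-})$ is exact. Given a morphism $P'\rarrow P$ of c\+unital modules with cokernel $C$ in $\widetilde K\Modl$, applying $\Hom_{\widetilde K}(K,{-})$ to the exact sequence $P'\rarrow P\rarrow C\rarrow 0$ yields an exact sequence, and the natural maps $M\rarrow\Hom_{\widetilde K}(K,M)$ assemble into a morphism from the first sequence to this one. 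The two leftmost vertical maps are isomorphisms by c\+unitality of $P'$ and $P$, so a comparison of cokernels shows that $C\rarrow\Hom_{\widetilde K}(K,C)$ is an isomorphism; thus $C$ is c\+unital.

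The main obstacle is the converse (3)\,$\Longrightarrow$\,(1): closedness of $K\Modlc$ under cokernels must be shown to force projectivity of $K$ as a left $\widetilde K$\+module. This is the substantive input; it is the dual-analogue of the implication (3)\,$\Longrightarrow$\,(1) in Proposition~\ref{t-unital-kernels-flat-ring}, proved there by quoting \cite[Corollary~7.4]{Pnun}, and I would invoke the corresponding statement for c\+unital modules from \cite{Pnun}. Conceptually, under the equivalences~\eqref{four-category-equivalence} the inclusion $K\Modlc\rarrow\widetilde K\Modl$ is right adjoint to the Serre quotient functor and is therefore left exact; hypothesis~(3) upgrades it to an exact functor. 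Combined with the identification of c\+projective t\+unital modules with projective $\widetilde K$\+modules \textup{(}the discussion preceding Corollary~\ref{projectivity-over-tilde-check-equivalence}\textup{)}, this yields the projectivity of $K$. It is in this direction that the t\+unitality of $K$ is genuinely used, exactly as in the flat case.
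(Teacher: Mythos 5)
Your proposal is correct and takes essentially the same route as the paper: the equivalence (1)\,$\Longleftrightarrow$\,(2) via Corollary~\ref{projectivity-over-tilde-check-equivalence} applied to $Q=K$, the substantive implication (3)\,$\Longrightarrow$\,(1) by invoking the corresponding c\+unital result of~\cite{Pnun} (the paper cites \cite[Corollary~7.6]{Pnun}), and the remaining directions by elementary arguments not requiring t\+unitality. Your explicit cokernel-comparison ladder for (1)\,$\Longrightarrow$\,(3) is a correct write-out of what the paper delegates to \cite[Remark~3.6]{Pnun}, so the two proofs differ only in how much is spelled out versus cited.
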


\begin{proof}
 (1)\,$\Longleftrightarrow$\,(2) is a particular case of
Corollary~\ref{projectivity-over-tilde-check-equivalence}.

 (1)\,$\Longleftrightarrow$\,(3) is~\cite[Corollary~7.6]{Pnun}.
 The implication (1)\,$\Longrightarrow$\,(3) does not depend on
the assumption of t\+unitality of~$K$ \,\cite[Remark~3.6]{Pnun}.

 (2)\,$\Longleftrightarrow$\,(4) is similar to
(1)\,$\Longleftrightarrow$\,(3).
 The implication (2)\,$\Longrightarrow$\,(4) does not depend on
the assumption of t\+unitality of~$K$.

 (3)\,$\Longleftrightarrow$\,(4) is straightforward and also does not
depend on the assumption of t\+unitality of~$K$.
\end{proof}

\subsection{nt-Flat and nc-projective modules}
\label{nt-nc-flat-proj-prelim-subsecn}
 In this section $K$ is a nonunital $k$\+algebra.
 The assertions involving t\+flatness or c\+projectivity will
presume $K$ to be t\+unital.

 Let us say that a $\widetilde K$\+module $Q$ is \emph{nc\+projective}
if the covariant $\Hom$ functor $\Hom_{\widetilde K}(Q,{-})$ preserves
right exactness of all right exact sequences $L\rarrow M\rarrow E
\rarrow0$ in $\widetilde K\Modl$ with $L$, $M\in K\Modlc$.
 It is clear that any such right exact sequence
$L\rarrow M\rarrow E\rarrow0$ is actually a right exact sequence
in $\widecheck K\Modl$.

 By~\cite[Lemma~8.3]{Pnun}, a t\+unital module $Q$ over a t\+unital
algebra $K$ is nc\+projective if and only if it is c\+projective.
 According to~\cite[Corollary~8.6]{Pnun}
and Corollary~\ref{projectivity-over-tilde-check-equivalence},
this holds if and only if $Q$ is a projective $\widetilde K$\+module,
or equivalently, a projective $\widecheck K$\+module.
 In particular, the left $K$\+module $K$ is nc\+projective if
and only if it is projective as a left $\widecheck K$\+module.

 On the other hand, it is obvious from the definition that
any projective $\widetilde K$\+module is nc\+projective, and
any projective $\widecheck K$\+module is nc\+projective;
while such modules \emph{need not} be c\+projective in
general~\cite[Remarks~8.2(1\+-2)]{Pnun}.
 When the full subcategory $K\Modlc$ is closed under cokernels in
$\widetilde K\Modl$ or $\widecheck K\Modl$, the classes of
c\+projective and nc\+projective left $K$\+modules coincide.

\begin{lem} \label{nc-projective-tensor-product}
 Let $B\in\widecheck K\Bimod\widecheck K$ be a $K$\+$K$\+bimodule and
$F$ be a left $\widecheck K$\+module.
 Assume that both the left $K$\+modules $B$ and $F$ are nc\+projective,
and the right $K$\+module $B$ is t\+unital.
 Then the tensor product $B\ot_{\widecheck K}F$ is an nc\+projective
left $K$\+module.
\end{lem}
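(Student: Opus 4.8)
The plan is to transport the defining right-exactness condition for $B\ot_{\widecheck K}F$ through the tensor--Hom adjunction. First I would record that, since $B$ is t\+unital as a right $K$\+module, one has $BK=B$, so the natural map $B\ot_{\widetilde K}F\rarrow B\ot_{\widecheck K}F$ is an isomorphism by Lemma~\ref{hom-tensor-over-tilde-check-agree}(a) (the case $NK=N$ with $N=B$, $M=F$). Hence I may compute with $B\ot_{\widetilde K}F$ and work over $\widetilde K$ throughout. Regarding $B$ as a $\widetilde K$\+$\widetilde K$\+bimodule, I then have the adjunction isomorphism
$$
 \Hom_{\widetilde K}(B\ot_{\widetilde K}F,P)\;\cong\;
 \Hom_{\widetilde K}(F,\Hom_{\widetilde K}(B,P)),
$$
natural in the left $\widetilde K$\+module $P$, where $\Hom_{\widetilde K}(B,P)$ carries the left $\widetilde K$\+module structure induced by the right action of $\widetilde K$ on $B$. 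Given a right exact sequence $L\rarrow M\rarrow E\rarrow0$ in $\widetilde K\Modl$ with $L$, $M\in K\Modlc$, the goal is to show that $\Hom_{\widetilde K}(B\ot_{\widetilde K}F,{-})$ keeps it right exact; via the adjunction this is the same as keeping the composite $\Hom_{\widetilde K}(F,\Hom_{\widetilde K}(B,{-}))$ right exact on that sequence.

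The computation then splits into two applications of nc\+projectivity, bridged by a step transferring t\+unitality to c\+unitality. Applying $\Hom_{\widetilde K}(B,{-})$ to $L\rarrow M\rarrow E\rarrow0$ and using that $B$ is an nc\+projective left $K$\+module yields a right exact sequence
$$
 \Hom_{\widetilde K}(B,L)\rarrow\Hom_{\widetilde K}(B,M)\rarrow
 \Hom_{\widetilde K}(B,E)\rarrow0
$$
in $\widetilde K\Modl$. Next I claim its first two terms are c\+unital left $K$\+modules; this is where the hypothesis that $B$ is t\+unital as a right $K$\+module is used. For any c\+unital $L$, the c\+unitality map for the left $\widetilde K$\+module $\Hom_{\widetilde K}(B,L)$ is identified, through the adjunction $\Hom_{\widetilde K}(K,\Hom_{\widetilde K}(B,L))\cong\Hom_{\widetilde K}(B\ot_{\widetilde K}K,L)$, with $\Hom_{\widetilde K}(\beta,L)$, where $\beta\:B\ot_{\widetilde K}K\rarrow B$ is the right action map $b\ot r\mapsto br$. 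Since $\beta$ is precisely the t\+unitality isomorphism of $B$ as a right $K$\+module, $\Hom_{\widetilde K}(\beta,L)$ is an isomorphism, so $\Hom_{\widetilde K}(B,L)$ is c\+unital, and likewise for $M$. (This is the analogue, for the internal $\Hom_{\widetilde K}(B,{-})$, of the duality statement in Lemma~\ref{duality-preserves-reflects-t-c}.)

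Finally, the displayed sequence of $\Hom_{\widetilde K}(B,{-})$'s is a right exact sequence in $\widetilde K\Modl$ whose first two terms lie in $K\Modlc$, so the nc\+projectivity of $F$ applies to it and keeps
$$
 \Hom_{\widetilde K}(F,\Hom_{\widetilde K}(B,L))\rarrow
 \Hom_{\widetilde K}(F,\Hom_{\widetilde K}(B,M))\rarrow
 \Hom_{\widetilde K}(F,\Hom_{\widetilde K}(B,E))\rarrow0
$$
right exact. Transporting back along the adjunction gives the desired right exactness of $\Hom_{\widetilde K}(B\ot_{\widetilde K}F,{-})$ on $L\rarrow M\rarrow E\rarrow0$, i.e.\ that $B\ot_{\widecheck K}F$ is nc\+projective. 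I expect the only genuinely delicate point to be the middle step: verifying that $\Hom_{\widetilde K}(B,{-})$ carries c\+unital modules to c\+unital modules, which forces one to keep careful track of the left $K$\+module structure coming from the right action on $B$ and to match the c\+unitality map with the t\+unitality isomorphism~$\beta$. The two flanking steps are direct invocations of the definitions of nc\+projectivity for $B$ and for $F$, and the passage between $\ot_{\widetilde K}$ and $\ot_{\widecheck K}$ is exactly what the t\+unitality of $B$ secures at the outset.
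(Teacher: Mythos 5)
Your proof is correct and takes essentially the same route as the paper's: the tensor--Hom adjunction $\Hom_K(B\ot_KF,\>{-})\simeq\Hom_K(F,\Hom_K(B,{-}))$ combined with the fact that $\Hom_K(B,{-})$ carries left $K$\+modules to c\+unital ones because the right $K$\+module $B$ is t\+unital. The only difference is that the paper cites this last fact from~\cite[Lemma~3.2]{Pnun}, whereas you verify it directly by identifying the c\+unitality map of $\Hom_{\widetilde K}(B,L)$ with $\Hom_{\widetilde K}(\beta,L)$ for the t\+unitality isomorphism $\beta\:B\ot_{\widetilde K}K\rarrow B$ (and this verification, which moreover needs no c\+unitality assumption on~$L$, is correct).
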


\begin{proof}
 The assertion follows from the natural isomorphism
$\Hom_K(B\ot_KF,\>{-})\simeq\Hom_K(F,\Hom_K(B,{-}))$ together with
the fact that the functor $\Hom_K(B,{-})$ takes all left $K$\+modules
to c\+unital ones~\cite[Lemma~3.2]{Pnun}.
\end{proof}

 Similarly, we will say that a left $\widetilde K$\+module $F$ is
\emph{nt\+flat} if the tensor product functor ${-}\ot_{\widetilde K}F$
preserves left exactness of all left exact sequences $0\rarrow N
\rarrow L\rarrow M$ in $\Modr\widetilde K$ with $L$, $M\in\Modrt K$.
 It is clear that any such left exact sequence
$0\rarrow N\rarrow L\rarrow M$ is actually a left exact sequence
in $\Modr\widecheck K$.

 By~\cite[Lemma~8.15]{Pnun}, a t\+unital module $F$ over a t\+unital
algebra $K$ is nt\+flat if and only if it is t\+flat.
 According to~\cite[Corollary~8.20]{Pnun}
and Corollary~\ref{flatness-over-tilde-check-equivalence},
this holds if and only if $F$ is a flat $\widetilde K$\+module,
or equivalently, a flat $\widecheck K$\+module.
 In particular, the left $K$\+module $K$ is nt\+flat if and only if
it is flat as a left $\widecheck K$\+module.

 On the other hand, it is obvious from the definition that any flat
$\widetilde K$\+module is nt\+flat, and any flat $\widecheck K$\+module
is nt\+flat; while such modules \emph{need not} be t\+flat in
general~\cite[Remarks~8.14(1\+-2)]{Pnun}.
 When the full subcategory $\Modrt K$ is closed under kernels in
$\Modr\widetilde K$ or $\Modr\widecheck K$, the classes of
t\+flat and nt\+flat left $K$\+modules coincide.

\begin{lem} \label{nt-flat-tensor-product}
 Let $B\in\widecheck K\Bimod\widecheck K$ be a $K$\+$K$\+bimodule
and $F$ be a left $\widecheck K$\+module.
 Assume that both the left $K$\+modules $B$ and $F$ are nt\+flat, and
the right $K$\+module $B$ is t\+unital.
 Then the tensor product $B\ot_{\widecheck K}F$ is an nt\+flat
left $K$\+module.
\end{lem}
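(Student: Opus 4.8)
The plan is to dualize the proof of Lemma~\ref{nc-projective-tensor-product}, replacing the $\Hom$\+tensor adjunction by the associativity isomorphism for tensor products, and replacing the fact that $\Hom_K(B,{-})$ lands in c\+unital modules by the dual fact that ${-}\ot_K B$ lands in t\+unital modules.

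First I would fix a left exact sequence $0\rarrow N\rarrow L\rarrow M$ in $\Modr\widetilde K$ with $L$, $M\in\Modrt K$, and reduce the claim to a statement about applying $B$ and then $F$. Viewing $B$ as a $\widetilde K$\+$\widecheck K$\+bimodule by restriction of its left action, the associativity isomorphism
$$
 X\ot_{\widetilde K}(B\ot_{\widecheck K}F)\simeq(X\ot_{\widetilde K}B)\ot_{\widecheck K}F,
$$
natural in the right $\widetilde K$\+module $X$, applied to $X=N$, $L$, $M$, shows that tensoring the original sequence with $B\ot_{\widecheck K}F$ over $\widetilde K$ produces, up to natural isomorphism, the sequence
$$
 0\rarrow(N\ot_{\widetilde K}B)\ot_{\widecheck K}F\rarrow(L\ot_{\widetilde K}B)\ot_{\widecheck K}F\rarrow(M\ot_{\widetilde K}B)\ot_{\widecheck K}F.
$$

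Second, I would apply the nt\+flatness of $B$ to the original sequence: as $L$ and $M$ are t\+unital, the sequence $0\rarrow N\ot_{\widetilde K}B\rarrow L\ot_{\widetilde K}B\rarrow M\ot_{\widetilde K}B$ is left exact in $\Modr\widecheck K$. The key point---the tensor analogue of ``$\Hom_K(B,{-})$ takes values in c\+unital modules''---is that $X\ot_{\widetilde K}B$ is a t\+unital right $K$\+module for every right $\widetilde K$\+module $X$. This follows from the associativity isomorphism $(X\ot_{\widetilde K}B)\ot_{\widetilde K}K\simeq X\ot_{\widetilde K}(B\ot_{\widetilde K}K)$ together with the hypothesis that $B$ is t\+unital as a right $K$\+module, which makes the action map $B\ot_{\widetilde K}K\rarrow B$, and hence the action map of $X\ot_{\widetilde K}B$, an isomorphism. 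In particular all three terms $N\ot_{\widetilde K}B$, $L\ot_{\widetilde K}B$, $M\ot_{\widetilde K}B$ are t\+unital.

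Third, since these three terms are t\+unital, the first case of Lemma~\ref{hom-tensor-over-tilde-check-agree}(a) identifies $({-}\ot_{\widetilde K}B)\ot_{\widecheck K}F$ with $({-}\ot_{\widetilde K}B)\ot_{\widetilde K}F$ naturally, so the displayed three\+term sequence is isomorphic as a complex to its $\ot_{\widetilde K}F$ version. It then remains to apply the nt\+flatness of $F$ to the left exact sequence $0\rarrow N\ot_{\widetilde K}B\rarrow L\ot_{\widetilde K}B\rarrow M\ot_{\widetilde K}B$, whose last two (indeed all three) terms are t\+unital; this yields the required left exactness, and the lemma follows. The step I expect to be the main obstacle is the bookkeeping between the two unitalizations $\widetilde K$ and $\widecheck K$: one must verify that the associativity isomorphisms are natural in $X$ and compatible with the comparison maps of Lemma~\ref{hom-tensor-over-tilde-check-agree}(a), and that the t\+unitality of $X\ot_{\widetilde K}B$ really does follow from the right t\+unitality of $B$ alone, so that the nt\+flatness of $F$ becomes applicable. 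Everything else is a routine transcription of the argument for Lemma~\ref{nc-projective-tensor-product}.
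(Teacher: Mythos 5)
Your proof is correct and follows essentially the same route as the paper's: the paper's (very brief) argument combines the associativity isomorphism ${-}\ot_K(B\ot_KF)\simeq({-}\ot_KB)\ot_KF$ with the fact that ${-}\ot_KB$ lands in t\+unital right $K$\+modules (cited there as~\cite[Lemma~1.2(b)]{Pnun}), then applies nt\+flatness of $B$ and of $F$ in succession, exactly as you do. The only difference is that you re-prove the cited t\+unitality fact inline and spell out the $\widetilde K$/$\widecheck K$ bookkeeping via Lemma~\ref{hom-tensor-over-tilde-check-agree}(a), both of which are done correctly.
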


\begin{proof}
 The assertion follows from the natural isomorphism ${-}\ot_K(B\ot_KF)
\simeq({-}\ot_K\nobreak B)\ot_KF$ together with the fact that
the functor ${-}\ot_KB$ takes all right $K$\+modules to t\+unital
ones~\cite[Lemma~1.2(b)]{Pnun}.
\end{proof}

 In the context of the following proposition, the assumption that
$K$ is a $k$\+algebra is irrelevant; so $K$ can be any nonunital ring.
 For any right $K$\+module $N$, we denote by $N^+=\Hom_\boZ(N,
\boQ/\boZ)$ the character dual left $K$\+module.

\begin{prop} \label{nc-projective-are-nt-flat}
 For any nonunital ring $K$, any nc\+projective $K$\+module is nt\+flat.
\end{prop}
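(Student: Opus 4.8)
The plan is to reduce nt\+flatness to nc\+projectivity by means of character duality, in the spirit of the classical proof (via Lambek's criterion) that projective modules are flat. Throughout I work over the unital ring $\widetilde K$ and write $N^+=\Hom_\boZ(N,\boQ/\boZ)$ for the character dual of a right $\widetilde K$\+module~$N$, endowed with its induced left $\widetilde K$\+module structure. The two formal tools I would use are the tensor--Hom adjunction
$$
 (N\ot_{\widetilde K}F)^+\simeq\Hom_{\widetilde K}(F,N^+),
$$
valid for any right $\widetilde K$\+module $N$ and any left $\widetilde K$\+module $F$, and the fact that $(-)^+$ is an exact and faithful contravariant functor (since $\boQ/\boZ$ is an injective cogenerator of $\Ab$), so that a complex of abelian groups is exact if and only if its character dual is exact.

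Let $F$ be an nc\+projective left $K$\+module and let $0\rarrow N\rarrow L\rarrow M$ be a left exact sequence in $\Modr\widetilde K$ with $L$, $M\in\Modrt K$. First I would dualize: applying the exact contravariant functor $(-)^+$ turns this into a right exact sequence
$$
 M^+\rarrow L^+\rarrow N^+\rarrow0
$$
in $\widetilde K\Modl$. The essential input here is that the character dual sends t\+unital right $K$\+modules to c\+unital left $K$\+modules; this is the $\boZ$\+linear analogue of the implication (1)$\,\Longrightarrow\,$(2) of Lemma~\ref{duality-preserves-reflects-t-c}, proved over $\boZ$ in \cite[Lemma~8.17]{Pnun} (with $\boQ/\boZ$ in the role of the injective cogenerator). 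Hence $L^+$ and $M^+$ both lie in $K\Modlc$, and the displayed sequence is a right exact sequence whose first two terms are c\+unital.

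Now I would invoke the hypothesis that $F$ is nc\+projective directly. By definition $\Hom_{\widetilde K}(F,{-})$ preserves the right exactness of any right exact sequence in $\widetilde K\Modl$ whose first two terms are c\+unital, so
$$
 \Hom_{\widetilde K}(F,M^+)\rarrow\Hom_{\widetilde K}(F,L^+)
 \rarrow\Hom_{\widetilde K}(F,N^+)\rarrow0
$$
is exact. By the adjunction above, this sequence is the character dual of the complex $0\rarrow N\ot_{\widetilde K}F\rarrow L\ot_{\widetilde K}F\rarrow M\ot_{\widetilde K}F$. Since $(-)^+$ reflects exactness, surjectivity of $\Hom_{\widetilde K}(F,L^+)\rarrow\Hom_{\widetilde K}(F,N^+)$ forces $N\ot_{\widetilde K}F\rarrow L\ot_{\widetilde K}F$ to be injective, and exactness at the middle $\Hom$ term forces exactness of the tensored complex at $L\ot_{\widetilde K}F$. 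Together these say that $0\rarrow N\ot_{\widetilde K}F\rarrow L\ot_{\widetilde K}F\rarrow M\ot_{\widetilde K}F$ is left exact, which is exactly nt\+flatness of~$F$.

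Once these ingredients are assembled the argument is formal, and I do not expect a serious obstacle. The only step that genuinely requires the nonunital theory rather than classical module theory is the passage from t\+unitality of $L$ and $M$ to c\+unitality of $L^+$ and $M^+$; this is the reason the proof must go through the character-dual refinement of Lemma~\ref{duality-preserves-reflects-t-c}, matching the restricted notions of nt\+flatness and nc\+projectivity rather than ordinary flatness and projectivity over $\widetilde K$.
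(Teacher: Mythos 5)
Your proof is correct and follows essentially the same route as the paper's: dualize the left exact sequence with $\Hom_\boZ({-},\boQ/\boZ)$, use \cite[Lemma~8.17]{Pnun} to see that $L^+$, $M^+$ are c\+unital, apply nc\+projectivity to the resulting right exact sequence, identify the $\Hom$ sequence with the character dual of the tensored sequence via the tensor--Hom adjunction, and conclude by reflection of exactness. The only difference is presentational: you spell out the final exactness-reflection step in more detail than the paper does.
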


\begin{proof}
 The argument is similar to (but simpler than) \cite[first proof of
Proposition~8.21]{Pnun}.
 Let $Q$ be an nc\+projective left $K$\+module and $0\rarrow N
\rarrow L\rarrow M$ be a left exact sequence in $\Modr\widetilde K$
with $L$, $M\in\Modrt K$.
 Then $M^+\rarrow L^+\rarrow N^+\rarrow0$ is a right exact sequence
in $\widetilde K\Modl$ with $L^+$, $M^+\in K\Modlc$
(by~\cite[Lemma~8.17]{Pnun}).
 Now the sequence of abelian groups $\Hom_{\widetilde K}(Q,M^+)\rarrow
\Hom_{\widetilde K}(Q,L^+)\rarrow\Hom_{\widetilde K}(Q,N^+)\rarrow0$
can be obtained by applying the functor $\Hom_\boZ({-},\boQ/\boZ)$ to
the sequence of abelian groups $0\rarrow N\ot_{\widetilde K}Q\rarrow
L\ot_{\widetilde K}Q\rarrow M\ot_{\widetilde K}Q$.
 The former sequence is right exact by assumption, so it follows that
the latter one is left exact.
\end{proof}

\subsection{Bimodules}
 Let $K$ be a (nonunital) $k$\+algebra.
 As mentioned in the beginning of Section~\ref{prelim-nonunital-secn},
by a \emph{unital $\widecheck K$\+$\widecheck K$\+bimodule} we mean
a $k$\+vector space endowed with commuting structures of a left unital
$\widecheck K$\+module and a right unital $\widecheck K$\+module.
 In other words, the left and right actions of~$k$ in bimodules are
presumed to agree.

 We will say that a $\widecheck K$\+$\widecheck K$\+bimodule is
\emph{t\+unital} if it is t\+unital both as a left and as
a right module.
 We will denote the full subcategory of t\+unital
$\widecheck K$\+$\widecheck K$\+bimodules by
$K\tBimodt K\subset\widecheck K\Bimod\widecheck K$ and call its objects
simply ``t\+unital $K$\+$K$\+bimodules''.

\begin{prop} \label{t-c-unital-monoidal-model-categories}
 Let $K$ be a t\+unital $k$\+algebra.  Then \par
\textup{(a)} the additive category of t\+unital $K$\+$K$\+bimodules
$K\tBimodt K$ is an associative and unital monoidal category, with
the unit object $K\in K\tBimodt K$, with respect to the tensor
product operation\/~$\ot_{\widecheck K}$; \par
\textup{(b)} the additive category of t\+unital left $K$\+modules
$K\Modlt$ is an associative and unital left module category over
$K\tBimodt K$, and the additive category of t\+unital right
$K$\+modules $\Modrt K$ is an associative and unital right module
category over $K\tBimodt K$, with respect to the tensor product
operation\/~$\ot_{\widecheck K}$; \par
\textup{(c)} the opposite category $K\Modlc^\sop$ to the additive
category of c\+unital left $K$\+modules $K\Modlc$ is an associative
and unital right module category over the monoidal category
$K\tBimodt K$, with respect to the Hom operation
$$
 P^\sop*B=\Hom_{\widecheck K}(B,P)^\sop
 \quad\text{for all $P\in K\Modlc$ and $B\in K\tBimodt K$}.
$$
\end{prop}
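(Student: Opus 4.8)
The plan is to obtain all three structures by restriction from the ordinary monoidal category $\widecheck K\Bimod\widecheck K$ of bimodules over the \emph{unital} ring~$\widecheck K$, where the associativity and unit constraints for $\ot_{\widecheck K}$ are the standard ones. The only points requiring argument are that the full subcategories in question are closed under the relevant operations, and that the object~$K$---rather than the ambient unit~$\widecheck K$, which does not lie in $K\tBimodt K$---serves as the unit. Throughout I read t\+ and c\+unitality off the structure maps $K\ot_{\widecheck K}M\rarrow M$ and $P\rarrow\Hom_{\widecheck K}(K,P)$ via Lemma~\ref{t-c-unitality-equivalence-lemma}.

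For part~(a) the associator is the associativity isomorphism of $\ot_{\widecheck K}$, so the pentagon is inherited. Closure under $\ot_{\widecheck K}$ follows from associativity together with t\+unitality: for $B,B'\in K\tBimodt K$ one has
$$
 K\ot_{\widecheck K}(B\ot_{\widecheck K}B')\simeq(K\ot_{\widecheck K}B)\ot_{\widecheck K}B'\simeq B\ot_{\widecheck K}B',
$$
the last isomorphism being left t\+unitality of~$B$, and symmetrically $(B\ot_{\widecheck K}B')\ot_{\widecheck K}K\simeq B\ot_{\widecheck K}B'$ by right t\+unitality of~$B'$; hence $B\ot_{\widecheck K}B'$ is t\+unital on both sides. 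Since $K$ is a t\+unital $k$\+algebra, $K\in K\tBimodt K$, and the left and right unit isomorphisms $K\ot_{\widecheck K}B\simeq B$ and $B\ot_{\widecheck K}K\simeq B$ are exactly the t\+unitality structure maps, while the triangle identity reduces to associativity of these multiplication maps. Part~(b) is the same computation for one\+sided modules: the left $K$\+action on $B\ot_{\widecheck K}M$ comes from~$B$, so left t\+unitality of~$B$ yields $B\ot_{\widecheck K}M\in K\Modlt$, and the unit isomorphism $K\ot_{\widecheck K}M\simeq M$ is t\+unitality of~$M$; the case of $\Modrt K$ is symmetric, and the coherence constraints are restrictions of those in $\widecheck K\Bimod\widecheck K$.

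For part~(c) the Hom in $P^\sop*B=\Hom_{\widecheck K}(B,P)^\sop$ is formed over the left $\widecheck K$\+structure of~$B$ and turned into a left module via the right $\widecheck K$\+structure of~$B$. Closure holds because $B$ is t\+unital as a right $K$\+module, so the functor $\Hom_{\widecheck K}(B,{-})$ lands in $K\Modlc$ by \cite[Lemma~3.2]{Pnun}; thus $\Hom_{\widecheck K}(B,P)\in K\Modlc$ for every~$P$. The associativity isomorphism $(P^\sop*B)*B'\simeq P^\sop*(B\ot_{\widecheck K}B')$ is the tensor\+hom adjunction
$$
 \Hom_{\widecheck K}(B',\Hom_{\widecheck K}(B,P))\simeq\Hom_{\widecheck K}(B\ot_{\widecheck K}B',P),
$$
and the unit isomorphism $P^\sop*K\simeq P^\sop$ is the c\+unitality map $\Hom_{\widecheck K}(K,P)\simeq P$. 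The coherence axioms then follow from naturality of the adjunction isomorphisms.

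The step I expect to be most delicate is bookkeeping rather than conceptual: verifying that the chosen unit~$K$ (in place of the ambient unit $\widecheck K\notin K\tBimodt K$) satisfies the triangle identity, and in part~(c) tracking the left and right $\widecheck K$\+structures through the opposite category so that the variance in the tensor\+hom adjunction matches the right module\+category axioms.
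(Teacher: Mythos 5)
Your proof is correct. It is worth noting, though, that it is not what the paper does: the paper's entire proof of this proposition is a citation to \cite[Corollaries~1.4 and~3.4]{Pnun}, together with the remark that imposing agreement of the left and right $k$\+actions on bimodules (absent in \cite{Pnun}, which works with abstract nonunital rings over $\widetilde K$) does not affect the statements. Your argument is instead a self\+contained verification, and it is essentially the argument that underlies those cited corollaries: closure of $K\tBimodt K$ under $\ot_{\widecheck K}$ via the identification of the natural action map $K\ot_{\widecheck K}(B\ot_{\widecheck K}B')\rarrow B\ot_{\widecheck K}B'$ with (associativity)\,$\circ$\,(t\+unitality of $B$ tensored with $\id_{B'}$); the unit isomorphisms for the object $K$ being the t\+unitality structure maps themselves, with the triangle identity reducing to middle\+balancedness of $\ot_{\widecheck K}$; closure in part~(c) via the fact that $\Hom_{\widecheck K}(B,{-})$ lands in $K\Modlc$ when the right $K$\+module $B$ is t\+unital (the same \cite[Lemma~3.2]{Pnun} the paper invokes elsewhere, in Lemma~\ref{nc-projective-tensor-product}); and the associativity constraint in~(c) given by the tensor\+hom adjunction, with the correct covariance in both arguments after passing to $K\Modlc^\sop$. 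What the paper's route buys is brevity and the transfer of all coherence bookkeeping to the earlier preprint; what your route buys is that, by working directly with $\widecheck K$\+module structures (reading t\+ and c\+unitality off Lemma~\ref{t-c-unitality-equivalence-lemma}), you never need the $\widetilde K$\,/\,$\widecheck K$ comparison that the paper's remark has to address. The one place where you wave your hands — the mixed coherence axioms for the module category structures, especially in part~(c) — is genuinely routine and is not spelled out in the paper either, so this does not constitute a gap.
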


\begin{proof}
 This is essentially~\cite[Corollaries~1.4 and~3.4]{Pnun}.
 The restriction that the left and right actions of~$k$ in
the bimodules agree (which is imposed here, but not in~\cite{Pnun})
does not affect the validity of the assertions.
\end{proof}

\subsection{s-Unital rings and modules}
 The following definitions and a part of the results are due to
Tominaga~\cite{Tom}; we refer to the survey~\cite{Nys} and
the preprint~\cite[Section~2]{Pnun} for additional discussion.

 A left module $M$ over a (nonunital) ring $K$ is said to be
\emph{s\+unital} if for every $m\in M$ there exists $e\in K$
such that $em=m$ in~$M$.
 Similarly, a right $K$\+module $N$ is said to be \emph{s\+unital}
if for every $n\in N$ there exists $e\in K$ such that $ne=n$ in~$N$.

 We will denote the full subcategory of s\+unital left $K$\+modules
by $K\Modls\subset\widetilde K\Modl$ and the full subcategory of
s\+unital right $K$\+modules by $\Modrs K\subset\Modr\widetilde K$.

\begin{prop} \label{s-unital-modules-hereditary-torsion}
 For any (nonunital) ring $K$, the full subcategory of s\+unital
$K$\+modules $K\Modls$ is a hereditary torsion class in the abelian
category $\widetilde K\Modl$ of nonunital $K$\+modules.
 In other words, the full subcategory $K\Modls$ is closed under
submodules, quotients, extensions, and infinite direct sums
in $\widetilde K\Modl$.
\end{prop}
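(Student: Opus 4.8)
The plan is to verify the four closure properties directly; since a hereditary torsion class is by definition a class closed under submodules, quotients, extensions, and arbitrary direct sums, establishing these four closures is the entire content of the statement. Throughout, for a left $K$\+module $M$ and $m\in M$, I will say that $e\in K$ is a \emph{local unit} for $m$ if $em=m$.

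Closure under submodules and quotients is immediate and requires no computation. If $M$ is s\+unital and $M'\subseteq M$ is a $K$\+submodule, then for $m\in M'$ any local unit $e$ for $m$ in $M$ already satisfies $em=m$ inside $M'$ (since $em\in M'$), so $M'$ is s\+unital with the same witnesses. Dually, if $\pi\:M\rarrow M''$ is a surjection and $\bar m=\pi(m)$, then $em=m$ gives $e\bar m=\bar m$, so $M''$ is s\+unital.

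The only substantive point is closure under extensions, for which I would use the standard s\+unital trick. Given a short exact sequence $0\rarrow M'\rarrow M\rarrow M''\rarrow0$ with $M'$ and $M''$ s\+unital, and $m\in M$, first choose $e_1\in K$ with $e_1\bar m=\bar m$ in $M''$; then $m-e_1m$ maps to $0$ in $M''$, hence lies in $M'$, so there is $e_2\in K$ with $e_2(m-e_1m)=m-e_1m$. Setting $e=e_1+e_2-e_2e_1$, associativity of the action gives
$$
 em=e_1m+e_2m-e_2e_1m=e_1m+e_2(m-e_1m)=e_1m+(m-e_1m)=m,
$$
so $e$ is a local unit for $m$ and $M$ is s\+unital. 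This computation is where the nonunital ring structure is genuinely used; everything else is formal, so I expect this to be the only step needing care.

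Finally, closure under arbitrary direct sums reduces to the extension case. A finite direct sum of s\+unital modules is s\+unital by induction, splitting off one summand at a time via the obvious split short exact sequence and invoking the extension closure just proved. For an arbitrary direct sum $\bigoplus_i M_i$, any element $m$ has finite support, hence lies in a finite subsum $\bigoplus_{i\in S}M_i$; this subsum is a submodule, is s\+unital by the finite case, and therefore supplies a local unit $e$ for $m$ that continues to satisfy $em=m$ in $\bigoplus_i M_i$. Combining the four closures gives the assertion that $K\Modls$ is a hereditary torsion class in $\widetilde K\Modl$.
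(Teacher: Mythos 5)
Your proof is correct, and it is necessarily a different route from the paper's, because the paper does not actually prove this proposition: it simply cites \cite[Proposition~2.2]{Pnun} (which in turn goes back to Tominaga's work on s\+unital rings). What you have written is a complete, self-contained substitute for that citation. The closures under submodules and quotients are indeed immediate, and the crux is exactly where you locate it: the extension step, where the element $e=e_1+e_2-e_2e_1$ (formally $e_1+e_2(1-e_1)$) is the standard s\+unital trick, and your verification $em=e_1m+e_2(m-e_1m)=m$ uses only the distributivity and associativity axioms of a nonunital module, as it should. Your reduction of arbitrary direct sums to finite ones via finite support, and of finite ones to extensions via the split exact sequences, is also valid; it has the small advantage of not invoking Tominaga's simultaneous\+local\+unit theorem (Proposition~\ref{s-unital-simultaneous-for-several} in this paper), which would give an alternative direct argument for finite sums but is logically downstream of nothing you need here. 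The only stylistic quibble is the word ``dually'' for the quotient case, which is just ``functorially'' (apply $\pi$ to the equation $em=m$); there is no duality involved.
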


\begin{proof}
 This is~\cite[Proposition~2.2]{Pnun}.
\end{proof}

\begin{prop}[\cite{Tom}] \label{s-unital-simultaneous-for-several}
 Let $K$ be a (nonunital) ring and $M$ be an s\+unital left
$K$\+module.
 Then, for any finite collection of elements $m_1$,~\dots, $m_n\in M$
there exists an element $e\in K$ such that $em_i=m_i$ for all\/
$1\le i\le n$.
\end{prop}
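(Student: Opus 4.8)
The plan is to prove the statement by induction on~$n$, reducing the simultaneous unitality for several elements to the case of a single element, which is exactly the definition of s\+unitality. The base case $n=1$ is immediate from the hypothesis that $M$ is s\+unital. The inductive step is where the real work lies: given that the claim holds for $n-1$ elements, I must combine a unit for the first $n-1$ elements with a unit for the $n$\+th element into a single element of~$K$ acting as identity on all~$n$ of them.

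First I would carry out the induction as follows. Suppose the statement holds for any $n-1$ elements of an s\+unital module, and let $m_1,\dots,m_n\in M$ be given. By the inductive hypothesis applied to $m_1,\dots,m_{n-1}$, there exists $e'\in K$ with $e'm_i=m_i$ for $1\le i\le n-1$. Now I would apply s\+unitality not to $m_n$ alone, but to the cleverly chosen auxiliary element $m_n-e'm_n\in M$ together with $m_n$: more precisely, s\+unitality gives some $e''\in K$ with $e''(m_n-e'm_n)=m_n-e'm_n$. The heart of the argument is then to produce the desired $e$ from $e'$ and $e''$ by the formula $e=e'+e''-e''e'$, and to verify that $em_i=m_i$ for every~$i$.

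The key verification splits into two cases. For $i\le n-1$, using $e'm_i=m_i$ one computes $em_i=e'm_i+e''m_i-e''e'm_i=m_i+e''m_i-e''m_i=m_i$, where the cancellation exploits $e'm_i=m_i$ inside the last two terms. For $i=n$, one computes $em_n=e'm_n+e''m_n-e''e'm_n=e'm_n+e''(m_n-e'm_n)=e'm_n+(m_n-e'm_n)=m_n$, using the defining property of~$e''$ at the middle step. Thus $e$ acts as the identity on all of $m_1,\dots,m_n$, completing the induction.

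The main obstacle is identifying the correct combination of~$e'$ and~$e''$; the naive guess $e=e'+e''$ fails because the cross terms do not cancel, and one needs the ``quasi-unit'' correction $-e''e'$ (an instance of the circle/adjoint operation $e'\ocn e''$ familiar from the theory of idempotents in nonunital rings) to make both cases work simultaneously. Once this element is written down, the verification is the routine two-case computation above, requiring no properties beyond associativity of~$K$, distributivity, and the two unitality relations already secured. I expect no difficulty from nonunitality of~$K$ itself, since the argument never invokes a global unit and stays entirely within the given s\+unital module~$M$.
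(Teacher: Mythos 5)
Your proof is correct: the induction with the quasi-unit correction $e=e'+e''-e''e'$, where $e''$ is chosen to fix the single element $m_n-e'm_n$, verifies both cases exactly as you compute, using nothing beyond associativity and distributivity. The paper itself gives no argument here---it simply cites Tominaga's Theorem~1 (and the later surveys)---and your induction is precisely the classical proof found in those references, so the two approaches coincide.
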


\begin{proof}
 This is~\cite[Theorem~1]{Tom}; see also~\cite[Proposition~2.8 of
the published version or Proposition~8 of the \texttt{arXiv}
version]{Nys} or~\cite[Corollary~2.3]{Pnun}.
\end{proof}

 A ring (or $k$\+algebra) $K$ is said to be \emph{left s\+unital} if
it is an s\+unital left $K$\+module, and \emph{right s\+unital} if
it is an s\+unital right $K$\+module.
 The ring $K$ is called \emph{s\+unital} if it is both left and
right s\+unital.

\begin{prop} \label{s-unital-implies-t-unital}
\textup{(a)} Any left s\+unital ring is t\+unital.
 Similarly, any right s\+unital ring is t\+unital. \par
\textup{(b)} Over a left s\+unital ring, a left module is
t\+unital if and only if it is s\+unital.
 Over a right s\+unital ring, a right module is t\+unital if
and only if it is s\+unital.
\end{prop}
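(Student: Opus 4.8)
The plan is to reduce the whole statement to two implications about a left module $M$ over a left s\+unital ring~$K$. Indeed, part~(a) is the special case $M=K$ of the implication from s\+unitality to t\+unitality in part~(b): a left s\+unital ring is precisely one whose left regular module $K$ is s\+unital, and $K$ being t\+unital as a left module is by definition what it means for the ring $K$ to be t\+unital. The right\+module assertions in both parts follow by the mirror (left--right opposite) argument. So I would concentrate on proving, for a left s\+unital ring~$K$, that a left module $M$ is t\+unital if and only if it is s\+unital.

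For the direction ``s\+unital implies t\+unital'' I would analyze the structure map $\mu\:K\ot_{\widetilde K}M\rarrow M$. Its image is $KM$, so s\+unitality of $M$ (which already gives $KM=M$) makes $\mu$ surjective. The crux is injectivity. Given $\xi=\sum_{i=1}^n r_i\ot m_i$ with $r_i\in K$ and $\mu(\xi)=\sum_i r_im_i=0$, I would invoke Proposition~\ref{s-unital-simultaneous-for-several}, applied to the s\+unital left module $K$ and the finite set $r_1,\dots,r_n$, to obtain a single $e\in K$ with $er_i=r_i$ for all~$i$. Moving $e$ across the tensor sign then collapses the element: $\xi=\sum_i(er_i)\ot m_i=\sum_i e\ot r_im_i=e\ot\sum_i r_im_i=e\ot0=0$. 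Hence $\mu$ is injective, and therefore an isomorphism, so $M$ is t\+unital.

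For the converse, assuming $M$ t\+unital, surjectivity of $\mu$ already yields $KM=M$, so any $m\in M$ may be written as $m=\sum_i r_im_i$ with $r_i\in K$. Applying Proposition~\ref{s-unital-simultaneous-for-several} once more to the s\+unital module $K$ and the coefficients~$r_i$, I would choose $e\in K$ with $er_i=r_i$; then $em=\sum_i(er_i)m_i=\sum_i r_im_i=m$, which exhibits $M$ as s\+unital. This finishes part~(b), and part~(a) is then the instance $M=K$.

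The main obstacle is the injectivity step: surjectivity and the converse are essentially bookkeeping around the identity $KM=M$, but to force a kernel element $\sum_i r_i\ot m_i$ to vanish I must replace the finitely many coefficients $r_i\in K$ by $er_i$ for one common $e\in K$ acting as a left unit on all of them simultaneously. This is exactly the point where the left s\+unitality of~$K$ (rather than the weaker $KM=M$) and Tominaga's simultaneous\+unit theorem are indispensable; once such an $e$ is available, the rest of the computation is formal.
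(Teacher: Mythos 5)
Your proof is correct and complete. Note, however, that the paper itself contains no argument for this proposition: its ``proof'' is an attribution to Tominaga~\cite{Tom} together with a pointer to \cite[Corollaries~2.7 and~2.9]{Pnun} for the details. Your proposal reconstructs exactly the argument underlying those sources, using only tools the paper makes available. The two genuinely nontrivial points are handled correctly: both the injectivity of $\mu\:K\ot_{\widetilde K}M\rarrow M$ for an s\+unital module $M$ and the converse direction rest on Tominaga's simultaneous\+unit theorem, stated in the paper as Proposition~\ref{s-unital-simultaneous-for-several} and applied to the left s\+unital ring $K$ viewed as an s\+unital left module over itself; and the rewriting $\sum_i(er_i)\ot m_i=e\ot\sum_ir_im_i$ is legitimate because each $r_i$ lies in $K\subset\widetilde K$ and the tensor product is taken over $\widetilde K$. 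Your reduction of part~(a) to the instance $M=K$ of part~(b) is also sound, since the paper defines a t\+unital ring as one that is t\+unital as a left module over itself (the equivalence with the right-sided condition being quoted as part of that definition), and the right-module statements do follow by passing to the opposite ring. So while there is no in-text proof to compare against, your route is self-contained and is surely the same mathematics as in the cited references; what it buys is a proof internal to the paper, independent of~\cite{Tom} and~\cite{Pnun}.
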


\begin{proof}
 This result is due to Tominaga~\cite[Remarks~(1\+-2) in Section~1
on p.~121\+-122]{Tom}.
 See~\cite[Corollaries~2.7 and~2.9]{Pnun} for the details.
\end{proof}

\begin{cor}
 Let $K$ be a left s\+unital ring.  In this context: \par
\textup{(a)} The full subcategory of t\+unital left $K$\+modules
$K\Modlt$ is closed under kernels in $\widetilde K\Modl$.
 If $K$ is a $k$\+algebra, then $K\Modl$ is also closed under
kernels in $\widecheck K\Modl$. \par
\textup{(b)} The right $\widetilde K$\+module $K$ is flat.
 If $K$ is a $k$\+algebra, then the right $\widecheck K$\+module $K$
is also flat.
\end{cor}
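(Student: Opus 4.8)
The plan is to bypass any direct flatness computation and instead reduce everything to the t\+unital machinery of Section~\ref{t-c-flat-proj-inj-prelim-subsecn}, exploiting the fact that over a left s\+unital ring the t\+unital modules are exactly the s\+unital ones. First I would record that $K$ is t\+unital, by Proposition~\ref{s-unital-implies-t-unital}(a), which makes Proposition~\ref{t-unital-kernels-flat-ring} available in the $k$\+algebra case. Moreover, by Proposition~\ref{s-unital-implies-t-unital}(b) the full subcategory $K\Modlt$ of t\+unital left $K$\+modules coincides with the full subcategory $K\Modls$ of s\+unital left $K$\+modules inside $\widetilde K\Modl$.

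For part~(a), closure under kernels then becomes immediate. By Proposition~\ref{s-unital-modules-hereditary-torsion} the class $K\Modls$ is a hereditary torsion class in $\widetilde K\Modl$, hence in particular closed under submodules. Given any morphism $f\colon M\rarrow N$ between objects of $K\Modlt=K\Modls$, the kernel of~$f$ computed in $\widetilde K\Modl$ is a submodule of~$M$, and therefore again lies in $K\Modls=K\Modlt$. This is precisely condition~(3) of Proposition~\ref{t-unital-kernels-flat-ring}. When $K$ is a $k$\+algebra, the equivalence (3)$\,\Longleftrightarrow\,$(4) of that proposition then also yields closure under kernels in $\widecheck K\Modl$.

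For part~(b), I would feed the conclusion of part~(a) back into Proposition~\ref{t-unital-kernels-flat-ring}: having established condition~(3), the equivalence (1)$\,\Longleftrightarrow\,$(3) shows that $K$ is flat as a right $\widetilde K$\+module, and in the $k$\+algebra case (2)$\,\Longleftrightarrow\,$(4) likewise gives flatness over~$\widecheck K$. Since that proposition is stated for $k$\+algebras, for the assertions over a general ring I would instead cite the underlying equivalence \cite[Corollary~7.4]{Pnun} between flatness of $K$ as a right $\widetilde K$\+module and closure of $K\Modlt$ under kernels in $\widetilde K\Modl$, which holds for arbitrary rings. The whole argument is formal once the identification $K\Modlt=K\Modls$ is in place, so there is no real obstacle; the only point demanding attention is keeping the ring and $k$\+algebra versions of the cited results straight.
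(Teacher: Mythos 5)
Your proof is correct and follows essentially the same route as the paper: the first assertion of~(a) is obtained by comparing Propositions~\ref{s-unital-modules-hereditary-torsion} and~\ref{s-unital-implies-t-unital}(b) (so that $K\Modlt=K\Modls$ is closed under submodules, hence kernels), and all remaining assertions come from Proposition~\ref{t-unital-kernels-flat-ring}. Your extra care in citing \cite[Corollary~7.4]{Pnun} for the general-ring case of~(b), where Proposition~\ref{t-unital-kernels-flat-ring} is formally stated only for $k$-algebras, fills in a detail that the paper's terse proof leaves implicit.
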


\begin{proof}
 The first assertion of part~(a) follows by comparing
Propositions~\ref{s-unital-modules-hereditary-torsion}
and~\ref{s-unital-implies-t-unital}(b).
 All the other assertions are then provided by
Proposition~\ref{t-unital-kernels-flat-ring}.
\end{proof}

\Section{Preliminaries on Semialgebras over Coalgebras}

 As mentioned in the beginning of Section~\ref{prelim-nonunital-secn},
all \emph{coalgebras} in this paper are (coassociative and) counital;
all comodules are counital and contramodules are contraunital.
 Furthermore, all \emph{semialgebras} are (semiassociative and)
semiunital; all semimodules are semiunital and semicontramodules
are semicontraunital.

 The definitions of semialgebras, semimodules, and semicontramodules
go back to the book~\cite{Psemi}, which is written in the generality
level of ``three-story towers'': a semialgebra over a coring over
a (noncommutative, nonsemisimple, associative, unital) ring.
 Hence in the exposition of~\cite{Psemi} it was necessary to deal with
such issues as the nonassociativity of the cotensor product of
comodules over corings, etc.
 The related definitions and discussion are presented
in~\cite[Chapters~1 and~3]{Psemi}.

 In this paper, we avoid many complications by restricting ourselves
to coalgebras over a field and semialgebras over such coalgebras.
 An introductory discussion of this context can be found
in~\cite[Section~0.3]{Psemi} and~\cite[Sections~2.5\+-2.6]{Prev}.

\subsection{Coalgebras, comodules, and contramodules}
 Before passing to semialgebras, let us say a few words about
coalgebras.
 Very briefly, a (coassociative, counital) \emph{coalgebra} $\C$ is
a $k$\+vector space endowed with $k$\+linear maps of
\emph{comultiplication} $\mu\:\C\rarrow\C\ot_k\C$ and \emph{counit}
$\epsilon\:\C\rarrow k$ satisfying the usual coassociativity and
counitality axioms obtainable by inverting the arrows in the definition
of an associative, unital algebra written down in the tensor notation.

 A (coassociative, counital) \emph{left comodule} $\M$ over $\C$ is
a $k$\+vector space endowed with a $k$\+linear \emph{left coaction}
map $\nu\:\M\rarrow\C\ot_k\M$ satisfying the usual coassociativity
and counitality axioms.
 A \emph{right comodule} $\N$ over $\C$ is a $k$\+vector space endowed
with a \emph{right coaction} map $\nu\:\N\rarrow\N\ot_k\C$ satisfying
the similar axioms.

 The definition of a \emph{left contramodule} over $\C$ is less
familiar, so let us spell it out in a little more detail.
 A left $\C$\+contramodule $\fP$ is a $k$\+vector space endowed
a $k$\+linear \emph{left contraaction} map $\pi\:\Hom_k(\C,\fP)
\rarrow\fP$ satisfying the following \emph{contraassociativity} and
\emph{contraunitality} axioms.

 Firstly (contraassociativity), the two compositions
$$
 \Hom_k(\C\ot_k\C,\>\fP)\simeq\Hom_k(\C,\Hom_k(\C,\fP))
 \rightrightarrows\Hom_k(\C,\fP)\rarrow\fP
$$
of the two maps
$$
 \Hom_k(\mu,\fP)\:\Hom_k(\C\ot_k\C,\>\fP)\rarrow
 \Hom_k(\C,\fP)
$$
and
$$
 \Hom_k(\C,\pi)\:\Hom_k(\C,\Hom_k(\C,\fP))\rarrow\Hom_k(\C,\fP)
$$
with the contraaction map $\pi\:\Hom_k(\C,\fP)\rarrow\fP$ must be
equal to each other.
 Here the presumed identification $\Hom_k(\C\ot_k\C,\>\fP)\simeq
\Hom_k(\C,\Hom_k(\C,\fP))$ is obtained as a particular case of
the natural isomorphism $\Hom_k(U\ot_k V,\>W)\simeq
\Hom_k(V,\Hom_k(U,W))$ for all $k$\+vector spaces $U$, $V$, and~$W$.

 Secondly (contraunitality), the composition
$$
 \fP\rarrow\Hom_k(\C,\fP)\rarrow\fP
$$
of the map $\Hom_k(\epsilon,\fP)\:\fP\rarrow\Hom_k(\C,\fP)$
with the contraaction map $\pi\:\Hom_k(\C,\allowbreak\fP)
\rarrow\fP$ must be equal to the identity map $\id_\fP$.

 The definition of a \emph{right contramodule} over $\C$ is very
similar to that of a left contramodule.
 The only difference is that, in the case of right contramodules $\fQ$,
the identification $\Hom_k(\C\ot_k\C,\>\fQ)\simeq
\Hom_k(\C,\Hom_k(\C,\fQ))$ obtained as a particular case of
the natural isomorphism $\Hom_k(U\ot_k V,\>W)\simeq
\Hom_k(U,\Hom_k(V,W))$ for all $k$\+vector spaces $U$, $V$, and $W$
is used in the contraassociativity axiom.

 For any right $\C$\+comodule $\N$ and any $k$\+vector space $V$,
the vector space $\fP=\Hom_k(\N,V)$ has a natural left
$\C$\+contramodule structure.
 The contraaction map $\pi\:\Hom_k(\C,\fP)\rarrow\fP$ is constructed
as the composition
$$
 \Hom_k(\C,\Hom_k(\N,V))\simeq\Hom_k(\N\ot_k\C,\>V)\rarrow
 \Hom_k(\N,V)
$$
of the natural isomorphism of vector spaces $\Hom_k(\C,\Hom_k(\N,V))
\simeq\Hom_k(\N\ot_k\C,\allowbreak\>V)$ with the map $\Hom_k(\nu,V)\:
\Hom_k(\N\ot_k\C,\>V)\rarrow\Hom_k(\N,V)$ induced by the coaction
map $\nu\:\N\rarrow\N\ot_k\C$.

 We will denote the $k$\+linear category of left $\C$\+comodules by
$\C\Comodl$, the $k$\+linear category of right $\C$\+comodules by
$\Comodr\C$, the $k$\+linear category of left $\C$\+contramodules
by $\C\Contra$, and the $k$\+linear category of right
$\C$\+contramodules by $\Contrar\C$.
 The $k$\+vector space of morphisms $\LL\rarrow\M$ in the category
$\C\Comodl$ is denoted by $\Hom_\C(\LL,\M)$, while the $k$\+vector
space of morphisms $\fP\rarrow\fQ$ in the category $\C\Contra$ is
denoted by $\Hom^\C(\fP,\fQ)$.

 We refer to the book~\cite{Swe}, the introductory
section and appendix~\cite[Section~0.2 and Appendix~A]{Psemi},
the surveys~\cite[Section~1]{Prev} and~\cite[Sections~3 and~8]{Pksurv}, 
and the preprints~\cite[Section~1]{Phff}, \cite[Section~1]{Plfin} for
a further discussion of coalgebras over fields together with comodules
and contramodules over them.

\subsection{Injective comodules and projective contramodules}
 Let $\C$ be a coalgebra over~$k$.
 Then the category of left $\C$\+comodules $\C\Comodl$ is a locally
finite Grothendieck abelian category.
 The forgetful functor $\C\Comodl\rarrow k\Vect$ is exact and
preserves coproducts (but usually does \emph{not} preserve products;
and the products are \emph{not} exact in $\C\Comodl$ in general).

 In particular, there are enough injective objects in $\C\Comodl$
(but usually \emph{no} projectives).
 Left $\C$\+comodules of the form $\C\ot_kV$, where $V\in k\Vect$
ranges over the $k$\+vector spaces, are called the \emph{cofree} left
$\C$\+comodules.
 Similarly, the cofree right $\C$\+comodules are the ones of the form
$V\ot_k\C$, where $V\in k\Vect$.
 For any left $\C$\+comodule $\LL$, there is a natural isomorphism of
$k$\+vector spaces~\cite[Section~1.1.2]{Psemi}
$$
 \Hom_\C(\LL,\>\C\ot_kV)\simeq\Hom_k(\LL,V).
$$
 Hence the cofree $\C$\+comodules are injective.
 A $\C$\+comodule is injective if and only if it is a direct summand
of a cofree one.

 The category of left $\C$\+contramodules $\C\Contra$ is a locally
presentable abelian category with enough projective objects.
 The forgetful functor $\C\Contra\rarrow k\Vect$ is exact and preserves
products (but usually does \emph{not} preserve coproducts; and
the coproducts are \emph{not} exact in $\C\Contra$ in general).

 Left $\C$\+contramodules of the form $\Hom_k(\C,V)$ (with
the $\C$\+contramodule structure arising from the right $\C$\+comodule
structure on~$\C$) are called the \emph{free} left $\C$\+contramodules.
 For any left $\C$\+contramodule $\fQ$, there is a natural isomorphism
of $k$\+vector spaces~\cite[Section~3.1.2]{Psemi}
$$
 \Hom^\C(\Hom_k(\C,V),\fQ)\simeq\Hom_k(V,\fQ).
$$
 Hence the free $\C$\+contramodules are projective.
 A $\C$\+contramodule is projective if and only if it is a direct
summand of a free one.

\subsection{Bicomodules}
 Let $\C$ and $\D$ be two (coassociative, counital) coalgebras over~$k$.
 A \emph{$\C$\+$\D$\+bicomodule} $\B$ is a $k$\+vector space endowed
with a left $\C$\+comodule and a right $\D$\+comodule structures
commuting with each other.
 The latter condition means that the square diagram of coaction maps
$\B\rarrow\C\ot_k\B\rarrow\C\ot_k\B\ot_k\D$, \ 
$\B\rarrow\B\ot_k\D\rarrow\C\ot_k\B\ot_k\D$ is commutative.
 Equivalently, a $\C$\+$\D$\+bicomodule can be defined as a $k$\+vector
space $\B$ endowed with a \emph{bicoaction} map $\B\rarrow\C\ot_k\B
\ot_k\D$ satisfying the coassociativity and counitality axioms.
 We will denote the abelian category of $\C$\+$\D$\+bicomodules
by $\C\Bicomod\D$.

\subsection{Cotensor product and cohomomorphisms}
 Let $\N$ be a right $\C$\+comodule and $\N$ be a left $\C$\+comodule.
 The \emph{cotensor product} $\N\oc_\C\M$ is a $k$\+vector space
constructed as the kernel of (the difference of) the natural pair of
maps
$$
 \N\ot_k\M\rightrightarrows\N\ot_k\C\ot_k\M
$$
one of which is induced by the right coaction map
$\nu_\N\:\N\rarrow\N\ot_k\C$ and the other one by the left
coaction map $\nu_\M\:\M\rarrow\C\ot_k\M$.

 Given three coalgebras $\C$, $\D$, $\E$, a $\C$\+$\D$\+bicomodule
$\N$, and a $\D$\+$\E$\+bicomodule $\M$, the cotensor product
$\N\oc_\D\M$ is naturally endowed with a $\C$\+$\E$\+bicomodule
structure.

 For any left $\C$\+comodule $\M$ and right $\C$\+comodule $\N$,
there are natural cotensor product unitality isomorphisms of vector
spaces (or left/right $\C$\+comodules)
\begin{equation} \label{cotensor-unitality}
 \C\oc_\C\M\simeq\M \quad\text{and}\quad
 \N\oc_\C\C\simeq\N
\end{equation}
\cite[Section~1.2.1]{Psemi}.
 For any right $\C$\+comodule $\N$, any $\C$\+$\D$\+bicomodule $\B$,
and any left $\D$\+comodule $\M$, there is a natural associativity
isomorphism of $k$\+vector spaces
\begin{equation} \label{cotensor-associativity}
 (\N\oc_\C\B)\oc_\D\M \simeq \N\oc_\C(\B\oc_\D\M).
\end{equation}
 In fact, both $(\N\oc_\C\B)\oc_\D\M$ and $\N\oc_\C(\B\oc_\D\M)$ are
one and the same vector subspace in the vector space $\N\ot_k\B\ot_k\M$.

 Let $\M$ be a left $\C$\+comodule and $\fP$ be a left
$\C$\+contramodule.
 The $k$\+vector space of \emph{cohomomorphisms} $\Cohom_\C(\M,\fP)$ is
constructed as the cokernel of (the difference of) the natural pair of
maps
$$
 \Hom_k(\C\ot_k\M,\>\fP)\simeq\Hom_k(\M,\Hom_k(\C,\fP))
 \rightrightarrows\Hom_k(\C,\fP)
$$
one of which is induced by the left coaction map
$\nu_\M\:\M\rarrow\C\ot_k\M$ and the other one by the left
contraaction map $\pi_\fP\:\Hom_k(\C,\fP)\rarrow\fP$.

 Given two coalgebras $\C$ and $\D$, a $\C$\+$\D$\+bicomodule $\B$,
and a left $\C$\+contramodule $\fP$, the vector space of cohomomorphisms
$\Cohom_\C(\B,\fP)$ is naturally endowed with a left $\D$\+contramodule
structure arising from the right $\D$\+comodule structure on~$\B$.

 For any left $\C$\+contramodule $\fP$, there is a natural
$\Cohom$ unitality isomorphism of vector spaces (or left
$\C$\+contramodules)
\begin{equation} \label{cohom-unitality}
 \Cohom_\C(\C,\fP)\simeq\fP
\end{equation}
\cite[Section~3.2.1]{Psemi}.
 For any left $\D$\+comodule $\M$, any $\C$\+$\D$\+bicomodule $\B$, and
any left $\C$\+contramodule $\fP$, there is a natural associativity
isomorphism of $k$\+vector spaces
\begin{equation} \label{cohom-associativity}
 \Cohom_\C(\B\oc_\D\M,\>\fP)\simeq\Cohom_\D(\M,\Cohom_\C(\B,\fP)).
\end{equation}
 In fact, both $\Cohom_\C(\B\oc_\D\M,\>\fP)$ and
$\Cohom_\D(\M,\Cohom_\C(\B,\fP))$ are naturally identified with
the quotient space of the vector space $\Hom_k(\B\ot_k\M,\>\fP)\simeq
\Hom_k(\M,\Hom_k(\B,\fP))$ by one and the same vector subspace.

\subsection{Coflatness, coprojectivity, and coinjectivity}
\label{adjusted-co-contra-mods-subsecn}
 Let $\C$ be a coalgebra over a field~$k$.
 A left $\C$\+comodule $\M$ is called \emph{coflat} if the cotensor
product functor ${-}\oc_\C\M\:\Comodr\C\rarrow k\Vect$ is exact.
 A left $\C$\+comodule $\M$ is called \emph{coprojective} if
the covariant $\Cohom$ functor $\Cohom_\C(\M,{-})\:\C\Contra
\rarrow k\Vect$ is exact.
 A left $\C$\+contramodule $\fP$ is called \emph{coinjective} if
the contravariant $\Cohom$ functor $\Cohom_\C({-},\fP)\:
\C\Comodl^\sop\rarrow k\Vect$ is exact.

 It is not difficult to show that a $\C$\+comodule is coflat if and
only if it is coprojective, and if and only if it is
injective~\cite[Lemma~3.1(a)]{Prev}.
 The assertion that a $\C$\+contramodule is coinjective if and only if
it is projective is also true, but a bit harder to
prove~\cite[Lemma~3.1(b)]{Prev}.

\begin{lem} \label{cotensor-product-injective}
 Let\/ $\C$ and\/ $\D$ be coalgebras, $\B$ be
a\/ $\C$\+$\D$\+bicomodule, and\/ $\J$ be a left\/ $\D$\+comodule.
 Assume that the left\/ $\C$\+comodule\/ $\B$ and the left\/
$\D$\+comodule\/ $\J$ are injective.
 Then the left\/ $\C$\+comodule\/ $\B\oc_\D\J$ is also injective.
\end{lem}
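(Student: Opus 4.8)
The plan is to reduce to the case of a cofree comodule $\J$ and then recognize the resulting cotensor product as a direct summand of a cofree $\C$\+comodule, using the description of injective comodules recalled above (a $\C$\+comodule is injective if and only if it is a direct summand of a cofree one).

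First I would invoke the injectivity of $\J$. As an injective left $\D$\+comodule, $\J$ is a direct summand of a cofree comodule $\D\ot_k V$ for some $k$\+vector space~$V$. The functor $\B\oc_\D{-}$ is additive, so it carries this splitting to one exhibiting $\B\oc_\D\J$ as a direct summand of $\B\oc_\D(\D\ot_k V)$ in $\C\Comodl$. Since a direct summand of an injective comodule is injective, it suffices to prove that $\B\oc_\D(\D\ot_k V)$ is an injective left $\C$\+comodule.

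Next I would compute this cotensor product explicitly. Using the unitality isomorphism $\B\oc_\D\D\simeq\B$ from~\eqref{cotensor-unitality} together with the exactness of the functor ${-}\ot_k V$ over the field~$k$, which commutes with the kernel defining $\oc_\D$, one obtains a natural isomorphism $\B\oc_\D(\D\ot_k V)\simeq(\B\oc_\D\D)\ot_k V\simeq\B\ot_k V$. Because the left $\C$\+coaction acts only on the $\B$\+factor throughout, and all the maps involved in the kernel definition and in the unitality isomorphism touch only the $\D$\+ and $V$\+factors, this is an isomorphism of left $\C$\+comodules, where $\B\ot_k V$ carries the left $\C$\+comodule structure induced from~$\B$.

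Finally I would bring in the injectivity of $\B$. As an injective left $\C$\+comodule, $\B$ is a direct summand of a cofree comodule $\C\ot_k W$; tensoring this splitting with $V$ over~$k$ exhibits $\B\ot_k V$ as a direct summand of $(\C\ot_k W)\ot_k V\simeq\C\ot_k(W\ot_k V)$, which is cofree and hence injective. Therefore $\B\ot_k V$ is injective, and the reduction above finishes the proof. The argument is essentially routine; the only points requiring care are the bookkeeping of comodule structures in the unitality isomorphism and the observation that the two hypotheses enter separately, the injectivity of $\J$ in the first step and the injectivity of $\B$ only at the very end.
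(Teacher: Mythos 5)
Your proof is correct and follows the same route as the paper's own (second, ``alternative'') argument: write $\J$ as a direct summand of a cofree $\D$\+comodule $\D\ot_kV$, identify $\B\oc_\D(\D\ot_kV)\simeq\B\ot_kV$ as left $\C$\+comodules, and conclude by injectivity of~$\B$. The only difference is that you spell out the final step (that $\B\ot_kV$ is injective because $\B$ is a summand of a cofree $\C$\+comodule), which the paper leaves implicit; the paper also sketches a second proof via coflatness and associativity of cotensor products, which you do not use.
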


\begin{proof}
 One can replace comodule injectivity by coflatness and argue that
the composition of two exact cotensor product functors is exact,
using the associativity of cotensor products.
 Alternatively, let the left $\D$\+comodule $\J$ be a direct summand of
a cofree left $\D$\+comodule $\D\ot_kV$, where $V\in k\Vect$; then
the left $\C$\+comodule $\B\oc_\D\J$ is a direct summand of
the left $\C$\+comodule $\B\oc_\D(\D\ot_kV)\simeq\B\ot_kV$.
\end{proof}

\subsection{Semialgebras} \label{semialgebras-prelim-subsecn}
 Let $\C$ be a (coassociative, counital) coalgebra over~$k$.
 The cotensor product unitality and associativity
isomorphisms~(\ref{cotensor-unitality}\+-\ref{cotensor-associativity})
tell that the category of $\C$\+$\C$\+bicomodules $\C\Bicomod\C$ is
an associative, unital monoidal category with the unit object $\C$
with respect to the cotensor product operation~$\oc_\C$.
 
 A \emph{semialgebra} over $\C$ is an associative, unital monoid
object in this monoidal category.
 In other words, a semialgebra $\bS$ over $\C$ is
a $\C$\+$\C$\+bicomodule endowed with a \emph{semimultiplication} map
$\bm\:\bS\oc_\C\bS\rarrow\bS$ and a \emph{semiunit} map
$\be\:\C\rarrow\bS$, which must be $\C$\+$\C$\+bicomodule morphisms
satisfying the following \emph{semiassociativity} and
\emph{semiunitality} axioms.

 Firstly (semiassociativity), the two compositions
$$
 \bS\oc_\C\bS\oc_\C\bS\rightrightarrows\bS\oc_\C\bS\rarrow\bS
$$
of the two maps $\bm\oc_\C\id_{\bS}$, $\id_{\bS}\oc_\C\bm\:
\bS\oc_\C\bS\oc_\C\bS\rightrightarrows\bS\oc_\C\bS$
induced by the semimultiplication map~$\bm$ with the semimultiplication
map $\bm\:\bS\oc_\C\bS\rarrow\bS$ must be equal to each other.
 Secondly (semiunitality), the two compositions
$$
 \bS\rightrightarrows\bS\oc_\C\bS\rarrow\bS
$$
of the two maps $\be\oc_\C\id_{\bS}$, $\id_{\bS}\oc_\C\be\:
\bS\rightrightarrows\bS\oc_\C\bS$ induced by the seminunit map
$\be\:\C\rarrow\bS$ with the semimultiplication map~$\bm$ must be
equal to the identity map~$\id_{\bS}$
\,\cite[Sections~0.3.2 and~1.3.1]{Psemi}, \cite[Section~2.6]{Prev}.

\subsection{Semimodules} \label{semimodules-prelim-subsecn}
 Let $\C$ be a coalgebra over~$k$.
 The unitality and associativity
isomorphisms~(\ref{cotensor-unitality}\+-\ref{cotensor-associativity})
also tell that the category of left $\C$\+comodules $\C\Comodl$ is
an associative, unital left module category, with respect to
the cotensor product operation~$\oc_\C$, over the monoidal category
$\C\Bicomod\C$.

 Let $\bS$ be a semialgebra over~$\C$.
 A \emph{left semimodule} $\bM$ over $\bS$ is a module object in
the module category $\C\Comodl$ over the monoid object $\bS$ in
the monoidal category $\C\Bicomod\C$.
 In other words, a left semimodule $\bM$ over $\bS$ is
a left $\C$\+comodule endowed with a \emph{left semiaction} map
$\bn\:\bS\oc_\C\bM\rarrow\bM$, which must be a left $\C$\+comodule
morphism satisfying the following semiassociativity and seminunitality
axioms.

 Firstly (semiassociativity), the two compositions
$$
 \bS\oc_\C\bS\oc_\C\bM\rightrightarrows\bS\oc_\C\bM\rarrow\bM
$$
of the two maps $\bm\oc_\C\id_{\bM}$, $\id_{\bS}\oc_\C\bn\:
\bS\oc_\C\bS\oc_\C\bM\rightrightarrows\bS\oc_\C\bM$ induced
by the semimultiplication and semiaction maps $\bm$ and~$\bn$
with the semiaction map $\bn\:\bS\oc_\C\bM\rarrow\bM$ must be
equal to each other.
 Secondly (semiunitality), the composition
$$
 \bM\rarrow\bS\oc_\C\bM\rarrow\bM
$$
of the map $\be\oc_\C\id_{\bM}\:\bM\rarrow\bS\oc_\C\bM$ induced by
the semiunit map~$\be$ with the semiaction map~$\bn$ must be
equal to the identity map~$\id_{\bM}$
\,\cite[Sections~0.3.2 and~1.3.1]{Psemi}, \cite[Section~2.6]{Prev}.

 Similarly, the category of right $\C$\+comodules $\Comodr\C$ is
an associative, unital right module category, with respect to
the cotensor product operation~$\oc_\C$, over the monoidal category
$\C\Bicomod\C$.
 A \emph{right semimodule} over $\bS$ is a module object in
the module category $\Comodr\C$ over the monoid object $\bS\in
\C\Bicomod\C$.
 In other words, a right semimodule $\bN$ over $\bS$ is
a right $\C$\+comodule endowed with a \emph{right semiaction} map
$\bn\:\bN\oc_\C\bS\rarrow\bN$, which must be a right $\C$\+comodule
morphism satisfying the semiassociativity and semiunitality axioms
similar to the above ones.

 We will denote the $k$\+linear category of left $\bS$\+semimodules by
$\bS\Simodl$ and the $k$\+linear category of right $\bS$\+semimodules
by $\Simodr\bS$.
 The $k$\+vector space of morphisms $\bL\rarrow\bM$ in the category
$\bS\Simodl$ is denoted by $\Hom_{\bS}(\bL,\bM)$.

\begin{prop} \label{comodule-categories-abelian}
 Let\/ $\bS$ be a semialgebra over a coalgebra\/ $\C$ over a field~$k$.
 Then \par
\textup{(a)} the following two conditions on\/ $\bS$ are equivalent:
\begin{itemize}
\item the category of left\/ $\bS$\+semimodules\/ $\bS\Simodl$ is
abelian \emph{and} the forgetful functor\/ $\bS\Simodl\rarrow\C\Comodl$
is exact;
\item $\bS$ is an injective right\/ $\C$\+comodule;
\end{itemize} \par
\textup{(b)} the following two conditions on\/ $\bS$ are equivalent:
\begin{itemize}
\item the category of right\/ $\bS$\+semimodules \/ $\Simodr\bS$ is
abelian \emph{and} the forgetful functor\/ $\Simodr\bS\rarrow\Comodr\C$
is exact;
\item $\bS$ is an injective left\/ $\C$\+comodule.
\end{itemize}
\end{prop}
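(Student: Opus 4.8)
The plan is to prove~(a) and then obtain~(b) by the evident left--right symmetry of the whole setup (interchanging left and right comodules and semimodules). The starting move is to reformulate the comodule-injectivity condition functorially. The endofunctor $T=\bS\oc_\C{-}\:\C\Comodl\rarrow\C\Comodl$ is \emph{always} left exact: by definition $\bS\oc_\C\bM$ is the kernel of a pair of maps between the exact functors $\bM\mapsto\bS\ot_k\bM$ and $\bM\mapsto\bS\ot_k\C\ot_k\bM$, and forming such a pointwise kernel preserves left exactness. Hence $T$ is exact if and only if it is right exact, and --- since the forgetful functor $\C\Comodl\rarrow k\Vect$ is exact --- this holds if and only if $\bS\oc_\C{-}\:\C\Comodl\rarrow k\Vect$ is exact, i.e.\ if and only if $\bS$ is a coflat right $\C$\+comodule. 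By the result recalled in Section~\ref{adjusted-co-contra-mods-subsecn}, coflat is the same as injective, so the right-hand condition in~(a) is equivalent to the right exactness of~$T$. Finally I would record that, via $\bm$ and $\be$, the functor $T$ is a monad on $\C\Comodl$ whose algebras are exactly the left $\bS$\+semimodules (the semiaction $\bn\:T\bM\rarrow\bM$ with its two axioms is precisely a $T$\+algebra structure), and that the forgetful functor $U\:\bS\Simodl\rarrow\C\Comodl$ and the free functor $F=\bS\oc_\C{-}$, with $UF=T$, form an adjunction $F\dashv U$ that exists unconditionally.

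Assume first that $\bS$ is injective, so $T$ is exact. First I would show that kernels and cokernels of semimodule maps are computed in $\C\Comodl$. Given $f\:\bL\rarrow\bM$ in $\bS\Simodl$, set $K=\ker(Uf)$ and $C=\coker(Uf)$ in $\C\Comodl$, with $i\:K\rarrow\bL$ and $p\:\bM\rarrow C$. For the kernel, the composite $TK\rarrow T\bL\rarrow\bL$ (second map $\bn_{\bL}$) becomes zero after applying $f$, because $\bn_{\bM}\circ Tf=f\circ\bn_{\bL}$ and $fi=0$; hence it factors through $K$, giving $K$ a semiaction, and this uses no exactness. For the cokernel, right exactness of $T$ yields $TC=\coker(Tf)$, and since $p\circ\bn_{\bM}\circ Tf=p\circ f\circ\bn_{\bL}=0$, the map $p\circ\bn_{\bM}$ factors uniquely through $TC$, giving $C$ a semiaction. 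One then checks routinely that $K$ and $C$ are the kernel and cokernel of $f$ in $\bS\Simodl$ and that $U$ sends them to $\ker(Uf)$ and $\coker(Uf)$. Since $U$ is faithful and reflects isomorphisms (a comodule isomorphism commuting with semiactions is automatically a semimodule isomorphism), and since $U$ now preserves both kernels and cokernels, the canonical morphism from the coimage to the image of $f$ in $\bS\Simodl$ maps to the corresponding isomorphism in the abelian category $\C\Comodl$, and is therefore itself an isomorphism. This shows $\bS\Simodl$ is abelian and $U$ is exact.

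For the converse, suppose $\bS\Simodl$ is abelian and $U$ is exact. I would argue purely formally: $F$ is a left adjoint, hence preserves colimits and in particular is right exact as a functor between the abelian categories $\C\Comodl$ and $\bS\Simodl$. Composing with the (exact, in particular right exact) functor $U$ shows that $T=UF$ is right exact. Combined with the automatic left exactness of $T$ noted above, this makes $T$ exact, so $\bS$ is a coflat, equivalently injective, right $\C$\+comodule.

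The one genuinely load-bearing step is the lifting of cokernels in the first direction: this is exactly where injectivity (coflatness) of $\bS$ as a right $\C$\+comodule is indispensable, since the identification $TC=\coker(Tf)$ --- and hence the existence of a semiaction on the cokernel --- fails without right exactness of $\bS\oc_\C{-}$. Everything else (the kernels, the adjunction, the coimage--image comparison, and the whole converse) is formal, relying only on $\C\Comodl$ being abelian and on $U$ being faithful and conservative.
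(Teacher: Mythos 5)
Your proposal is correct and follows essentially the same route as the paper's proof: both rest on the identification of injectivity with coflatness, the automatic left exactness of $\bS\oc_\C{-}$, the explicit construction of semimodule structures on kernels and cokernels when $\bS$ is injective, and, for the converse, the observation that the induction functor $\bS\oc_\C{-}$ is left adjoint to the forgetful functor, so that abelianness plus exactness of the forgetful functor forces right (hence two-sided) exactness of the cotensor functor. Your write-up merely fills in the details (the monad framing, the coimage--image comparison) that the paper leaves as a sketch.
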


\begin{proof}
 This is a semialgebra version of~\cite[Proposition~2.12(a)]{Prev}.
 The point is that a comodule is coflat if and only if it is injective
(see Section~\ref{adjusted-co-contra-mods-subsecn}).
 If $\bS$ is an injective right $\C$\+comodule, then the functor
$\bS\oc_\C{-}\,\:\C\Comodl\rarrow\C\Comodl$ is exact, and one can
construct a left $\bS$\+semimodule structure on the kernel and
cokernel of the underlying $\C$\+comodule map of any left
$\bS$\+semimodule morphism.
 Conversely, one needs to observe that the induction functor
$\bS\oc_\C{-}\,\:\C\Comodl\rarrow\bS\Simodl$ is always left adjoint to
the forgetful functor $\bS\Simodl\rarrow\C\Comodl$
\,\cite[Section~0.3.2, Lemma~1.1.2, and Section~1.3.1]{Psemi}.
 Hence, if $\bS\Simodl$ is abelian, then the induction functor is
right exact.
 If the forgetful functor is exact, it follows that the composition
$\C\Comodl\rarrow\bS\Simodl\rarrow\C\Comodl$ is right exact.
 This composition is the cotensor product functor
$\bS\oc_\C{-}\,\:\C\Comodl\rarrow\C\Comodl$.
 It remains to notice that the cotensor product over a coalgebra over
a field is always left exact (since it is constructed as the kernel of
a map of tensor products over~$k$).
\end{proof}

\subsection{Semicontramodules} \label{semicontramodules-prelim-subsecn}
 Let $\C$ be a coalgebra over~$k$.
 The $\Cohom$ unitality and associativity
isomorphisms~(\ref{cohom-unitality}\+-\ref{cohom-associativity}) can be
interpreted as saying that the opposite category $\C\Contra^\sop$
to the category of left $\C$\+contramodules $\C\Contra$ is
an associative, unital right module category, with respect to
the operation of cohomomorphisms
$$
 \fP^\sop*\B=\Cohom_\C(\B,\fP)^\sop
 \quad\text{for $\fP\in\C\Contra$ and $\B\in\C\Bicomod\C$},
$$
over the monoidal category $\C\Bicomod\C$.
 The (objects opposite to the) module objects in $\C\Contra^\sop$ over
a monoid object $\bS\in\C\Bicomod\C$ are called the \emph{left
semicontramodules} over~$\bS$.

 Explicitly, a \emph{left semicontramodule} $\bP$ over $\bS$ is
a left $\C$\+contramodule endowed with a \emph{left semicontraaction}
map $\bp\:\bP\rarrow\Cohom_\C(\bS,\bP)$, which must be a left
$\C$\+contramodule morphism satisfying the following
semicontraassociativity and semicontraunitality equations.

 Firstly (semicontraassociativity), the two compositions
$$
 \bP\rarrow\Cohom_\C(\bS,\bP)\rightrightarrows
 \Cohom_\C(\bS\oc_\C\bS,\>\bP)\simeq\Cohom_\C(\bS,\Cohom_\C(\bS,\bP))
$$
of the semicontraaction map $\bp\:\bP\rarrow\Cohom_\C(\bS,\bP)$
with the two maps
$$
 \Cohom_\C(\bm,\bP)\:\Cohom_\C(\bS,\bP)\rarrow
 \Cohom_\C(\bS\oc_\C\bS,\>\bP)
$$
and
$$
 \Cohom_\C(\bS,\bp)\:\Cohom_\C(\bS,\bP)\rarrow
 \Cohom_\C(\bS,\Cohom_\C(\bS,\bP))
$$
must be equal to each other.
 Here the presumed identification
$\Cohom_\C(\bS\oc_\C\bS,\>\bP)\simeq\Cohom_\C(\bS,\Cohom_\C(\bS,\bP))$
is obtained as a particular case of the $\Cohom$ associativity
isomorphism~\eqref{cohom-associativity}.

 Secondly (semicontraunitality), the composition
$$
 \bP\rarrow\Cohom_\C(\bS,\bP)\rarrow\bP
$$
of the semicontraaction map $\bp\:\bP\rarrow\Cohom_\C(\bS,\bP)$ with
the map $\Cohom_\C(\be,\bP)\:\allowbreak\Cohom_\C(\bS,\bP)\rarrow
\Cohom_\C(\C,\bP)\simeq\bP$ must be equal to the identity
map~$\id_{\bP}$.
 Here the presumed identification $\Cohom_\C(\C,\bP)\simeq\bP$ is
provided by the $\Cohom$ unitality isomorphism~\eqref{cohom-unitality}
\,\cite[Sections~0.3.5 and~3.3.1]{Psemi}, \cite[Section~2.6]{Prev}.

 We will denote the $k$\+linear category of left
$\bS$\+semicontramodules by $\bS\Sicntr$.
 The definition of the category of right $\bS$\+semicontramodules
$\SicntrR\bS$ is similar (we suppress the details).
 The $k$\+vector space of morphisms $\bP\rarrow\bQ$ in
the category $\bS\Sicntr$ is denoted by $\Hom^{\bS}(\bP,\bQ)$.

\begin{prop} \label{contramodule-category-abelian}
 Let\/ $\bS$ be a semialgebra over a coalgebra\/ $\C$ over a field~$k$.
 Then the following two conditions on\/ $\bS$ are equivalent:
\begin{itemize}
\item the category of left\/ $\bS$\+semicontramodules\/ $\bS\Sicntr$ is
abelian \emph{and} the forgetful functor\/ $\bS\Sicntr\rarrow\C\Contra$
is exact;
\item $\bS$ is an injective left\/ $\C$\+comodule.
\end{itemize} \par
\end{prop}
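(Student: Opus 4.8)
This proposition is the semicontramodule mirror image of Proposition~\ref{comodule-categories-abelian}, and the plan is to prove it by dualizing that argument, replacing the cotensor product functor ${-}\oc_\C{-}$ by the cohomomorphism functor $\Cohom_\C$ and the role of coflatness by that of coprojectivity. The one external input needed is the equivalence recalled in Section~\ref{adjusted-co-contra-mods-subsecn}: a left $\C$\+comodule is injective if and only if it is coprojective, i.e., if and only if the covariant functor $\Cohom_\C(\bS,{-})\:\C\Contra\rarrow k\Vect$ is exact. Dually to the observation that a cotensor product over a field is always left exact, the key preliminary remark is that $\Cohom_\C(\bS,{-})$, viewed as a functor of the contramodule variable, is \emph{always right exact}: it is computed as the cokernel of a map between the functors $\fP\mapsto\Hom_k(\C\ot_k\bS,\>\fP)$ and $\fP\mapsto\Hom_k(\C,\fP)$, both of which are exact in $\fP$ because $\Hom_k(V,{-})$ is exact over a field, and the cokernel of a natural transformation of exact functors is right exact. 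Thus coprojectivity of $\bS$ is precisely the \emph{additional left exactness} of $\Cohom_\C(\bS,{-})$.

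For the implication from injectivity, suppose $\bS$ is an injective left $\C$\+comodule. Then $\bS$ is coprojective, so the endofunctor $\Cohom_\C(\bS,{-})\:\C\Contra\rarrow\C\Contra$ is exact. Given any morphism of left $\bS$\+semicontramodules, I would equip the kernel and cokernel of its underlying $\C$\+contramodule morphism with semicontraaction maps by transporting the semicontramodule structures through the exact functor $\Cohom_\C(\bS,{-})$, just as the kernel and cokernel of a semimodule morphism were given semimodule structures in the proof of Proposition~\ref{comodule-categories-abelian}. This exhibits $\bS\Sicntr$ as an abelian category on which the forgetful functor to $\C\Contra$ is exact.

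For the converse, the point is that the coinduction functor $\Cohom_\C(\bS,{-})\:\C\Contra\rarrow\bS\Sicntr$ is right adjoint to the forgetful functor $\bS\Sicntr\rarrow\C\Contra$ (this is the semicontramodule counterpart of the induction--forgetful adjunction used above, \cite[Sections~0.3.5 and~3.3.1]{Psemi}). If $\bS\Sicntr$ is abelian, then this right adjoint is left exact; and if the forgetful functor is moreover exact, then the composite $\C\Contra\rarrow\bS\Sicntr\rarrow\C\Contra$, which is exactly $\Cohom_\C(\bS,{-})$, is left exact. Combined with the automatic right exactness noted above, this makes $\Cohom_\C(\bS,{-})$ exact, so $\bS$ is coprojective and hence an injective left $\C$\+comodule.

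The bookkeeping is routine once the two structural facts are in place, so the main obstacle is really just verifying those two facts cleanly: that $\Cohom_\C(\bS,{-})$ is automatically right exact in the contramodule variable, and that coinduction is right adjoint to the forgetful functor. Both are the precise $\Cohom$\+analogues of the cotensor statements used in Proposition~\ref{comodule-categories-abelian}, and I would handle them by appealing to the $\Cohom$ unitality and associativity isomorphisms~\eqref{cohom-unitality}--\eqref{cohom-associativity} together with the exactness of $\Hom_k(V,{-})$ over the field~$k$.
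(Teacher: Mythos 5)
Your proposal is correct and follows essentially the same route as the paper's own proof: both directions rest on the same three facts (a comodule is injective if and only if it is coprojective; the coinduction functor $\Cohom_\C(\bS,{-})\:\C\Contra\rarrow\bS\Sicntr$ is right adjoint to the forgetful functor; and $\Cohom_\C(\bS,{-})$ is automatically right exact in the contramodule variable, being a cokernel of a map of $\Hom_k$ spaces over a field), used in exactly the same way. The only slip is notational: the target of the pair of maps defining $\Cohom_\C(\bS,\fP)$ should be $\Hom_k(\bS,\fP)$ rather than $\Hom_k(\C,\fP)$, but this does not affect the argument, since both functors involved are exact in~$\fP$.
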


\begin{proof}
 This is a semialgebra version of~\cite[Proposition~2.12(b)]{Prev}.
 The point is that a comodule is coprojective if and only if it is
injective (see Section~\ref{adjusted-co-contra-mods-subsecn}).
 If $\bS$ is an injective left $\C$\+comodule, then the functor
$\Cohom_\C(\bS,{-})\:\C\Contra\rarrow\C\Contra$ is exact, and one can
construct a left $\bS$\+semicontramodule structure on the kernel and
cokernel of the underlying $\C$\+contramodule map of any left
$\bS$\+semi\-con\-tra\-mod\-ule morphism.
 Conversely, one needs to observe that the coinduction functor
$\Cohom_\C(\bS,{-})\:\C\Contra\rarrow\bS\Sicntr$ is always right
adjoint to the forgetful functor $\bS\Sicntr\rarrow\C\Contra$
\,\cite[Sections~0.3.5 and~3.3.1]{Psemi}.
 Hence, if $\bS\Sicntr$ is abelian, then the coinduction functor is
left exact.
 If the forgetful functor is exact, it follows that the composition
$\C\Contra\rarrow\bS\Sicntr\rarrow\C\Contra$ is left exact.
 This composition is the $\Cohom$ functor
$\Cohom_\C(\bS,{-})\:\C\Contra\rarrow\C\Contra$.
 It remains to notice that the $\Cohom$ functor over a coalgebra over
a field is always right exact (since it is constructed as the cokernel
of a map of $\Hom$ spaces over~$k$).
\end{proof}

\Section{Left Flat, Right Integrable Bimodules}
\label{flat-integrable-bimodules-secn}

 Starting from this section, everything in this paper happens over
a field~$k$.
 All (possibly nonunital) rings are $k$\+algebras and all (nonunital)
modules are $k$\+vector spaces; so by a ``$K$\+module'' we mean
a unital $\widecheck K$\+module.
 All module morphisms are, at least, $k$\+linear; so the notation
$\Hom_K(M,P)$ means $\Hom_{\widecheck K}(M,P)$; and similarly for
the tensor product, $N\ot_KM$ means $N\ot_{\widecheck K}M$.
 Keeping in mind Lemma~\ref{hom-tensor-over-tilde-check-agree}, this
notation change mostly makes no difference.

 We will use the notation $K\Modln=\widecheck K\Modl$ and
$\Modrn K=\Modr\widecheck K$ for the categories of nonunital left
and right $K$\+modules.
 The category of nonunital $K$\+$K$\+bimodules is denoted by
$K\nBimodn K=\widecheck K\Bimod\widecheck K=
(\widecheck K\ot_k\widecheck K^\rop)\Modl$.
 Then we have the full subcategories of t\+unital modules
$K\Modlt\subset K\Modln$ and $\Modrt K\subset\Modrn K$,
the full subcategory of t\+unital bimodules $K\tBimodt K\subset
K\nBimodn K$, and the full subcategory of c\+unital modules
$K\Modlc\subset K\Modln$.

\subsection{Multiplicative pairings}
\label{multiplicative-pairings-subsecn}
 The following definition is a modification of the one
from~\cite[Section~10.1.2]{Psemi}.
 Let $K$ be a (nonunital) $k$\+algebra and $\C$ be a (counital)
coalgebra over~$k$.
 A \emph{multiplicative pairing}
\begin{equation} \label{mult-pairing}
 \phi\:\C\times K\lrarrow k
\end{equation}
is a bilinear map satisfying the equation
\begin{equation} \label{mult-pairing-defined}
 \phi(c,r'r'')=\phi(c_{(1)},r'')\phi(c_{(2)},r')
\end{equation}
for all $c\in\C$ and $r'$, $r''\in K$.
 Here $\mu(c)=c_{(1)}\ot c_{(2)}$ is a simplified version of
Sweedler's symbolic notation~\cite[Section~1.2]{Swe} for
the comultiplication in~$\C$.

 Equivalently, one can pass from~$\phi$ to the induced/partial dual
map $\phi^*\:K\rarrow\C^*=\Hom_k(\C,k)$.
 Then $\phi$~is a multiplicative pairing if and only if $\phi^*$~is
a morphism of (nonunital) $k$\+algebras.

\begin{rem}
 Notice that the factors are switched in
the formula~\eqref{mult-pairing-defined}.
 This corresponds to our convention (unusual for the theory of
coalgebras over a field) concerning the definition of
multiplication in~$\C^*$.
 Our preference, reflected in the formula and assertion above, is
to define the multiplication in $\C^*$ so that left $\C$\+comodules
become left $\C^*$\+modules and right $\C$\+comodules become right
$\C^*$\+modules; then left $\C$\+contramodules also become left
$\C^*$\+modules.
 This is mentioned in the surveys~\cite[beginning of Section~1.4]{Prev}
and~\cite[Sections~6.3 and~9.2]{Pksurv}.
 In the more general context of corings over noncommutative rings,
there is no choice, and our convention becomes the only consistent
one (as in~\cite[Section~10.1.2]{Psemi}).
\end{rem}

\begin{prop} \label{underlying-K-module-structures}
 Let $K$ be a (nonunital) $k$\+algebra and\/ $\C$ be a (counital)
coalgebra over~$k$.
 Then the datum of a multiplicative pairing\/~$\phi$ induces exact,
faithful functors
\begin{alignat}{2}
 \phi_*&\:\C\Comodl &&\lrarrow K\Modln,
 \label{left-comodules-to-modules} \\
 \phi_*&\:\Comodr\C &&\lrarrow \Modrn K,
 \label{right-comodules-to-modules} \\
 \phi_*&\:\C\Contra &&\lrarrow K\Modln
 \label{left-contramodules-to-modules}
\end{alignat}
assigning to every\/ $\C$\+comodule or\/ $\C$\+contramodule
the induced $K$\+module structure on the same vector space.
\end{prop}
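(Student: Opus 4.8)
The plan is to factor each functor through the category of modules over the unital algebra $\C^*=\Hom_k(\C,k)$. As observed in the Remark preceding the proposition, the partial dual $\phi^*\:K\rarrow\C^*$, \,$\phi^*(r)=\phi({-},r)$, is a homomorphism of (nonunital) $k$\+algebras; I would extend it to a unital homomorphism $\widecheck K\rarrow\C^*$ by sending $1\in k$ to the counit $\epsilon\in\C^*$, which is the unit of $\C^*$ for the product fixed in the Remark. With that convention, every left $\C$\+comodule $\M$ carries a unital left $\C^*$\+module structure $f\cdot m=f(m_{(-1)})\,m_{(0)}$ (writing $\nu_\M(m)=m_{(-1)}\ot m_{(0)}$ for the coaction in Sweedler notation), every right $\C$\+comodule a unital right $\C^*$\+module structure, and every left $\C$\+contramodule $\fP$ a unital left $\C^*$\+module structure $f\cdot p=\pi_\fP(c\mapsto f(c)p)$. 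Restricting these actions along $\widecheck K\rarrow\C^*$ yields $K$\+module structures on the same underlying vector spaces; explicitly, $r\cdot m=\phi(m_{(-1)},r)\,m_{(0)}$ in the left comodule case. This defines the functors $\phi_*$ on objects.

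Next I would verify that these are genuine unital $\widecheck K$\+module structures. For the comodule cases, associativity $(r'r'')\cdot m=r'\cdot(r''\cdot m)$ is precisely the coassociativity of the coaction combined with the multiplicativity equation~\eqref{mult-pairing-defined}, the action of $1\in k$ reduces to the counitality axiom, and $k$\+linearity of the action is immediate from bilinearity of~$\phi$. Functoriality is then automatic: a comodule morphism intertwines the coactions, hence commutes with the displayed formulas and is $K$\+linear as the same underlying map. The hard part will be the left contramodule case, where the contraaction is a map $\pi_\fP\:\Hom_k(\C,\fP)\rarrow\fP$ rather than an elementwise action. Here deriving $\C^*$\+module associativity from the contraassociativity axiom requires tracking the identification $\Hom_k(\C\ot_k\C,\fP)\simeq\Hom_k(\C,\Hom_k(\C,\fP))$ and matching it against the product on $\C^*$; concretely one tests the axiom on maps of the form $c_1\ot c_2\mapsto f(c_1)g(c_2)p$, and the contraunitality axiom then gives unitality of the action.

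Finally, exactness and faithfulness are formal once one observes that each $\phi_*$ is the identity on underlying $k$\+vector spaces. The forgetful functors $\C\Comodl\rarrow k\Vect$ and $\C\Contra\rarrow k\Vect$ are exact, and a sequence of comodules or contramodules is exact exactly when the underlying sequence of vector spaces is; likewise the forgetful functor $K\Modln=\widecheck K\Modl\rarrow k\Vect$ is exact and reflects exactness. Since $\phi_*$ commutes with these forgetful functors, it carries exact sequences to exact sequences. Faithfulness is then immediate, since two morphisms that agree after $\phi_*$ agree as $k$\+linear maps, hence agree as morphisms in the source category.
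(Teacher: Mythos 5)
Your proposal is correct and follows essentially the same route as the paper: the paper's proof also factors each functor as the comodule/contramodule inclusion/forgetful functor to $\C^*\Modl$ (respectively $\Modr\C^*$) followed by restriction of scalars along $\phi^*\:K\rarrow\C^*$, and then records the resulting explicit formulas (e.g.\ the composition $K\ot_k\M\rarrow K\ot_k\C\ot_k\M\rarrow\M$), which match yours. The only difference is one of bookkeeping: the paper delegates the verification of the $\C^*$\+module structures to the references \cite{Swe} and \cite{Phff} and treats exactness and faithfulness as immediate, whereas you spell out these checks, correctly using that $\phi_*$ is the identity on underlying vector spaces and that exactness in all the categories involved is detected by the forgetful functors to $k\Vect$.
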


\begin{proof}
 The functors in question are the compositions
\begin{alignat*}{3}
 &\C\Comodl &&\overset{\Upsilon_\C}\lrarrow \C^*\Modl
  &&\lrarrow K\Modln, \\
 &\Comodr\C &&\overset{\Upsilon_{\C^\rop}}\lrarrow \Modr\C^*
  &&\lrarrow \Modrn K, \\
 &\C\Contra &&\overset{\Theta_\C}\lrarrow \C^*\Modl
  &&\lrarrow K\Modln
\end{alignat*}
of the comodule/contramodule inclusion/forgetful functors (depending
only on a coalgebra~$\C$) with the functors of restriction of scalars
with respect to the ring homomorphism $\phi^*\:K\rarrow\C^*$.
 We refer to the book~\cite[Section~2.1]{Swe} and
the preprint~\cite{Phff} for further details on
the comodule/contramodule inclusion/forgetful functors.

 The functors~(\ref{left-comodules-to-modules}\+-%
\ref{left-contramodules-to-modules}) can be also constructed
directly as follows.
 Given a left $\C$\+comodule $\M$, the composition
$$
 K\ot_k\M\overset\nu\lrarrow K\ot_k\C\ot_k\M\overset\phi
 \lrarrow\M
$$
of the map $K\ot_k\M\rarrow K\ot_k\C\ot_k\M$ induced by the coaction
map $\nu\:\M\rarrow\C\ot_k\M$ with the map $K\ot_k\C\ot_k\M\rarrow\M$
induced by the pairing $\phi\:\C\times K\rarrow k$
defines a left $K$\+module structure on~$\M$.
 This is the functor~\eqref{left-comodules-to-modules}.
 
 Given a right $\C$\+comodule $\N$, the similar composition
$$
 \N\ot_k K\overset\nu\lrarrow\N\ot_k\C\ot_k K\overset\phi
 \lrarrow\N
$$
of the map $\N\ot_k K\rarrow\N\ot_k\C\ot_kK$ induced by the coaction
map $\nu\:\N\rarrow\N\ot_k\C$ with the map $\N\ot_k\C\ot_k K\rarrow\N$
induced by the pairing $\phi\:\C\times K\rarrow k$
defines a left $K$\+module structure on~$\N$.
 This is the functor~\eqref{right-comodules-to-modules}.

 Given a left $\C$\+contramodule $\fP$, the composition
$$
 K\ot_k\fP\overset\phi\lrarrow\Hom_k(\C,\fP)\overset\pi\lrarrow\fP
$$
of the map $r\ot_k p\longmapsto(c\mapsto\phi(c,r)p)\:K\ot_k\fP\rarrow
\Hom_k(\C,\fP)$ induced by the pairing $\phi\:\C\times K\rarrow k$
with the contraaction map $\pi\:\Hom_k(\C,\fP)\rarrow\fP$ defines
a left $K$\+module structure on~$\fP$.
 This is the functor~\eqref{left-contramodules-to-modules}.
\end{proof}

\subsection{t-Unital multiplicative pairings}
\label{t-unital-pairings-subsecn}
 Let $\phi\:\C\times K\rarrow k$ be a multiplicative pairing for
an algebra $K$ and a coalgebra~$\C$.
 Then the construction of
the functors~$\phi_*$ \,(\ref{left-comodules-to-modules}\+-%
\ref{right-comodules-to-modules}) from
Proposition~\ref{underlying-K-module-structures} endows
the $\C$\+$\C$\+bicomodule $\C\in\C\Bicomod\C$ with
a $K$\+$K$\+bimodule structure, $\C\in K\nBimodn K$.

 We will say the pairing~$\phi$ is \emph{right t\+unital} if
$\C$ is a t\+unital right $K$\+module.
 Similarly, the pairing~$\phi$ is called \emph{left t\+unital} if
$\C$ is a t\+unital left $K$\+module.

\begin{prop} \label{t-unital-pairing-comodules-prop}
 Let $K$ be a nonunital $k$\+algebra, $\C$ be a coalgebra over~$k$,
and $\phi\:\C\times K\rarrow k$ be a multiplicative pairing.
 Assume that the full subcategory $\Modrt K$ is closed under kernels
in $\Modrn K$ (e.~g., this holds if $K$ is a flat left
$\widecheck K$\+module; see
Proposition~\ref{t-unital-kernels-flat-ring}).
 Then the following conditions are equivalent:
\begin{enumerate}
\item the pairing~$\phi$ is right t\+unital;
\item for every right\/ $\C$\+comodule\/ $\N$, the right $K$\+module
$\phi_*(\N)$ is t\+unital;
\item for every finite-dimensional right\/ $\C$\+comodule\/ $\N$,
the right $K$\+module $\phi_*(\N)$ is t\+unital.
\end{enumerate}
\end{prop}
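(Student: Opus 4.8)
The three conditions are naturally arranged in a cycle $(2)\Rightarrow(3)\Rightarrow(1)\Rightarrow(2)$. The first arrow is trivial, since every finite-dimensional right $\C$\+comodule is in particular a right $\C$\+comodule. For $(3)\Rightarrow(1)$ I would use that, as a right $\C$\+comodule (via the comultiplication), $\C$ is the directed union of its finite-dimensional right subcomodules $\C_\alpha$. Each $\C_\alpha$ is stable under the right $K$\+action induced by $\phi_*$ (its coaction lands in $\C_\alpha\ot_k\C$), so $\phi_*(\C)$ is the directed union of the $K$\+submodules $\phi_*(\C_\alpha)$. By~$(3)$ each $\phi_*(\C_\alpha)$ is t\+unital, and since $\Modrt K$ is closed under colimits in $\Modrn K$ \cite[Lemma~1.5]{Pnun}, the directed union $\phi_*(\C)=\C$ is t\+unital; this is precisely condition~$(1)$.

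The substantive implication is $(1)\Rightarrow(2)$, and this is exactly where the hypothesis that $\Modrt K$ be closed under kernels in $\Modrn K$ enters. Let $\N$ be an arbitrary right $\C$\+comodule. Its coaction $\nu\:\N\to\N\ot_k\C$ realizes $\N$ as a subcomodule of the cofree comodule $\N\ot_k\C$ (the map is split by the counit, hence injective); applying the same to the cokernel produces a left exact sequence $0\to\N\to J^0\to J^1$ of right $\C$\+comodules with $J^0$ and $J^1$ cofree. Since the functor $\phi_*$ of~\eqref{right-comodules-to-modules} is exact by Proposition~\ref{underlying-K-module-structures}, applying it identifies $\phi_*(\N)$ with $\ker\bigl(\phi_*(J^0)\to\phi_*(J^1)\bigr)$ in $\Modrn K$. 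A cofree comodule $V\ot_k\C$ is a direct sum of copies of $\C$ indexed by a basis of $V$, so $\phi_*(V\ot_k\C)$ is a coproduct of copies of the t\+unital module $\phi_*(\C)$; as $\Modrt K$ is closed under coproducts, both $\phi_*(J^0)$ and $\phi_*(J^1)$ are t\+unital. Invoking the kernel-closedness hypothesis then shows that $\phi_*(\N)$, being the kernel of a map of t\+unital modules, is itself t\+unital, which is condition~$(2)$.

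The only genuine obstacle lies in this last step: t\+unitality is not inherited by arbitrary submodules, so without the closure of $\Modrt K$ under kernels one cannot conclude that the subobject $\phi_*(\N)\subset\phi_*(J^0)$ is t\+unital. The hypothesis is therefore indispensable here, and it is guaranteed, for instance, when $K$ is flat as a left $\widecheck K$\+module by Proposition~\ref{t-unital-kernels-flat-ring}. The remaining steps are formal consequences of the fundamental theorem of comodules and the exactness and colimit-preservation of~$\phi_*$.
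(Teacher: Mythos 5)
Your proof is correct and follows essentially the same route as the paper: the implication $(1)\Rightarrow(2)$ via the embedding of an arbitrary comodule as the kernel of a morphism of cofree comodules (using closure of $\Modrt K$ under direct sums and, crucially, under kernels), and the finite-dimensional step via the directed union of finite-dimensional subcomodules together with closure of $\Modrt K$ under direct limits \cite[Lemma~1.5]{Pnun}. The only cosmetic difference is that you arrange the implications as a cycle $(2)\Rightarrow(3)\Rightarrow(1)\Rightarrow(2)$, with $(3)\Rightarrow(1)$ being the paper's $(3)\Rightarrow(2)$ argument specialized to $\N=\C$, whereas the paper proves $(3)\Rightarrow(2)$ in full.
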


\begin{proof}
 The implications (2)\,$\Longrightarrow$\,(1) and
(2)\,$\Longrightarrow$\,(3) are obvious.

 (1)\,$\Longrightarrow$\,(2) By~\cite[Proposition~4.2]{Quil}
or~\cite[Lemma~1.5]{Pnun}, the full subcategory $\Modrt K$ is closed
under direct sums in $\Modrn K$.
 Hence, if the right $\C$\+comodule $\C$ is a t\+unital right
$K$\+module, then all cofree right $\C$\+comodules are t\+unital
right $K$\+modules.
 It remains to recall that any $\C$\+comodule is the kernel of
a morphism of cofree ones.
 
 (3)\,$\Longrightarrow$\,(2) By~\cite[Proposition~4.2]{Quil}
or~\cite[Lemma~1.5]{Pnun}, the full subcategory $\Modrt K$ is closed
under direct limits in $\Modrn K$.
 So it remains to use the fact that any $\C$\+comodule is the union of
its finite-dimensional subcomodules~\cite[Theorem~2.1.3(b)]{Swe}.
\end{proof}

\begin{prop} \label{t-unital-pairing-contramodules-prop}
 Let $K$ be a nonunital $k$\+algebra, $\C$ be a coalgebra over~$k$,
and $\phi\:\C\times K\rarrow k$ be a multiplicative pairing.
 Assume that the full subcategory $K\Modlc$ is closed under cokernels
in $K\Modln$ (e.~g., this holds if $K$ is a projective left
$\widecheck K$\+module; see
Proposition~\ref{c-unital-cokernels-projective-ring}).
 Then the following conditions are equivalent:
\begin{enumerate}
\item the pairing~$\phi$ is right t\+unital;
\item for the free left\/ $\C$\+contramodule\/ $\Hom_k(\C,k)$,
the left $K$\+module $\phi_*(\Hom_k(\C,k))$ is c\+unital;
\item for every left\/ $\C$\+contramodule\/ $\fP$, the left $K$\+module
$\phi_*(\fP)$ is c\+unital.
\end{enumerate}
\end{prop}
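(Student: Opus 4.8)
The plan is to reduce everything to Lemma~\ref{duality-preserves-reflects-t-c}, via the observation that the free left $\C$\+contramodule $\Hom_k(\C,V)$, pushed forward by the functor $\phi_*$ \eqref{left-contramodules-to-modules}, is nothing but the $k$\+linear duality $\Hom_k({-},V)$ applied to the right $K$\+module $\phi_*(\C)$ associated by~\eqref{right-comodules-to-modules} to $\C$ viewed as a right $\C$\+comodule. Recall that condition~(1) of the proposition means precisely that this right $K$\+module $N=\phi_*(\C)$ is t\+unital. First I would record that $\phi_*\:\C\Contra\rarrow K\Modln$ is exact, since a contramodule and its underlying $K$\+module share the same underlying vector space and contramodule morphisms are $k$\+linear (equivalently, $\phi_*$ is the exact forgetful functor $\C\Contra\rarrow k\Vect$ followed by restriction of scalars along $\phi^*\:K\rarrow\C^*$).

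The key step is the compatibility claim: for every $k$\+vector space $V$, the left $K$\+module $\phi_*(\Hom_k(\C,V))$ coming from the free contramodule structure coincides with the left $K$\+module $\Hom_k(N,V)$ carrying the structure dual to the right action on $N=\phi_*(\C)$ as in Lemma~\ref{duality-preserves-reflects-t-c}. I would verify this by a direct computation in Sweedler notation: writing $c\cdot r=c_{(1)}\phi(c_{(2)},r)$ for the right action on $\C$, the dual left action is $(r\cdot p)(c)=\phi(c_{(2)},r)\,p(c_{(1)})$; on the other hand, unwinding the contraaction on the free contramodule $\Hom_k(\C,V)$ together with the construction of~\eqref{left-contramodules-to-modules} in Proposition~\ref{underlying-K-module-structures} yields the same formula. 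Alternatively, one factors $\phi_*$ through $\C^*$\+modules and uses that the comodule- and contramodule-to-$\C^*$-module functors intertwine $\Hom_k({-},V)$ with ordinary module duality, while restriction of scalars along $\phi^*$ commutes with duality.

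Granting the compatibility, the rest is formal. The implication (3)$\Rightarrow$(2) is immediate, as $\Hom_k(\C,k)$ is a particular contramodule. For (1)$\Leftrightarrow$(2) I would apply Lemma~\ref{duality-preserves-reflects-t-c} with $N=\phi_*(\C)$ and $V=k$: condition~(1) of the proposition is condition~(1) of the lemma, and by the compatibility condition~(2) of the proposition is condition~(2) of the lemma. For (1)$\Rightarrow$(3), the compatibility together with the implication (1)$\Rightarrow$(3) of Lemma~\ref{duality-preserves-reflects-t-c} shows that every free contramodule $\Hom_k(\C,V)$ has c\+unital $\phi_*$\+image. Given an arbitrary left $\C$\+contramodule $\fP$, I would pick a presentation $\Hom_k(\C,V_1)\rarrow\Hom_k(\C,V_0)\rarrow\fP\rarrow0$ by free contramodules (there being enough projectives in $\C\Contra$), apply the exact functor $\phi_*$, and conclude that $\phi_*(\fP)$ is the cokernel in $K\Modln$ of a morphism between c\+unital modules; the hypothesis that $K\Modlc$ is closed under cokernels then forces $\phi_*(\fP)$ to be c\+unital. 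This closes the cycle (2)$\Rightarrow$(1)$\Rightarrow$(3)$\Rightarrow$(2).

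The main obstacle is the compatibility step itself: matching the free-contramodule $K$\+module structure with the duality of the right $\C$\+comodule $\C$ demands care with the variance convention in the isomorphism $\Hom_k(U\ot_kV,W)\simeq\Hom_k(V,\Hom_k(U,W))$ that defines left contraactions, and with the order of factors in~\eqref{mult-pairing-defined}. Once the two left actions are seen to agree on the nose, the remainder is a purely formal application of Lemma~\ref{duality-preserves-reflects-t-c} and the cokernel-closure hypothesis; notably, that hypothesis enters only in the passage (1)$\Rightarrow$(3), while (1)$\Leftrightarrow$(2) needs no assumption on $K\Modlc$.
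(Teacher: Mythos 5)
Your proposal is correct and follows essentially the same route as the paper: the compatibility claim you verify is exactly the paper's ``commutative square'' of the functors $\phi_*$ with $k$\+linear duality, the equivalence (1)$\Leftrightarrow$(2) is the same application of Lemma~\ref{duality-preserves-reflects-t-c}, and (1)$\Rightarrow$(3) is the same reduction via a free presentation of $\fP$, exactness of~$\phi_*$, and the cokernel-closure hypothesis. Your explicit Sweedler-notation check of the compatibility (and your remark that the hypothesis on $K\Modlc$ is needed only for~(3)) just spells out details the paper leaves implicit.
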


\begin{proof}
 (1)\,$\Longleftrightarrow$\,(2) One needs to observe that
the functors~$\phi_*$ \,(\ref{right-comodules-to-modules}\+-%
\ref{left-contramodules-to-modules}) form a commutative square
diagram with the functor $\Hom_k({-},V)\:\Comodr\C\rarrow\C\Contra$
and the functor $\Hom_k({-},V)\:\Modrn K\rarrow K\Modln$ for
any $k$\+vector space~$V$ (in particular, for $V=k$).
 Then it remains to use Lemma~\ref{duality-preserves-reflects-t-c}.
 
 (3)\,$\Longrightarrow$\,(2) Obvious.
 
 (1)~or~(2)~$\Longrightarrow$~(3) By
Lemma~\ref{duality-preserves-reflects-t-c}, all free left
$\C$\+contramodules $\Hom_k(\C,V)$ are c\+unital left
$K$\+modules under the assumption of either~(1) or~(2).
 It remains to recall that any left $\C$\+contramodule is the cokernel
of a morphism of free ones.
\end{proof}

\begin{prop} \label{s-unital-pairings-characterized}
 Let $K$ be a nonunital $k$\+algebra, $\C$ be a coalgebra over~$k$,
and $\phi\:\C\times K\rarrow k$ be a multiplicative pairing.
 Then the following conditions are equivalent:
\begin{enumerate}
\item the right $K$\+module\/ $\C$ is s\+unital;
\item the left $K$\+module\/ $\C$ is s\+unital;
\item for every finite-dimensional vector subspace $W\subset\C$
there exists an element $e\in K$ such that $\epsilon_\C(w)=\phi(w,e)$
for all $w\in W$ (where $\epsilon_\C\:\C\rarrow k$ denotes
the counit map).
\end{enumerate}
\end{prop}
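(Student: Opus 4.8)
The plan is to prove the two equivalences (1)$\Longleftrightarrow$(3) and (2)$\Longleftrightarrow$(3), from which (1)$\Longleftrightarrow$(2) is immediate. First I would record the explicit module structures supplied by Proposition~\ref{underlying-K-module-structures}: regarding $\C$ as a left $\C$\+comodule via $\mu(c)=c_{(1)}\ot c_{(2)}$, the induced left action of $r\in K$ is $r\cdot c=\phi(c_{(1)},r)\,c_{(2)}$, while regarding $\C$ as a right $\C$\+comodule the induced right action is $c\cdot r=\phi(c_{(2)},r)\,c_{(1)}$. Everything then reduces to the interplay between these two formulas and the counit axioms of~$\C$.

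For the directions (3)$\Longrightarrow$(1) and (3)$\Longrightarrow$(2) I would argue directly from counitality, with no further hypotheses. Given $c\in\C$, write $\mu(c)=\sum_i c_{(1)}^i\ot c_{(2)}^i$. To establish~(1), apply condition~(3) to the finite-dimensional subspace $W\subset\C$ spanned by the $c_{(2)}^i$, obtaining $e\in K$ with $\phi(c_{(2)}^i,e)=\epsilon_\C(c_{(2)}^i)$; then $c\cdot e=\sum_i\phi(c_{(2)}^i,e)\,c_{(1)}^i=\sum_i\epsilon_\C(c_{(2)}^i)\,c_{(1)}^i=c$. To establish~(2), take instead $W$ spanned by the $c_{(1)}^i$ and use the counit identity $\sum_i\epsilon_\C(c_{(1)}^i)\,c_{(2)}^i=c$. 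Since $c$ is arbitrary, $\C$ is s\+unital on the respective side.

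For the reverse implications (1)$\Longrightarrow$(3) and (2)$\Longrightarrow$(3) the essential input is Tominaga's simultaneity theorem, Proposition~\ref{s-unital-simultaneous-for-several}. Assuming~(1) and a finite-dimensional $W\subset\C$ with basis $w_1,\dots,w_n$, that proposition produces a single element $e\in K$ with $w_j\cdot e=w_j$ for all $j$ at once. I would then recover~(3) by applying the counit: from $\phi((w_j)_{(2)},e)\,(w_j)_{(1)}=w_j$ and the identity $\sum\epsilon_\C((w_j)_{(1)})\,(w_j)_{(2)}=w_j$, bilinearity of $\phi$ yields $\phi(w_j,e)=\epsilon_\C(w_j)$, and hence $\phi(w,e)=\epsilon_\C(w)$ for all $w\in W$ by linearity. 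The implication (2)$\Longrightarrow$(3) is the mirror image, collapsing the opposite tensor leg of $\mu$ against $\epsilon_\C$.

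I expect the only genuine obstacle to be this passage from s\+unitality to~(3). The definition of s\+unitality provides, for each individual element, its own witness $e$, whereas condition~(3) demands one element $e$ that pairs like the counit against an entire finite-dimensional subspace \emph{simultaneously}; Proposition~\ref{s-unital-simultaneous-for-several} is precisely the tool that closes this gap, after which only the counit axioms are needed. The remaining care is purely clerical: keeping track of which tensor factor of $\mu(c)$ carries the pairing variable, so that the left- and right-module computations are not conflated.
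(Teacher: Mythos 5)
Your proposal is correct and follows essentially the same route as the paper: the implication (3)$\Longrightarrow$(1) by choosing $W$ to contain the second tensor legs of $\mu(c)$ and invoking counitality, and (1)$\Longrightarrow$(3) by using Tominaga's simultaneity result (Proposition~\ref{s-unital-simultaneous-for-several}, in its left-right opposite form) to get one element $e$ fixing a whole basis of $W$, then applying $\epsilon_\C$ and the counit identity to convert $we=w$ into $\phi(w,e)=\epsilon_\C(w)$. The only cosmetic difference is that the paper writes out just the right-module equivalence (1)$\Longleftrightarrow$(3) and leaves the left-module case to symmetry, whereas you spell out both mirror-image arguments.
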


\begin{proof}
 (3)\,$\Longrightarrow$\,(1) Let $c\in\C$ be an element.
 Choose a finite-dimensional vector subspace $W\subset\C$ for which
$\mu(c)\in\C\ot_kW\subset\C\ot_k\C$.
 Let $e\in K$ be an element such that $\epsilon_\C(w)=\phi(w,e)$
for all $w\in W$.
 Then $ce=c_{(1)}\phi(c_{(2)},e)=c_{(1)}\epsilon_\C(c_{(2)})=c$,
where $\mu(c)=c_{(1)}\ot c_{(2)}\in\C\ot_k\C$ is Sweedler's
notation for the comultiplication and $c_{(1)}\epsilon_\C(c_{(2)})=c$
is the counitality equation for the coalgebra~$\C$.

 (1)\,$\Longrightarrow$\,(3)
 By (the left-right opposite version of)
Proposition~\ref{s-unital-simultaneous-for-several}, there exists
an element $e\in K$ such that $we=w$ in $\C$ for all $w\in W$.
 So we have $w_{(1)}\phi(w_{(2)},e)=w$, where $w_{(1)}\ot w_{(2)}
=\mu(w)\in\C\ot_k\C$.
 Applying the linear map~$\epsilon_\C$, we obtain
$\epsilon_\C(w_{(1)})\phi(w_{(2)},e)=\epsilon_\C(w)$,
and it remains to point out that the counitality equation
$\epsilon_\C(w_{(1)})w_{(2)}=w$ implies
$\epsilon_\C(w_{(1)})\phi(w_{(2)},e)=\phi(w,e)$.
\end{proof}

 We will say that a multiplicative pairing $\phi\:\C\times K\rarrow k$
is \emph{s\+unital} if it satisfies the equivalent conditions of
Proposition~\ref{s-unital-pairings-characterized}.

\begin{cor} \label{s-unital-t-unital-pairings-cor}
 Let $K$ be a right s\+unital $k$\+algebra, $\C$ be a coalgebra
over~$k$, and $\phi\:\C\times K\rarrow k$ be a multiplicative pairing.
 Then the pairing~$\phi$ is right t\+unital if and only if it is
s\+unital.
\end{cor}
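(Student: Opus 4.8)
The plan is to translate both hypothesis-level notions---``right t\+unital pairing'' and ``s\+unital pairing''---into intrinsic properties of the single right $K$\+module $\C$, and then to read off the equivalence from Proposition~\ref{s-unital-implies-t-unital}(b).

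By the definition given in Section~\ref{t-unital-pairings-subsecn}, the pairing $\phi$ is right t\+unital precisely when $\C$ is a t\+unital right $K$\+module, where the right $K$\+module structure on $\C$ is the one induced by $\phi$ via the functor~\eqref{right-comodules-to-modules} of Proposition~\ref{underlying-K-module-structures} (applied to $\C$ viewed as a right $\C$\+comodule over itself). On the other hand, $\phi$ is s\+unital when it satisfies the equivalent conditions of Proposition~\ref{s-unital-pairings-characterized}, and condition~(1) there says exactly that this same right $K$\+module $\C$ is s\+unital. Thus both sides of the desired ``if and only if'' are statements about the right $K$\+module $\C$: t\+unitality on the one side, s\+unitality on the other.

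Now I would invoke the standing hypothesis that $K$ is right s\+unital. By Proposition~\ref{s-unital-implies-t-unital}(b), over a right s\+unital ring a right module is t\+unital if and only if it is s\+unital. Applying this to the right $K$\+module $\C$ immediately yields that $\C$ is t\+unital iff it is s\+unital, which is precisely the assertion that $\phi$ is right t\+unital iff it is s\+unital. Once the two pairing-level conditions are recognized as these module-level conditions on $\C$, the argument is pure bookkeeping; the only substantive input is Tominaga's theorem as packaged in Proposition~\ref{s-unital-implies-t-unital}(b), so I do not expect any genuine obstacle here.
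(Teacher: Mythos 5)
Your proof is correct and is essentially identical to the paper's own argument: the paper's proof of Corollary~\ref{s-unital-t-unital-pairings-cor} simply cites Proposition~\ref{s-unital-implies-t-unital}(b), and your writeup just makes explicit the definitional bookkeeping (right t\+unitality of~$\phi$ means the right $K$\+module $\C$ is t\+unital, and s\+unitality of~$\phi$ is condition~(1) of Proposition~\ref{s-unital-pairings-characterized}, i.e., s\+unitality of that same module) that the paper leaves implicit.
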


\begin{proof}
 Follows from Proposition~\ref{s-unital-implies-t-unital}(b).
\end{proof}

\subsection{Associativity isomorphisms}
 The following result is our version
of~\cite[Propositions~1.2.3(a)]{Psemi}.

\begin{prop} \label{tensor-cotensor-assoc-prop}
 Let $K$ be a nonunital algebra, $\C$ be a coalgebra,
$\N$ be a right\/ $\C$\+comodule, $F$ be a left $K$\+module,
and\/ $\E$ be a left\/ $\C$\+comodule endowed with a right action
of the algebra $K$ by left\/ $\C$\+comodule endomorphisms.
 Then there is a natural map of vector spaces
\begin{equation} \label{tensor-cotensor-assoc-eqn}
 (\N\oc_\C\E)\ot_K F\lrarrow\N\oc_\C(\E\ot_K F).
\end{equation}
 The map~\eqref{tensor-cotensor-assoc-eqn} is an isomorphism whenever\/
$\E$ is a t\+unital right $K$\+module and $F$ is an nt\+flat
left $K$\+module.
\end{prop}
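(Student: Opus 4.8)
The plan is to realize both sides as kernels of explicit maps of $k$\+vector spaces and to apply the definition of nt\+flatness directly. Recall that the cotensor product $\N\oc_\C\E$ is, by definition, the kernel of the pair of maps $\alpha$,~$\beta\:\N\ot_k\E\rightrightarrows\N\ot_k\C\ot_k\E$, where $\alpha=\nu_\N\ot\id_\E$ is induced by the right coaction of~$\N$ and $\beta=\id_\N\ot\nu_\E$ by the left coaction of~$\E$. Since the right $K$\+action on~$\E$ is by left $\C$\+comodule endomorphisms, the coaction map $\nu_\E\:\E\rarrow\C\ot_k\E$ is right $K$\+linear; hence $\alpha$ and~$\beta$ are morphisms of right $K$\+modules (the $K$\+action on each term being carried through the $\E$ factor), and $\N\oc_\C\E$ is a right $K$\+submodule of $\N\ot_k\E$.

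First I would construct the map~\eqref{tensor-cotensor-assoc-eqn}. For any $k$\+vector space~$V$, right $K$\+module~$M$, and left $K$\+module~$F$, there is a canonical isomorphism $(V\ot_k M)\ot_K F\simeq V\ot_k(M\ot_K F)$, because $V$ is free over~$k$ and so $V\ot_k{-}$ commutes with the coequalizer computing~$\ot_K$. Applying ${-}\ot_K F$ to the inclusion $\N\oc_\C\E\rarrow\N\ot_k\E$ and composing with this isomorphism (for $V=\N$) produces a map $(\N\oc_\C\E)\ot_K F\rarrow\N\ot_k(\E\ot_K F)$. Because $\alpha$ and~$\beta$ agree on $\N\oc_\C\E$, their images under ${-}\ot_K F$ agree on $(\N\oc_\C\E)\ot_K F$; identifying also $(\N\ot_k\C\ot_k\E)\ot_K F\simeq\N\ot_k\C\ot_k(\E\ot_K F)$, and checking that $\alpha\ot_K F$ and $\beta\ot_K F$ become the two structure maps $\nu_\N\ot\id$ and $\id_\N\ot\nu_{\E\ot_K F}$ defining $\N\oc_\C(\E\ot_K F)$, one sees that the map factors through $\N\oc_\C(\E\ot_K F)$. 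This is the natural map~\eqref{tensor-cotensor-assoc-eqn}.

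To prove bijectivity under the stated hypotheses, I would view the defining equalizer as a left exact sequence $0\rarrow\N\oc_\C\E\rarrow\N\ot_k\E\rarrow\N\ot_k\C\ot_k\E$ in $\Modrn K$ (with last map $\alpha-\beta$) and apply the nt\+flatness of~$F$. The definition of nt\+flatness requires the two right-hand terms to be t\+unital right $K$\+modules. Here both $\N\ot_k\E$ and $\N\ot_k\C\ot_k\E$ carry their $K$\+action through the $\E$ factor; choosing $k$\+bases of~$\N$ and of $\N\ot_k\C$, they are isomorphic as right $K$\+modules to direct sums of copies of the t\+unital module~$\E$. Since $\Modrt K$ is closed under direct sums in $\Modrn K$ \cite[Proposition~4.2]{Quil}, \cite[Lemma~1.5]{Pnun}, both terms lie in $\Modrt K$. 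By nt\+flatness, the functor ${-}\ot_K F$ preserves the left exactness of this sequence, so $(\N\oc_\C\E)\ot_K F$ is the kernel of $(\N\ot_k\E)\ot_K F\rarrow(\N\ot_k\C\ot_k\E)\ot_K F$. Transporting along the canonical isomorphisms above, this kernel is precisely $\N\oc_\C(\E\ot_K F)$, and the induced identification is the map~\eqref{tensor-cotensor-assoc-eqn}.

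The routine parts are the naturality verifications: that $(V\ot_k M)\ot_K F\simeq V\ot_k(M\ot_K F)$ is natural and that $\alpha\ot_K F$, $\beta\ot_K F$ match the structure maps of the target cotensor product; these I would relegate to direct computation. The conceptual crux---and the reason the hypotheses are exactly t\+unitality of~$\E$ together with nt\+flatness of~$F$, rather than ordinary flatness---is that $\N\oc_\C\E$ \emph{need not} itself be t\+unital, so one cannot invoke t\+flatness of~$F$; the weaker notion of nt\+flatness is designed precisely to preserve the left exactness of a kernel sequence whose two larger terms are t\+unital while the kernel is unconstrained. Accordingly, the step I would treat most carefully is confirming that the direct-sum presentation genuinely places both larger terms $\N\ot_k\E$ and $\N\ot_k\C\ot_k\E$ in $\Modrt K$, which is what licenses the application of nt\+flatness.
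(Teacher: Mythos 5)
Your proposal is correct and follows essentially the same route as the paper: both realize the two sides via the defining left exact (kernel) sequences, apply ${-}\ot_K F$ to the sequence for $\N\oc_\C\E$ to obtain the comparison map, observe that $\N\ot_k\E$ and $\N\ot_k\C\ot_k\E$ are t\+unital (being direct sums of copies of $\E$, with the same citations to Quillen and \cite{Pnun}), and then invoke nt\+flatness of $F$ to get left exactness and hence the isomorphism. Your added remark on why nt\+flatness rather than t\+flatness is the right hypothesis (the kernel $\N\oc_\C\E$ itself need not be t\+unital) is a correct and useful gloss, but the argument is the paper's.
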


\begin{proof}
 By the definition of the cotensor product, we have a left exact
sequence of right $K$\+modules
\begin{equation} \label{cotensor-product-left-exact-sequence}
 0\lrarrow \N\oc_\C\E\lrarrow\N\ot_k\E\lrarrow\N\ot_k\C\ot_k\E
\end{equation}
and a similar left exact sequence of vector spaces
\begin{equation} \label{cotensor-product-of-tensor-products-sequence}
 0\lrarrow \N\oc_\C(\E\ot_KF)\lrarrow\N\ot_k\E\ot_KF\lrarrow
 \N\ot_k\C\ot_k\E\ot_KF.
\end{equation}
 Taking the tensor product
of~\eqref{cotensor-product-left-exact-sequence} with the left
$K$\+module $F$, we produce a three-term complex of vector spaces
\begin{equation} \label{tensor-of-cotensor-complex}
 (\N\oc_\C\E)\ot_KF\lrarrow\N\ot_k\E\ot_KF\lrarrow
 \N\ot_k\C\ot_k\E\ot_KF.
\end{equation}
 Comparing~\eqref{tensor-of-cotensor-complex}
with~\eqref{cotensor-product-of-tensor-products-sequence},
we immediately obtain the desired natural
map~\eqref{tensor-cotensor-assoc-eqn}.

 Furthermore, it is clear that the map~\eqref{tensor-cotensor-assoc-eqn}
is an isomorphism if and only if
the complex~\eqref{tensor-of-cotensor-complex} is a left exact sequence.
 Assume that $\E$ is a t\+unital right $K$\+module.
 Then, by~\cite[Proposition~4.2]{Quil} or~\cite[Lemma~1.5]{Pnun}, so
are the right $K$\+modules $\N\ot_k\E$ and $\N\ot_k\C\ot_k\E$.
 By the definition of nt\+flatness (see
Section~\ref{nt-nc-flat-proj-prelim-subsecn}), \,$F$ being an nt\+flat
left $K$\+module then implies left exactness of
the complex~\eqref{tensor-of-cotensor-complex}.
\end{proof}

 We refer to~\cite[end of Section~1.2.5]{Psemi} for a relevant
discussion of \emph{commutativity of the diagrams of associativity
isomorphisms} of cotensor products.

 The next proposition is a version
of~\cite[Propositions~3.2.3.2(a)]{Psemi}.

\begin{prop} \label{tensor-hom-cohom-assoc-prop}
 Let $K$ be a nonunital algebra, $\C$ be a coalgebra, $\fP$ be
a left\/ $\C$\+contramodule, $F$ be a left $K$\+module, and\/ $\E$ be
a left\/ $\C$\+comodule endowed with a right action of the algebra $K$
by left\/ $\C$\+comodule endomorphisms.
 Then there is a natural map of vector spaces
\begin{equation} \label{tensor-hom-cohom-assoc-eqn}
 \Cohom_\C(\E\ot_KF,\>\fP)\lrarrow\Hom_K(F,\Cohom_\C(\E,\fP)).
\end{equation}
 The map~\eqref{tensor-hom-cohom-assoc-eqn} is an isomorphism whenever\/
$\E$ is a t\+unital right $K$\+module and $F$ is an nc\+projective
left $K$\+module.
\end{prop}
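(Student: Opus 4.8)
The plan is to mirror the proof of Proposition~\ref{tensor-cotensor-assoc-prop}, with the left exact cotensor sequences replaced by the right exact sequence defining $\Cohom$ and the tensor product functor replaced by $\Hom_K(F,{-})$. First I would record that, by the definition of $\Cohom$, the space $\Cohom_\C(\E,\fP)$ is the cokernel of the difference of the two natural maps $\Hom_k(\E,\Hom_k(\C,\fP))\rightrightarrows\Hom_k(\E,\fP)$; this gives a right exact sequence
\begin{equation} \label{cohom-right-exact}
 \Hom_k(\E,\Hom_k(\C,\fP))\overset{d}\lrarrow\Hom_k(\E,\fP)
 \lrarrow\Cohom_\C(\E,\fP)\lrarrow0 .
\end{equation}
Because $K$ acts on $\E$ by $\C$\+comodule endomorphisms, all three terms carry left $K$\+module structures and the maps are $K$\+linear, so~\eqref{cohom-right-exact} is a right exact sequence in $K\Modln$.

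Next I would build the natural map. The left $\C$\+comodule $\E\ot_KF$ carries the coaction induced from $\E$, and the tensor--Hom adjunction $\Hom_k(\E\ot_KF,\>V)\simeq\Hom_K(F,\Hom_k(\E,V))$, applied with $V=\fP$ and with $V=\Hom_k(\C,\fP)$, identifies the cokernel presentation of $\Cohom_\C(\E\ot_KF,\fP)$ with $\coker\bigl(\Hom_K(F,d)\bigr)$. Applying the left exact functor $\Hom_K(F,{-})$ to~\eqref{cohom-right-exact} yields a three-term complex with zero composite, and the induced map $\coker\bigl(\Hom_K(F,d)\bigr)\rarrow\Hom_K(F,\Cohom_\C(\E,\fP))$ is precisely the desired map~\eqref{tensor-hom-cohom-assoc-eqn}.

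For the isomorphism statement, both injectivity and surjectivity of~\eqref{tensor-hom-cohom-assoc-eqn} follow as soon as $\Hom_K(F,{-})$ carries~\eqref{cohom-right-exact} to a right exact sequence: exactness at the middle term yields injectivity, and exactness at the right-hand term yields surjectivity. This is where the hypotheses enter. Since $\E$ is a t\+unital right $K$\+module, Lemma~\ref{duality-preserves-reflects-t-c} shows that $\Hom_k(\E,V)$ is a c\+unital left $K$\+module for every vector space~$V$; in particular the first two terms of~\eqref{cohom-right-exact} lie in $K\Modlc$. As $F$ is nc\+projective, the functor $\Hom_K(F,{-})$ preserves right exactness of right exact sequences whose first two terms are c\+unital (Section~\ref{nt-nc-flat-proj-prelim-subsecn}), so applying it to~\eqref{cohom-right-exact} gives the required right exact sequence.

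The main obstacle I anticipate is a naturality verification rather than a conceptual difficulty: one must check that, under the tensor--Hom adjunction, the two structure maps defining $\Cohom_\C(\E\ot_KF,\fP)$ correspond exactly to $\Hom_K(F,{-})$ applied to the two structure maps defining $\Cohom_\C(\E,\fP)$---that is, that the coaction on $\E\ot_KF$ and the contraaction on $\fP$ transport correctly through the adjunction. This is entirely parallel to the identification of complexes carried out in the proof of Proposition~\ref{tensor-cotensor-assoc-prop}.
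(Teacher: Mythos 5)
Your proposal is correct and follows essentially the same route as the paper's proof: the same right exact sequence presenting $\Cohom_\C(\E,\fP)$ as a cokernel of left $K$\+modules, the functor $\Hom_K(F,{-})$ applied to it, the identification with the cokernel presentation of $\Cohom_\C(\E\ot_KF,\>\fP)$ via the tensor--Hom adjunction, and the same use of Lemma~\ref{duality-preserves-reflects-t-c} (c\+unitality of $\Hom_k(\E,V)$ for t\+unital $\E$) together with the definition of nc\+projectivity to get right exactness. The only cosmetic difference is that you write the leftmost term in its adjoint form $\Hom_k(\E,\Hom_k(\C,\fP))$ where the paper writes $\Hom_k(\C\ot_k\E,\>\fP)$, and you phrase the construction of the map as a factorization through a cokernel rather than a comparison of complexes.
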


\begin{proof}
 By the definition of cohomomorphisms, we have a right exact sequence
of left $K$\+modules
\begin{equation} \label{cohom-right-exact-sequence}
 \Hom_k(\C\ot_k\E,\>\fP)\lrarrow\Hom_k(\E,\fP)\lrarrow
 \Cohom_\C(\E,\fP)\lrarrow0
\end{equation}
and a similar right exact sequence of vector spaces
\begin{equation} \label{cohom-of-tensor-products-sequence}
 \Hom_k(\C\ot_k\E\ot_KF,\>\fP)\rarrow\Hom_k(\E\ot_KF,\>\fP)\rarrow
 \Cohom_\C(\E\ot_KF,\>\fP)\rarrow0.
\end{equation}
 Applying to~\eqref{cohom-right-exact-sequence} the covariant functor
$\Hom_K(F,{-})$, we produce a three-term complex of vector spaces
\begin{equation} \label{hom-into-cohom-complex}
 \Hom_k(\C\ot_k\E\ot_KF,\>\fP)\rarrow\Hom_k(\E\ot_KF,\>\fP)\rarrow
 \Hom_K(F,\Cohom_\C(\E,\fP)).
\end{equation}
 Comparing~\eqref{hom-into-cohom-complex}
with~\eqref{cohom-of-tensor-products-sequence}, we immediately obtain
the desired natural map~\eqref{tensor-hom-cohom-assoc-eqn}.

 Furthermore, it is clear that
the map~\eqref{tensor-hom-cohom-assoc-eqn} is an isomorphism if and only
if the complex~\eqref{hom-into-cohom-complex} is a right exact sequence.
 Assume that $\E$ is a t\+unital right $K$\+module.
 Then, by Lemma~\ref{duality-preserves-reflects-t-c},
the left $K$\+modules $\Hom_k(\E,\fP)$ and $\Hom_k(\C\ot_k\E,\>\fP)$
are c\+unital.
 By the definition of nc\+projectivity (see
Section~\ref{nt-nc-flat-proj-prelim-subsecn}), \,$F$ being
an nc\+projective left $K$\+module then implies right exactness of
the complex~\eqref{hom-into-cohom-complex}.
\end{proof}

 We refer to~\cite[end of Section~3.2.5]{Psemi} for a relevant
discussion of \emph{commutativity of the diagrams of associativity
isomorphisms} of iterated cohomomorphisms.

\subsection{Left flat, right integrated bimodules}
\label{integrated-bimodules-subsecn}
 Let $K$ be nonunital $k$\+algebra, $\C$ be a coalgebra over~$k$,
and $\phi\:\C\times K\rarrow k$ be a multiplicative pairing.
 Recall that the pairing~$\phi$ endows the $\C$\+$\C$\+bicomodule
$\C$ with the induced $K$\+$K$\+bimodule structure (see
Proposition~\ref{underlying-K-module-structures}).

 Let $B\in K\nBimodn K$ be a $K$\+$K$\+bimodule.
 Then the tensor product $\C\ot_KB$ has a natural pair of commuting
structures of a left $\C$\+comodule and a right $K$\+module, or in
other words, $\C\ot_KB$ is a left $\C$\+comodule endowed with
a right action of $K$ by left $\C$\+comodule endomorphisms.

 We will say that the $K$\+$K$\+bimodule $B$ is \emph{right integrated}
if the tensor product $\C\ot_KB$ is endowed with a right
$\C$\+comodule structure such that
\begin{enumerate}
\renewcommand{\theenumi}{\roman{enumi}}
\item the right $\C$\+comodule structure on $\C\ot_KB$ commutes with
the natural left $\C$\+comodule structure (arising from the left
$\C$\+comodule structure on~$\C$), so $\C\ot_KB$ is
a $\C$\+$\C$\+bicomodule;
\item applying the functor~$\phi_*$ \,\eqref{right-comodules-to-modules}
to the right $\C$\+comodule structure on $\C\ot_KB$ produces
the natural right $K$\+module structure (arising from the right
$K$\+module structure on~$B$).
\end{enumerate}

 In other words, a \emph{right integrated $K$\+$K$\+bimodule} is
a triple $B=(B,\B,\beta)$, where $B$ is a $K$\+$K$\+bimodule, $\B$ is
a $\C$\+$\C$\+bicomodule, and $\beta\:\C\ot_KB\simeq\B$ is
an isomorphism of left $\C$\+comodules which is also an isomorphism
of right $K$\+modules.

\begin{rem}
 Our terminology ``integrated bimodule'' comes from Lie theory.
 Let $G$ be an algebraic group over a field~$k$ and $\fg$~be
the Lie algebra of~$G$.
 Let $\C(G)$ be the coalgebra of regular functions on $G$ (with respect
to the convolution comultiplication) and $U(\fg)$ be the enveloping
algebra of~$\fg$.

 Then there is a natural multiplicative pairing
$\phi\:\C(G)\times U(\fg)\rarrow k$ respecting the unit of $U(\fg)$
and the counit of~$\C(G)$ \,\cite[Section~2.7]{Prev}.
 This pairing is responsible, as per the construction of
Proposition~\ref{underlying-K-module-structures}, for the Lie functor
assigning the underlying $\fg$\+module structure to every
$G$\+module (notice that by a \emph{$G$\+module}, or
a \emph{representation of~$G$}, one simply means a $\C(G)$\+comodule).
 The same proposition also assigns the underlying $\fg$\+module
structure to every $\C(G)$\+contramodule.

 One speaks colloquially of the process of recovering a Lie/algebraic
group from a Lie algebra, or an algebraic group action from a Lie
algebra action, as of ``integration''.
 Hence the word ``integrated'' in our terminology.
\end{rem}

 Assume that $K$ is a t\+unital algebra and $\phi$~is a right
t\+unital multiplicative pairing.
 Then we have a natural (multiplication) isomorphism of left
$\C$\+comodules and right $K$\+modules $\varkappa\:\C\ot_KK\rarrow\C$.
 Consequently, the $K$\+$K$\+bimodule $K$ can be endowed with
the natural structure of right integrated bimodule given by
the triple $(K,\C,\varkappa)$.
 Here the coalgebra $\C$ is endowed with its natural
$\C$\+$\C$\+bicomodule structure.

\begin{prop} \label{tensor-product-integrated}
 Let $K$ be a nonunital algebra, $\C$ be a coalgebra, and
$\phi\:\C\times K\rarrow k$ be a right t\+unital multiplicative pairing.
 Let $B'=(B',\B',\beta')$ and $B''=(B'',\B'',\beta'')$ be two
right integrated $K$\+$K$\+bimodules.
 Assume that $B''$ is an nt\+flat left $K$\+module.
 Then the tensor product $B=B'\ot_KB''$ acquires a natural structure of
right integrated $K$\+$K$\+bimodule, $B=(B,\B,\beta)$.
\end{prop}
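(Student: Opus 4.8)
The plan is to define $\B=\B'\oc_\C\B''$, which is a $\C$\+$\C$\+bicomodule as the cotensor product of two bicomodules, and then to produce a natural isomorphism $\beta\:\C\ot_KB\rarrow\B$ that is at the same time an isomorphism of left $\C$\+comodules and of right $K$\+modules. Granting such a $\beta$, the triple $(B,\B,\beta)$ is by definition a right integrated $K$\+$K$\+bimodule: condition~(i) of integration holds because $\B$ is honestly a bicomodule, while condition~(ii) amounts to $\beta$ being compatible with the right $K$\+module structures, once one knows that the right $K$\+action on $\B$ equals $\phi_*$ of its right $\C$\+comodule structure. So the work is to build $\beta$ and to check that it transports the structures correctly.

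First I would assemble $\beta$ as a composite chain. Using associativity of $\ot_K=\ot_{\widecheck K}$ together with the isomorphism $\beta'$, one has
$$
 \C\ot_KB=\C\ot_K(B'\ot_KB'')\simeq(\C\ot_KB')\ot_KB''\simeq\B'\ot_KB'',
$$
the last step being $\beta'\ot_K\id_{B''}$ (legitimate since $\beta'$ is a right $K$\+module map). The central step is an application of Proposition~\ref{tensor-cotensor-assoc-prop} with $\N=\B'$ (as a right $\C$\+comodule), with $\E=\C$ (as a left $\C$\+comodule carrying its right $K$\+action via~$\phi$, which commutes with the left coaction), and with $F=B''$, yielding an isomorphism $(\B'\oc_\C\C)\ot_KB''\simeq\B'\oc_\C(\C\ot_KB'')$. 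This is exactly where the hypotheses are consumed: the map is an isomorphism because $\E=\C$ is a t\+unital right $K$\+module (which is the definition of right t\+unitality of~$\phi$) and $F=B''$ is nt\+flat. Combining this with the cotensor unitality isomorphism $\B'\oc_\C\C\simeq\B'$ of~\eqref{cotensor-unitality} and with $\id_{\B'}\oc_\C\beta''$, I obtain
$$
 \B'\ot_KB''\simeq(\B'\oc_\C\C)\ot_KB''\simeq\B'\oc_\C(\C\ot_KB'')\simeq\B'\oc_\C\B''=\B.
$$

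It then remains to verify that $\beta$ respects the relevant structures, and this bookkeeping is where I would be most careful. The left $\C$\+comodule structure is carried all along by the leftmost factor ($\C$, then $\C\ot_KB'$, then $\B'$); each isomorphism in the chain is a morphism of left $\C$\+comodules, the map of Proposition~\ref{tensor-cotensor-assoc-prop} being so by its naturality in $\N$, and the unitality isomorphism and $\beta''$ being isomorphisms of bicomodules and of left comodules respectively. Dually, the right $K$\+module structure is carried throughout by the rightmost factor $B''$ through its bimodule right action, and Proposition~\ref{tensor-cotensor-assoc-prop} respects it by naturality in $F$.

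The one substantive point, which I expect to be the main obstacle and would isolate explicitly, is that the right $K$\+action transported onto $\B=\B'\oc_\C\B''$ agrees with $\phi_*$ of its right $\C$\+comodule structure (the coaction coming from the right factor $\B''$); this is precisely condition~(ii) of integration for $B$. It reduces to two compatibilities: first, integration of $B''$ says that $\beta''$ identifies the right $K$\+action of $B''$ with $\phi_*$ of the right $\C$\+comodule structure on $\B''$; second, $\phi_*$ is ``local to the right tensor factor'' of a cotensor product, so that $\phi_*$ of the right coaction on $\B'\oc_\C\B''$ equals $\id_{\B'}\oc_\C$ applied to $\phi_*$ of the right coaction on $\B''$ (this action preserves the cotensor subspace because, by integration of $B''$, it commutes with the left coaction on $\B''$). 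The same compatibility, now for $B'$, is what makes $\B'\oc_\C\C\simeq\B'$ an isomorphism of right $K$\+modules in the chain above. Beyond this, I expect no difficulty: associativity and naturality take care of the rest, and the two hypotheses (right t\+unitality of $\phi$ and nt\+flatness of $B''$) enter exactly once, in the application of Proposition~\ref{tensor-cotensor-assoc-prop}.
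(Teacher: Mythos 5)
Your proof is correct and follows essentially the same route as the paper: the paper's proof likewise applies Proposition~\ref{tensor-cotensor-assoc-prop} with $\N=\B'$, \ $\E=\C$, \ $F=B''$ (consuming right t\+unitality of~$\phi$ and nt\+flatness of $B''$ exactly there), puts $\B=\B'\oc_\C\B''$, and takes $\beta$ to be the resulting composite isomorphism $\C\ot_KB'\ot_KB''\simeq\B'\oc_\C\B''$. The structure-transport verifications you spell out at the end (left comodule structure on the leftmost factor, right module structure on the rightmost factor, and condition~(ii) for the transported action) are treated as implicit consequences of naturality in the paper's much terser proof.
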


\begin{proof}
 Applying Proposition~\ref{tensor-cotensor-assoc-prop} to
the right $\C$\+comodule $\N=\B'$, the $\C$\+$\C$\+bicomodule $\E=\C$,
and the left $K$\+module $F=B''$, we obtain a natural associativity
isomorphism
$$
 \B'\ot_KB''=(\B'\oc_\C\C)\ot_KB''\simeq\B'\oc_\C(\C\ot_KB''),
$$
that is,
\begin{equation} \label{integration-isomorphism}
 \C\ot_KB'\ot_KB''\simeq\B'\oc_\C\B''.
\end{equation}
 It remains to put $\B=\B'\oc_\C\B''$ and let $\beta$~be
the isomorphism~\eqref{integration-isomorphism}.
\end{proof}

 We will say that a right integrated $K$\+$K$\+bimodule $(B,\B,\beta)$
is \emph{left nt\+flat} if the left $K$\+module $B$ is nt\+flat.
 Similarly, $(B,\B,\beta)$ is called \emph{right t\+unital} if
the right $K$\+module $B$ is t\+unital.

\begin{prop} \label{nonunital-tensor-of-nt-flat-integrated}
  Let $K$ be a nonunital algebra, $\C$ be a coalgebra, and
$\phi\:\C\times K\rarrow k$ be a right t\+unital multiplicative pairing.
 Then the category of left nt\+flat, right t\+unital right integrated
$K$\+$K$\+bimodules $(B,\B,\beta)$ is an associative but
\emph{nonunital} tensor/monoidal category with respect to the operation
of tensor product of bimodules and cotensor product of bicomodules.
\end{prop}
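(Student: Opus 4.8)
The plan is to establish the three ingredients of an associative (nonunital) monoidal structure: that the tensor product of Proposition~\ref{tensor-product-integrated} stays within the stated subcategory, that it carries a natural associativity isomorphism, and that these associators satisfy the pentagon. Unitality is deliberately not claimed, which is the meaning of ``nonunital'' here: the candidate unit object $(K,\C,\varkappa)$ from Section~\ref{integrated-bimodules-subsecn} requires $K$ itself to be t\+unital, and this is not among our hypotheses. A morphism of right integrated bimodules $(B_1,\B_1,\beta_1)\rarrow(B_2,\B_2,\beta_2)$ should be understood as a pair consisting of a $K$\+$K$\+bimodule map $f\:B_1\rarrow B_2$ and a $\C$\+$\C$\+bicomodule map $g\:\B_1\rarrow\B_2$ with $g\circ\beta_1=\beta_2\circ(\C\ot_Kf)$; equality of two such morphisms may be tested componentwise.

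First I would check closure. Given $B'=(B',\B',\beta')$ and $B''=(B'',\B'',\beta'')$ in the subcategory, Proposition~\ref{tensor-product-integrated} already produces the right integrated structure $(B,\B,\beta)$ on $B=B'\ot_KB''$ with $\B=\B'\oc_\C\B''$, its only hypothesis being that $B''$ is nt\+flat as a left $K$\+module. It remains to see that $B$ again lies in the subcategory. Left nt\+flatness of $B'\ot_KB''$ is exactly Lemma~\ref{nt-flat-tensor-product}, applied with the bimodule $B'$ (left nt\+flat and right t\+unital) and the left module $B''$ (left nt\+flat). Right t\+unitality of $B'\ot_KB''$ follows from that of $B''$: associativity of $\ot_{\widecheck K}$ gives $(B'\ot_KB'')\ot_KK\simeq B'\ot_K(B''\ot_KK)$, and since the structure map $B''\ot_KK\rarrow B''$ is an isomorphism, so is $(B'\ot_KB'')\ot_KK\rarrow B'\ot_KB''$.

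Next I would produce the associator. For a triple $B',B'',B'''$ in the subcategory, both bracketings $(B'\ot_KB'')\ot_KB'''$ and $B'\ot_K(B''\ot_KB''')$ are defined, because closure guarantees that the intermediate products $B'\ot_KB''$ and $B''\ot_KB'''$ are again left nt\+flat and right t\+unital, so Proposition~\ref{tensor-product-integrated} also applies at the outer stage. On the underlying bimodules I take the ordinary associator of $\ot_{\widecheck K}$, and on the underlying bicomodules the cotensor associativity isomorphism~\eqref{cotensor-associativity}, $(\B'\oc_\C\B'')\oc_\C\B'''\simeq\B'\oc_\C(\B''\oc_\C\B''')$. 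The associator of the integrated\+bimodule category is then this pair, and the content of the step is to verify that it is a morphism of right integrated bimodules, i.e.\ that it intertwines the two structure maps $\beta$. Unwinding the definition of $\beta$ from the proof of Proposition~\ref{tensor-product-integrated}, where $\beta$ is assembled from $\beta'$, $\beta''$ and the mixed associativity isomorphism~\eqref{tensor-cotensor-assoc-eqn}, this reduces to the commutativity of a diagram built from the tensor associator, the cotensor associator~\eqref{cotensor-associativity}, and the isomorphism of Proposition~\ref{tensor-cotensor-assoc-prop}. The pentagon identity for the resulting associator is then formal: since the associator is componentwise the tensor associator of $\ot_{\widecheck K}$ and the cotensor associator of $\oc_\C$, and morphisms of integrated bimodules are tested componentwise, the pentagon follows from the (already known) pentagon identities for $\ot_{\widecheck K}$ and for $\oc_\C$.

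The main obstacle is precisely the compatibility check in the previous paragraph: one must show that the two ways of assembling $\C\ot_KB'\ot_KB''\ot_KB'''$ into a triple cotensor product of the $\B^{(i)}$ — passing through $(\B'\oc_\C\B'')\oc_\C\B'''$ or through $\B'\oc_\C(\B''\oc_\C\B''')$ — agree after inserting the appropriate associators. This is an instance of the ``commutativity of the diagrams of associativity isomorphisms'' for iterated cotensor products established in \cite[end of Section~1.2.5]{Psemi}; the mixed associativity isomorphism~\eqref{tensor-cotensor-assoc-eqn} enters through the identifications $\beta^{(i)}\:\C\ot_KB^{(i)}\simeq\B^{(i)}$, and it is the naturality of~\eqref{tensor-cotensor-assoc-eqn} together with its coherence with~\eqref{cotensor-associativity} that makes the diagram commute. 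Once this coherence is secured, everything else is bookkeeping.
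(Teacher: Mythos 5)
Your proposal is correct and follows essentially the same route as the paper: closure of the subcategory under tensor products via Lemma~\ref{nt-flat-tensor-product} for left nt\+flatness and the elementary associativity argument for right t\+unitality (which the paper compresses into a citation of \cite[Lemma~1.2(b)]{Pnun}), followed by Proposition~\ref{tensor-product-integrated} to equip the product with its integrated structure. The associator construction and coherence checks you spell out, resting on the naturality of~\eqref{tensor-cotensor-assoc-eqn} and the reference to \cite[end of Section~1.2.5]{Psemi}, are exactly what the paper's two-sentence proof leaves implicit.
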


\begin{proof}
 The point is that for any two left nt\+flat, right t\+unital
$K$\+$K$\+bimodules $B'$ and $B''$, the tensor product $B'\ot_KB''$
is also left nt\+flat and right t\+unital (by
Lemma~\ref{nt-flat-tensor-product} and~\cite[Lemma~1.2(b)]{Pnun}).
 So the assertion follows from
Proposition~\ref{tensor-product-integrated}.
\end{proof}

 The problem with nonunitality of the tensor/monoidal category in
Proposition~\ref{nonunital-tensor-of-nt-flat-integrated} arises from
the fact that the left $K$\+module $K$ is not nt\+flat in general (while
the right $K$\+module $\widecheck K$ is, of course, not t\+unital).
 Indeed, as explained in Section~\ref{nt-nc-flat-proj-prelim-subsecn},
the left $K$\+module $K$ is nt\+flat if and only if $K$ is a flat
left $\widecheck K$\+module.

 We will say that a right integrated $K$\+$K$\+bimodule $(B,\B,\beta)$
is \emph{left flat} if the left $\widecheck K$\+module $B$ is flat.
 Similarly, $(B,\B,\beta)$ is called \emph{t\+unital} if
the $K$\+$K$\+bimodule $B$ is t\+unital (i.~e., both left and right
t\+unital).

\begin{cor} \label{unital-monoidal-of-flat-integrated}
 Let $K$ be t\+unital algebra such that $K$ is a flat left
$\widecheck K$\+module.
 Let $\C$ be a coalgebra and $\phi\:\C\times K\rarrow k$ be
a right t\+unital multiplicative pairing.
 Then the category of t\+unital, left flat right integrated
$K$\+$K$\+bimodules $(B,\B,\beta)$ is an asssociative, \emph{unital}
monoidal category with the unit object $(K,\C,\varkappa)$ with respect
to the operation of tensor product of bimodules and cotensor product
of bicomodules.
\end{cor}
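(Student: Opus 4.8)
The plan is to deduce the corollary from Proposition~\ref{nonunital-tensor-of-nt-flat-integrated} by exhibiting the category in question as a full subcategory of the nonunital monoidal category constructed there, closed under the tensor product operation and possessing a unit object. The first point to record is that, for a \emph{t\+unital} left $K$\+module, the notions of \emph{left flat} (i.e., flatness of the underlying left $\widecheck K$\+module) and \emph{left nt\+flat} coincide: combining the equivalence between nt\+flatness and t\+flatness for t\+unital modules over a t\+unital algebra (Section~\ref{nt-nc-flat-proj-prelim-subsecn}) with Corollary~\ref{flatness-over-tilde-check-equivalence}, a t\+unital $K$\+module is nt\+flat if and only if it is a flat $\widecheck K$\+module. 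Since a t\+unital bimodule is in particular right t\+unital, it follows that every t\+unital, left flat right integrated bimodule is a left nt\+flat, right t\+unital one; thus our category sits inside the category of Proposition~\ref{nonunital-tensor-of-nt-flat-integrated} as a full subcategory.

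Next I would check closure under the tensor product. Let $B'$ and $B''$ be t\+unital and left flat. By Proposition~\ref{nonunital-tensor-of-nt-flat-integrated} the tensor product $B'\ot_KB''$ is already left nt\+flat and right t\+unital, so it remains to verify that it is \emph{left} t\+unital; for then it is t\+unital as a bimodule and, being left nt\+flat and t\+unital, left flat by the equivalence recorded above. Left t\+unitality follows from the associativity of the tensor product over $\widecheck K$ together with the left t\+unitality of $B'$: one has $K\ot_K(B'\ot_KB'')\simeq(K\ot_KB')\ot_KB''\simeq B'\ot_KB''$, the last isomorphism being $K\ot_KB'\simeq B'$. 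Hence the tensor product of Proposition~\ref{tensor-product-integrated} restricts to the subcategory.

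Then I would install the unit object. The bimodule $K$ is t\+unital as a bimodule because $K$ is a t\+unital algebra, and it is left flat precisely by the hypothesis that $K$ is a flat left $\widecheck K$\+module; so $(K,\C,\varkappa)$ lies in our category. Both unit tensor products are defined through Proposition~\ref{tensor-product-integrated}: the left product $(K,\C,\varkappa)\ot(B,\B,\beta)$ because its right factor $B$ is left nt\+flat, and the right product $(B,\B,\beta)\ot(K,\C,\varkappa)$ because its right factor $K$ is left nt\+flat --- this last point being exactly where the flatness hypothesis on $K$ is used (recall that the left $K$\+module $K$ is generally \emph{not} nt\+flat, which is what forced the nonunitality in Proposition~\ref{nonunital-tensor-of-nt-flat-integrated}). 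Unwinding the construction of Proposition~\ref{tensor-product-integrated}, the unit isomorphisms are the cotensor product unitality isomorphisms $\C\oc_\C\B\simeq\B$ and $\B\oc_\C\C\simeq\B$ of~\eqref{cotensor-unitality} on the bicomodule level, matched with the left and right t\+unitality isomorphisms $K\ot_KB\simeq B\simeq B\ot_KK$ on the bimodule level.

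Finally, the coherence axioms must be addressed. The associativity constraints are inherited from the associative structure of Proposition~\ref{nonunital-tensor-of-nt-flat-integrated}, since our subcategory is closed under the tensor product. The remaining task --- and the step I expect to be the most delicate --- is the unit (triangle) coherence: one must check that the cotensor unitality isomorphisms of~\eqref{cotensor-unitality} are compatible with the associativity isomorphism~\eqref{integration-isomorphism} defining the tensor product, so that $(K,\C,\varkappa)$ is a genuine two\+sided unit. This reduces to the commutativity of the relevant diagrams of cotensor associativity and unitality isomorphisms, for which I would appeal to the discussion referenced after Proposition~\ref{tensor-cotensor-assoc-prop} (cf.\ \cite[end of Section~1.2.5]{Psemi}); the bimodule\+level compatibilities are the routine unit identities for $\ot_K$.
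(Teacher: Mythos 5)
Your proof is correct and follows essentially the same route as the paper: both arguments rest on the observation that flat $\widecheck K$\+modules are nt\+flat, so that Proposition~\ref{tensor-product-integrated} (via Proposition~\ref{nonunital-tensor-of-nt-flat-integrated}) supplies the tensor product of integrated bimodules, with the flatness hypothesis on $K$ making $(K,\C,\varkappa)$ a legitimate unit object. The paper merely compresses the closure and coherence checks you carry out by hand (left t\+unitality of $B'\ot_KB''$, unit isomorphisms, triangle coherence) into references to the ambient unital monoidal structures on $K\tBimodt K$ (Proposition~\ref{t-c-unital-monoidal-model-categories}(a)) and $\C\Bicomod\C$, which is a difference of exposition rather than of substance.
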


\begin{proof}
 It was mentioned in Section~\ref{nt-nc-flat-proj-prelim-subsecn}
that all flat $\widecheck K$\+modules are nt\+flat.
 By Proposition~\ref{t-c-unital-monoidal-model-categories}(a),
the category of t\+unital $K$\+$K$\+bimodules $K\tBimodt K$ is
an associative, unital monoidal category with the unit object $K$
with respect to the tensor product operation~$\ot_K$.
 As explained in Section~\ref{semialgebras-prelim-subsecn},
the category of $\C$\+$\C$\+bicomodules $\C\Bicomod\C$ is
an associative, unital monoidal category with the unit object $\C$
with respect to the cotensor product operation~$\oc_\C$.
 It remains to refer to Proposition~\ref{tensor-product-integrated}
or Proposition~\ref{nonunital-tensor-of-nt-flat-integrated}.
\end{proof}

\subsection{Two forgetful monoidal functors}
\label{two-forgetful-monoidal-functors-subsecn}
 We will simply say that a bimodule over a nonunital algebra $K$ is
\emph{left flat} if it is flat as a left $\widecheck K$\+module.
 In particular, $K$ itself is \emph{left flat} if $K$ is flat as
a left $\widecheck K$\+module.

 Notice that a left flat $k$\+algebra $K$ is t\+unital if and only
if $K^2=K$ \,\cite[Definition~2.3]{Quil}, \cite[Remark~1.6]{Pnun}.

\begin{rem} \label{lands-in-t-unital-remark}
 Let $K$ be a left flat $k$\+algebra, $\C$ be a coalgebra over~$k$,
and $\phi\:\C\times K\rarrow k$ be a right t\+unital pairing.
 Then Proposition~\ref{t-unital-pairing-comodules-prop}(2) tells
that the essential image of the functor $\phi_*\:\Comodr\C\rarrow
\Modrn K$ \,\eqref{right-comodules-to-modules} is contained in
the full subcategory of t\+unital modules $\Modrt K\subset\Modrn K$.
 So we can (and will) write $\phi_*\:\Comodr\C\rarrow\Modrt K$.

 Obviously, both the functors $\Comodr\C\rarrow\Modrn K$ and
$\Comodr\C\rarrow\Modrt K$ are always faithful; and one of them
is fully faithful if and only if the other one is.
\end{rem}

 Given a left flat t\+unital algebra $K$, a coalgebra $\C$, and
a right t\+unital pairing~$\phi$, the construction of
Corollary~\ref{unital-monoidal-of-flat-integrated} produces
an associative, unital monoidal category of left flat, right
integrated, t\+unital $K$\+$K$\+bimodules, which we will denote by
$$
 K\flattBimodtint K.
$$

 Let us also denote by $K\flattBimodt K\subset K\tBimodt K$ the full
subcategory of left flat t\+unital $K$\+$K$\+bimodules.
 Clearly, $K\flattBimodt K$ is a monoidal subcategory in $K\tBimodt K$.
 When the algebra $K$ is left flat, the unit object $K\in K\tBimodt K$
belongs to $K\flattBimodt K$.

 Similarly, let us denote by $\C\injBicomod\C\subset\C\Bicomod\C$
the full subcategory of left injective $\C$\+$\C$\+bicomodules (i.~e.,
injective as left $\C$\+comodules).
 By Lemma~\ref{cotensor-product-injective}, $\C\injBicomod\C$ is
a monoidal subcategory in $\C\Bicomod\C$ containing the unit object
$\C\in\C\Bicomod\C$.

\begin{lem} \label{tensor-product-with-flat-is-injective}
 Let $K$ be a left flat t\+unital algebra, $\C$ be a coalgebra, $F$ be
an nt\+flat (equivalently, t\+flat) left $K$\+module, and\/ $\E$ be
an injective left\/ $\C$\+comodule endowed with a t\+unital right
action of the algebra $K$ by left\/ $\C$\+comodule endomorphisms.
 Then the tensor product $\E\ot_KF$ is an injective left\/
$\C$\+comodule.
\end{lem}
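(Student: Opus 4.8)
The plan is to prove that $\E\ot_KF$ is \emph{coflat} as a left $\C$\nobreakdash-comodule; since over a coalgebra over a field coflatness is the same as injectivity (Section~\ref{adjusted-co-contra-mods-subsecn}), this will suffice. Concretely, I would check that the cotensor product functor $\N\mapsto\N\oc_\C(\E\ot_KF)$, from $\Comodr\C$ to $k\Vect$, is exact.

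The first move is to rewrite this functor via the associativity isomorphism of Proposition~\ref{tensor-cotensor-assoc-prop}. Because $\E$ is a t-unital right $K$\nobreakdash-module and $F$ is nt-flat, that proposition provides a natural isomorphism $\N\oc_\C(\E\ot_KF)\simeq(\N\oc_\C\E)\ot_KF$. Hence the functor in question is naturally isomorphic to the composition of $\N\mapsto\N\oc_\C\E$, from $\Comodr\C$ to $\Modrt K$, with $-\ot_KF$.

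The key intermediate point is that $\N\oc_\C\E$ is a t-unital right $K$\nobreakdash-module for every right $\C$\nobreakdash-comodule $\N$, the $K$\nobreakdash-action being induced from the one on $\E$. By definition $\N\oc_\C\E$ is the kernel of the difference of a right $K$\nobreakdash-linear pair $\N\ot_k\E\rightrightarrows\N\ot_k\C\ot_k\E$; both terms here are direct sums of copies of $\E$ as right $K$\nobreakdash-modules, hence t-unital. Since $K$ is left flat, the left-right opposite of Proposition~\ref{t-unital-kernels-flat-ring} says that $\Modrt K$ is closed under kernels in $\Modrn K$, so $\N\oc_\C\E$ is t-unital. (This same left flatness is what makes nt-flatness and t-flatness of $F$ coincide, as recorded in Section~\ref{nt-nc-flat-proj-prelim-subsecn}.)

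With this established, exactness of the composite follows in two steps. Given a short exact sequence $0\to\N'\to\N\to\N''\to0$ in $\Comodr\C$, coflatness of the injective comodule $\E$ turns $0\to\N'\oc_\C\E\to\N\oc_\C\E\to\N''\oc_\C\E\to0$ into a short exact sequence of t-unital right $K$\nobreakdash-modules. Applying $-\ot_KF$, right exactness is automatic, while left exactness is precisely what nt-flatness of $F$ yields, the middle and right terms being t-unital. Thus $\N\mapsto\N\oc_\C(\E\ot_KF)$ is exact, proving coflatness and hence injectivity of $\E\ot_KF$. The main obstacle to watch is the t-unitality of $\N\oc_\C\E$: without the left flatness of $K$ (responsible for closure of $\Modrt K$ under kernels), nt-flatness of $F$ could not be applied to the cotensor sequence and the argument would collapse.
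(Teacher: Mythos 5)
Your proposal is correct and follows essentially the same route as the paper's own (first) proof: reduce injectivity to coflatness, apply the associativity isomorphism of Proposition~\ref{tensor-cotensor-assoc-prop} to identify the functor $\N\longmapsto\N\oc_\C(\E\ot_KF)$ with $\N\longmapsto(\N\oc_\C\E)\ot_KF$, and use left flatness of $K$ (via the left-right opposite version of Proposition~\ref{t-unital-kernels-flat-ring}) to get t\+unitality of $\N\oc_\C\E$, so that nt\+flatness of $F$ applies. You in fact spell out the key t\+unitality step more explicitly than the paper does; the paper additionally records a second, alternative argument reducing to the case of a flat $F$ via $\E\ot_KF\simeq\E\ot_K(K\ot_KF)$, which your proof does not need.
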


\begin{proof}
 Following the discussion in
Section~\ref{adjusted-co-contra-mods-subsecn}, it suffices to show
that the functor $\N\longmapsto\N\oc_\C(\E\ot_KF)\:\Comodr\C\rarrow
k\Vect$ is exact.
 By Proposition~\ref{tensor-cotensor-assoc-prop}, the latter functor
is isomorphic to the functor $\N\longmapsto(\N\oc_\C\E)\ot_KF$.
 Finally, if $K$ is a left flat t\+unital algebra, then the full
subcategory $\Modrt K$ is closed under kernels in $\Modrn K$
by Proposition~\ref{t-unital-kernels-flat-ring}, and it follows that
$\N\oc_\C\E$ is a t\+unital right $K$\+module for every
right $\C$\+comodule~$\N$.

 Alternatively, one can notice the natural isomorphisms
$\E\ot_KF\simeq(\E\ot_KK)\ot_KF\simeq\E\ot_K(K\ot_KF)$.
 The left $K$\+module $K\ot_KF$ is flat by~\cite[Proposition~8.16
and Corollary~8.20]{Pnun}.
 This reduces the question to the case when $F$ is a flat left
$K$\+module.
 When the left $K$\+module $F$ is flat, the assumption that $\E$ is
a t\+unital right $K$\+module is no longer needed for the validity
of the lemma.
\end{proof}

 Let $K$ be a left flat t\+unital algebra, $\C$ be a coalgebra, and
$\phi\:\C\times K\rarrow k$ be a right t\+unital pairing.
 By Corollary~\ref{unital-monoidal-of-flat-integrated} and
Lemma~\ref{tensor-product-with-flat-is-injective} (for $\E=\C$),
we have two associative, unital monoidal forgetful functors of 
associative, unital monoidal categories, depicted in the upper part
of the diagram below.
 We also have two associative, unital monoidal fully faithful inclusion
functors of such monoidal categories, depicted in the lower part of
the diagram
\begin{equation} \label{two-monoidal-forgetful-functors-diagram}
\begin{gathered}
 \xymatrix{
  & K\flattBimodtint K \ar[ld] \ar[rd] \\
  K\flattBimodt K \ar@{>->}[d]
  && \C\injBicomod\C \ar@{>->}[d] \\
  K\tBimodt K && \C\Bicomod\C
 }
\end{gathered}
\end{equation}
 The leftmost diagonal functor assigns to an integrated bimodule
$(B,\B,\beta)$ its underlying $K$\+$K$\+bimodule~$B$.
 The rightmost diagonal functor assigns to $(B,\B,\beta)$
the $\C$\+$\C$\+bicomodule~$\B$.

 Since the functor $\phi_*\:\Comodr\C\rarrow\Modrt K\subset\Modrn K$
(cf.\ Remark~\ref{lands-in-t-unital-remark}) is faithful, the leftmost
diagonal functor on the diagram is always faithful.

\subsection{The fully faithful comodule inclusion case}
\label{fully-faithful-comodule-inclusion-secn}
 An important special case of the theory developed in
Section~\ref{integrated-bimodules-subsecn} occurs when the functor
$\phi_*\:\Comodr\C\rarrow\Modrn K$ \,\eqref{right-comodules-to-modules}
from Proposition~\ref{underlying-K-module-structures} is fully faithful.

 Given an algebra $K$, a coalgebra $\C$, and a multiplicative pairing
$\phi\:\C\times K\rarrow k$, one can extend~$\phi$ to a multiplicative
pairing $\check\phi\:\C\times\widecheck K\rarrow k$ by the obvious
rule $\check\phi(c,\>a+r)=a\epsilon_\C(c)+\phi(c,r)$ for all
$c\in\C$ and $a+r\in k\oplus K=\widecheck K$.
 Then the pairing~$\check\phi$ is compatible with the counit of $\C$
and the unit of $\widecheck K$, in the sense that $\check\phi(c,1)=
\epsilon_\C(c)$ for all $c\in\C$.
 Hence the induced unital $k$\+algebra homomorphism $\check\phi^*\:
\widecheck K\rarrow\C^*$.

 Notice that, for every subcoalgebra $\E\subset\C$, every $\E$\+comodule
can be considered as a $\C$\+comodule~\cite[Section~3.1]{Pksurv};
so we have natural fully faithful functors $\E\Comodl\rarrow\C\Comodl$
and $\Comodr\E\rarrow\Comodr\C$.

\begin{prop} \label{comodule-forgetful-full-and-faithfulness}
 Let $K$ be a nonunital $k$\+algebra, $\C$ be a coalgebra over~$k$,
and $\phi\:\C\times K\rarrow k$ be a multiplicative pairing.
 Consider the following conditions:
\begin{enumerate}
\item the functor $\phi_*\:\C\Comodl\rarrow K\Modln$
\,\eqref{left-comodules-to-modules} is fully faithful;
\item the functor $\phi_*\:\Comodr\C\rarrow\Modrn K$
\,\eqref{right-comodules-to-modules} is fully faithful;
\item for every finite-dimensional subcoalgebra\/ $\E\subset\C$,
the composition of functors\/ $\E\Comodl\rarrow\C\Comodl\rarrow
K\Modln$ is fully faithful;
\item for every finite-dimensional subcoalgebra\/ $\E\subset\C$,
the composition of functors\/ $\Comodr\E\rarrow\Comodr\C\rarrow
\Modrn K$ is fully faithful;
\item the pairing~$\check\phi$ is nondegenerate in\/~$\C$, i.~e., for
every $c\in\C$ there exists $\check r\in\widecheck K$ such that
$\check\phi(c,\check r)\ne0$;
\item the image of the $k$\+algebra homomorphism
$\check\phi^*\:\widecheck K\rarrow\C^*$ is dense in the natural
pro-finite-dimensional (linearly compact) topology on the vector
space/algebra\/~$\C^*$; \hbadness=1250
\item the pairing~$\phi$ is nondegenerate in\/~$\C$, i.~e., for
every $c\in\C$ there exists $r\in K$ such that $\phi(c,r)\ne0$;
\item the image of the $k$\+algebra homomorphism $\phi^*\:K\rarrow\C^*$
is dense in the natural pro-finite-dimensional topology on\/~$\C^*$.
\end{enumerate}
 Then the following implications and equivalences hold:
$$
 (1)\ \Longleftrightarrow\ (2)\ \Longleftrightarrow\ (3)
 \ \Longleftrightarrow\ (4)\ \Longleftarrow\ (5)
 \ \Longleftrightarrow\ (6)\ \Longleftarrow\ (7)
 \ \Longleftrightarrow\ (8).
$$
\end{prop}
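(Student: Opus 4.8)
The plan is to run everything through finite-dimensional subcoalgebras and the duality between $\C$ and its dual algebra $\C^*$ equipped with the pro-finite-dimensional topology. First I would settle the two purely linear-algebraic equivalences. For a subspace $V\subseteq\C^*$, density of $V$ in the finite topology is equivalent to the vanishing of its orthogonal complement $\{c\in\C:v(c)=0\text{ for all }v\in V\}$. Applying this to $V=\check\phi^*(\widecheck K)$ gives $(5)\Leftrightarrow(6)$, since that orthogonal complement is precisely the set of $c\in\C$ with $\check\phi(c,\check r)=0$ for all $\check r\in\widecheck K$, whose vanishing is the nondegeneracy in~$(5)$; applying it to $V=\phi^*(K)$ gives $(7)\Leftrightarrow(8)$ in the same way. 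The implication $(7)\Rightarrow(6)$ is then immediate from the inclusion $\phi^*(K)\subseteq\check\phi^*(\widecheck K)$: a subspace containing a dense subspace is dense (equivalently, nondegeneracy of $\phi$ forces nondegeneracy of $\check\phi$).

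Next I would reduce the full-faithfulness conditions to finite-dimensional subcoalgebras. Since the inclusion functors $\E\Comodl\rarrow\C\Comodl$ and $\Comodr\E\rarrow\Comodr\C$ are fully faithful, the implications $(1)\Rightarrow(3)$ and $(2)\Rightarrow(4)$ hold trivially, a composite of fully faithful functors being fully faithful. For the converses I would use that every $\C$\+comodule is the directed union of its finite-dimensional subcomodules, each of which is a comodule over a finite-dimensional subcoalgebra. Given a $K$\+linear map $f\:\N\rarrow\N'$ of right $\C$\+comodules (faithfulness being already known from Proposition~\ref{underlying-K-module-structures}) and a finite-dimensional subcomodule $\N_0\subseteq\N$ over a subcoalgebra $\E_0$, the finite-dimensional subspace $f(\N_0)$ lies in some finite-dimensional subcomodule $\N'_0$ over a subcoalgebra $\E_1$; with $\E=\E_0+\E_1$ the restriction $f|_{\N_0}\:\N_0\rarrow\N'_0$ is a $K$\+linear map of $\E$\+comodules. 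Here I would stress that the induced $K$\+module structure on a subcomodule is intrinsic, not depending on whether one regards it over $\E$ or over $\C$, because the $K$\+action factors through $\C^*\rarrow\E^*$. Condition~$(4)$ then forces $f|_{\N_0}$ to be $\E$\+colinear, hence $\C$\+colinear at every point of $\N_0$; as $\N_0$ varies this proves $f$ is $\C$\+colinear, giving $(4)\Rightarrow(2)$, and symmetrically $(3)\Rightarrow(1)$.

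The heart of the argument is $(3)\Leftrightarrow(4)$ and $(5)\Rightarrow(4)$, which I would obtain by identifying both $(3)$ and $(4)$, for a fixed finite-dimensional subcoalgebra $\E$, with the single left--right symmetric condition that the induced homomorphism $\widecheck K\rarrow\E^*$ be surjective. Using the comodule--module correspondence $\E\Comodl\simeq\E^*\Modl$ and $\Comodr\E\simeq\Modr\E^*$, the composites appearing in $(3)$ and $(4)$ become restriction of scalars along $\widecheck K\overset{\check\phi^*}{\rarrow}\C^*\rarrow\E^*$. Restriction along a surjection is fully faithful, so surjectivity suffices; conversely, if the image of $\widecheck K$ is a proper subalgebra $A\subsetneq\E^*$, then testing on the regular $\E^*$\+module shows $\Hom_{\E^*}(\E^*,\E^*)\subsetneq\Hom_A(\E^*,\E^*)$ (an $A$\+linear endomorphism of the regular module need not be given by multiplication by an element of $\E^*$ once $1$ fails to generate $\E^*$ over $A$), so full faithfulness fails. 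Thus each of $(3)$, $(4)$ amounts to surjectivity of $\widecheck K\rarrow\E^*$ for every finite-dimensional $\E$, and the symmetry of this criterion yields $(3)\Leftrightarrow(4)$. Finally $(6)$ gives this surjectivity for each $\E$: the quotient map $\C^*\rarrow\E^*$ is continuous onto the discrete finite-dimensional space $\E^*$, so the dense image $\check\phi^*(\widecheck K)$ maps onto a dense, hence full, subspace; via $(5)\Leftrightarrow(6)$ this is $(5)\Rightarrow(4)$.

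The step I expect to be the main obstacle is the regular-module computation underpinning $(3)\Leftrightarrow(4)$: one must verify that full faithfulness of restriction of scalars along $\widecheck K\rarrow\E^*$ is genuinely equivalent to \emph{surjectivity}, rather than to the a priori weaker condition that $\widecheck K\rarrow\E^*$ be a ring epimorphism. The explicit test object $\E^*$ and the comparison of endomorphism algebras is exactly what rules out the epimorphism-but-not-surjection phenomenon in this finite-dimensional situation. By contrast, the comodule reductions of the second paragraph are routine once the compatibility of the induced $K$\+module structures across subcoalgebras is noted, and the first- and last-paragraph arguments are standard facts about the finite topology on $\C^*$.
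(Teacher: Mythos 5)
Most of your argument is sound and parallels the paper: the reductions $(1)\Leftrightarrow(3)$ and $(2)\Leftrightarrow(4)$ via finite-dimensional subcomodules and subcoalgebras, the orthogonal-complement equivalences $(5)\Leftrightarrow(6)$ and $(7)\Leftrightarrow(8)$, the implication $(7)\Rightarrow(6)$, and the derivation of $(6)\Rightarrow(3),(4)$ from surjectivity of $\widecheck K\rarrow\E^*$ are all correct. The genuine gap is precisely at the step you flagged as the crux, and your regular-module computation does not rule out the ``epimorphism-but-not-surjection phenomenon'' --- it is defeated by it. Full faithfulness of restriction of scalars along a ring homomorphism $A\rarrow B$ is equivalent to that homomorphism being a \emph{ring epimorphism}, and non-surjective epimorphisms between finite-dimensional unital algebras exist. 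The standard example is the inclusion of the upper triangular matrices $T=T_2(k)\subset M_2(k)$. Here $M_2(k)$, as a left $T$-module, is the direct sum of its two columns, each isomorphic to the standard module $k^2$; every $T$-endomorphism of $k^2$ is a scalar (commuting with the diagonal idempotents forces it to be diagonal, and commuting with $e_{12}$ forces the two diagonal entries to agree). Hence $\Hom_T(M_2(k),M_2(k))$ is $4$-dimensional and coincides with the space of right multiplications, i.e., $\Hom_T(M_2(k),M_2(k))=\Hom_{M_2(k)}(M_2(k),M_2(k))$, contrary to your claimed strict inclusion $\Hom_{\E^*}(\E^*,\E^*)\subsetneq\Hom_A(\E^*,\E^*)$. (Equivalently, one computes $M_2(k)\ot_T M_2(k)\simeq M_2(k)$.)

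Consequently your identification of $(3)$ and $(4)$ with surjectivity of $\widecheck K\rarrow\E^*$ is false, so your proof of $(3)\Leftrightarrow(4)$ is unsound; worse, if your lemma were true it would prove $(4)\Rightarrow(5)$, which fails. Indeed, take $\C$ to be the $2\times2$ matrix coalgebra, so $\C^*\simeq M_2(k)$, let $K\subset\C^*$ be a copy of the upper triangular matrices, and let $\phi$ be the evaluation pairing. Since $\C$ is finite-dimensional, comodules are just unital $\C^*$-modules, and conditions $(1)$--$(4)$ hold because $K\rarrow\C^*$ is a ring epimorphism; yet $(5)$--$(8)$ all fail, since $\phi$ and $\check\phi$ vanish on the functional dual to the lower-left matrix unit. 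This is exactly the phenomenon the paper's proof warns about when it recalls that a monomorphism of coalgebras need not be injective~\cite{NT}, \cite{Ag}, and it is why the proposition asserts only the one-way implication $(5)\Rightarrow(4)$. The repair is the paper's route: with $L\subseteq\E^*$ the image of $\widecheck K$ and $\LL=L^*$, full faithfulness in $(3)$ (resp.\ $(4)$) is equivalent to the surjection of coalgebras $\E\rarrow\LL$ being a \emph{monomorphism of coalgebras}, equivalently to $L\rarrow\E^*$ being a ring epimorphism; and one then quotes the fact that this property is left-right symmetric \cite[Proposition~XI.1.2]{Sten}, \cite[Proposition~3.2 or Theorem~3.5]{NT} to get $(3)\Leftrightarrow(4)$. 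Your density argument for $(6)\Rightarrow(3),(4)$ survives unchanged, since density does give surjectivity onto each $\E^*$, in which case $\E\rarrow\LL$ is an isomorphism.
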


\begin{proof}
 The point is that a monomorphism (in the category of) coassociative
coalgebras need not be an injective map~\cite[Example in
Section~3.2]{NT} (see also~\cite{Ag}).
 For this reason, the conditions~(5\+-6) are stronger than~(1\+-4).

 (1)\,$\Longrightarrow$\,(3) and (2)\,$\Longrightarrow$\,(4)
 These implications hold because the functors
$\E\Comodl\allowbreak\rarrow\C\Comodl$ and $\Comodr\E\rarrow\Comodr\C$
are fully faithful.

 (3)\,$\Longrightarrow$\,(1) and (4)\,$\Longrightarrow$\,(2)
 Any coassociative coalgebra $\C$ is the union of its finite-dimensional
subcoalgebras $\E$, and any $\C$\+comodule is the union of its
finite-dimensional subcomodules.
 Furthermore, any finite-dimensional $\C$\+comodule is a comodule over
a finite-dimensional subcoalgebra of~$\C$ \,\cite[Lemma~3.1]{Pksurv}.
 The desired implications follow from these observations.

 (3)\,$\Longleftrightarrow$\,(4)
 The multiplicative, unital pairing $\check\phi\:\C\times\widecheck K
\rarrow k$ induces functors $\check\phi_*\:\C\Comodl\rarrow
\widecheck K\Modl=K\Modln$ and $\check\phi_*\:\Comodr\C\rarrow\Modr
\widecheck K=\Modrn K$, which can be identified with
the functors~$\phi_*$.
 This reduces the question to the case of a unital algebra and
a unital pairing.

 Compose the unital $k$\+algebra homomorphism $\check\phi^*\:
\widecheck K\rarrow\C^*$ with the surjective unital $k$\+algebra
homomorphism $\C^*\rarrow\E^*$ dual to the inclusion $\E\rarrow\C$.
 Denote by $L$ the image of the composition $\widecheck K\rarrow\C^*
\rarrow\E^*$; so $L$ is a finite-dimensional unital $k$\+algebra.
 Let $\LL=L^*$ be the coalgebra dual to~$L$.
 The we have an injective homomorphism of finite-dimensional
unital $k$\+algebras $L\rarrow\E^*$ and the dual surjective
homomorphism of finite-dimensional coalgebras $\E\rarrow\LL$.

 It is clear that the functor $\E\Comodl\rarrow\widecheck K\Modl$
is fully faithful if and only if the functor $\E\Comodl\rarrow
\LL\Comodl$ induced by the coalgebra map $\E\rarrow\LL$
(see~\cite[Section~1]{Pksurv}) is fully faithful.
 Similarly, the functor $\Comodr\E\rarrow\Modr\widecheck K$ is fully
faithful if and only if the functor $\Comodr\E\rarrow\Comodr\LL$
is fully faithful.

 Finally, any one of the functors $\E\Comodl\rarrow\LL\Comodl$ and/or
$\Comodr\E\rarrow\Comodr\LL$ is fully faithful if and only if $\E
\rarrow\LL$ is a monomorphism in the category of (finite-dimensional)
coalgebras, which is a left-right symmetric
property~\cite[Proposition~XI.1.2]{Sten},
\cite[Proposition~3.2 or Theorem~3.5]{NT}.

 (6)\,$\Longrightarrow$\,(3) or (6)\,$\Longrightarrow$\,(4)
 The map $\check\phi^*\:\widecheck K\rarrow\C^*$ has dense image if
and only if, in the context of the discussion in the previous
paragraphs, for every finite-dimensional subcoalgebra $\E\subset\C$,
the composition $\widecheck K\rarrow\C^*\rarrow\E^*$ is surjective.
 In this case, the coalgebra map $\E\rarrow\LL$ is an isomorphism.
 Then the induced functors $\E\Comodl\rarrow\LL\Comodl$ and
$\Comodr\E\rarrow\Comodr\LL$ are category equivalences.

 (5)\,$\Longleftrightarrow$\,(6) and (7)\,$\Longleftrightarrow$\,(8)
 Straightforward.
 
 (7)\,$\Longrightarrow$\,(5) and (8)\,$\Longrightarrow$\,(6) Obvious.
\end{proof}

 Returning to the context of Section~\ref{integrated-bimodules-subsecn},
we observe that if the functor $\phi_*\:\Comodr\C\rarrow\Modrn K$
is fully faithful, then so is the functor assigning to a right
integrated bimodule $(B,\B,\beta)$ its underlying
$K$\+$K$\+bimodule~$B$.
 In particular, on
the diagram~\eqref{two-monoidal-forgetful-functors-diagram}
from Section~\ref{two-forgetful-monoidal-functors-subsecn},
the leftmost diagonal functor $K\flattBimodtint K\rarrow
K\flattBimodt K$ is fully faithful in this case.
{\hbadness=1500\par}

 In this special case, being right integrated becomes a \emph{property}
of a $K$\+$K$\+bimodule rather than an additional structure.
 So we will sometimes change our terminology and speak of
\emph{right integrable} instead of ``right integrated'' bimodules
when the functor $\phi_*\:\Comodr\C\rarrow\Modrn K$ is known
to be fully faithful.

 Moreover, if the functor $\phi_*\:\Comodr\C\rarrow\Modrn K$ is
fully faithful, then a right $\C$\+comodule structure on
a $k$\+vector space $\B$ commutes with the given left $\C$\+comodule
structure if and only if the underlying right $K$\+module structure
commutes with that given left $\C$\+comodule structure.
 So condition~(i) from Section~\ref{integrated-bimodules-subsecn}
\emph{need not} be imposed in the definition of a right integrable
bimodule~$B$; it holds automatically (provided that condition~(ii)
holds).

 Simply put: assuming the functor $\phi_*\:\Comodr\C\rarrow\Modrn K$
to be fully faithful, a $K$\+$K$\+bimodule $B$ is called \emph{right
integrable} if the right $K$\+module structure on the tensor
product $\C\ot_KB$ arises from some right $\C$\+comodule structure.

\Section{Description of Right Semimodules and Left Semicontramodules}
\label{description-of-semimod-semicontra-secn}

\subsection{Construction of semialgebra}
\label{construction-of-semialgebra-subsecn}
 A homomorphism of $k$\+algebras $f\:K\rarrow R$ is said to be
\emph{left t\+unital}~\cite[Section~9]{Pnun} if $f$~makes $R$
a t\+unital left $K$\+module.
 By~\cite[Proposition~9.5]{Pnun}, it then follows that the algebra $R$
is t\+unital (but we will not use this fact).
 Similarly, $f$~is said to be \emph{right t\+unital} if $R$ is
a t\+unital right $K$\+module; and $f$~is said to be \emph{t\+unital}
if it is both left and right t\+unital.
 In other words, $f$~is t\+unital if and only if $R$ is a t\+unital
$K$\+$K$\+bimodule.

 Assume that the $k$\+algebra $K$ is t\+unital.
 Then the datum of a t\+unital homomorphism of $k$\+algebras
$f\:K\rarrow R$ is equivalent to the datum of a unital monoid
object $R$ in the unital monoidal category $K\tBimodt K$.
 
 Let $f\:K\rarrow R$ be a t\+unital homomorphism of $k$\+algebras.
 Assume further that $K$ is a left flat algebra and $R$ is a left
flat $K$\+$K$\+bimodule (as defined in
Section~\ref{two-forgetful-monoidal-functors-subsecn}).
 So $R$ is a unital monoid object in the unital monoidal category
$K\flattBimodt K$.

 Finally, let $\phi\:\C\times K\rarrow k$ be a right t\+unital
multiplicative pairing (in the sense of
Sections~\ref{multiplicative-pairings-subsecn}\+-%
\ref{t-unital-pairings-subsecn}), and let $(R,\R,\rho)$ be a right
integrated bimodule structure on the $K$\+$K$\+bimodule~$R$
(in the sense of Section~\ref{integrated-bimodules-subsecn}).
 Then $(R,\R,\rho)$ is an object of the unital monoidal category
$K\flattBimodtint K$.

 We are interested in lifting the monoid structure on $R$ to
a monoid structure on $(R,\R,\rho)$, along the monoidal functor
$K\flattBimodtint K\rarrow K\flattBimodt K$ (the leftmost diagonal
functor on the diagram~\eqref{two-monoidal-forgetful-functors-diagram}
from Section~\ref{two-forgetful-monoidal-functors-subsecn}).
 The following proposition and corollary form our version
of~\cite[Section~10.2.1]{Psemi}.

\begin{prop} \label{lift-of-monoid-structure-prop}
 Let $K$ be a left flat t\+unital $k$\+algebra and
$\phi\:\C\times K\rarrow k$ be a right t\+unital multiplicative pairing.
 In this context, a monoid structure on an object $(R,\R,\rho)\in
K\flattBimodtint K$ is uniquely determined by the induced monoid
structure on the object $R\in K\flattBimodt K$.
 Given a t\+unital homomorphism of $K$\+algebras making $R$ a left
flat $K$\+$K$\+bimodule and a right integrated bimodule structure
$(R,\R,\rho)$ on $R$, the unital monoid structure on the object
$R\in K\flattBimodt K$ can be lifted to a unital monoid structure on
$(R,\R,\rho)\in K\flattBimodtint K$ if and only if the following
two conditions hold:
\begin{itemize}
\item the map\/ $\C=\C\ot_KK\rarrow\C\ot_KR=\R$ induced by the map
$f\:K\rarrow R$ is a right\/ $\C$\+comodule morphism;
\item the map\/ $\R\oc_\C\R=(\C\ot_KR)\oc_\C(\C\ot_KR)\simeq
\C\ot_KR\ot_KR\rarrow\C\ot_KR=\R$ induced by the multiplication
map $R\ot_KR\rarrow R$ is a right\/ $\C$\+comodule morphism.
 (Here the natural\/ $\C$\+$\C$\+bicomodule isomorphism
$(\C\ot_KR)\oc_\C(\C\ot_KR)\simeq\C\ot_KR\ot_KR$ is provided
by Proposition~\ref{tensor-cotensor-assoc-prop}.)
\end{itemize}
\end{prop}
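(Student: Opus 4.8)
The plan is to read the statement as a general fact about lifting a monoid along the faithful monoidal functor $U\colon K\flattBimodtint K\rarrow K\flattBimodt K$ given by the leftmost diagonal arrow on diagram~\eqref{two-monoidal-forgetful-functors-diagram}, which sends $(B,\B,\beta)$ to its underlying bimodule~$B$. The first thing I would record is a description of the morphisms in $K\flattBimodtint K$: a morphism $(B',\B',\beta')\rarrow(B'',\B'',\beta'')$ amounts to a $K$\+$K$\+bimodule map $g\colon B'\rarrow B''$ such that the induced map $\id_\C\ot_K g$, transported along $\beta'$ and $\beta''$ to a map $\B'\rarrow\B''$, is a morphism of right $\C$\+comodules. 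Here $\id_\C\ot_K g$ is automatically a map of left $\C$\+comodules and, by condition~(ii) of Section~\ref{integrated-bimodules-subsecn}, a map of right $K$\+modules; so the one genuine requirement is compatibility with the right coactions (this requirement is vacuous exactly when $\phi_*$ is fully faithful, as in Section~\ref{fully-faithful-comodule-inclusion-secn}). Since $\phi_*$, and hence $U$, is faithful (see the end of Section~\ref{two-forgetful-monoidal-functors-subsecn}), this correspondence on morphisms is injective.

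The uniqueness assertion is then immediate: any monoid structure on $(R,\R,\rho)$ has multiplication and unit lying over those of the monoid $R$ in $K\flattBimodt K$, and $U$ being faithful, these lifts are determined by their images.

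For the existence criterion I would apply the morphism description to the two structure maps of~$R$. The unit $f\colon K\rarrow R$ lifts to a morphism $(K,\C,\varkappa)\rarrow(R,\R,\rho)$ if and only if $\id_\C\ot_K f\colon\C=\C\ot_KK\rarrow\C\ot_KR=\R$ is a right $\C$\+comodule map, which is the first displayed condition. Similarly, the multiplication $R\ot_KR\rarrow R$ lifts to a morphism $(R\ot_KR,\R\oc_\C\R,\beta)\rarrow(R,\R,\rho)$ if and only if the induced map $\C\ot_KR\ot_KR\rarrow\C\ot_KR$ is a right $\C$\+comodule map; here one uses that the structural isomorphism $\R\oc_\C\R\simeq\C\ot_KR\ot_KR$ built into the tensor product of $K\flattBimodtint K$ is precisely the one of Proposition~\ref{tensor-cotensor-assoc-prop}, so this is the second displayed condition.

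It remains to check that, once both structure maps lift, the associativity and unitality axioms hold in $K\flattBimodtint K$. This is where faithfulness does the decisive work: the two composites in each axiom diagram for the lifted maps are carried by $U$ to the corresponding composites for the monoid $R$ in $K\flattBimodt K$, which agree; and a faithful functor reflects such equalities, so the diagrams commute upstairs as well. The step I expect to require the most care is confirming that $U$ is genuinely monoidal with coherence isomorphisms compatible with those of $K\flattBimodt K$ and $\C\Bicomod\C$---equivalently, that the identification used in the second bullet really is the underlying bimodule map of the tensor associativity of the integrated category. This is the content of Proposition~\ref{tensor-product-integrated} and Corollary~\ref{unital-monoidal-of-flat-integrated}, and granting it the argument closes.
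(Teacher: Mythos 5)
Your proposal is correct and follows essentially the same route as the paper's own proof: faithfulness of the forgetful functor $K\flattBimodtint K\rarrow K\flattBimodt K$ gives uniqueness, the two displayed conditions are exactly the requirement that the unit and multiplication maps of $R$ come from morphisms in $K\flattBimodtint K$, and the monoid axioms then hold automatically because a faithful functor reflects equalities of morphisms. Your additional remarks (the explicit description of morphisms in the integrated category, and the compatibility of the tensor structure with the isomorphism of Proposition~\ref{tensor-cotensor-assoc-prop}) merely make explicit what the paper leaves implicit.
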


\begin{proof}
 The point is that the forgetful functor $K\flattBimodtint K\rarrow
K\flattBimodt K$ taking $(R,\R,\rho)$ to $R$ is faithful.
 Any monoid structure on $(R,\R,\rho)$ is given by certain morphisms in
$K\flattBimodtint K$, and these morphisms are uniquely determined by
their images in $K\flattBimodt K$.
 Hence the first assertion of the proposition.

 Conversely, given a monoid structure on $R$, in order for it to be
liftable to a monoid structure on $(R,\R,\rho)$, one needs to check
that the structure morphisms of the given monoid structure in
$K\flattBimodt K$ come from some morphisms in
$K\flattBimodtint K$.
 These are the conditions listed in the second assertion of
the proposition.

 If the required morphisms exist in $K\flattBimodtint K$,
they will automatically satisfy the axioms of a monoid structure in
$K\flattBimodtint K$, because their images satisfy such axioms in
$K\flattBimodt K$ and the forgetful functor $K\flattBimodtint K
\rarrow K\flattBimodt K$ is faithful.
 Simply put, an equation on morphisms in $K\flattBimodtint K$
is satisfied whenever its image in $K\flattBimodt K$ is satisfied.
\end{proof}

\begin{cor} \label{fully-faithful-lift-of-monoid-cor}
 Let $K$ be a left flat t\+unital $k$\+algebra and $\phi\:\C\times
K\rarrow k$ be a right t\+unital multiplicative pairing such that
the functor $\phi_*\:\Comodr\C\rarrow\Modrt K$ is fully faithful.
 Let $f\:K\rarrow R$ be a t\+unital homomorphism of $K$\+algebras
making $R$ a left flat $K$\+$K$\+bimodule, and assume that $R$ is
a right integrable $K$\+$K$\+bimodule (as per the discussion in
the end of Section~\ref{fully-faithful-comodule-inclusion-secn}).
 Then there exists a unique (unital) monoid structure on the object
$(R,\R,\rho)\in K\flattBimodtint K$ lifting the monoid structure
on the object $R\in K\flattBimodt K$.
\end{cor}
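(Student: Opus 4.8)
The plan is to deduce this from Proposition~\ref{lift-of-monoid-structure-prop}, observing that the full faithfulness of~$\phi_*$ makes both of its conditions automatic. Uniqueness is immediate: it is the first assertion of Proposition~\ref{lift-of-monoid-structure-prop}, since the forgetful functor $K\flattBimodtint K\rarrow K\flattBimodt K$ is faithful, so any monoid structure lifting the given one on~$R$ is determined by its image. Thus only existence needs an argument, and by the second assertion of Proposition~\ref{lift-of-monoid-structure-prop} it suffices to check that the unit map $\C=\C\ot_KK\rarrow\C\ot_KR=\R$ induced by~$f$ and the multiplication map $\R\oc_\C\R\simeq\C\ot_KR\ot_KR\rarrow\C\ot_KR=\R$ induced by the multiplication of~$R$ are right $\C$\+comodule morphisms.

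The key point is that the functor $\phi_*\:\Comodr\C\rarrow\Modrt K$ does not alter underlying vector spaces or underlying $k$\+linear maps; hence, being fully faithful, it has the property that a $k$\+linear map between the underlying spaces of two right $\C$\+comodules is a right $\C$\+comodule morphism as soon as it is a morphism of the induced right $K$\+modules. I would therefore reduce both conditions to the assertion that the two maps above are right $K$\+module morphisms. For this I note that $f\:K\rarrow R$ and the multiplication $m\:R\ot_KR\rarrow R$ are morphisms in $K\tBimodt K$, in particular morphisms of right $K$\+modules; tensoring on the left with~$\C$ over~$K$ then shows that $\id_\C\ot_Kf$ and $\id_\C\ot_Km$ are right $K$\+module morphisms. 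The source and target of each map carry right $\C$\+comodule structures---namely $\C$, \,$\R$, and $\R\oc_\C\R$ (the latter being a $\C$\+$\C$\+bicomodule by associativity of the cotensor product and identified with $\C\ot_KR\ot_KR$ via Proposition~\ref{tensor-cotensor-assoc-prop})---so full faithfulness upgrades both maps to right $\C$\+comodule morphisms, and Proposition~\ref{lift-of-monoid-structure-prop} yields the desired lift.

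I expect the only delicate step to be the bookkeeping that connects the two descriptions of each map. Concretely, I would confirm that applying $\phi_*$ to the right $\C$\+comodule structures on $\C$, on $\R=\C\ot_KR$, and on $\R\oc_\C\R\simeq\C\ot_KR\ot_KR$ recovers precisely the right $K$\+module structures coming from the rightmost tensor factors $K$, \,$R$, and~$R$: for~$\C$ this uses right t\+unitality of~$\phi$ to identify $\C\ot_KK\simeq\C$ (the isomorphism~$\varkappa$), for~$\R$ it is condition~(ii) in the definition of a right integrated bimodule, and for $\R\oc_\C\R$ it follows from the compatibility of the isomorphism of Proposition~\ref{tensor-cotensor-assoc-prop} with the right comodule structures. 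Once this identification is in place, the maps $\id_\C\ot_Kf$ and $\id_\C\ot_Km$ really are the $K$\+module avatars of the two comodule morphisms being tested, and the argument closes.
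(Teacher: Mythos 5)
Your proposal is correct and follows essentially the same route as the paper: both deduce the corollary from Proposition~\ref{lift-of-monoid-structure-prop} by observing that the unit and multiplication maps are automatically right $K$\+module morphisms (being obtained by tensoring the bimodule maps $f$ and $R\ot_KR\rarrow R$ with $\C$), and that under the full-and-faithfulness hypothesis any right $K$\+module map between right $\C$\+comodules is a right $\C$\+comodule map. The bookkeeping you flag as delicate---that $\phi_*$ applied to the comodule structures on $\C$, $\R$, and $\R\oc_\C\R\simeq\C\ot_KR\ot_KR$ recovers the natural right $K$\+module structures---is indeed needed, but it is already packaged into the definition of right integrated bimodules and the monoidal structure of $K\flattBimodtint K$ from Proposition~\ref{tensor-product-integrated} and Corollary~\ref{unital-monoidal-of-flat-integrated}, which is why the paper can leave it implicit.
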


\begin{proof}
 Under the full-and-faithfulness assumption of the corollary,
the functor $K\flattBimodtint K \rarrow K\flattBimodt K$ is
fully faithful, so the conditions of
Proposition~\ref{lift-of-monoid-structure-prop} are
satisfied automatically.

 In other words, the maps appearing in the conditions of
the proposition are always right $K$\+module maps.
 Under the full-and-faithfulness assumption, any right $K$\+module
map between right $\C$\+comodules is a right $\C$\+comodule map.
\end{proof}

 Suppose that we have managed to lift a monoid structure on
$R\in K\flattBimodt K$ to a monoid structure on
$(R,\R,\rho)\in K\flattBimodtint K$, as per
Proposition~\ref{lift-of-monoid-structure-prop} or
Corollary~\ref{fully-faithful-lift-of-monoid-cor}.
 Then, applying the monoidal functor $K\flattBimodtint K\rarrow
\C\injBicomod\C$ (the rightmost diagonal
functor on the diagram~\eqref{two-monoidal-forgetful-functors-diagram}
from Section~\ref{two-forgetful-monoidal-functors-subsecn}), we
obtain a unital monoid object $\bS=\R$ in the unital monoidal
category $\C\injBicomod\C$.
 In other words, we have produced a (semiassociative, semiunital)
\emph{semialgebra} $\bS$ over the coalgebra~$\C$, as defined
in Section~\ref{semialgebras-prelim-subsecn}.

 Explicitly, we have
$$
 \bS=\R=\C\ot_KR.
$$
 The left $\C$\+comodule structure on $\bS$ is induced by the left
$\C$\+comodule structure on~$\C$.
 The right $\C$\+comodule structure on $\bS$ is assumed given as
a part of the data: this is what is meant by $(R,\R,\rho)$ being
a right integrated $K$\+$K$\+bimodule.

 The semiunit map $\be\:\C\rarrow\bS$ is the composition
$$
 \C=\C\ot_KK\rarrow\C\ot_KR=\bS,
$$
as in the first condition of
Proposition~\ref{lift-of-monoid-structure-prop}.
 The semimultiplication map $\bm\:\bS\oc_\C\bS\rarrow\bS$ is
the composition
$$
 \bS\oc_\C\bS=(\C\ot_KR)\oc_\C(\C\ot_KR)\simeq
 \C\ot_KR\ot_KR\rarrow\C\ot_KR=\bS,
$$ 
as in the second condition of the proposition.

 Moreover, the semialgebra $\bS$ is an injective left $\C$\+comodule.
 By Propositions~\ref{comodule-categories-abelian}(b)
and~\ref{contramodule-category-abelian}, it follows that
the category of right $\bS$\+semimodules $\Simodr\bS$ and
the category of left $\bS$\+semicontramodules $\bS\Sicntr$ are
abelian (with exact forgetful functors $\Simodr\bS\rarrow\Comodr\C$
and $\bS\Sicntr\rarrow\C\Contra$).
 In the rest of Section~\ref{description-of-semimod-semicontra-secn},
our aim is to describe these abelian categories more explicitly.

\subsection{Functor between module categories of right comodules
and modules} \label{functor-module-categories-right-comods-subsecn}
 Let $K$ be a left flat t\+unital $k$\+algebra, $\C$ be a coalgebra
over~$k$, and $\phi\:\C\times K\rarrow k$ be a right t\+unital
multiplicative pairing.

 As mentioned in Section~\ref{semimodules-prelim-subsecn},
the category of right $\C$\+comodules $\Comodr\C$ is a right module
category over the monoidal category $\C\Bicomod\C$.
 Restricting the action of $\C\Bicomod\C$ in $\Comodr\C$ along
the monoidal functors
$$
 K\flattBimodtint K\lrarrow\C\injBicomod\C\lrarrow\C\Bicomod\C
$$
(the rightmost functors on
the diagram~\eqref{two-monoidal-forgetful-functors-diagram}),
we make $\Comodr\C$ a right module category over the monoidal
category $K\flattBimodtint K$.

 On the other hand, according to
Proposition~\ref{t-c-unital-monoidal-model-categories}(b),
the category of t\+unital right $K$\+modules $\Modrt K$ is
a right module category over the monoidal category $K\tBimodt K$.
 Restricting the action of $K\tBimodt K$ in $\Modrt K$ along
the monoidal functors
$$
 K\flattBimodtint K\lrarrow K\flattBimodt K\lrarrow K\tBimodt K
$$
(the leftmost functors on
the diagram~\eqref{two-monoidal-forgetful-functors-diagram}),
we make $\Modrt K$ a right module category of the monoidal
category $K\flattBimodtint K$.

\begin{prop} \label{functor-module-cats-right-comods-prop}
 The functor $\phi_*\:\Comodr\C\rarrow\Modrt K$ is an associative,
unital module functor between the associative, unital right module
categories\/ $\Comodr\C$ and\/ $\Modrt K$ over the monoidal category
$K\flattBimodtint K$.
\end{prop}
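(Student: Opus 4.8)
The plan is to exhibit, naturally in the right $\C$\+comodule $\N$ and in the object $(B,\B,\beta)\in K\flattBimodtint K$, an isomorphism of t\+unital right $K$\+modules
$$\phi_*(\N\oc_\C\B)\simeq\phi_*(\N)\ot_KB$$
and then to verify that these isomorphisms are compatible with the unitality and associativity constraints of the two right module category structures. Both sides do lie in $\Modrt K$: the left-hand side because $\N\oc_\C\B$ is a right $\C$\+comodule and $\phi_*$ sends right $\C$\+comodules to t\+unital right $K$\+modules by Proposition~\ref{t-unital-pairing-comodules-prop}, and the right-hand side by Proposition~\ref{t-c-unital-monoidal-model-categories}(b).

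First I would build the isomorphism as a composite. Since $\beta\:\C\ot_KB\simeq\B$ is an isomorphism of left $\C$\+comodules, it induces an isomorphism $\N\oc_\C(\C\ot_KB)\simeq\N\oc_\C\B$ (the cotensor product uses only the left $\C$\+comodule structure on~$\B$). Proposition~\ref{tensor-cotensor-assoc-prop}, applied with $\E=\C$ carrying its natural left $\C$\+comodule structure and the right $K$\+action induced by~$\phi$, and with $F=B$, yields $(\N\oc_\C\C)\ot_KB\simeq\N\oc_\C(\C\ot_KB)$; the hypotheses are met because $\C$ is a t\+unital right $K$\+module (right t\+unitality of~$\phi$) and $B$ is nt\+flat (being flat over~$\widecheck K$). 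Finally the cotensor unitality isomorphism~\eqref{cotensor-unitality} identifies $\N\oc_\C\C\simeq\N$. Composing the three gives the map displayed above.

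The essential step is to check $K$\+linearity for the right actions. By construction $\phi_*$ turns the right $\C$\+comodule structure on $\N\oc_\C\B$ (inherited from the one on~$\B$) into the right $K$\+action on the $\B$\+factor; by condition~(ii) in the definition of a right integrated bimodule (Section~\ref{integrated-bimodules-subsecn}), the isomorphism $\beta$ identifies this with the natural right $K$\+action on the $B$\+factor of $\C\ot_KB$. The isomorphism of Proposition~\ref{tensor-cotensor-assoc-prop} is then right $K$\+linear for this action, since it is obtained by tensoring the defining left exact sequence~\eqref{cotensor-product-left-exact-sequence} with $F=B$, so the right action on~$B$ is carried through unchanged. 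It remains to see that the cotensor unitality map $\N\oc_\C\C\simeq\N$ intertwines the right $K$\+action on the $\C$\+factor with the action defining $\phi_*(\N)$; this is a one-line Sweedler computation using the definition of $\phi_*$ and the counitality of~$\C$.

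Finally I would verify the module-functor coherence. Unitality, taken at the unit object $(K,\C,\varkappa)$, asks that our isomorphism agree with the unit constraints $\N\oc_\C\C\simeq\N$ and $\phi_*(\N)\ot_KK\simeq\phi_*(\N)$, which is immediate from the construction. Associativity asks, for objects $(B',\B',\beta')$ and $(B'',\B'',\beta'')$ with $B''$ nt\+flat, that the two ways of identifying $\phi_*\big(\N\oc_\C(\B'\oc_\C\B'')\big)$ with $(\phi_*(\N)\ot_KB')\ot_KB''$ coincide, where the tensor product object with underlying data $B'\ot_KB''$ and $\B'\oc_\C\B''$ is the one furnished by Proposition~\ref{tensor-product-integrated}. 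I expect this associativity coherence to be the main obstacle. It should reduce to the compatibility of the several associativity isomorphisms in play: the cotensor associativity~\eqref{cotensor-associativity}, which realizes $(\N\oc_\C\B')\oc_\C\B''$ and $\N\oc_\C(\B'\oc_\C\B'')$ as the same subspace of $\N\ot_k\B'\ot_k\B''$, together with the associativity isomorphism of Proposition~\ref{tensor-cotensor-assoc-prop}, which is defined at the level of underlying tensor products over~$k$. Because all these maps are concrete sub\-/quotient identifications of $k$\+tensor products, the coherence square commutes by a diagram chase on elements, exactly in the spirit of the commutativity of diagrams of associativity isomorphisms discussed at the end of~\cite[Section~1.2.5]{Psemi}; no idea beyond careful bookkeeping should be required.
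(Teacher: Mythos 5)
Your proposal is correct and follows essentially the same route as the paper: the paper's entire proof consists of the chain $\N\ot_KB=(\N\oc_\C\C)\ot_KB\simeq\N\oc_\C(\C\ot_KB)=\N\oc_\C\B$, which is exactly your three-step composite (cotensor unitality, Proposition~\ref{tensor-cotensor-assoc-prop} with $\E=\C$ and $F=B$, and the isomorphism~$\beta$), justified by the same hypotheses (right t\+unitality of~$\phi$ making $\C$ a t\+unital right $K$\+module, and left flatness of $B$ implying nt\+flatness). Your additional verifications of right $K$\+linearity and of the unitality/associativity coherence are details the paper leaves implicit, resolved in the same spirit (element-level identifications of subspaces of $k$\+tensor products, cf.\ the reference to~\cite[end of Section~1.2.5]{Psemi}), so there is no gap.
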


\begin{proof}
 For an explanation concerning the functor $\phi_*\:\Comodr\C\rarrow
\Modrt K$, see Remark~\ref{lands-in-t-unital-remark}.
 The point is that, for any object $(B,\B,\beta)\in K\flattBimodtint K$
and any right $\C$\+comodule $\N$, the natural isomorphism of
right $K$\+modules
$$
 \N\ot_K B=(\N\oc_\C\C)\ot_KB\simeq \N\oc_\C(\C\ot_KB)=\N\oc_\C\B
$$
holds by Proposition~\ref{tensor-cotensor-assoc-prop}.
\end{proof}

\subsection{Description of right semimodules}
 Let $K$ be a left flat t\+unital $k$\+algebra, $\C$ be a coalgebra
over~$k$, and $\phi\:\C\times K\rarrow k$ be a right t\+unital
multiplicative pairing.
 Let $f\:K\rarrow R$ be a t\+unital homomorphism of $k$\+algebras
making $R$ a left flat $K$\+$K$\+bimodule, and let $(R,\bS,\rho)$
be a monoid object in $K\flattBimodtint K$ lifting the monoid object
$R\in K\flattBimodt K$.
 So $\bS\in\C\injBicomod\C$ is a semialgebra over~$\C$.

 By the definition (see Section~\ref{semimodules-prelim-subsecn}),
a right semimodule $\bN$ over $\bS$ is a module object in
the right module category $\Comodr\C$ over the monoid object
$\bS$ in the monoidal category $\C\Bicomod\C$.
 In our context, when the monoid object $\bS$ in $\C\Bicomod\C$ comes
from a monoid object $(R,\bS,\rho)$ in $K\flattBimodtint K$,
the datum of a module object over $\bS$ in the right module category
$\Comodr\C$ over $\C\Bicomod\C$ is equivalent to the datum of
a module object over $(R,\bS,\rho)$ in the right module category
$\Comodr\C$ over the monoidal category $K\flattBimodtint K$.
 Here the definition of the structure of a right module category
over $K\flattBimodtint K$ on the category $\Comodr\C$ was explained
in the previous
Section~\ref{functor-module-categories-right-comods-subsecn}.

 According to Proposition~\ref{functor-module-cats-right-comods-prop},
we have a functor of right module categories $\phi_*\:\Comodr\C\rarrow
\Modrt K$ over the monoidal category $K\flattBimodtint K$.
 Applying this functor to a right semimodule $\bN$ over $\bS$, we
obtain a module object over $(R,\bS,\rho)$ in the right module
category $\Modrt K$ over $K\flattBimodtint K$.
 The datum of a module object in $\Modrt K$ over a monoid object
$(R,\bS,\rho)$ in $K\flattBimodtint K$ is equivalent to the datum of
a module object in $\Modrt K$ over the monoid object $R\in K\tBimodt K$.
 This means a right $R$\+module $N$ that is t\+unital as a right
$K$\+module.
 The latter condition is actually equivalent to $N$ being t\+unital
as a right $R$\+module.

\begin{rem}
 Let $f\:K\rarrow R$ be a right t\+unital homomorphism of
$k$\+algebras.
 Then a right $R$\+module is t\+unital if and only if it is t\+unital
as a right $K$\+module.
 This is the result of~\cite[Corollary~9.7(a)]{Pnun}.
\end{rem}

 Explicitly, let $\bN$ be a right $\bS$\+semimodule.
 Then, first of all, $\bN$ is a right $\C$\+comodule.
 The construction of the functor~$\phi_*$
\,\eqref{right-comodules-to-modules} endows $\bN$ with a (t\+unital)
right $K$\+module structure.
  Now the composition
\begin{equation} \label{right-action-underlying-right-semiaction}
 \bN\ot_KR=(\bN\oc_\C\C)\ot_KR\simeq\bN\oc_\C(\C\ot_KR)
 =\bN\oc_\C\bS\lrarrow\bN
\end{equation}
of the associativity isomorphism from
Proposition~\ref{tensor-cotensor-assoc-prop} and
the semiaction map $\bn\:\bN\oc_\C\bS\rarrow\bN$ provides
an action map $\bN\ot_KR\rarrow\bN$ making $\bN$ a right $R$\+module.
 The following proposition and corollary form our version
of~\cite[first paragraph of Section~10.2.2]{Psemi}.

\begin{prop} \label{right-semimodules-described-prop}
 The datum of a right\/ $\bS$\+semimodule\/ $\bN$ is equivalent to
the datum of a right\/ $\C$\+comodule\/ $\N$, a (t\+unital) right
$R$\+module $N$, and an isomorphism of right $K$\+modules
$\phi_*(\N)\simeq N$ satisfying the following condition.
 The map\/ $\N\oc_\C\bS\rarrow\N$ constructed by reverting
the formula~\eqref{right-action-underlying-right-semiaction},
$$
 \N\oc_\C\bS=\N\oc_\C(\C\ot_KR)\simeq\N\ot_KR
 \simeq N\ot_KR\rarrow N\simeq\N
$$
must be a right $\C$\+comodule morphism.
\end{prop}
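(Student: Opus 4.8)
The plan is to read off the equivalence from the module-category formalism set up in Section~\ref{functor-module-categories-right-comods-subsecn}, rather than to chase the semimodule axioms by hand. By the definition recalled in Section~\ref{semimodules-prelim-subsecn} and the identification made just above the statement, a right $\bS$\+semimodule is the same datum as a module object over the monoid $(R,\bS,\rho)$ in the right $K\flattBimodtint K$\+module category $\Comodr\C$. First I would apply the module functor $\phi_*\:\Comodr\C\rarrow\Modrt K$ of Proposition~\ref{functor-module-cats-right-comods-prop}: being a module functor over $K\flattBimodtint K$, it carries the module object $\bN$ to a module object over $(R,\bS,\rho)$ in $\Modrt K$. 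Transporting the monoid structure along the monoidal functor $K\flattBimodtint K\rarrow K\tBimodt K$, such a module object is precisely a t\+unital right $R$\+module structure $N$ on the vector space $\phi_*(\N)$, whose action map is the one obtained from the semiaction via the associativity isomorphism~\eqref{right-action-underlying-right-semiaction}. This produces the data $(\N,N,\>\phi_*(\N)\simeq N)$, the isomorphism being the identity; the condition in the proposition then holds automatically, as it merely restates that the original semiaction map is a right $\C$\+comodule morphism.

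For the reverse construction, given the data $(\N,N,\>\phi_*(\N)\simeq N)$ together with the stated condition, I would define a candidate semiaction map $\bn\:\N\oc_\C\bS\rarrow\N$ by running the formula~\eqref{right-action-underlying-right-semiaction} backwards: compose the inverse associativity isomorphism $\N\oc_\C\bS\simeq\N\ot_KR$ of Proposition~\ref{tensor-cotensor-assoc-prop} with the isomorphism $\N\ot_KR\simeq N\ot_KR$, the $R$\+action $N\ot_KR\rarrow N$, and the identification $N\simeq\N$. The condition in the proposition is exactly the requirement that this composite be a right $\C$\+comodule morphism, so $\bn$ is then a legitimate morphism in $\Comodr\C$. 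The two constructions are visibly mutually inverse, since each is built from the associativity isomorphism~\eqref{right-action-underlying-right-semiaction} and its inverse.

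It remains to verify that $\bn$ satisfies the semiassociativity and semiunitality axioms of Section~\ref{semimodules-prelim-subsecn}. Here I would exploit that $\phi_*$ is faithful (Remark~\ref{lands-in-t-unital-remark}): each axiom is an equation between right $\C$\+comodule morphisms, and it suffices to check the corresponding equation between the underlying right $K$\+module morphisms. Because $\phi_*$ is a module functor, applying it to $\bn$ returns, up to the built-in associativity and unitality constraints, the action map $N\ot_KR\rarrow N$; the semiunitality equation for $\bn$ thereby becomes the unitality of the $R$\+action (using the semiunit $\be$ and the multiplication isomorphism $\C\simeq\C\ot_KK$), while the semiassociativity equation becomes the associativity of the $R$\+multiplication. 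Both of these hold because $N$ is a genuine t\+unital right $R$\+module.

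The main obstacle I anticipate is the bookkeeping in this last step: showing that the semiassociativity diagram for $\bn$ is carried by $\phi_*$ \emph{precisely} onto the associativity diagram for the $R$\+action. This requires the coherence (commutativity) of the diagrams of associativity isomorphisms of cotensor products referenced after Proposition~\ref{tensor-cotensor-assoc-prop}, together with the module-functor coherence constraints on $\phi_*$ supplied by Proposition~\ref{functor-module-cats-right-comods-prop}. Granting this coherence, the faithfulness of $\phi_*$ does the rest, reducing every semimodule axiom for $\bn$ to the already-available $R$\+module axioms for $N$, and conversely.
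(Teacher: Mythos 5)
Your proposal is correct and follows essentially the same route as the paper's own proof: both arguments rest on the faithfulness of the module functor $\phi_*\:\Comodr\C\rarrow\Modrt K$ from Proposition~\ref{functor-module-cats-right-comods-prop}, identify the stated condition as precisely the requirement that the structure morphism of the $R$\+module $N$ lift from $\Modrt K$ to $\Comodr\C$, and conclude that the semimodule axioms then hold automatically because their images under the faithful functor~$\phi_*$ are the already-satisfied $R$\+module axioms for~$N$. The coherence bookkeeping you flag as a potential obstacle is exactly what the paper delegates to the module-functor structure of Proposition~\ref{functor-module-cats-right-comods-prop} together with the cited discussion of commutativity of the diagrams of associativity isomorphisms, so it is not a gap.
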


\begin{proof}
 The logic here is similar to that in
Proposition~\ref{lift-of-monoid-structure-prop}.
 The point is that the functor of module categories
$\phi_*\:\Comodr\C\rarrow\Modrt K$ from
Proposition~\ref{functor-module-cats-right-comods-prop} is faithful.
 Consequently, a module structure on an object $\N\in\C\Comodl$
over a monoid object $(R,\bS,\rho)\in K\flattBimodtint K$ is
uniquely determined by the induced module structure over
$(R,\bS,\rho)$ on the object $N=\phi_*(\N)\in K\Modlt$.

 Conversely, given a t\+unital right $K$\+module $N$ with a module
structure over $(R,\bS,\rho)$ (which means an $R$\+module structure),
and given a right $\C$\+comodule $\N$ such that $\phi_*(\N)=N$,
in order for the $R$\+module structure on $N$ to be liftable to
an $\bS$\+semimodule structure on $\N$, one needs to check that
the structure morphism of the given module structure in $\Modrt K$
comes from some morphism in $\Comodr\C$.
 This is the condition stated in the proposition.

 If the required morphism exists in $\Comodr\C$, it will automatically
satisfy the axioms of a module structure in $\Comodr\C$, because
its image satisfies such axioms in $\Modrt K$.
 Simply put, an equation on morphisms in $\Comodr\C$ is satsified
whenever its image in $\Modrt K$ is satisfied.
\end{proof}

\begin{cor} \label{right-semimodules-described-fully-faithful-case}
 Assume that the functor $\phi_*\:\Comodr\C\rarrow\Modrt K$ is fully
faithful.
 Then the datum of a right\/ $\bS$\+semimodule\/ $\bN$ is equivalent
to the datum of a right\/ $\C$\+comodule\/ $\N$, a (t\+unital) right
$R$\+module $N$, and an isomorphism of right $K$\+modules
$\phi_*(\N)\simeq N$.
 In other words, this is the datum of a (t\+unital) right $R$\+module
$N$ whose underlying (t\+unital) right $K$\+module structure has
the property that it arises from some right\/ $\C$\+coomdule structure.
\end{cor}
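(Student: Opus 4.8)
The plan is to deduce this directly from Proposition~\ref{right-semimodules-described-prop}, whose statement already describes a right $\bS$\+semimodule as the datum of a right $\C$\+comodule $\N$, a t\+unital right $R$\+module $N$, and an isomorphism of right $K$\+modules $\phi_*(\N)\simeq N$, \emph{subject to} the extra condition that the map $\N\oc_\C\bS\rarrow\N$ obtained by reverting~\eqref{right-action-underlying-right-semiaction} be a right $\C$\+comodule morphism. The whole content of the corollary is that, under the full-and-faithfulness hypothesis on $\phi_*\:\Comodr\C\rarrow\Modrt K$, this extra condition holds automatically; once this is shown, the remaining data are exactly as claimed.

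First I would trace the construction of the map in question. By Proposition~\ref{functor-module-cats-right-comods-prop}, the functor $\phi_*$ is a module functor over $K\flattBimodtint K$, so applying $\phi_*$ to the right $\C$\+comodule $\N\oc_\C\bS$ yields a natural isomorphism of right $K$\+modules $\phi_*(\N\oc_\C\bS)\simeq\phi_*(\N)\ot_KR=N\ot_KR$. The map $\N\oc_\C\bS\rarrow\N$ is built as the composite of this associativity isomorphism, the isomorphism $\phi_*(\N)\simeq N$, and the $R$\+action map $N\ot_KR\rarrow N$; each of these is a morphism of right $K$\+modules. Hence the underlying map of right $K$\+modules of $\N\oc_\C\bS\rarrow\N$ is a genuine right $K$\+module homomorphism between $\phi_*(\N\oc_\C\bS)$ and $\phi_*(\N)$.

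The key step is then to observe that both $\N\oc_\C\bS$ and $\N$ are right $\C$\+comodules, and that the map constructed above is precisely a morphism of their underlying right $K$\+modules. Since $\phi_*\:\Comodr\C\rarrow\Modrt K$ is assumed fully faithful, the induced map $\Hom_{\Comodr\C}(\N\oc_\C\bS,\N)\rarrow\Hom_K(\phi_*(\N\oc_\C\bS),\phi_*(\N))$ is a bijection; consequently every right $K$\+module map between the images of two right $\C$\+comodules is the image of a unique right $\C$\+comodule morphism. Therefore the map $\N\oc_\C\bS\rarrow\N$ is automatically a right $\C$\+comodule morphism, and the condition of Proposition~\ref{right-semimodules-described-prop} is vacuous. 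This is the same mechanism already exploited in Corollary~\ref{fully-faithful-lift-of-monoid-cor}, where full faithfulness trivializes the compatibility conditions of Proposition~\ref{lift-of-monoid-structure-prop}.

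I do not anticipate a serious obstacle: the corollary is essentially the observation that full faithfulness of $\phi_*$ collapses the compatibility condition of the preceding proposition. The only point demanding care is the bookkeeping that identifies the source and target of the relevant map as $\phi_*$ applied to honest right $\C$\+comodules, so that full faithfulness can be applied at all; this identification is exactly what Proposition~\ref{functor-module-cats-right-comods-prop} supplies. The final rephrasing in the statement — that one is describing a t\+unital right $R$\+module $N$ whose underlying t\+unital right $K$\+module structure arises from some right $\C$\+comodule structure — is then just a restatement of the data $(\N,N,\phi_*(\N)\simeq N)$ with no further conditions.
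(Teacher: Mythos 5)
Your proposal is correct and follows essentially the same route as the paper's own proof: the paper likewise observes that the map in the condition of Proposition~\ref{right-semimodules-described-prop} is always a right $K$\+module morphism, and that under the full-and-faithfulness assumption any right $K$\+module morphism between right $\C$\+comodules is a right $\C$\+comodule morphism, exactly as in Corollary~\ref{fully-faithful-lift-of-monoid-cor}. Your additional bookkeeping via Proposition~\ref{functor-module-cats-right-comods-prop}, identifying the source of the map as $\phi_*(\N\oc_\C\bS)\simeq N\ot_KR$, just spells out the step the paper states without elaboration.
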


\begin{proof}
 This is similar to Corollary~\ref{fully-faithful-lift-of-monoid-cor}.
 The map appearing in the condition of
Proposition~\ref{right-semimodules-described-prop} is always
a morphism of right $K$\+modules.
 Under the full-and-faithfulness assumption of the corollary,
any right $K$\+module morphism between right $\C$\+comodules is
a right $\C$\+comodule morphism.
\end{proof}

\subsection{Projectivity instead of flatness}
 We start with a version of
Proposition~\ref{nonunital-tensor-of-nt-flat-integrated} with
the nt\+flatness condition replaced by nc\+projectivity.
 We say that a right integrated $K$\+$K$\+bimodule $(B,\B,\beta)$
is \emph{left nc\+projective} if the left $K$\+module $B$ is
nc\+projective.

\begin{prop} \label{nonunital-tensor-of-nc-projective-integrated}
 Let $K$ be a nonunital algebra, $\C$ be a coalgebra, and
$\phi\:\C\times K\rarrow k$ be a right t\+unital multiplicative pairing.
 Then the category of left nc\+projective, right t\+unital right
integrated $K$\+$K$\+bimodules $(B,\B,\beta)$ is a \emph{nonunital}
tensor/monoidal full subcategory of the nonunital tensor/monoidal
category from Proposition~\ref{nonunital-tensor-of-nt-flat-integrated}.
\end{prop}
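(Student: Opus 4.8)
The plan is to mirror the proof of Proposition~\ref{nonunital-tensor-of-nt-flat-integrated}, replacing the role of Lemma~\ref{nt-flat-tensor-product} by Lemma~\ref{nc-projective-tensor-product}. First I would check that the class of objects in question really is contained in the class from Proposition~\ref{nonunital-tensor-of-nt-flat-integrated}. This is immediate from Proposition~\ref{nc-projective-are-nt-flat}: any nc\+projective left $K$\+module is nt\+flat, so any left nc\+projective, right t\+unital right integrated bimodule $(B,\B,\beta)$ is in particular left nt\+flat and right t\+unital. Hence its underlying data is an object of the nonunital tensor category of Proposition~\ref{nonunital-tensor-of-nt-flat-integrated}, and we obtain a \emph{full} subcategory of the latter.

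It then remains to verify that this full subcategory is closed under the tensor product operation, so that it inherits the (associative, nonunital) tensor structure. Given two objects $B'=(B',\B',\beta')$ and $B''=(B'',\B'',\beta'')$ with $B'$, $B''$ both left nc\+projective and right t\+unital, Proposition~\ref{tensor-product-integrated} already endows $B=B'\ot_KB''$ with a right integrated bimodule structure $(B,\B,\beta)$; the nt\+flatness hypothesis needed there is supplied by Proposition~\ref{nc-projective-are-nt-flat}. The right t\+unitality of $B$ is part of the conclusion of Proposition~\ref{nonunital-tensor-of-nt-flat-integrated}. The only genuinely new point is left nc\+projectivity of $B$, and this is exactly Lemma~\ref{nc-projective-tensor-product} applied to the bimodule $B'$ (which is left nc\+projective and right t\+unital, as required by the lemma) and the left module $F=B''$ (which is left nc\+projective); the conclusion is that $B'\ot_KB''$ is an nc\+projective left $K$\+module. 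Thus $(B,\B,\beta)$ lies in our subcategory.

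Since there is essentially no calculation beyond invoking these three results, I do not anticipate a serious obstacle; the one point that needs care is the asymmetry of the hypotheses of Lemma~\ref{nc-projective-tensor-product}, which requires the first tensor factor $B'$ to be right t\+unital (not merely left nc\+projective). This is available because right t\+unitality is one of the defining conditions imposed on our objects. With closure under tensor products established and the full subcategory inclusion manifestly compatible with the tensor product, the associativity constraints are inherited from the ambient nonunital tensor category of Proposition~\ref{nonunital-tensor-of-nt-flat-integrated}, so the subcategory is a nonunital tensor/monoidal full subcategory, completing the argument.
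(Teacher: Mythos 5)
Your proof is correct and takes essentially the same route as the paper, whose own proof is the one-line ``Follows from Lemma~\ref{nc-projective-tensor-product} and Proposition~\ref{nc-projective-are-nt-flat}''; you invoke exactly these two results in exactly the intended way (Proposition~\ref{nc-projective-are-nt-flat} for the full subcategory inclusion and the applicability of Proposition~\ref{tensor-product-integrated}, Lemma~\ref{nc-projective-tensor-product} for closure under tensor product), merely spelling out the details the paper leaves implicit.
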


\begin{proof}
 Follows from Lemma~\ref{nc-projective-tensor-product}
and Proposition~\ref{nc-projective-are-nt-flat}.
\end{proof}

 We will say that a bimodule over a nonunital algebra $K$ is
\emph{left projective} if it is projective as a left
$\widecheck K$\+module.
 In particular, $K$ itself is \emph{left projective} if $K$ is
projective as a left $\widecheck K$\+module.
 A right integrated bimodule $(B,\B,\beta)$ is called \emph{left
projective} if the $K$\+$K$\+bimodule $B$ is left projective.

\begin{rem} \label{lands-in-c-unital-remark}
 Let $K$ be a left projective $k$\+algebra, $\C$ be a coalgebra
over~$k$, and $\phi\:\C\times K\rarrow k$ be a right t\+unital pairing.
 Then Proposition~\ref{t-unital-pairing-contramodules-prop}(3) tells
that the essential image of the functor $\phi_*\:\C\Contra\rarrow
K\Modln$ \,\eqref{left-contramodules-to-modules} is contained in
the full subcategory of c\+unital modules $K\Modlc\subset K\Modln$.
 So we can (and will) write $\phi_*\:\C\Contra\rarrow K\Modlc$.

 Obviously, both the functors $\C\Contra\rarrow K\Modln$ and
$\C\Contra\rarrow K\Modlc$ are always faithful; and one of them
is fully faithful if and only if the other one is.
\end{rem}

 We will denote by $K\projtBimodt K\subset K\flattBimodt K$ the full
subcategory of left projective $K$\+$K$\+bimodules.
 Clearly, $K\projtBimodt K$ is a monoidal subcategory in
$K\tBimodt K$.
 When the algebra $K$ is left projective, the unit object
$K\in K\tBimodt K$ belongs to $K\projtBimodt K$.

\begin{cor} \label{unital-monoidal-of-projective-integrated}
 Let $K$ be a left projective t\+unital algebra, $\C$ be a coalgebra,
and $\phi\:\C\times K\rarrow k$ be a right t\+unital multiplicative
pairing.
 Then the category of t\+unital, left projective right integrated
$K$\+$K$\+bimodules $(B,\B,\beta)$ is a monoidal full subcategory of
the \emph{unital} monoidal category from
Corollary~\ref{unital-monoidal-of-flat-integrated}, containing
the unit object $(K,\C,\varkappa)$.
\end{cor}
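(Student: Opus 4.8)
The plan is to reduce everything to the left flat case already established in Corollary~\ref{unital-monoidal-of-flat-integrated}, using that over a t\+unital algebra projectivity of a t\+unital module coincides with nc\+projectivity. First I would note that every projective module is flat, so a left projective t\+unital algebra is in particular left flat t\+unital, and a left projective right integrated bimodule is in particular left flat. This exhibits the category in question as a full subcategory of the ambient unital monoidal category of t\+unital, left flat right integrated bimodules from Corollary~\ref{unital-monoidal-of-flat-integrated} (the morphisms being the same in both). The unit object $(K,\C,\varkappa)$ lies in the subcategory by the standing hypothesis that $K$ is left projective and t\+unital.

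The real content is closure under the monoidal operation; the bicomodule side $\B=\B'\oc_\C\B''$ needs no new argument, since it is inherited from the ambient category, so the task reduces to checking that the underlying bimodule of a tensor product stays left projective. Given two t\+unital, left projective right integrated bimodules $(B',\B',\beta')$ and $(B'',\B'',\beta'')$, their product $(B,\B,\beta)$ is formed as in Proposition~\ref{tensor-product-integrated}, with $B=B'\ot_KB''$. Here I would invoke the equivalence recalled in Section~\ref{nt-nc-flat-proj-prelim-subsecn}: a t\+unital module over a t\+unital algebra is nc\+projective if and only if it is projective as a left $\widecheck K$\+module. Hence $B'$ and $B''$, being t\+unital and left projective, are nc\+projective left $K$\+modules.

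The key step is then to apply Lemma~\ref{nc-projective-tensor-product} with $B=B'$ and $F=B''$: the left $K$\+modules $B'$ and $B''$ are nc\+projective and the right $K$\+module $B'$ is t\+unital, so the lemma gives that $B'\ot_KB''$ is nc\+projective as a left $K$\+module. Finally, since $B=B'\ot_KB''$ is again a t\+unital bimodule by Proposition~\ref{t-c-unital-monoidal-model-categories}(a), hence a t\+unital left $K$\+module, I would run the same equivalence in the reverse direction to conclude that $B$ is projective as a left $\widecheck K$\+module, i.~e., left projective. I do not expect a genuine obstacle here; the only points requiring care are to confirm that the monoidal product's underlying bimodule is indeed $B'\ot_KB''$ (so that Lemma~\ref{nc-projective-tensor-product} applies) and that the t\+unitality of this tensor product is available, as it is needed to pass back from nc\+projectivity to actual projectivity.
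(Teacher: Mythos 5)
Your proposal is correct and takes essentially the same route as the paper: the paper's (deliberately terse) proof also reduces the corollary to the fact that projective modules are nc\+projective, with closure under the tensor product supplied by Lemma~\ref{nc-projective-tensor-product} (via Proposition~\ref{nonunital-tensor-of-nc-projective-integrated}) and the passage back from nc\+projectivity to genuine projectivity for t\+unital modules over a t\+unital algebra, exactly as you do. Your write-up simply makes explicit the steps the paper dismisses as ``straightforward.''
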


\begin{proof}
 The assertion is straightforward, so we only recall that any
projective module is nc\+projective, as mentioned in
Section~\ref{nt-nc-flat-proj-prelim-subsecn}.
\end{proof}

 We will denote the unital monoidal full subcategory defined in
Corollary~\ref{unital-monoidal-of-projective-integrated} by
$$
 K\projtBimodtint K\,\subset\, K\flattBimodtint K.
$$
 For any left projective t\+unital algebra $K$, coalgebra $\C$,
and a right t\+unital multiplicative pairing
$\phi\:\C\times K\rarrow k$, we have the following commutative diagram
extending the diagram~\eqref{two-monoidal-forgetful-functors-diagram}
from Section~\ref{two-forgetful-monoidal-functors-subsecn}:
\begin{equation} \label{two-monoidal-forgetful-functors-extended-diag}
\begin{gathered}
 \xymatrix{
  & K\projtBimodtint K \ar@{>->}[d] \ar[ld] \\
  K\projtBimodt K \ar@{>->}[d]
  & K\flattBimodtint K \ar[ld] \ar[rd] \\
  K\flattBimodt K \ar@{>->}[d]
  && \C\injBicomod\C \ar@{>->}[d] \\
  K\tBimodt K && \C\Bicomod\C
 }
\end{gathered}
\end{equation}
 All the functors are associative, unital monoidal functors of
associative, unital monoidal categories.
 The four vertical arrows denote fully faithful functors.

\subsection{Functor between module categories of left contramodules
and modules}  \label{functor-module-categories-left-contramods-subsecn}
 Let $K$ be a left projective t\+unital $k$\+algebra, $\C$ be
a coalgebra over~$k$, and $\phi\:\C\times K\rarrow k$ be a right
t\+unital multiplicative pairing.

 As mentioned in Section~\ref{semicontramodules-prelim-subsecn},
the opposite category $\C\Contra^\sop$ to the category of left
$\C$\+contramodules $\C\Contra$ is a right module category over
the monoidal category $\C\Bicomod\C$.
 Restricting the action of $\C\Bicomod\C$ in $\C\Contra^\sop$ along
the monoidal functors
$$
 K\projtBimodtint K\rarrow K\flattBimodtint K
 \rarrow\C\injBicomod\C\rarrow\C\Bicomod\C
$$
(the right-hand side of
the diagram~\eqref{two-monoidal-forgetful-functors-extended-diag}),
we make $\C\Contra^\sop$ a right module category over the monoidal
category $K\projtBimodtint K$.

 On the other hand, according to
Proposition~\ref{t-c-unital-monoidal-model-categories}(c),
the opposite category $K\Modlc^\sop$ to the category of c\+unital
left $K$\+modules $K\Modlc$ is a right module category over
the monoidal category $K\tBimodt K$.
 Restricting the action of $K\tBimodt K$ in $K\Modlc^\sop$ along
the monoidal functors
$$
 K\projtBimodtint K\rarrow K\projtBimodt K
 \rarrow K\flattBimodt K\rarrow K\tBimodt K
$$
(the left-hand side of
the diagram~\eqref{two-monoidal-forgetful-functors-extended-diag}),
we make $K\Modlc^\sop$ a right module category over the monoidal
category $K\projtBimodtint K$.

\begin{prop} \label{functor-module-cats-left-contramods-prop}
 The functor $\phi_*^\sop\:\C\Contra^\sop\rarrow K\Modlc^\sop$ is
an associative, unital module functor between the associative, unital
right module categories\/ $\C\Contra^\sop$ and\/ $K\Modlc^\sop$ over
the monoidal category $K\projtBimodtint K$.
\end{prop}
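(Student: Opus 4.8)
The plan is to mirror the proof of Proposition~\ref{functor-module-cats-right-comods-prop}, with the role of the tensor-cotensor associativity isomorphism of Proposition~\ref{tensor-cotensor-assoc-prop} now played by the tensor-Hom-Cohom associativity isomorphism of Proposition~\ref{tensor-hom-cohom-assoc-prop}. First recall from Remark~\ref{lands-in-c-unital-remark} that, since $K$ is left projective and $\phi$ is right t\+unital, the functor $\phi_*$~\eqref{left-contramodules-to-modules} lands in $K\Modlc$, so that $\phi_*\:\C\Contra\rarrow K\Modlc$, and hence $\phi_*^\sop\:\C\Contra^\sop\rarrow K\Modlc^\sop$, are defined.

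To equip $\phi_*^\sop$ with a module functor structure I must supply a natural family of isomorphisms $\phi_*^\sop(\fP^\sop*(B,\B,\beta))\simeq\phi_*^\sop(\fP^\sop)*(B,\B,\beta)$ for $\fP\in\C\Contra$ and $(B,\B,\beta)\in K\projtBimodtint K$. Unwinding the two actions (cohomomorphisms on the left, $\Hom$ on the right), this is the same as a natural isomorphism of left $K$\+modules
$$
 \Cohom_\C(\B,\fP)=\Cohom_\C(\C\ot_KB,\>\fP)\simeq
 \Hom_K(B,\Cohom_\C(\C,\fP))=\Hom_K(B,\fP),
$$
the right-hand $\fP$ carrying its $\phi_*$\+induced $K$\+module structure. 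The middle isomorphism is the instance of Proposition~\ref{tensor-hom-cohom-assoc-prop} with $\E=\C$ (equipped with the right $K$\+action induced by~$\phi$) and $F=B$, while the two equalities come, respectively, from the right integrated structure $\B=\C\ot_KB$ and from the $\Cohom$ unitality isomorphism~\eqref{cohom-unitality}. The hypotheses of Proposition~\ref{tensor-hom-cohom-assoc-prop} are exactly what the assumptions of the present proposition provide: $\C$ is a t\+unital right $K$\+module because $\phi$ is right t\+unital, and $B$ is nc\+projective because it is left projective (every projective $\widecheck K$\+module being nc\+projective, as recalled in Section~\ref{nt-nc-flat-proj-prelim-subsecn}).

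Two things then remain, and they are where genuine care is needed. First, one checks that this isomorphism of vector spaces is $K$\+linear: the left $K$\+action that Proposition~\ref{tensor-hom-cohom-assoc-prop} transports onto $\Cohom_\C(\B,\fP)$ through the right $K$\+action on~$\B$ agrees with the $\phi_*$\+induced one by condition~(ii) in the definition of a right integrated bimodule, which identifies the right $\C$\+comodule structure on $\B=\C\ot_KB$ with the right $K$\+action coming from~$B$ under~$\phi_*$~\eqref{right-comodules-to-modules}; this is the analogue of the implicit matching of $K$\+module structures used in Proposition~\ref{functor-module-cats-right-comods-prop}. Second, and this I expect to be the main obstacle, one must verify the module functor coherence axioms. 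Unitality is the compatibility of the structure isomorphism at the monoidal unit $(K,\C,\varkappa)$ with the unit constraints $\Cohom_\C(\C,\fP)\simeq\fP$ and $\Hom_K(K,\phi_*(\fP))\simeq\phi_*(\fP)$, the latter being the c\+unitality of $\phi_*(\fP)$. Associativity requires that, for two bimodules $(B',\B',\beta')$ and $(B'',\B'',\beta'')$, the two resulting comparisons of $\Cohom_\C(\B'\oc_\C\B'',\>\fP)$ with $\Hom_K(B',\Hom_K(B'',\phi_*(\fP)))$ coincide; this reduces to the commutativity of the diagrams of associativity isomorphisms of iterated cohomomorphisms noted after Proposition~\ref{tensor-hom-cohom-assoc-prop} (see~\cite[end of Section~3.2.5]{Psemi}) together with the $\Cohom$ associativity isomorphism~\eqref{cohom-associativity}. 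The only delicate point in these coherence diagrams is the bookkeeping of the $K$\+module structures, which is routine.
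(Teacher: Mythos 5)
Your proposal is correct and follows essentially the same route as the paper: the functor $\phi_*\:\C\Contra\rarrow K\Modlc$ is taken from Remark~\ref{lands-in-c-unital-remark}, and the module functor structure isomorphism is exactly the chain $\Hom_K(B,\fP)=\Hom_K(B,\Cohom_\C(\C,\fP))\simeq\Cohom_\C(\C\ot_KB,\>\fP)=\Cohom_\C(\B,\fP)$ supplied by Proposition~\ref{tensor-hom-cohom-assoc-prop} with $\E=\C$ and $F=B$. Your additional verifications (the hypotheses of Proposition~\ref{tensor-hom-cohom-assoc-prop}, the matching of $K$\+module structures via condition~(ii) of the integrated structure, and the coherence axioms) are all correct and are precisely the points the paper leaves implicit.
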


\begin{proof}
 The functor $\phi_*\:\C\Contra\rarrow K\Modlc$ was discussed in
Remark~\ref{lands-in-c-unital-remark}.
 The proposition holds because, for any object $(B,\B,\beta)\in
K\projtBimodtint K$ and any left $\C$\+contramodule $\fP$, there is
a natural isomorphism of left $K$\+modules
$$
 \Hom_K(B,\fP)=\Hom_K(B,\Cohom_\C(\C,\fP))\simeq
 \Cohom_\C(\C\ot_KB,\>\fP)=\Cohom_\C(\B,\fP)
$$
provided by Proposition~\ref{tensor-hom-cohom-assoc-prop}.
\end{proof}

\subsection{Description of left semicontramodules}
 Let $K$ be a left projective t\+unital $k$\+algebra, $\C$ be
a coalgebra over~$k$, and $\phi\:\C\times K\rarrow k$ be a right
t\+unital multiplicative pairing.
 Let $f\:K\rarrow R$ be a t\+unital homomorphism of $k$\+algebras
making $R$ a left projective $K$\+$K$\+bimodule, and let $(R,\bS,\rho)$
be a monoid object in $K\projtBimodtint K$ lifting the monoid object
$R\in K\projtBimodt K$.
 So $\bS\in\C\injBicomod\C$ is a semialgebra over~$\C$.

 By the definition (see Section~\ref{semicontramodules-prelim-subsecn}),
a left semicontramodule $\bP$ over $\bS$ is essentially (up to
a passage to the opposite category) a module object in the right
module category $\C\Contra^\sop$ over the monoid object $\bS$ in
the monoidal category $\C\Bicomod\C$.
 In our context, when the monoid object $\bS$ in $\C\Bicomod\C$ comes
from a monoid object $(R,\bS,\rho)$ in $K\projtBimodtint K$,
the datum of a module object over $\bS$ in the right module category
$\C\Contra^\sop$ over $\C\Bicomod\C$ is equivalent to the datum of
a module object over $(R,\bS,\rho)$ in the right module category
$\C\Contra^\sop$ over the monoidal category $K\projtBimodtint K$.
 Here the definition of the structure of a right module category
over $K\projtBimodtint K$ on the category $\C\Contra^\sop$ was
explained in the previous
Section~\ref{functor-module-categories-left-contramods-subsecn}.

 According to
Proposition~\ref{functor-module-cats-left-contramods-prop}, we have
a functor of right module categories
$\phi_*^\sop\:\C\Contra^\sop\rarrow K\Modlc^\sop$ over the monoidal
category $K\projtBimodtint K$.
 Applying this functor to a left semicontramodule $\bP$ over $\bS$,
we obtain a module object over $(R,\bS,\rho)$ in the right module
category $K\Modlc^\sop$ over $K\projtBimodtint K$.
 The datum of a module object in $K\Modlc^\sop$ over a monoid object
$(R,\bS,\rho)$ in $K\projtBimodtint K$ is equivalent to the datum of
a module object in $K\Modlc^\sop$ over the monoid object
$R\in K\tBimodt K$.
 This means a left $R$\+module $P$ that is c\+unital as a left
$K$\+module.
 The latter condition is actually equivalent to $P$ being c\+unital
as a left $R$\+module.

\begin{rem}
 Let $f\:K\rarrow R$ be a right t\+unital homomorphism of $k$\+algebras.
 Then a left $R$\+module is c\+unital if and only if it is c\+unital as
a left $K$\+module.
 This is the result of~\cite[Corollary~9.7(b)]{Pnun}.
\end{rem}

 Explicitly, let $\bP$ be a left $\bS$\+semicontramodule.
 Then, first of all, $\bP$ is a left $\C$\+contramodule.
 The construction of the functor~$\phi_*$
\,\eqref{left-contramodules-to-modules} endows $\bP$ with
a (c\+unital) left $K$\+module structure.
 Now the composition
\begin{multline} \label{left-action-underlying-left-semicontraaction}
 \bP\lrarrow\Cohom_\C(\bS,\bP)=\Cohom_\C(\C\ot_KR,\>\bP) \\
 \simeq\Hom_K(R,\Cohom_\C(\C,\bP))=\Hom_K(R,\bP)
\end{multline}
of the semicontraaction map $\bp\:\bP\rarrow\Cohom_\C(\bS,\bP)$ and
the associativity isomorphism from
Proposition~\ref{tensor-hom-cohom-assoc-prop} provides an action map
$\bP\rarrow\Hom_K(R,\bP)$ making $\bP$ a left $R$\+module.
 The following proposition is our version
of~\cite[second paragraph of Section~10.2.2]{Psemi}.

\begin{prop} \label{left-semicontramodules-described-prop}
 The datum of a left\/ $\bS$\+semicontramodule\/ $\bP$ is equivalent
to the datum of a left\/ $\C$\+contramodule $\fP$, a (c\+unital) left
$R$\+module $P$, and an isomorphism of left $K$\+modules $\phi_*(\fP)
\simeq P$ satisfying the following condition.
 The map\/ $\fP\rarrow\Cohom_\C(\bS,\fP)$ constructed by reverting
the formula~\eqref{left-action-underlying-left-semicontraaction},
\begin{multline*}
 \fP\simeq P\lrarrow\Hom_K(R,P)\simeq\Hom_K(R,\fP) \\
 \simeq\Cohom_\C(\C\ot_KR,\>\fP)=\Cohom_\C(\bS,\fP)
\end{multline*}
must be a left\/ $\C$\+contramodule morphism.
\end{prop}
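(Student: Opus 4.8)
The plan is to follow the template of Proposition~\ref{right-semimodules-described-prop} (which in turn follows Proposition~\ref{lift-of-monoid-structure-prop}), transported to the contramodule side by means of the faithful module functor $\phi_*^\sop\:\C\Contra^\sop\rarrow K\Modlc^\sop$ supplied by Proposition~\ref{functor-module-cats-left-contramods-prop}. The starting observation is that, since $\bS$ is the image of the monoid object $(R,\bS,\rho)\in K\projtBimodtint K$ under the monoidal functor to $\C\Bicomod\C$, the datum of a left $\bS$\+semicontramodule is the same (up to the passage to opposite categories) as the datum of a module object over $(R,\bS,\rho)$ in the right module category $\C\Contra^\sop$ over $K\projtBimodtint K$, as explained in Section~\ref{functor-module-categories-left-contramods-subsecn}.

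First I would establish the uniqueness/forward direction. Applying the module functor $\phi_*^\sop$ to a module object $\fP^\sop$ over $(R,\bS,\rho)$ produces a module object over $(R,\bS,\rho)$ in $K\Modlc^\sop$; since module structures over $(R,\bS,\rho)$ in $K\Modlc^\sop$ amount to module structures over the monoid object $R\in K\tBimodt K$, this is precisely a left $R$\+module structure on $P=\phi_*(\fP)$ that is c\+unital as a left $K$\+module (equivalently, c\+unital as a left $R$\+module, by the cited Remark). Because $\phi_*^\sop$ is faithful, the semicontramodule structure on $\fP$ is uniquely determined by the induced $R$\+module structure on $P$; concretely, the action map $\bP\rarrow\Hom_K(R,\bP)$ is recovered from the semicontraaction through the associativity isomorphism~\eqref{left-action-underlying-left-semicontraaction} of Proposition~\ref{tensor-hom-cohom-assoc-prop}.

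For the converse, I would start from a left $\C$\+contramodule $\fP$, a c\+unital left $R$\+module $P$, and an isomorphism $\phi_*(\fP)\simeq P$, and ask when the $R$\+module structure lifts to an $\bS$\+semicontramodule structure on $\fP$. Reverting the formula~\eqref{left-action-underlying-left-semicontraaction} produces a canonical map $\fP\rarrow\Cohom_\C(\bS,\fP)$, which is automatically a morphism of left $K$\+modules; the only thing to verify is that it is in fact a morphism of left $\C$\+contramodules. This is exactly the condition stated in the proposition. Once this single morphism is known to live in $\C\Contra^\sop$, the semicontraassociativity and semicontraunitality axioms hold for free: each is an equality of morphisms in $\C\Contra^\sop$ whose images in $K\Modlc^\sop$ already satisfy the corresponding $R$\+module axioms, and $\phi_*^\sop$ is faithful, so an equation on morphisms holds whenever its image does.

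The main obstacle, as in the semimodule case, is bookkeeping rather than substance: one must keep the variance straight throughout the opposite categories, verifying that a ``module object in $\C\Contra^\sop$'' unwinds to a semicontraaction map $\bp\:\bP\rarrow\Cohom_\C(\bS,\bP)$ of the correct direction, and that the right module category structure of Section~\ref{functor-module-categories-left-contramods-subsecn} matches the explicit $\Cohom$/$\Hom$ associativity isomorphism of Proposition~\ref{tensor-hom-cohom-assoc-prop} used in~\eqref{left-action-underlying-left-semicontraaction}. Tracking these identifications carefully is what guarantees that the stated condition lands on the correct map $\fP\rarrow\Cohom_\C(\bS,\fP)$; everything else is a formal consequence of the faithfulness of $\phi_*^\sop$.
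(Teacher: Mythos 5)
Your proposal is correct and takes essentially the same route as the paper: the paper's proof simply says the argument is parallel to Proposition~\ref{right-semimodules-described-prop}, with the key point being that the functor $\phi_*\:\C\Contra\rarrow K\Modlc$ is always faithful, which is exactly the mechanism you use (uniqueness from faithfulness, the single structure morphism as the only thing to check, and the axioms holding automatically since equations of morphisms transfer along a faithful functor). Your additional remarks on unwinding the opposite-category variance and matching the $\Cohom$/$\Hom$ associativity isomorphism are just an expanded account of the same bookkeeping the paper leaves implicit.
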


\begin{proof}
 The proof is similar to that of
Proposition~\ref{right-semimodules-described-prop}.
 The point is that the functor $\phi_*\:\C\Contra\rarrow K\Modlc$
is always faithful.
\end{proof}

\begin{cor} \label{left-semicontramods-described-fully-faithful-case}
 Assume that the functor $\phi_*\:\C\Contra\rarrow K\Modlc$ is fully
faithful.
 Then the datum of a left\/ $\bS$\+semicontramodule\/ $\bP$ is
equivalent to the datum of a left\/ $\C$\+contramodule\/ $\fP$,
a (c\+unital) left $R$\+module $P$, and an isomorphism of left
$K$\+modules $\phi_*(\fP)\simeq P$.
 In other words, this is the datum of a (c\+unital) left $R$\+module $P$
whose underlying (c\+unital) left $K$\+module structure has the property
that it arises from some left\/ $\C$\+contramodule structure.
\end{cor}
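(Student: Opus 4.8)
The plan is to mirror the proof of Corollary~\ref{right-semimodules-described-fully-faithful-case}, reducing the assertion to Proposition~\ref{left-semicontramodules-described-prop} and showing that the single side condition appearing there becomes vacuous under the full-and-faithfulness hypothesis. Proposition~\ref{left-semicontramodules-described-prop} already identifies the datum of a left $\bS$\+semicontramodule $\bP$ with that of a triple consisting of a left $\C$\+contramodule $\fP$, a c\+unital left $R$\+module $P$, and an isomorphism of left $K$\+modules $\phi_*(\fP)\simeq P$, subject to the requirement that the reverted map $\fP\rarrow\Cohom_\C(\bS,\fP)$ be a morphism of left $\C$\+contramodules. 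So it suffices to prove that, when $\phi_*\:\C\Contra\rarrow K\Modlc$ is fully faithful, this requirement holds automatically.

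First I would record that the map in question is always a morphism of left $K$\+modules. The target $\Cohom_\C(\bS,\fP)$ carries the left $\C$\+contramodule structure induced by the right $\C$\+comodule structure on~$\bS$, and applying $\phi_*$ the displayed map becomes the left $R$\+module action map $\bP\rarrow\Hom_K(R,\bP)$ of formula~\eqref{left-action-underlying-left-semicontraaction}; the compatibility of the two descriptions is exactly the module-functoriality of $\phi_*^\sop$ established in Proposition~\ref{functor-module-cats-left-contramods-prop}, whose structure isomorphism is supplied by Proposition~\ref{tensor-hom-cohom-assoc-prop}. In particular, the map is $\widecheck K$\+linear.

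Next I would invoke full-and-faithfulness of $\phi_*\:\C\Contra\rarrow K\Modlc$. Fullness says that the natural map $\Hom^\C(\fP,\fQ)\rarrow\Hom_K(\phi_*\fP,\phi_*\fQ)$ is surjective for all left $\C$\+contramodules $\fP$, $\fQ$; that is, every morphism of left $K$\+modules between the underlying modules of two left $\C$\+contramodules is induced by a (necessarily unique, by faithfulness) $\C$\+contramodule morphism. Taking $\fQ=\Cohom_\C(\bS,\fP)$, the left $K$\+module map of the preceding paragraph is therefore automatically a morphism of left $\C$\+contramodules, so the side condition of Proposition~\ref{left-semicontramodules-described-prop} is vacuous, and the stated equivalence follows. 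The closing ``in other words'' reformulation is then just the observation, as in Remark~\ref{lands-in-c-unital-remark}, that the $\C$\+contramodule $\fP$ exists precisely when the c\+unital left $K$\+module underlying $P$ arises from some left $\C$\+contramodule structure.

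I do not expect a genuine obstacle here; the one point that wants a moment's care is checking that the map in the side condition is honestly $\widecheck K$\+linear (not merely $k$\+linear) and that both its source and target are bona fide $\C$\+contramodules, so that full-faithfulness may be applied to them---both facts being guaranteed by the module-category formalism already in place.
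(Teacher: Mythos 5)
Your proposal is correct and follows essentially the same route as the paper's own proof: reduce to Proposition~\ref{left-semicontramodules-described-prop}, observe that the map in its side condition is always a morphism of left $K$\+modules (via the module-functor structure of $\phi_*^\sop$), and conclude from full-and-faithfulness of $\phi_*\:\C\Contra\rarrow K\Modlc$ that any such $K$\+module morphism between left $\C$\+contramodules is automatically a $\C$\+contramodule morphism. The extra care you take---checking that the target $\Cohom_\C(\bS,\fP)$ is a genuine $\C$\+contramodule and that the identification with $\Hom_K(R,P)$ is the one from Proposition~\ref{tensor-hom-cohom-assoc-prop}---is exactly what the paper's terser argument implicitly relies on.
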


\begin{proof}
 This is similar to
Corollary~\ref{right-semimodules-described-fully-faithful-case}.
 The map appearing in the condition of
Proposition~\ref{left-semicontramodules-described-prop} is always
a morphism of left $K$\+modules.
 Under the full-and-faithfulness assumption of the corollary,
any left $K$\+module morphism between left $\C$\+contramodules
is a left $\C$\+contramodule morphism.
\end{proof}

\begin{rem}
 The conditions under which the comodule forgetful functor
$\phi_*\:\Comodr\C\rarrow\Modrn K$ is fully faithful were discussed
in Proposition~\ref{comodule-forgetful-full-and-faithfulness}.
 The contramodule forgetful functor $\phi_*\:\C\Contra\rarrow
K\Modln$ being fully faithful is a more rare occurrence.
{\hbadness=1800\par}

 In particular, in the case when the dual $k$\+algebra to $\C$
plays the role of $K$, the comodule forgetful/inclusion functor
$\Comodr\C\rarrow\Modr\C^*$ is \emph{always} fully
faithful~\cite[Propositions~2.1.1\+-2.1.2 and Theorem~2.1.3(e)]{Swe}.
 This is \emph{not} true for the contramodule forgetful functor
$\C\Contra\rarrow\C^*\Modl$.
 A complete characterization is known in the particular case when
the coalgebra $\C$ is \emph{conilpotent}: the functor
$\C\Contra\rarrow\C^*\Modl$ is fully faithful if and only if
the coalgebra $\C$ is \emph{finitely cogenerated}.
 See~\cite[Theorem~2.1]{Psm} for the ``if'' implication
and~\cite[Example~7.2]{Phff} for the ``only if''.
 Using the full result of~\cite[Theorem~2.1]{Psm}, one can show that,
for a conilpotent coalgebra $\C$, the functor $\phi_*\:\C\Contra\rarrow
K\Modln$ is fully faithful if and only if $\C$ is finitely cogenerated
and the map $\check\phi^*\:\widecheck K\rarrow\C^*$ has dense image
in~$\C^*$.

 Not so much is known outside of the conilpotent case, but examples
of fully faithful contramodule forgetful functors $\phi_*\:\C\Contra
\rarrow K\Modlc$ are provided by~\cite[Theorem~3.7]{Plfin} or
Theorem~\ref{strictly-lower-contramodule-full-and-faithfulness}
below (use~\cite[Proposition~6.6]{Pnun} or
Proposition~\ref{functors-as-c-unital-modules} below to establish
a comparison between our present setting and the one in~\cite{Plfin}).
 These examples will be discussed in the next
Sections~\ref{loc-fin-subcategories-secn}\+-\ref{examples-secn}.
\end{rem}

\Section{Locally Finite Subcategories in $k$-Linear Categories}
\label{loc-fin-subcategories-secn}

 As above, in this section $k$~denotes a fixed ground field.
 We build upon the expositions in the preprints~\cite{Plfin}
and~\cite[Section~6]{Pnun}.

\subsection{Recollections on $k$-linear categories}
 A \emph{$k$\+linear category} $\sE$ is a category enriched in
$k$\+vector spaces.
 In other words, for every pair of objects $x$, $y\in\sE$,
a $k$\+vector space $\sE_{y,x}=\Hom_\sE(x,y)$ is given, and for
every triple of objects $x$, $y$, $z\in\sE$, a $k$\+linear
multiplication/composition map
$$
 \Hom_\sE(y,z)\ot_k\Hom_\sE(x,y)\lrarrow\Hom_\sE(x,z)
$$
is defined.
 Identity morphisms $\id_x\in\Hom_\sE(x,x)$ must exist for all
$x\in\sE$, and the associativity and unitality axioms are imposed
on the compositions of morphisms.

 To every small $k$\+linear category $\sE$, a nonunital $k$\+algebra
$$
 R_\sE=\bigoplus\nolimits_{x,y\in\sE}\Hom_\sE(x,y),
$$
with the multiplication map $R_\sE\ot_k R_\sE\rarrow R_\sE$
induced by the composition of morphisms in~$\sE$, is assigned.

\begin{prop} \label{category-algebras-t-unital-and-projective}
 For every small $k$\+linear category\/ $\sE$, the $k$\+algebra
$R=R_\sE$ is t\+unital.
 In fact, $R_\sE$ is both left and right s\+unital.
 The algebra $R$ is a projective left and a projective right
$\widecheck R$\+module.
\end{prop}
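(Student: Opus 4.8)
The plan is to exhibit the identity morphisms $\id_x\in R_\sE$, for the objects $x\in\sE$, as a system of \emph{local units} and then to read off all three assertions from their formal properties. First I would record the elementary multiplication rules in $R=R_\sE$: each $\id_x$ is an idempotent, $\id_x\id_{x'}=0$ for $x\ne x'$ (the composition being undefined, so the product vanishes), and for every morphism $f\in\Hom_\sE(x,y)$ one has $\id_y f=f=f\id_x$, while $\id_{y'}f=0$ for $y'\ne y$ and $f\id_{x'}=0$ for $x'\ne x$.

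For s\+unitality, take an arbitrary element $r\in R$, written as a finite sum $r=\sum_i f_i$ with $f_i\in\Hom_\sE(x_i,y_i)$. Letting $e$ be the sum of the identity morphisms $\id_y$ at the (finitely many) distinct targets $y$ occurring among the $f_i$, one has $e\in R$ and $er=r$ by the rules above, so $R$ is left s\+unital; summing instead the identity morphisms at the distinct sources shows that $R$ is right s\+unital. Both t\+unitality assertions then follow at once from Proposition~\ref{s-unital-implies-t-unital}(a).

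For the projectivity statements I would pass to the $k$\+unitalization $\widecheck K$-style ring $\widecheck R=k\oplus R$, in which each $\id_x$ is again an idempotent. Grouping morphisms by their source gives a direct sum decomposition of left $\widecheck R$\+modules
$$
 R=\bigoplus\nolimits_{x\in\sE}R\id_x,
$$
the summand $R\id_x$ consisting of all morphisms with source~$x$ (for any given element of $R$ only finitely many summands are nonzero, so the sum is direct and exhausts $R$). Since $\id_x\in R\id_x$, we have $R\id_x=\widecheck R\id_x$, which is a direct summand of the free module, $\widecheck R=\widecheck R\id_x\oplus\widecheck R(1-\id_x)$, and hence projective. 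As a coproduct of projective modules is projective, $R$ is a projective left $\widecheck R$\+module. Grouping morphisms by their target instead yields $R=\bigoplus_{y}\id_y R=\bigoplus_{y}\id_y\widecheck R$ and shows, symmetrically, that $R$ is a projective right $\widecheck R$\+module.

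I expect no serious obstacle here: the argument is essentially bookkeeping with the orthogonal idempotents~$\id_x$. The one point that deserves care is that, although $R$ itself carries no unit when $\sE$ has infinitely many objects, each $\id_x$ is nevertheless a genuine idempotent of the \emph{unital} ring $\widecheck R$, so that $\widecheck R\id_x$ really does split off as a direct $\widecheck R$\+summand; this is exactly what makes the projectivity argument go through in the absence of a global unit.
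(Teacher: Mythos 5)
Your proof is correct. The paper gives no argument of its own for this proposition: it simply cites \cite[Lemmas~6.1 and~6.2]{Pnun} (made applicable by \cite[Lemma~6.4]{Pnun}), so your proposal supplies in full the content that the paper outsources. Your route is the standard ``enough idempotents'' argument: the orthogonal idempotents $\id_x$ serve as local units, a finite sum of identities at the targets (resp.\ sources) occurring in an element witnesses left (resp.\ right) s\+unitality, and t\+unitality then follows from Proposition~\ref{s-unital-implies-t-unital}(a) --- which matches the order of assertions in the statement. For projectivity, your decompositions $R=\bigoplus_{x\in\sE}R\id_x=\bigoplus_{x\in\sE}\widecheck R\id_x$ (grouping by source) and $R=\bigoplus_{y\in\sE}\id_y\widecheck R$ (grouping by target) are correct, and you rightly flag the key point: each $\id_x$ is an idempotent of the \emph{unital} ring $\widecheck R$, so $\widecheck R\id_x$ splits off $\widecheck R$ as a direct summand and is projective, whence $R$ is projective as a coproduct of projectives. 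The small details all check out ($\id_x\id_{x'}=0$ for $x\ne x'$ under the convention that non-composable products vanish; $k\id_x\subset R\id_x$, so indeed $R\id_x=\widecheck R\id_x$). The only difference from the paper is thus one of presentation --- self-contained bookkeeping versus a citation --- and your version has the advantage of making the proposition verifiable within the paper itself.
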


\begin{proof}
 This is~\cite[Lemmas~6.1 and~6.2]{Pnun}, which are applicable in
view of~\cite[Lemma~6.4]{Pnun}.
\end{proof}

 Let $\sE$ be a small $k$\+linear category.
 Then a \emph{left\/ $\sE$\+module} $\sM$ is defined as a covariant
$k$\+linear functor $\sM\:\sE\rarrow k\Vect$.
 This is the same thing as a covariant additive functor
$\sM\:\sE\rarrow\Ab$, since the action of the scalar multiples of
the identity morphism~$\id_x$ always endows the abelian group $\sM(x)$
with a $k$\+vector space structure.

 Explicitly, a left $\sE$\+module $\sM$ is the datum of a vector space
$\sM(x)$ for every $x\in\sE$ and a linear map
$$
 \Hom_\sE(x,y)\ot_k\sM(x)\lrarrow\sM(y)
$$
for every pair of objects $x$, $y\in\sE$.
 The usual associativity and unitality axioms are imposed.
 We denote the abelian category of left $\sE$\+modules by $\sE\Modl$.

 Similarly, a \emph{right\/ $\sE$\+module} $\sN$ is a contravariant
$k$\+linear functor $\sN\:\sE^\sop\rarrow k\Vect$, or equivalently,
a contravariant additive functor $\sN\:\sE^\sop\rarrow\Ab$.
 In other words, it is the datum of a vector space $\sN(x)$
for every $x\in\sE$ and a linear map
$$
 \Hom_\sE(x,y)\ot_k\sN(y)\lrarrow\sN(x)
$$
for every pair of objects $x$, $y\in\sE$, satisfying the usual
associativity and unitality axioms.
 The abelian category of right $\sE$\+modules is denoted by $\Modr\sE$.

\begin{prop} \label{functors-as-t-unital-modules}
 Let\/ $\sE$ be a small $k$\+linear category and $R=R_\sE$ be
the related nonunital $k$\+algebra. \par
\textup{(a)} There is a natural equivalence of abelian categories
$$
 \sE\Modl\simeq R_\sE\Modlt.
$$
 To any left\/ $\sE$\+module\/ $\sM$, the t\+unital left $R_\sE$\+module
$M=\bigoplus_{x\in\sE}\sM(x)$, with the left action of $R_\sE$ in $M$
induced by the left action of\/ $\sE$ in\/ $\sM$, is assigned. \par
\textup{(b)} There is a natural equivalence of abelian categories
$$
 \Modr\sE\simeq \Modrt R_\sE.
$$
 To any right\/ $\sE$\+module\/ $\sN$, the t\+unital right
$R_\sE$\+module $N=\bigoplus_{x\in\sE}\sN(x)$, with the right action
of $R_\sE$ in $N$ induced by the right action of\/ $\sE$ in\/ $\sN$,
is assigned.
\end{prop}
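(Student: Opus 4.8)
The plan is to exploit the fact that $R=R_\sE$ is an algebra with enough orthogonal idempotents and to match the idempotent decomposition of a t\+unital module with the object-wise values of a functor. For each $x\in\sE$ put $e_x=\id_x\in\Hom_\sE(x,x)\subset R$. These are orthogonal idempotents, $e_ye_x=0$ for $x\ne y$ and $e_x^2=e_x$, and one has $R=\bigoplus_{x,y\in\sE}e_yRe_x$ with $e_yRe_x=\Hom_\sE(x,y)$; this ``enough idempotents'' structure is the one recorded in \cite[Section~6]{Pnun} that underlies Proposition~\ref{category-algebras-t-unital-and-projective}.

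The key step, and the only one with genuine content, is the concrete description of t\+unitality: a nonunital left $R$\+module $M$ is t\+unital if and only if $M=\bigoplus_{x\in\sE}e_xM$. Since $R_\sE$ is left s\+unital by Proposition~\ref{category-algebras-t-unital-and-projective}, t\+unitality coincides with s\+unitality by Proposition~\ref{s-unital-implies-t-unital}(b), so it suffices to compare s\+unitality with the displayed decomposition. The internal sum $\sum_xe_xM$ is always direct by orthogonality of the $e_x$ (apply $e_y$ to a vanishing finite sum). If $M$ is s\+unital, pick for $m\in M$ an element $e\in R$ with $em=m$, expand $e$ as a finite sum of elements $e_yre_x$, and let $f=\sum_{y\in S}e_y$ where $S$ is the finite set of occurring targets~$y$; then $fe=e$, whence $m=fm=\sum_{y\in S}e_ym\in\bigoplus_ye_yM$. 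Conversely, if $M=\bigoplus_xe_xM$ then any $m$ has finite support and $e=\sum_{x\in S}e_x$ satisfies $em=m$, so $M$ is s\+unital.

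With this in hand I would write down the two mutually inverse functors. In one direction, send a left $\sE$\+module $\sM$ to $M=\bigoplus_x\sM(x)$, with the $R$\+action in which $r\in\Hom_\sE(x,y)$ sends the summand $\sM(x)$ into $\sM(y)$ by the structure map $\Hom_\sE(x,y)\ot_k\sM(x)\to\sM(y)$ and annihilates the remaining summands; unitality of the functor gives $e_xM=\sM(x)$, so $M=\bigoplus_xe_xM$ is t\+unital by the previous step. In the other direction, send a t\+unital $M$ to the functor $\sM$ with $\sM(x)=e_xM$, where $r\in\Hom_\sE(x,y)=e_yRe_x$ acts as $m\mapsto rm\colon e_xM\to e_yM$; the $R$\+module axioms are exactly the associativity and unitality of this assignment (note $e_x$ acts as the identity on $e_xM$). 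Both constructions are additive and $k$\+linear on morphisms, and the composites are canonically isomorphic to the respective identities via $M\simeq\bigoplus_xe_xM$ (t\+unitality) and $\sM(x)=e_x\bigl(\bigoplus_y\sM(y)\bigr)$. This establishes the equivalence in part~(a); as $\sE\Modl$ is the abelian category of $k$\+linear functors $\sE\rarrow k\Vect$, the equivalence identifies $R_\sE\Modlt$ with an abelian category and is automatically exact.

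Part~(b) I would deduce from~(a) by duality rather than repeating the argument. The category $R_{\sE^\sop}$ is canonically the opposite algebra $R_\sE^\rop$; a left $\sE^\sop$\+module is the same datum as a right $\sE$\+module, and a t\+unital left $R_\sE^\rop$\+module is the same as a t\+unital right $R_\sE$\+module. Applying part~(a) to the small $k$\+linear category $\sE^\sop$ therefore yields $\Modr\sE\simeq\Modrt R_\sE$, under which $\sN\mapsto\bigoplus_x\sN(x)$ as asserted. Thus the main obstacle is isolated entirely in the idempotent characterization of t\+unitality of the second paragraph; everything else is the standard functor/module dictionary for algebras with enough idempotents, and presents only bookkeeping (this is closely related to \cite[Proposition~6.6]{Pnun}).
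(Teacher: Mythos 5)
Your proposal is correct. Note, however, that the paper does not actually prove this proposition in the text: its ``proof'' is a citation to~\cite[Proposition~6.5]{Pnun}, so your argument supplies content where the paper defers to a reference. What you do is give a self-contained proof whose only real work is the characterization of t\+unital left $R_\sE$\+modules as those with $M=\bigoplus_{x}e_xM$, obtained by first replacing t\+unitality with s\+unitality via Proposition~\ref{s-unital-implies-t-unital}(b) (legitimate, since Proposition~\ref{category-algebras-t-unital-and-projective} is established independently of the statement being proved, so there is no circularity), and then running the standard orthogonal-idempotent bookkeeping; the functor/module dictionary and the reduction of part~(b) to part~(a) via $R_{\sE^\sop}\simeq R_\sE^\rop$ are routine and handled correctly. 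Two small points that you leave implicit but that do hold: any $\widecheck R$\+module map $f\:M\rarrow M'$ satisfies $f(e_xM)\subseteq e_xM'$, which is exactly what makes both constructions bijective on morphisms (full faithfulness), and the resulting equivalence between two abelian categories is automatically exact. What your route buys is transparency: it isolates the idempotent decomposition as the one substantive step and makes the proposition independent of the external preprint; what the paper's citation buys is brevity and consistency with~\cite{Pnun}, where the same statement is developed in the wider framework of modules over nonunital rings.
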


\begin{proof}
 This is~\cite[Proposition~6.5]{Pnun}.
\end{proof}

\begin{prop} \label{functors-as-c-unital-modules}
 Let\/ $\sE$ be a small $k$\+linear category and $R=R_\sE$ be
the related nonunital $k$\+algebra.
 Then there is a natural equivalence of abelian categories
$$
 \sE\Modl\simeq R_\sE\Modlc.
$$
 To any left\/ $\sE$\+module\/ $\sP$, the c\+unital left $R_\sE$\+module
$P=\prod_{x\in\sE}\sP(x)$, with the left action of $R_\sE$ in $P$
induced by the left action of\/ $\sE$ in\/ $\sP$, is assigned.
\end{prop}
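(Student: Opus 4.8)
The plan is to build the equivalence by hand from the local units of $R=R_\sE$, which is legitimate since, by Proposition~\ref{category-algebras-t-unital-and-projective}, $R$ has enough idempotents. For each object $x\in\sE$ write $e_x=\id_x\in\Hom_\sE(x,x)\subset R$; these are orthogonal idempotents, and because $R=\bigoplus_{x,y}\Hom_\sE(x,y)$ and every morphism $g\colon x\to y$ satisfies $g=ge_x$, one gets a decomposition $R=\bigoplus_{x\in\sE}Re_x$ of $R$ as a left $R$\+module, with $Re_x=\bigoplus_y\Hom_\sE(x,y)$. Throughout I use the conventions of Section~\ref{flat-integrable-bimodules-secn}, so $\Hom_R$ means $\Hom_{\widecheck R}$, a $K$\+module is a unital $\widecheck K$\+module, and $\widecheck Re_x=Re_x$.

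First I would define a functor $\Phi\colon\sE\Modl\rarrow\widecheck R\Modl=R\Modln$ by $\Phi(\sP)=\prod_{x\in\sE}\sP(x)$, where an element $r=(r_{y,x})\in R$, with $r_{y,x}\in\Hom_\sE(x,y)$ and only finitely many components nonzero, acts on $p=(p_x)_x$ by $(r\cdot p)_y=\sum_x\sP(r_{y,x})(p_x)$. The finite support of $r$ makes each component a finite sum, and functoriality of $\sP$ gives associativity and $k$\+linearity of the action. The key step is to verify that $\Phi(\sP)$ is c\+unital. Using $R=\bigoplus_x Re_x$ together with the standard identification $\Hom_R(Re_x,P)\simeq e_xP$ (a homomorphism out of $Re_x=\widecheck Re_x$ is determined by its value on $e_x$, which lies in $e_xP$), I would compute $\Hom_R(R,\Phi(\sP))\simeq\prod_x\Hom_R(Re_x,\Phi(\sP))\simeq\prod_x e_x\Phi(\sP)$. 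A direct inspection shows $e_x\Phi(\sP)$ is exactly the $x$\+th factor $\sP(x)$ (since $e_x$ projects a family onto its $x$\+slot), so this product is $\Phi(\sP)$ itself; moreover the composite coincides with the natural map $P\rarrow\Hom_R(R,P)$, $p\mapsto(r\mapsto rp)$. Hence that map is an isomorphism, which by Lemma~\ref{t-c-unitality-equivalence-lemma}(b) is precisely c\+unitality, and $\Phi$ lands in $R\Modlc$.

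Next I would produce the inverse $\Psi\colon R\Modlc\rarrow\sE\Modl$ by $\Psi(P)(x)=e_xP$, with a morphism $g\in\Hom_\sE(x,y)=e_yRe_x$ acting as left multiplication $e_xP\rarrow e_yP$, $p\mapsto gp$; functoriality on $\sE$ is immediate. The isomorphism $\Psi\Phi\simeq\id_{\sE\Modl}$ follows at once from $e_x\prod_z\sP(z)=\sP(x)$, naturally in $\sP$ and compatibly with the $\sE$\+action. The remaining isomorphism $\Phi\Psi\simeq\id_{R\Modlc}$ is exactly the assertion that for a c\+unital module $P$ the natural map $P\rarrow\prod_x e_xP$ is an isomorphism of $R$\+modules, which is the c\+unitality isomorphism $P\simeq\Hom_R(R,P)$ rewritten through $R=\bigoplus_x Re_x$.

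The main obstacle I anticipate is the final bookkeeping: confirming that the intrinsic $R$\+action on a c\+unital $P$, transported to $\prod_x e_xP$ along $P\simeq\Hom_R(R,P)$, agrees componentwise with the action defining $\Phi$, and that the two comparison maps (the abstract c\+unitality map and the factorwise identification) genuinely coincide. This reduces to associativity of the action on $P$, but it is where care is needed. Everything else is formal once $R=\bigoplus_x Re_x$ and $\Hom_R(Re_x,P)\simeq e_xP$ are in place. As a shortcut one could instead obtain the equivalence abstractly by composing Proposition~\ref{functors-as-t-unital-modules}(a) with the equivalence $R\Modlt\simeq R\Modlc$ from~\eqref{four-category-equivalence} and then merely identify the essential image of $\sP\mapsto\prod_x\sP(x)$; but the direct construction above is what yields the explicit description claimed in the statement.
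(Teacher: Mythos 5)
Your proof is correct. Note, however, that the paper does not actually prove this proposition internally: its ``proof'' is a citation of~\cite[Proposition~6.6]{Pnun}, so your argument is a genuinely self-contained replacement rather than a reproduction of the paper's route. The substance of your construction checks out: the decomposition $R=\bigoplus_{x}Re_x$ with $e_x=\id_x$ (and $\widecheck Re_x=Re_x$, since $e_x\in Re_x$), the identification $\Hom_{\widecheck R}(Re_x,P)\simeq e_xP$ via $f\mapsto f(e_x)$, and the computation $e_x\bigl(\prod_z\sP(z)\bigr)=\sP(x)$ together show that the canonical map $\Phi(\sP)\rarrow\Hom_{\widecheck R}(R,\Phi(\sP))$ is the identity under these identifications, which by Lemma~\ref{t-c-unitality-equivalence-lemma}(b) is exactly c\+unitality. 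The point you flag as the main obstacle --- that the comparison map $P\rarrow\prod_x e_xP$ for a c\+unital $P$ is $R$\+linear for the action defining $\Phi\Psi(P)$ --- does indeed close: writing $r=\sum_{y,x}r_{y,x}$ with $r_{y,x}=e_yr_{y,x}e_x$, one has $e_y(rp)=\sum_x r_{y,x}p=\sum_x r_{y,x}(e_xp)$, which is precisely the componentwise formula; so the c\+unitality isomorphism $P\simeq\Hom_{\widecheck R}(R,P)\simeq\prod_xe_xP$ is an isomorphism in $R\Modlc$, not merely of vector spaces. Your alternative shortcut (compose Proposition~\ref{functors-as-t-unital-modules}(a) with the equivalence $R\Modlt\simeq R\Modlc$ of~\eqref{four-category-equivalence}) is also valid and is closer in spirit to how the paper organizes its material, but, as you say, it yields the explicit description $P=\prod_{x\in\sE}\sP(x)$ only after the same $\Hom_{\widecheck R}(R,{-})$ computation you perform anyway; the direct construction is therefore the more informative proof of the statement as phrased. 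One trivial quibble: the existence of the orthogonal idempotents $e_x$ and the decomposition $R=\bigoplus_xRe_x$ follow from the definition of $R_\sE$ itself, not from Proposition~\ref{category-algebras-t-unital-and-projective} (which records t\+unitality, s\+unitality and projectivity); since you verify the decomposition by hand immediately afterwards, this misattribution is harmless.
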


\begin{proof}
 This is~\cite[Proposition~6.6]{Pnun}.
\end{proof}

\subsection{Recollections on locally finite categories}
\label{locally-finite-categories-subsecn}
 Let $\sF$ be a small $k$\+linear category.
 We define a preorder relation~$\preceq$ on the set of all objects of
$\sF$ as the reflexive, transitive binary relation generated by
the following elementary comparisons: $x\preceq y$ if there exists
a nonzero morphism from~$x$ to~$y$ in $\sF$, that is, if
$\Hom_\sF(x,y)\ne0$.
 In other words, we write $x\preceq y$ if there exists a finite chain
of composable nonzero morphisms in $\sF$ going from~$x$ to~$y$
\,\cite[Definition~2.1]{Plfin}.
 Furthermore, we will write $x\prec y$ whenever $x\preceq y$ but
$y\not\preceq x$.

 We say that a small $k$\+linear category $\sF$ is \emph{locally
finite}~\cite[Definition~2.2]{Plfin} if the following two conditions
are satisfied:
\begin{itemize}
\item Hom-finiteness: for any two objects $x$ and $y\in\sF$,
the $k$\+vector space $\Hom_\sF(x,y)$ is finite-dimensional;
\item interval finiteness: for any two objects $x$ and $y\in\sF$,
the set of all objects $z\in\sF$ such that $x\preceq z\preceq y$
is finite.
\end{itemize}

 To every locally finite $k$\+linear category $\sF$, the construction
spelled out in~\cite[Construction~2.3]{Plfin} assigns a coassociative,
counital coalgebra
$$
 \C_\sF=\bigoplus\nolimits_{x,y\in\sF}\Hom_\sF(x,y)^*
$$
over the field~$k$.
 The collection of all the natural duality pairings
$$
 \phi_{x,y}\:\Hom_\sF(x,y)^*\times\Hom_\sF(x,y)\lrarrow k
$$
induces, by passing to the direct sum over all the objects
$x$, $y\in\sF$, a multiplicative bilinear pairing
$$
 \phi_\sF\:\C_\sF\times R_\sF\lrarrow k.
$$

\begin{lem} \label{category-pairings-t-unital}
 For any locally finite $k$\+linear category $\sF$, the multiplicative
pairing $\phi_\sF\:\C_\sF\times R_\sF\rarrow k$ is both left and
right t\+unital (and in fact, s\+unital).
\end{lem}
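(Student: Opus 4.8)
The plan is to establish the stronger s-unitality of $\phi_\sF$ and then read off both-sided t-unitality. Since s-unitality of a pairing is a left-right symmetric condition (Proposition~\ref{s-unital-pairings-characterized}), it suffices to verify condition~(3) of that proposition: for every finite-dimensional subspace $W\subset\C_\sF$ one must produce an element $e\in R_\sF$ such that $\epsilon_{\C_\sF}(w)=\phi_\sF(w,e)$ for all $w\in W$.

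First I would record the explicit, block-diagonal forms of the relevant structures with respect to the gradings $\C_\sF=\bigoplus_{x,y\in\sF}\Hom_\sF(x,y)^*$ and $R_\sF=\bigoplus_{x,y\in\sF}\Hom_\sF(x,y)$. The counit $\epsilon_{\C_\sF}$, being dual to the identity morphisms, vanishes on each off-diagonal summand $\Hom_\sF(x,y)^*$ with $x\ne y$ and equals evaluation at $\id_x$ on the diagonal summand $\Hom_\sF(x,x)^*$. The pairing $\phi_\sF=\bigoplus_{x,y}\phi_{x,y}$ satisfies $\phi_\sF(w,g)=w(g)$ when $w\in\Hom_\sF(x,y)^*$ and $g\in\Hom_\sF(x,y)$ lie in the same block, and $\phi_\sF(w,g)=0$ when the blocks differ.

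Next I would use that a finite-dimensional $W$ meets only finitely many summands, so $W\subset\bigoplus_{(x,y)\in S}\Hom_\sF(x,y)^*$ for a finite set $S$ of pairs of objects; letting $X$ be the finite set of objects occurring in $S$, I would set $e=\sum_{x\in X}\id_x\in R_\sF$. A summand-by-summand check then shows that the two linear functionals $\epsilon_{\C_\sF}$ and $\phi_\sF(-,e)$ coincide on each $\Hom_\sF(x,y)^*$ with $(x,y)\in S$: both vanish when $x\ne y$, and both equal evaluation at $\id_x$ when $x=y\in X$. Agreeing on each summand, the two functionals agree on $W$, which is exactly condition~(3); hence $\phi_\sF$ is s-unital.

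Finally, $R_\sF$ is both left and right s-unital by Proposition~\ref{category-algebras-t-unital-and-projective}, so Corollary~\ref{s-unital-t-unital-pairings-cor}, together with its left-right opposite, upgrades the s-unitality of $\phi_\sF$ to right and left t-unitality, as desired. I do not anticipate a genuine obstacle here: the only point requiring care is the explicit identification of $\epsilon_{\C_\sF}$ and $\phi_\sF$ as block-diagonal data, together with the observation that the witnessing element $e$ is nothing but a finite sum of identity morphisms, i.e.\ precisely one of the local units that make $R_\sF$ s-unital.
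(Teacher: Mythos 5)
Your proposal is correct and takes essentially the same route as the paper's proof: verify condition~(3) of Proposition~\ref{s-unital-pairings-characterized} (which the paper dismisses as ``obviously satisfied'' but you spell out via the block-diagonal description of $\epsilon_{\C_\sF}$ and $\phi_\sF$ and the local unit $e=\sum_{x\in X}\id_x$), and then combine the s\+unitality of $R_\sF$ from Proposition~\ref{category-algebras-t-unital-and-projective} with Corollary~\ref{s-unital-t-unital-pairings-cor} and its left-right opposite to obtain t\+unitality on both sides. The only difference is that you make explicit the elementary check the paper leaves to the reader.
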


\begin{proof}
 The condition~(3) of
Proposition~\ref{s-unital-pairings-characterized} is obviously
satisfied.
 So it remains to recall the second assertion of
Proposition~\ref{category-algebras-t-unital-and-projective} and
apply Corollary~\ref{s-unital-t-unital-pairings-cor}.
\end{proof}

 For any locally finite $k$\+linear category $\sF$,
the \emph{comodule inclusion functors}
\begin{alignat}{2}
 \Upsilon_\sF\: &\C_\sF\Comodl &&\lrarrow \sF\Modl, 
 \label{left-comodules-into-functors-inclusion} \\
 \Upsilon_{\sF^\sop}\: &\Comodr\C_{\sF} &&\lrarrow \Modr\sF
 \label{right-comodules-into-functors-inclusion}
\end{alignat}
were constructed in~\cite[Construction~2.4]{Plfin}.
 In the language of the present paper, the functors $\Upsilon_\sF$
and $\Upsilon_{\sF^\sop}$ are interpreted as the functors~$\phi_*$
\,(\ref{left-comodules-to-modules}\+-\ref{right-comodules-to-modules})
from Proposition~\ref{underlying-K-module-structures} and
Remark~\ref{lands-in-t-unital-remark} for the t\+unital
multiplicative pairing $\phi=\phi_\sF$ (cf.\
Proposition~\ref{functors-as-t-unital-modules}).

\begin{lem} \label{Upsilon-fully-faithful}
 For any locally finite $k$\+linear category\/ $\sF$, the functors\/
$\Upsilon_\sF$ and\/ $\Upsilon_{\sF^\sop}$
\,(\ref{left-comodules-into-functors-inclusion}\+-%
\ref{right-comodules-into-functors-inclusion}) are fully faithful.
\end{lem}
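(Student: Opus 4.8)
The plan is to reduce the statement to the nondegeneracy criterion already packaged in Proposition~\ref{comodule-forgetful-full-and-faithfulness}. By the discussion immediately preceding the lemma, $\Upsilon_\sF$ and $\Upsilon_{\sF^\sop}$ are the functors~$\phi_*$ of~(\ref{left-comodules-to-modules}) and~(\ref{right-comodules-to-modules}) attached to the multiplicative pairing $\phi=\phi_\sF$, co-restricted to the full subcategories of t\+unital modules $R_\sF\Modlt\subset R_\sF\Modln$ and $\Modrt R_\sF\subset\Modrn R_\sF$ (cf.\ Proposition~\ref{functors-as-t-unital-modules} and Remark~\ref{lands-in-t-unital-remark}). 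Since these are full subcategories through which the functors factor, it will suffice to prove that $\phi_*\:\C_\sF\Comodl\rarrow R_\sF\Modln$ and $\phi_*\:\Comodr\C_\sF\rarrow\Modrn R_\sF$ are fully faithful; that is, to verify conditions~(1) and~(2) of Proposition~\ref{comodule-forgetful-full-and-faithfulness}.

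The condition I would actually check is the nondegeneracy condition~(7): for every nonzero $c\in\C_\sF$ there is an $r\in R_\sF$ with $\phi_\sF(c,r)\ne0$. This is immediate from the block-diagonal shape of the pairing. Recall that $\C_\sF=\bigoplus_{x,y\in\sF}\Hom_\sF(x,y)^*$ and $R_\sF=\bigoplus_{x,y\in\sF}\Hom_\sF(x,y)$, and that $\phi_\sF$ is the direct sum of the tautological evaluation pairings $\phi_{x,y}\:\Hom_\sF(x,y)^*\times\Hom_\sF(x,y)\rarrow k$; thus a functional living in the $(x,y)$\+block pairs to zero with the $(x',y')$\+block of $R_\sF$ unless $(x',y')=(x,y)$. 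Given $c\ne0$, choose a homogeneous component $c_{x,y}\in\Hom_\sF(x,y)^*$ that is nonzero. Because $\sF$ is Hom-finite, the space $\Hom_\sF(x,y)$ is finite-dimensional, so the evaluation pairing $\phi_{x,y}$ is perfect, and there is a morphism $r\in\Hom_\sF(x,y)\subset R_\sF$ with $\phi_\sF(c,r)=c_{x,y}(r)\ne0$. This proves~(7).

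Having established~(7), I would invoke the implications of Proposition~\ref{comodule-forgetful-full-and-faithfulness}, namely $(7)\Rightarrow(5)\Rightarrow(4)$ together with the equivalences $(1)\Leftrightarrow(2)\Leftrightarrow(3)\Leftrightarrow(4)$, to deduce conditions~(1) and~(2). This yields the full-and-faithfulness of the two functors~$\phi_*$, and hence of $\Upsilon_\sF$ and $\Upsilon_{\sF^\sop}$. I do not anticipate any genuine obstacle: the entire content is the perfectness of the evaluation pairing between $\Hom_\sF(x,y)$ and its dual, which is exactly the Hom-finiteness half of local finiteness. Notably, the interval finiteness condition plays no role in this particular argument (it is needed only to make the coalgebra structure on $\C_\sF$ well defined); the one point to state carefully is the reduction in the first paragraph from the t\+unital target categories to $R_\sF\Modln$ and $\Modrn R_\sF$, which is routine since these are full subcategories into which the functors land.
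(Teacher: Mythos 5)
Your proposal is correct and is essentially the paper's own argument: the paper proves the lemma by citing \cite[Proposition~2.5]{Plfin} and then gives, as an alternative, exactly your route --- observing that the pairing~$\phi_\sF$ is (obviously) nondegenerate in $\C_\sF$, i.~e., condition~(7) of Proposition~\ref{comodule-forgetful-full-and-faithfulness} holds, whence conditions~(1) and~(2) follow. Your write-up merely fills in the blockwise-evaluation verification of~(7) (which, as you note, only needs that a nonzero functional on $\Hom_\sF(x,y)$ is nonzero somewhere) and the routine co-restriction to the full subcategories of t\+unital modules, both of which the paper treats as immediate.
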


\begin{proof}
 This is~\cite[Proposition~2.5]{Plfin}.
 Alternatively, one can observe that the multiplicative
pairing~$\phi_\sF$ is obviously nondegenerate in\/ $\C_\sF$,
i.~e., condition~(7) of
Proposition~\ref{comodule-forgetful-full-and-faithfulness}
is satisfied for it.
 So Proposition~\ref{comodule-forgetful-full-and-faithfulness} tells
that conditions~(1) and~(2) are satisfied as well.
\end{proof}

 For any locally finite $k$\+linear category $\sF$,
the \emph{contramodule forgetful functor}
\begin{equation} \label{contramodules-into-functors-forgetful}
 \Theta_\sF\:\C_\sF\Contra\lrarrow\sF\Modl
\end{equation}
was constructed in~\cite[Construction~3.1]{Plfin}.
 In the language of the present paper, the functor $\Theta_\sF$
is interpreted as the functor~$\phi_*$
\,\eqref{left-contramodules-to-modules}
from Proposition~\ref{underlying-K-module-structures} and
Remark~\ref{lands-in-c-unital-remark} for the t\+unital
multiplicative pairing $\phi=\phi_\sF$ (cf.\
Proposition~\ref{functors-as-c-unital-modules}).

 A sufficient condition for full-and-faithfulness of
the functor~$\Theta_\sF$ was obtained in~\cite[Theorem~3.7]{Plfin}.
 In order to formulate it, we need to recall the next definition.

 A locally finite $k$\+linear category $\sF$ is said
to be \emph{lower strictly locally finite}~\cite[Definition~3.6]{Plfin}
if for every object $y\in\sF$ there exists a finite set of objects
$X_y\subset\sF$ such that $x\prec y$ for all $x\in X_y$ and
the composition map
$$
 \bigoplus\nolimits_{x\in X_y}\Hom_\sF(x,y)\ot_k\Hom_\sF(z,x)
 \lrarrow\Hom_\sF(z,y)
$$
is surjective for all $z\prec y$.

 Dually, a locally finite $k$\+linear category $\sG$ is said to be
\emph{upper strictly locally finite} if the opposite category
$\sG^\sop$ is lower strictly locally finite.

\begin{thm} \label{strictly-lower-contramodule-full-and-faithfulness}
 For any lower strictly locally finite $k$\+linear category\/ $\sF$,
the functor\/ $\Theta_\sF\:\C_\sF\Contra\rarrow\sF\Modl$ is
fully faithful.
\end{thm}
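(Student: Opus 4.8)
The plan is to prove full\+and\+faithfulness by separating the two halves. Faithfulness is immediate: the functor $\Theta_\sF$ is identified with the functor~$\phi_*$ of Proposition~\ref{underlying-K-module-structures} (for the pairing $\phi=\phi_\sF$ and via Remark~\ref{lands-in-c-unital-remark}), and that proposition asserts $\phi_*$ to be faithful, since it assigns to a contramodule the induced module structure on the \emph{same} underlying vector space. So the whole content is \emph{fullness}. First I would record the explicit shape of everything in sight. By Proposition~\ref{functors-as-c-unital-modules}, a left $\C_\sF$\+contramodule $\fP$ has underlying space $\prod_{x\in\sF}\fP_x$, where $\fP_x$ is the component at~$x$, and $\Theta_\sF(\fP)$ is the $\sF$\+module $x\mapsto\fP_x$ with each $\Hom_\sF(x,y)$ acting by a map $\fP_x\rarrow\fP_y$. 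Since $\C_\sF=\bigoplus_{x,y}\Hom_\sF(x,y)^*$ and each $\Hom_\sF(x,y)$ is finite\+dimensional, there is a natural identification $\Hom_k(\C_\sF,\fP)\simeq\prod_{x,y}\Hom_\sF(x,y)\ot_k\fP_x$, under which the contraaction $\pi_\fP$ becomes a family of maps whose $y$\+component depends only on the ``into\+$y$'' part $\prod_{x\preceq y}\Hom_\sF(x,y)\ot_k\fP_x$ (objects $x$ with $\Hom_\sF(x,y)=0$ do not contribute).

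Now let $\fP$ and $\fQ$ be contramodules and let $g\:\Theta_\sF(\fP)\rarrow\Theta_\sF(\fQ)$ be a morphism of $\sF$\+modules; I must show $g$ commutes with the contraactions, i.e.\ that $h:=g\circ\pi_\fP-\pi_\fQ\circ\Hom_k(\C_\sF,g)$ vanishes. That $g$ is an $\sF$\+module (equivalently $R_\sF$\+module) map says exactly that $h$ annihilates every element $\phi_\sF({-},r)\,p$ with $r\in R_\sF$, $p\in\fP$, since the $R_\sF$\+action is $\pi_\fP$ precomposed with these; thus $h$ vanishes on the ``finite\+support'' subspace $\bigoplus_{x,y}\Hom_\sF(x,y)\ot_k\fP_x$. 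Working one target object $y$ at a time, I split the into\+$y$ sources into those $x$ with both $x\preceq y$ and $y\preceq x$, and the remaining $x\prec y$. By interval finiteness the former set lies in the interval $\{z:y\preceq z\preceq y\}$ and so is finite, whence that contribution already lies in the finite\+support part and is killed by~$h$. For the part with $x\prec y$ I invoke lower strict local finiteness: every morphism $z\rarrow y$ with $z\prec y$ factors through the finite set $X_y$, so dually the relevant part of the comultiplication of $\C_\sF$ factors through $\bigoplus_{x'\in X_y}\Hom_\sF(x',y)^*\ot_k\Hom_\sF(z,x')^*$. Feeding this factorization into the contraassociativity axioms for $\pi_\fP$ and $\pi_\fQ$ rewrites the $y$\+component of each contraaction, on the $x\prec y$ part, as a \emph{finite} sum over $x'\in X_y$ of the action $\Hom_\sF(x',y)\:\fP_{x'}\rarrow\fP_y$ applied to the $x'$\+component of a contraaction on a reorganized family (the associativity isomorphism here is the one of Proposition~\ref{tensor-hom-cohom-assoc-prop}). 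Because $g$ commutes with the finitely many actions $\Hom_\sF(x',y)$, the vanishing of the $y$\+component of $h$ is thereby reduced to the vanishing of its $x'$\+components for $x'\in X_y$.

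The step just described strictly decreases the object in the preorder~$\prec$, and this is exactly where the main obstacle lies: because $\prec$ may carry infinite descending chains, the reduction need not terminate, so a naive induction is unavailable. The key is that the contraaction is a genuine infinite\+summation operation, and contraassociativity lets the infinitely iterated reduction be packaged into a single identity exhibiting $h$ as ``contractive'' with respect to the filtration of $\C_\sF$ by~$\prec$: composing the reduction with itself pushes the support of the relevant families arbitrarily far down, while lower strict local finiteness guarantees that each stage is finite and that this filtration is exhaustive. Since $h$ already kills the finite\+support part and both $\pi_\fP$ and $\pi_\fQ$ satisfy contraassociativity, I would argue that the only map compatible with this contraction is $h=0$ — concretely, that the self\+referential equation $h=h\circ(\text{depth-increasing operator})$, combined with the completeness (infinite summability) built into the contramodule~$\fQ$, forces $h=0$. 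Making this convergence argument precise, rather than the largely formal reductions that precede it, is the hard part; it is also the unique place where the hypothesis of lower strict local finiteness is genuinely used. Once $h=0$ is established, fullness follows, completing the proof.
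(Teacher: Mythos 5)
Your proposal cannot be compared with an in-paper argument, because the paper proves this theorem by citation only: its entire proof is a reference to \cite[Theorem~3.7]{Plfin}, where the functor $\Theta_\sF$ is constructed and studied. So the question is whether your outline is itself a complete proof, and it is not. The formal part is fine: faithfulness is immediate since $\Theta_\sF$ is, up to the equivalence $\sF\Modl\simeq R_\sF\Modlc$ of Proposition~\ref{functors-as-c-unital-modules}, the functor~$\phi_*$, which preserves underlying vector spaces; and fullness does reduce to showing that an $\sF$\+module map $g$ between (the images of) two contramodules satisfies $h:=g\circ\pi_\fP-\pi_\fQ\circ\Hom_k(\C_\sF,g)=0$, where $R_\sF$\+linearity of~$g$ gives exactly the vanishing of $h$ on the finite-support subspace $\bigoplus_{x,y}\Hom_\sF(x,y)\ot_k\fP$ of $\Hom_k(\C_\sF,\fP)\simeq\prod_{x,y}\Hom_\sF(x,y)\ot_k\fP$. (Note a slip here: the $(x,y)$\+factor is $\Hom_\sF(x,y)\ot_k\fP$, not $\Hom_\sF(x,y)\ot_k\fP_x$; that the $y$\+component of the contraaction only sees the $\fP_x$\+parts is itself something to be proved, though formal.)

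The genuine gap is the final paragraph, and you name it yourself: the assertion that the self-referential identity $h=h\circ(\text{depth-increasing operator})$, ``combined with completeness,'' forces $h=0$ is precisely the theorem, and it is left unproved. Moreover, the heuristic it rests on is invalid as a general principle: contramodules are \emph{not} separated with respect to the natural decreasing filtrations. For instance, over the coalgebra $\C$ with $\C^*\simeq k[[t]]$ there exist contramodules $\fP$ with $\bigcap_{n\ge1}t^n\fP\ne0$ (the standard counterexample in the contramodule literature; see the survey~\cite{Prev}), even though for that coalgebra the contramodule forgetful functor happens to be fully faithful. So knowing that $h$ factors through arbitrarily ``deep'' parts of a filtration, or that $h$ vanishes on a dense (finite-support) subspace, does not by itself imply $h=0$; a map into a contramodule need not be determined by its restriction to such a subspace unless one uses contraassociativity to repackage the infinite iteration into a single evaluation of the contraaction (as in the proofs of the contramodule Nakayama lemma), verifying along the way that lower strict local finiteness makes the resulting ``infinite descent'' datum a legitimate element of $\Hom_k(\C_\sF,\fP)$. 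That argument is the actual content of \cite[Theorem~3.7]{Plfin}, and your proposal stops exactly where it begins.
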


\begin{proof}
 This is~\cite[Theorem~3.7]{Plfin}.
\end{proof}

 The following definition~\cite[Definition~7.3]{Plfin} specifies
an even more restrictive condition on a $k$\+linear category.
 A locally finite $k$\+linear category $\sF$ is said to be
\emph{lower finite} if, for every object $y\in\sF$, the set of all
objects $x\in\sF$ such that $x\preceq y$ is finite.
 Clearly, any lower finite $k$\+linear category is lower strictly
locally finite.

 Dually, $\sF$ is \emph{upper finite} if, for every object $x\in\sF$,
the set of all objects $y\in\sF$ such that $x\preceq y$ is finite.

\begin{prop} \label{upper-lower-finite-categories-prop}
\textup{(a)} For any upper finite $k$\+linear category\/ $\sF$,
the comodule inclusion functor\/ $\Upsilon_\sF\:\C_\sF\Comodl
\rarrow\sF\Modl$ is a category equivalence. \par
\textup{(b)} For any lower finite $k$\+linear category\/ $\sF$,
the comodule inclusion functor\/ $\Upsilon_{\sF^\sop}\:
\Comodr\C_\sF\rarrow\Modr\sF$ is a category equivalence. \par
\textup{(c)} For any lower finite $k$\+linear category\/ $\sF$,
the contramodule forgetful functor\/ $\Theta_\sF\:\C_\sF\Contra
\rarrow\sF\Modl$ is a category equivalence.
\end{prop}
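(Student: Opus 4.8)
The plan is to exploit the fact that in all three parts the functor in question is \emph{already known to be fully faithful}: by Lemma~\ref{Upsilon-fully-faithful} for $\Upsilon_\sF$ and $\Upsilon_{\sF^\sop}$ in parts~(a) and~(b), and by Theorem~\ref{strictly-lower-contramodule-full-and-faithfulness} for $\Theta_\sF$ in part~(c), since any lower finite $k$\+linear category is lower strictly locally finite. Hence the whole content to be proved is \emph{essential surjectivity} of each functor, i.e., every left (resp.\ right) $\sF$\+module must be exhibited as $\Upsilon_\sF(\M)$, $\Upsilon_{\sF^\sop}(\N)$, or $\Theta_\sF(\fP)$ for a suitable comodule or contramodule.

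For part~(a), I would begin with a left $\sF$\+module $\sM$ and, following Proposition~\ref{functors-as-t-unital-modules}(a), take the underlying graded space $M=\bigoplus_{x\in\sF}\sM(x)$; the task is to construct a left $\C_\sF$\+comodule structure $\nu\:M\rarrow\C_\sF\ot_k M$ that induces the given module structure under~$\phi_*$. For $m\in\sM(x)$ I would set $\nu(m)=\sum_{y}\sum_\alpha (e^{x,y}_\alpha)^*\ot (e^{x,y}_\alpha m)$, where $\{e^{x,y}_\alpha\}$ is a basis of $\Hom_\sF(x,y)$, the dual basis vector $(e^{x,y}_\alpha)^*$ sits in the summand $\Hom_\sF(x,y)^*\subset\C_\sF$, and $e^{x,y}_\alpha m\in\sM(y)$. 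The crucial observation is that this sum is finite: a term can be nonzero only when $\Hom_\sF(x,y)\ne0$, that is $x\preceq y$, and \emph{upper finiteness} bounds the set of such~$y$ while Hom-finiteness bounds each basis. A routine check, using that the comultiplication of $\C_\sF$ is dual to composition in $R_\sF$, gives the coassociativity and counitality axioms, and a direct computation with the pairing $\phi_\sF$ recovers the original action, so $\Upsilon_\sF(M,\nu)\simeq\sM$. Part~(b) is completely symmetric: for a right $\sF$\+module $\sN$ one puts $N=\bigoplus_x\sN(x)$ and, for $n\in\sN(y)$, defines the right coaction by summing over the objects $x$ with $x\preceq y$; here it is \emph{lower finiteness} that renders the sum finite.

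For part~(c), the essential-surjectivity construction is the genuinely delicate one, and this is where I expect the main difficulty to lie. Given a left $\sF$\+module $\sP$, Proposition~\ref{functors-as-c-unital-modules} dictates taking the underlying space to be the \emph{product} $P=\prod_{x\in\sF}\sP(x)$, which must then be equipped with a contraaction $\pi\:\Hom_k(\C_\sF,P)\rarrow P$. Using finite-dimensionality of the Hom-spaces, I would identify $\Hom_k(\C_\sF,P)\simeq\prod_{x,y}\Hom_\sF(x,y)\ot_k P$ and define the $z$\+component of $\pi$ on a family $(\theta_{x,y})$ by letting $\Hom_\sF(x,z)$ act on the $x$\+component of $\theta_{x,z}$ and summing over~$x$. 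In contrast to the comodule case there is no automatic reason for this sum to be finite; it ranges over $\{x\:x\preceq z\}$, and it is precisely \emph{lower finiteness} that makes it finite and the contraaction well defined. The remaining work—verifying the axioms and confirming that $\Theta_\sF$ returns $\sP$—is bookkeeping, but the contraassociativity axiom, which passes through the identification $\Hom_k(\C_\sF\ot_k\C_\sF,\>P)\simeq\Hom_k(\C_\sF,\Hom_k(\C_\sF,P))$ together with the comultiplication of $\C_\sF$, is the step that will demand the most care.
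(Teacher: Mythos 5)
Your proposal is correct. There is, however, essentially nothing in the paper to compare it against: the paper's entire proof of this proposition is the citation \cite[Propositions~7.4 and~7.5]{Plfin}, so your argument is in effect a self-contained reconstruction of what that reference establishes. Your overall structure is the right one: full faithfulness is indeed already available (Lemma~\ref{Upsilon-fully-faithful} for parts~(a) and~(b); Theorem~\ref{strictly-lower-contramodule-full-and-faithfulness} for part~(c), together with the paper's remark that any lower finite category is lower strictly locally finite), so only essential surjectivity remains. Your coaction $\nu(m)=\sum_{y,\alpha}(e^{x,y}_\alpha)^*\ot e^{x,y}_\alpha m$ is well defined precisely because upper finiteness makes the set $\{y : \Hom_\sF(x,y)\ne 0\}\subseteq\{y : x\preceq y\}$ finite while Hom-finiteness bounds each inner sum, and it visibly induces the given $R_\sF$\+action under the pairing~$\phi_\sF$, so the corresponding object of $\sF\Modl$ is $\sM$ itself; part~(b) is the mirror image with lower finiteness. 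In part~(c), the identification $\Hom_k(\C_\sF,P)\simeq\prod_{x,y}\Hom_\sF(x,y)\ot_k P$ rests on Hom-finiteness, and your contraaction is well defined exactly because lower finiteness bounds the sum over $\{x : x\preceq z\}$; you correctly single this out as the step with no analogue for merely locally finite categories, which is why $\Theta_\sF$ need not be an equivalence in general. The remaining coassociativity/counitality and contraassociativity/contraunitality verifications, as well as the check that the constructed structures map back to the given modules, are routine dualizations of the associativity and unitality of the $\sF$\+action, so your outline constitutes a complete proof.
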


\begin{proof}
 This is~\cite[Propositions~7.4 and~7.5]{Plfin}.
\end{proof}

\subsection{Functors surjective on objects are t-unital}
 Let $\sE$ and $\sF$ be two small $k$\+linear categories.
 A \emph{$k$\+linear functor} $F\:\sF\rarrow\sE$ is a map assigning to
every object $u\in\sF$ an object $F(u)\in\sE$ and to every pair of
objects $u$, $v\in\sF$ a $k$\+linear map
$$
 F(u,v)\:\Hom_\sF(u,v)\lrarrow\Hom_\sE(F(u),F(v)).
$$
 The usual axioms of multiplicativity and unitality must be
satisfied.
 Clearly, any $k$\+linear functor $F\:\sF\rarrow\sE$ induces
a homomorphism of $k$\+algebras $f\:R_\sF\rarrow R_\sE$.

 Given a left $\sE$\+module $\sM\:\sE\rarrow k\Vect$ and a $k$\+linear
functor $F\:\sF\rarrow\sE$, the composition $\sF\rarrow\sE\rarrow\sM$
defines the \emph{underlying left\/ $\sF$\+module structure} on~$\sM$.
 Similarly, given a right $\sE$\+module $\sN\:\sE^\sop\rarrow k\Vect$
and a functor $F$ as above, the composition $\sF^\sop\rarrow\sE^\sop
\rarrow\sN$ defines the \emph{underlying right\/ $\sF$\+module
structure} on~$\sN$.

 The definitions of a \emph{left} (\emph{right}) \emph{t\+unital}
homomorphism of nonunital algebras $f\:K\rarrow R$ were given
in the beginning of Section~\ref{construction-of-semialgebra-subsecn}.
 Let us also recall~\cite[Section~9]{Pnun} that $f$~is called
\emph{left s\+unital} if $f$~makes $R$ an s\+unital left $K$\+module,
and $f$~is \emph{right s\+unital} if $R$ is an s\+unital right
$K$\+module.
 A ring homomorphism~$f$ is called \emph{s\+unital} if it is both
left and right s\+unital.

 An object $z\in\sE$ is said to be \emph{zero} if $\id_z=0$ in
$\Hom_\sE(z,z)$.
 In this case one has $\Hom_\sE(x,z)=0=\Hom_\sE(z,y)$ for all
$x$, $y\in\sE$.

\begin{prop} \label{t-unital-functors}
 Let $F\:\sF\rarrow\sE$ be a $k$\+linear functor of small
$k$\+linear categories, and $f\:R_\sF\rarrow R_\sE$ be the induced
$k$\+algebra homomorphism.
 Then the following conditions are equivalent:
\begin{enumerate}
\item $f$~is left t\+unital;
\item $f$~is right t\+unital;
\item $f$~is left s\+unital;
\item $f$~is right s\+unital;
\item for each nonzero object $x\in\sE$ there exists an object
$u\in\sF$ such that $F(u)=x$.
\end{enumerate}
\end{prop}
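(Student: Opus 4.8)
The plan is to split the five conditions into the two easy equivalences $(1)\Leftrightarrow(3)$ and $(2)\Leftrightarrow(4)$, and the three remaining equivalences $(3)\Leftrightarrow(5)\Leftrightarrow(4)$; together these close the cycle. The first pair is essentially formal. By Proposition~\ref{category-algebras-t-unital-and-projective}, the algebra $R_\sF$ is both left and right s\+unital, so Proposition~\ref{s-unital-implies-t-unital}(b) applies to the $R_\sF$\+module $R_\sE$: a left (resp.\ right) $R_\sF$\+module is t\+unital if and only if it is s\+unital. Applied to $M=R_\sE$ with the module structures induced by~$f$, this yields $(1)\Leftrightarrow(3)$ and $(2)\Leftrightarrow(4)$ at once. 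It therefore remains to tie the s\+unitality conditions~$(3)$ and~$(4)$ to the object-surjectivity condition~$(5)$.

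For this I would exploit the bigrading $R_\sE=\bigoplus_{x,y}\Hom_\sE(x,y)$ by source and target objects, together with two structural facts about~$f$: that $f(\id_u)=\id_{F(u)}$ for every $u\in\sF$, and that $f$ carries $\Hom_\sF(u,v)$ into $\Hom_\sE(F(u),F(v))$. The latter confines $\im f$ to the hom-spaces between objects lying in the image of~$F$: concretely $\im f\cap\Hom_\sE(x,y)=\sum_{F(u)=x,\,F(v)=y}\im F(u,v)$. I also record that left multiplication by $\id_y$ (resp.\ right multiplication by $\id_x$) restores any element of $\Hom_\sE(x,y)$, so that for homogeneous elements the relevant s\+unit is a local identity.

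To prove $(5)\Rightarrow(3)$ and $(5)\Rightarrow(4)$, given an arbitrary $s\in R_\sE$ I would decompose it into its finitely many homogeneous components $s=\sum_i s_i$ with $0\ne s_i\in\Hom_\sE(x_i,y_i)$, and let $Y$ be the finite set of target objects $y_i$ occurring (each nonzero, since $s_i\ne0$ forces $\id_{y_i}\ne0$). Using~$(5)$, choose for each $y\in Y$ an object $u_y\in\sF$ with $F(u_y)=y$; then $e=\sum_{y\in Y}\id_{u_y}\in R_\sF$ satisfies $f(e)=\sum_{y\in Y}\id_y$, whence $f(e)\cdot s=\sum_i\id_{y_i}s_i=s$, establishing left s\+unitality. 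The right case is symmetric, grouping by source objects and multiplying on the right by the $\id_{u_x}$.

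Finally, for $(3)\Rightarrow(5)$ and $(4)\Rightarrow(5)$ — the direction I expect to be the crux — I would test s\+unitality on the single element $\id_x$ for a nonzero object $x\in\sE$. Left s\+unitality gives $e\in R_\sF$ with $f(e)\cdot\id_x=\id_x$; since left multiplication by $f(e)$ on $\id_x$ extracts exactly the sum of the components of $f(e)$ whose source object is~$x$, this forces the $\Hom_\sE(x,x)$\+component of $f(e)$ to equal $\id_x\ne0$. But that component lies in $\im f\cap\Hom_\sE(x,x)$, which by the formula above is spanned by the $F(u,v)(\Hom_\sF(u,v))$ over pairs with $F(u)=x=F(v)$; its nonvanishing therefore forces some $u\in\sF$ with $F(u)=x$. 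The right-handed argument is identical, using right multiplication to extract the target-$x$ part. The main point to get right here is the bookkeeping with the bigrading and the fact that $\im f$ is confined to hom-spaces between objects in the image of~$F$, which is precisely what converts the algebraic s\+unitality of $\id_x$ back into the existence of a preimage object (and explains why $(5)$ must exclude the zero objects, for which $\id_x=0$ imposes no constraint).
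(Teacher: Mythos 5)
Your proof is correct and follows essentially the same route as the paper's: the equivalences $(1)\Leftrightarrow(3)$ and $(2)\Leftrightarrow(4)$ via the left and right s\+unitality of $R_\sF$ together with the s\+unital/t\+unital module equivalence, and the link between $(3),(4)$ and $(5)$ via the bigrading of $R_\sE$ and the fact that $\im f$ is confined to hom-spaces between objects in the image of~$F$ (with sums of identity morphisms $\id_{u}$ serving as local units). The only cosmetic differences are that you invoke the in-paper Propositions \ref{category-algebras-t-unital-and-projective} and \ref{s-unital-implies-t-unital}(b) where the paper cites an external lemma, and you argue $(3),(4)\Rightarrow(5)$ directly by extracting the $\Hom_\sE(x,x)$\+component of $f(e)$, where the paper runs the same computation contrapositively by noting that $\id_z$ is annihilated by $f(R_\sF)$ on both sides when $z$ has no preimage.
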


 Let us emphasize that condition~(5) requires an \emph{equality} of
objects $F(u)=x$ and \emph{not} just an isomorphism.
 This is the condition of surjectivity of the functor $F$ on
(nonzero) objects, and \emph{not} only essential surjectivity.
 The following simple example is instructive.

\begin{ex}
 Let $\sE$ be a $k$\+linear category with only two objects $x$ and~$y$
that are isomorphic in $\sE$ and nonzero.
 Let $\sF\subset\sE$ be the full subcategory on the object~$x$,
and let $F\:\sF\rarrow\sE$ be the inclusion functor; so $F$ is
a $k$\+linear category equivalence.
 Then $R_\sF$ and $R_\sE$ are actually unital algebras (because
the sets of objects of $\sF$ and $\sE$ are finite), but
the ring homomorphism $f\:R_\sF\rarrow R_\sE$ does \emph{not} take
the unit to the unit.
 Consequently, $f$~is neither left nor right t\+unital.
\end{ex}
 
\begin{proof}[Proof of Proposition~\ref{t-unital-functors}]
 (1)\,$\Longleftrightarrow$\,(3) The ring $R_\sF$ is s\+unital by
Proposition~\ref{category-algebras-t-unital-and-projective}, and it
remains to refer to~\cite[Lemma~9.4(a)]{Pnun}.

 (2)\,$\Longleftrightarrow$\,(4) This is~\cite[Lemma~9.4(b)]{Pnun}.

 (3)~or (4)~$\Longrightarrow$~(5)
 For the sake of contradiction, assume that (5)~does not hold.
 Let $z\in\sE$ be a nonzero object not belonging to the image of~$F$.
 Put $r=\id_z\in\bigoplus_{x,y\in\sE}\Hom_\sE(x,y)=R_\sE$.
 Then for every $e\in R_\sF$ one has $f(e)r=rf(e)=0$ in $R_\sE$,
even though $r\ne0$.
 This contradicts both~(3) and~(4).

 (5)~$\Longrightarrow$~(3) and~(4)
 Let $r\in R_\sE=\bigoplus_{x,y\in\sE}\Hom_\sE(x,y)$ be an element.
 Then there is a finite set of nonzero objects $Z\subset\sE$
such that $r\in\bigoplus_{z,w\in Z}\Hom_\sE(z,w)\subset R_\sE$.
 Let $U\subset\sF$ be a finite set of objects such that for
every $z\in Z$ there exists $u\in U$ with $F(u)=z$.
 Put $e=\sum_{u\in U}\id_u\in R_\sF$.
 Then $f(e)r=r=rf(e)$ in~$R_\sE$.
\end{proof}

\subsection{Tensor products and triangular decompositions}
\label{tensor-and-triangular-subsecn}
 Let $\sH$ be a small $k$\+linear category, $\sN$ be a right
$\sH$\+module, and $\sM$ be a left $\sH$\+module.
 Then the \emph{tensor product} $\sN\ot_\sH\sM$ is a $k$\+vector
space constructed as the cokernel of (the difference of) the
natural pair of maps
$$
 \bigoplus\nolimits_{x,y\in\sH}\sN(y)\ot_k\Hom_\sH(x,y)\ot_k\sM(x)
 \,\rightrightarrows\,\bigoplus\nolimits_{z\in\sE}\sN(z)\ot_k\sM(z).
$$
 Here one of the maps is induced by the right action of $\sH$ in $\sN$
and the other one by the left action of $\sH$ in~$\sM$.

 In fact, let $N$ be the t\+unital right $R_\sH$\+module corresponding
to $N$ and $M$ be the t\+unital left $R_\sH$\+module corresponding
to~$M$ (as per Proposition~\ref{functors-as-t-unital-modules}).
 Then there is a natural isomorphism of $k$\+vector spaces
$$
 \sN\ot_\sH\sM\simeq N\ot_{R_\sH}M.
$$

 An \emph{$\sH$\+$\sH$\+bimodule} $\sB$ is defined as a $k$\+linear
functor of two arguments $\sB\:\sH\times\sH^\sop\rarrow k\Vect$.
 In other words, it is the datum of a vector space $\sB(x,y)$ for
every pair of objects $x$, $y\in\sH$, and a linear map
$$
 \Hom_\sH(x,y)\ot_k\sB(z,x)\ot_k\Hom_\sH(w,z)\lrarrow
 \Hom_\sH(w,y)
$$
for every quadruple of objects $x$, $y$, $z$, $w\in\sH$.
 The usual associativity and unitality axioms are imposed.
 
 Strictly speaking, an $\sH$\+$\sH$\+bimodule $\sB$ has \emph{no}
underlying left or right $\sH$\+module.
 Rather, there is an \emph{underlying collection of left\/
$\sH$\+modules} $(\sM_x\in\sE\Modl)_{x\in\sH}$ with
$\sM_x(y)=\sB(x,y)$ for all $y\in\sH$, and a similar \emph{underlying
collection of right\/ $\sH$\+modules} $(\sN_y\in\Modr\sE)_{y\in\sH}$
with $\sN_y(x)=\sB(x,y)$ for all $x\in\sH$.

 On the other hand, to any $\sH$\+$\sH$\+bimodule $\sB$ one can
assign a t\+unital $R_\sH$\+$R_\sH$\+bimod\-ule $B$ defined by
the obvious rule
$$
 B=\bigoplus\nolimits_{x,y\in\sE}\sB(x,y).
$$

 Let $\sB'$ and $\sB''$ be two $\sH$\+$\sH$\+bimodules.
 Then the \emph{tensor product} $\sB=\sB'\ot_\sH\sB''$ is
an $\sH$\+$\sH$\+bimodule defined by the rule
$$
 \sB(x,y)=\sN'_y\ot_\sH\sM''_x,
$$
where, as above, $\sN'_y$ is the right $\sH$\+module with
$\sN'_y(x)=\sB'(x,y)$, and $\sM''_x$ is the left $\sH$\+module with
$\sM''_x(y)=\sB''(x,y)$.
 The t\+unital $R_\sH$\+$R_\sH$\+bimodule $B$ corresponding to
the $\sH$\+$\sH$\+bimodule $B$ can be constructed as
$$
 B=B'\ot_{R_\sH}B''.
$$

 Now let $\sE$ be a small $k$\+linear category and $\sH\subset\sE$ be
a $k$\+linear subcategory \emph{on the same set of objects}.
 This means that, for every pair of objects $x$, $y\in\sE$,
a vector subspace $\Hom_\sH(x,y)\subset\Hom_\sE(x,y)$ is chosen in
such a way that $\id_x\in\Hom_\sH(x,x)$ for all $x\in\sE$ and
the composition of morphisms in $\sE$ belonging to $\sH$ again
belongs to~$\sH$.
 Then there is a $k$\+linear natural inclusion functor $\sH\rarrow\sE$.

 In this setting, the rule $\sB_\sE(x,y)=\Hom_\sE(x,y)$ defines
an $\sH$\+$\sH$\+bimodule~$\sB_\sE$.
 The corresponding t\+unital $R_\sH$\+$R_\sH$\+bimodule is
$B_\sE=R_\sE$, with the $R_\sH$\+$R_\sH$\+bimodule structure on $R_\sE$
induced by the $k$\+algebra homomorphism $R_\sH\rarrow R_\sE$
corresponding to the inclusion functor $\sH\rarrow\sE$.

 Let $\sF$ and $\sG\subset\sE$ be two other $k$\+linear subcategories
on the same set of objects.
 Assume that $\sH\subset\sF$ and $\sH\subset\sG$.
 Then the multiplication of morphisms in $\sE$ induces
an $\sH$\+$\sH$\+bimodule map
\begin{equation} \label{three-subcategories-bimodule-map}
 \sB_\sF\ot_\sH\sB_\sG\lrarrow\sB_\sE,
\end{equation}
which we will simply denote, by an abuse of notation, by
\begin{equation} \label{simplified-notation-three-subcategories-map}
 \sF\ot_\sH\sG\lrarrow\sE.
\end{equation}
 The corresponding morphism of t\+unital $R_\sH$\+$R_\sH$\+bimodules is
\begin{equation} \label{three-subcategories-t-unital-bimodule-map}
 R_\sF\ot_{R_\sH}R_\sG\lrarrow R_\sE.
\end{equation}

 We will say that a triple of $k$\+linear subcategories on the same
set of objects $\sF$, $\sG$, $\sH\subset\sE$ forms a \emph{triangular
decomposition} of $\sE$ if
the map~\eqref{three-subcategories-bimodule-map}, or which is the same,
the map~\eqref{simplified-notation-three-subcategories-map} is
an isomorphism of $\sH$\+$\sH$\+bimodules.
 Equivalently, three subcategories $\sF$, $\sG$, $\sH\subset\sE$ form
a triangular decomposition of $\sE$ if and only if
the $R_\sH$\+$R_\sH$\+bimodule
map~\eqref{three-subcategories-t-unital-bimodule-map} is an isomorphism.
 Explicitly, this means that the following sequence of $k$\+vector
spaces is right exact for all pairs of objects $x$, $y\in\sE$:
\begin{multline} \label{triangular-decomposition-right-exact-sequence}
 \bigoplus\nolimits_{u,v\in\sH}
 \Hom_\sF(v,y)\ot_k\Hom_\sH(u,v)\ot_k\Hom_\sG(x,u) \\ \lrarrow
 \bigoplus\nolimits_{z\in\sE}\Hom_\sF(z,y)\ot_k\Hom_\sG(x,z)
 \lrarrow\Hom_\sE(x,y)\lrarrow0.
\end{multline}

\subsection{Triangular decompositions and projectivity}
\label{triangular-and-projectivity-subsecn}
 We will say that a $k$\+linear category $\sH$ is \emph{discrete}
if $\dim_k\Hom_\sH(x,y)=\delta_{x,y}$ for all $x$, $y\in\sH$.
 So $\Hom_\sH(x,y)=0$ if $x\ne y$, and the vector space $\Hom_\sH(x,x)$
consists of the scalar multiples of the identity morphism~$\id_x$,
which is assumed to be nonzero.

 From now on it will be convenient for us to assume that there are
no zero objects in the small $k$\+linear category~$\sE$.
 Then the category $\sE^\id$ defined by the rules
$$
 \Hom_{\sE^\id}(x,y)=
 \begin{cases}
  k\cdot\id_x & \text{if $x=y\in\sE$}, \\
  0 & \text{if $x\ne y\in\sE$}
 \end{cases}
$$
is the unique discrete $k$\+linear subcategory on the same set of
objects in~$\sE$.

 In the sequel, we will be mostly interested in triangular
decompositions $\sF$, $\sG$, $\sH\subset\sE$ with a discrete
$k$\+linear category~$\sH$; so $\sH=\sE^\id$.
 In this case, the condition of right exactness of
the sequence~\eqref{triangular-decomposition-right-exact-sequence}
takes the form of an isomorphism
\begin{equation} \label{triangular-over-discrete-isomorphism}
 \bigoplus\nolimits_{z\in\sE}\Hom_\sF(z,y)\ot_k\Hom_\sG(x,z)
 \overset\simeq\lrarrow\Hom_\sE(x,y)
\end{equation}
for all objects $x$, $y\in\sE$.

\begin{lem} \label{t-unital-projective-over-discrete}
 Let $\sH$ be a small discrete $k$\+linear category and $H=R_\sH$ be
the related nonunital $k$\+algebra.
 Then all t\+unital $H$\+modules are projective as unital
$\widecheck H$\+modules, while all c\+unital $H$\+modules are
injective as unital $\widecheck H$\+modules.
\end{lem}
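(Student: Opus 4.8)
The plan is to make $H=R_{\sH}$ fully explicit and then exploit the commutativity of its unitalization. Since $\sH$ is discrete, $H=\bigoplus_{x\in\sH}k\,e_x$ where $e_x=\id_x$ are pairwise orthogonal idempotents; consequently $\widecheck H=k\oplus H$ is a \emph{commutative} $k$\+algebra in which every $e_x$ is a central idempotent. For each fixed $x$ this yields a ring direct-product decomposition $\widecheck H\cong(e_x\widecheck H)\times((1-e_x)\widecheck H)$ with first factor $e_x\widecheck H=k\,e_x\cong k$; in particular $\widecheck He_x$ is a direct summand of $\widecheck H$ and hence a projective $\widecheck H$\+module. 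These structural observations will drive both halves of the statement.

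For the first assertion I would invoke Proposition~\ref{functors-as-t-unital-modules}(a): a t\+unital left $H$\+module $M$ decomposes as the direct sum $M=\bigoplus_{x\in\sH}e_xM$ of its components. On each $e_xM$ the algebra $\widecheck H$ acts through the character sending $e_x\mapsto1$ and $e_y\mapsto0$ for $y\ne x$, so $e_xM$ is a $k$\+vector space on which $\widecheck H$ acts via the factor $e_x\widecheck H\cong k$; choosing a $k$\+basis exhibits $e_xM$ as a direct sum of copies of $\widecheck He_x$. Thus $M$ is a direct sum of copies of the projective modules $\widecheck He_x$, and since direct sums of projectives are projective, $M$ is projective over $\widecheck H$.

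For the second assertion the same idempotent analysis applies to the individual components, but the way they are assembled changes, and this is where the argument is genuinely different. By Proposition~\ref{functors-as-c-unital-modules} a c\+unital left $H$\+module $P$ is the \emph{product} $P\cong\prod_{x\in\sH}e_xP$ of its components, the central idempotents supplying the required $\widecheck H$\+linear projections $e_x\colon P\to e_xP$. Via the decomposition $\widecheck H\cong(e_x\widecheck H)\times((1-e_x)\widecheck H)$, each $e_xP$ is the module $(V,0)$ with $V=e_xP$ a $k$\+vector space: it is injective over the field factor $k$ and zero, hence injective, over the complementary factor, so $e_xP$ is an injective $\widecheck H$\+module. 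As a product of injective objects, $P$ is then injective over $\widecheck H$.

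The main obstacle to keep in view — and the reason the two halves are not formally symmetric — is that over the non-noetherian ring $\widecheck H$ an arbitrary direct sum of injective modules need \emph{not} be injective. This is precisely circumvented by the fact that c\+unital modules arise as \emph{products} rather than direct sums of their one-object components (in contrast to the t\+unital case treated by a direct sum), together with the elementary but essential point that products of injectives are always injective.
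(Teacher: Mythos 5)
Your proof is correct, but it takes a genuinely different route from the paper's. The paper identifies the abelian category $H\Modlt\simeq\widecheck H\Modl/\widecheck H\Modlz\simeq H\Modlc\simeq\sH\Modl$ (the equivalence~\eqref{four-category-equivalence}) with a Cartesian product of copies of $k\Vect$, observes that this category is therefore semisimple, so all its objects are projective and injective, and then transfers these properties to $\widecheck H\Modl$ by invoking the general machinery of Section~\ref{t-c-flat-proj-inj-prelim-subsecn}: a t\+unital module is projective over $\widecheck H$ if and only if it is projective in the Serre quotient category, and a c\+unital module is injective over $\widecheck H$ if and only if it is injective there. You avoid that machinery entirely and perform the transfer by hand: a t\+unital module is the direct sum $\bigoplus_{x}e_xM$ of components on which $\widecheck H$ acts through the factor $e_x\widecheck H\simeq k$, each component being a direct sum of copies of the projective direct summand $\widecheck He_x\subset\widecheck H$; a c\+unital module is the product $\prod_x e_xP$ of components that are injective over the ring decomposition $\widecheck H\simeq e_x\widecheck H\times(1-e_x)\widecheck H$, and products of injectives are always injective. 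Both arguments rest on the same decomposition into components indexed by objects of $\sH$ (this \emph{is} the semisimplicity), but yours is elementary and self-contained, needing only Propositions~\ref{functors-as-t-unital-modules} and~\ref{functors-as-c-unital-modules} plus standard ring theory, whereas the paper's is shorter given the nonunital c\+projectivity/t\+injectivity results already imported from~\cite{Pnun}. Your closing observation is also well taken and accurate: when $\sH$ has infinitely many objects, $\widecheck H$ is not noetherian, so a direct sum of injectives could fail to be injective, and it is precisely the product (rather than direct sum) assembly of c\+unital modules that makes the injectivity half of the argument go through.
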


\begin{proof}
 The point is that the abelian category
$$
 H\Modlt\simeq \widecheck H\Modl/\widecheck H\Modlz\simeq
 H\Modlc\simeq \sH\Modl
$$
(see formula~\eqref{four-category-equivalence} in
Section~\ref{null-modules-prelim-subsecn}
and~\cite[Proposition~6.7]{Pnun}) is semisimple.
 In fact, it is clear that the category $\sH\Modl$ is equivalent
to the Cartesian product of copies of $k\Vect$ taken over all
the objects $x\in\sH$.

 Consequently, all objects in this abelian category are both
projective and injective.
 Then it remains to refer to the discussion in
Section~\ref{t-c-flat-proj-inj-prelim-subsecn} for the assertions
that a t\+unital $K$\+module is projective in $\widecheck K\Modl$
if and only it is projective in $K\Modlt\simeq\widecheck K\Modl/
\widecheck K\Modlz$, and a c\+unital $K$\+module is injective
in $\widecheck K\Modl$ if and only if it is injective in
$K\Modlc\simeq\widecheck K\Modl/\widecheck K\Modlz$.
\end{proof}

\begin{prop} \label{triangular-decomposition-projectivity-prop}
 Let\/ $\sE$ be a small $k$\+linear category and\/
$\sF\ot_\sH\sG\simeq\sE$ be a triangular decomposition of\/ $\sE$
with a discrete $k$\+linear category\/ $\sH=\sE^\id$.
 Put $K=R_\sF$ and $R=R_\sE$, and denote by $f\:K\rarrow R$
the morphism of nonunital $k$\+algebras induced by the inclusion
functor\/ $\sF\rarrow\sE$.
 Then $R$ is a left projective $K$\+$K$\+bimodule.
\end{prop}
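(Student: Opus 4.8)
The plan is to exploit the triangular decomposition to write $R$ as a tensor product and then reduce projectivity to the (elementary) module theory over the discrete category~$\sH$. Set $H=R_\sH$ and write $e_x=\id_x\in H\subseteq K$ for each object $x\in\sE$. Since $\sH=\sE^\id$ is discrete, $\Hom_\sH(x,y)=0$ for $x\ne y$, so $H=\bigoplus_x k\,e_x$, and each $\widecheck H\,e_x=k\,e_x$ is one-dimensional. The triangular decomposition, in the form of the bimodule isomorphism~\eqref{three-subcategories-t-unital-bimodule-map}, provides an isomorphism $R_\sF\ot_H R_\sG\simeq R_\sE$. First I would observe that this isomorphism, being induced by the composition of morphisms in~$\sE$ (which is associative), is left $R_\sF$\+linear, indeed an isomorphism of $R_\sF$\+$R_\sG$\+bimodules; in particular it is an isomorphism of left $K$\+modules
$$
 R=R_\sE\simeq K\ot_H R_\sG .
$$

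Next I would decompose $R_\sG$ as a left $H$\+module. In $R_\sG=\bigoplus_{x,z}\Hom_\sG(x,z)$ the idempotent $e_z$ acts by post-composition, hence as the projection onto the summand $\bigoplus_x\Hom_\sG(x,z)$; consequently, as a left $\widecheck H$\+module, $R_\sG$ is a direct sum of copies of the one-dimensional modules $\widecheck H\,e_z=k\,e_z$, indexed by a basis of $\bigoplus_x\Hom_\sG(x,z)$ for each object~$z$. (This is consistent with, and could instead be read off from, Lemma~\ref{t-unital-projective-over-discrete} applied to the t\+unital left $H$\+module $R_\sG$, but it is the explicit decomposition that I want.) Since tensor products commute with direct sums and $K\ot_H(\widecheck H\,e_z)\simeq K\,e_z=R_\sF\,e_z$, this yields an isomorphism of left $\widecheck K$\+modules exhibiting $R$ as a direct sum of copies of the modules $R_\sF\,e_z$.

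It then remains to check that each $R_\sF\,e_z$ is a projective left $\widecheck K$\+module. This holds because $R_\sF=\bigoplus_z R_\sF\,e_z$ displays $R_\sF\,e_z$ as a direct summand of $R_\sF$, and $R_\sF$ is a projective left $\widecheck{R_\sF}$\+module by Proposition~\ref{category-algebras-t-unital-and-projective}; a direct summand of a projective is projective. Finally, a direct sum of projective $\widecheck K$\+modules is projective, so $R$ is projective as a left $\widecheck K$\+module, i.e.\ $R$ is a left projective $K$\+$K$\+bimodule.

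The steps are routine once the set-up is in place; the one point that requires genuine care—and the main thing I would want to verify in detail—is that the triangular-decomposition isomorphism $R_\sF\ot_H R_\sG\simeq R_\sE$ is truly left $K$\+linear, and not merely $k$\+linear or $H$\+bilinear, so that the computation of $R$ as a left $\widecheck K$\+module is legitimate. Concretely, this rests on the fact that under~\eqref{triangular-over-discrete-isomorphism} left multiplication by an element of $\sF$ affects only the $\Hom_\sF$\+tensor-factor. I would also note that no local finiteness hypothesis on $\sE$ is used anywhere in this argument; the discreteness of $\sH$ (making its t\+unital modules projective) is what does the work.
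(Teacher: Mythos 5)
Your proof is correct and takes essentially the same route as the paper: both rest on the left $K$\+linear triangular-decomposition isomorphism $R\simeq K\ot_{\widecheck H}R_\sG$, projectivity of the t\+unital left $H$\+module $R_\sG$ over the discrete category $\sH$, and projectivity of $K$ as a left $\widecheck K$\+module (Proposition~\ref{category-algebras-t-unital-and-projective}). The only difference is that where the paper cites Lemma~\ref{t-unital-projective-over-discrete}, you reprove the relevant instance of it by decomposing $R_\sG$ explicitly into one-dimensional summands $k\,e_z$, so that $R$ becomes a direct sum of the summands $R_\sF\,e_z$ of $K$ --- a more hands-on but mathematically identical argument.
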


\begin{proof}
 We need to prove that $R$ is a projective left $\widecheck K$\+module.
 Indeed, we have
$$
 R=R_\sE\simeq R_\sF\ot_{R_\sH}R_\sG=K\ot_{\widecheck H}R_\sG,
$$
where $H=R_\sH$ (see
formula~\eqref{three-subcategories-t-unital-bimodule-map}).
 It remains to recall that $R_\sG$ is a t\+unital left $R_\sH$\+module
by Proposition~\ref{t-unital-functors}, hence $R_\sG$ is
a projective left $\widecheck H$\+module by
Lemma~\ref{t-unital-projective-over-discrete}; and at the same time
$K$ is a projective left $\widecheck K$\+module by
Proposition~\ref{category-algebras-t-unital-and-projective}.
\end{proof}

\subsection{Conclusion}
 Let us summarize the results we have proved so far, in the context
of triangular decompositions of $k$\+linear categories.

 Let $\sE$ be a small $k$\+linear category and $\sF\ot_\sH\sG\simeq\sE$
be a triangular decomposition of $\sE$ with a discrete $k$\+linear
category $\sH=\sE^\id$.
 Put $K=R_\sF$ and $R=R_\sE$, and denote by $f\:K\rarrow R$ the morphism
of $k$\+algebras induced by the inclusion $\sF\rarrow\sE$.
 Then, by Proposition~\ref{category-algebras-t-unital-and-projective},
\,$K$ is a left projective t\+unital $k$\+algebra.
 By Propositions~\ref{t-unital-functors}
and~\ref{triangular-decomposition-projectivity-prop},
\,$f$~is a t\+unital homomorphism of $k$\+algebras making $R$
a left projective $K$\+$K$\+bimodule.

 Assume further that the $k$\+linear category $\sF$ is locally finite
(in the sense of Section~\ref{locally-finite-categories-subsecn}),
and consider the coalgebra $\C=\C_\sF$.
 Then $\phi=\phi_\sF\:\C\times K\rarrow k$ is a right t\+unital
multiplicative pairing (by Lemma~\ref{category-pairings-t-unital}).
 Furthermore, the induced functor $\phi_*=\Upsilon_{\sF^\sop}\:
\Comodr\C\rarrow\Modrt K=\Modr\sF$ is fully faithful
(by Lemma~\ref{Upsilon-fully-faithful}).

 It remains to assume that the (t\+unital) right $K$\+module structure
on the tensor product $\C\ot_KR$ arises from some right $\C$\+comodule
structure, as in Section~\ref{integrated-bimodules-subsecn}.
 In other words, assume that $R$ is a right integrable
$K$\+$K$\+bimodule (as defined in the end of
Section~\ref{fully-faithful-comodule-inclusion-secn}).
 This condition holds automatically when the $k$\+linear category $\sF$
is lower finite (by 
Proposition~\ref{upper-lower-finite-categories-prop}(b)).

\begin{cor} \label{category-semialgebra-right-semimodules}
 Under the list of assumptions above, the tensor product\/
$\bS=\C\ot_K\nobreak R$ acquires a natural structure of
semiassociative, semiunital semialgebra over the coalgebra\/~$\C$.
 Moreover, $\bS$ is an injective left\/ $\C$\+comodule.
 The abelian category of right\/ $\bS$\+semimodules\/ $\Simodr\bS$ is
equivalent to the full subcategory in the module category\/ $\Modr\sE$
consisting of all the right\/ $\sE$\+modules whose underlying right\/
$\sF$\+module structure arises from some right\/ $\C$\+comodule
structure via the functor $\phi_*=\Upsilon_{\sF^\sop}\:\Comodr\C
\rarrow\Modr\sF$.
\end{cor}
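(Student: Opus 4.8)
The plan is to assemble the ingredients collected just above and feed them into the corollaries of Section~\ref{description-of-semimod-semicontra-secn}. First I would record that every hypothesis of the semialgebra construction of Section~\ref{construction-of-semialgebra-subsecn} is in force: by Proposition~\ref{category-algebras-t-unital-and-projective} the algebra $K=R_\sF$ is left projective (hence left flat) and t\+unital; by Propositions~\ref{t-unital-functors} and~\ref{triangular-decomposition-projectivity-prop} the homomorphism $f\:K\rarrow R$ is t\+unital and makes $R$ a left projective (hence left flat) $K$\+$K$\+bimodule; the pairing $\phi=\phi_\sF$ is right t\+unital by Lemma~\ref{category-pairings-t-unital}; the functor $\phi_*=\Upsilon_{\sF^\sop}$ is fully faithful by Lemma~\ref{Upsilon-fully-faithful}; and $R$ is a right integrable $K$\+$K$\+bimodule by assumption. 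Applying Corollary~\ref{fully-faithful-lift-of-monoid-cor}, the monoid structure on $R\in K\flattBimodt K$ lifts uniquely to a monoid structure on $(R,\R,\rho)\in K\flattBimodtint K$; pushing this forward along the rightmost diagonal functor of the diagram~\eqref{two-monoidal-forgetful-functors-diagram} produces the semialgebra $\bS=\R=\C\ot_KR$ over~$\C$, exactly as in Section~\ref{construction-of-semialgebra-subsecn}. The injectivity of $\bS$ as a left $\C$\+comodule is part of that same construction: it is the statement that the target of the rightmost diagonal functor is $\C\injBicomod\C$, which rests on Lemma~\ref{tensor-product-with-flat-is-injective} applied with $\E=\C$ and $F=R$.

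Next I would describe the semimodule category. As the functor $\phi_*\:\Comodr\C\rarrow\Modrt K$ is fully faithful, Corollary~\ref{right-semimodules-described-fully-faithful-case} identifies the datum of a right $\bS$\+semimodule with that of a t\+unital right $R$\+module $N$ whose underlying t\+unital right $K$\+module structure arises, via $\phi_*$, from some right $\C$\+comodule structure. It then remains to translate this description along the equivalences of Proposition~\ref{functors-as-t-unital-modules}(b), namely $\Modrt R\simeq\Modr\sE$ and $\Modrt K\simeq\Modr\sF$. Under these equivalences the functor of restriction of scalars $\Modrt R\rarrow\Modrt K$ along $f$ corresponds to the functor $\Modr\sE\rarrow\Modr\sF$ of passage to the underlying $\sF$\+module, induced by the inclusion $\sF\rarrow\sE$; and the functor $\phi_*$ is, by the interpretation recorded just before Lemma~\ref{Upsilon-fully-faithful}, precisely the comodule inclusion $\Upsilon_{\sF^\sop}\:\Comodr\C\rarrow\Modr\sF$.

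The only point requiring explicit checking is this last piece of bookkeeping: that the scalar-restriction functor $\Modrt R\rarrow\Modrt K$ matches the passage-to-$\sF$\+module functor under the identifications of Proposition~\ref{functors-as-t-unital-modules}(b). This is immediate from the explicit form $\sN\longmapsto\bigoplus_{x\in\sE}\sN(x)$ of those equivalences, together with the fact that $f\:R_\sF\rarrow R_\sE$ is the homomorphism induced by the inclusion functor $\sF\rarrow\sE$: both functors act as the identity on the common underlying vector space and merely forget part of the action. Assembling the three steps, $\Simodr\bS$ is equivalent to the full subcategory of $\Modr\sE$ consisting of those right $\sE$\+modules whose underlying right $\sF$\+module arises from some right $\C$\+comodule structure via $\Upsilon_{\sF^\sop}$, as asserted. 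I do not expect any genuine obstacle: the mathematical content is contained entirely in the cited propositions and corollaries, so that what remains is an assembly together with the dictionary between $k$\+linear functors and t\+unital modules.
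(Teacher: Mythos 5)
Your proposal is correct and follows essentially the same route as the paper: the paper's own proof consists precisely of citing Corollary~\ref{fully-faithful-lift-of-monoid-cor} (with the construction and injectivity discussion of Section~\ref{construction-of-semialgebra-subsecn}) for the first two assertions, Corollary~\ref{right-semimodules-described-fully-faithful-case} for the last, and Proposition~\ref{functors-as-t-unital-modules} for the equivalences $\Modrt R\simeq\Modr\sE$ and $\Modrt K\simeq\Modr\sF$. The extra details you supply (re-verifying the hypotheses already assembled in the paper's preamble, and checking that restriction of scalars along~$f$ matches the underlying-$\sF$\+module functor) are correct but are exactly what the paper leaves implicit.
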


\begin{proof}
 The first two assertions are
Corollary~\ref{fully-faithful-lift-of-monoid-cor} with
the subsequent discussion.
 The last assertion is
Corollary~\ref{right-semimodules-described-fully-faithful-case}.
 The equivalences of categories $\Modrt R\simeq\Modr\sE$ and
$\Modrt K\simeq\Modr\sF$ are provided by
Proposition~\ref{functors-as-t-unital-modules}.
\end{proof}

 On top of the assumptions above, let us now assume additionally
that the functor $\phi_*=\Theta_\sF\:\C\Contra\rarrow K\Modlc=
\sF\Modl$ is fully faithful.
 This condition holds whenever the $k$\+linear category $\sF$ is
lower strictly locally finite
(by Theorem~\ref{strictly-lower-contramodule-full-and-faithfulness}),
which includes the easy special case when $\sF$ is lower finite.

\begin{cor} \label{category-semialgebra-left-semicontramodules}
 Under the full list of assumptions above, the abelian category of
left\/ $\bS$\+semicontramodules\/ $\bS\Sicntr$ is equivalent to
the full subcategory in the module category\/ $\sE\Modl$ consisting
of all the left\/ $\sE$\+modules whose underlying left\/ $\sF$\+module
structure arises from some left\/ $\C$\+contramodule structure via
the functor $\phi_*=\Theta_\sF\:\C\Contra\rarrow\sF\Modl$.
\end{cor}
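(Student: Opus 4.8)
The plan is to obtain this corollary from Corollary~\ref{left-semicontramods-described-fully-faithful-case}, exactly parallel to the way Corollary~\ref{category-semialgebra-right-semimodules} is obtained from Corollary~\ref{right-semimodules-described-fully-faithful-case}. First I would observe that the full-and-faithfulness hypothesis required by Corollary~\ref{left-semicontramods-described-fully-faithful-case} is precisely the additional assumption just imposed, namely that $\phi_*=\Theta_\sF\:\C\Contra\rarrow K\Modlc=\sF\Modl$ be fully faithful (which, as noted, holds whenever $\sF$ is lower strictly locally finite, by Theorem~\ref{strictly-lower-contramodule-full-and-faithfulness}). Applying that corollary, the datum of a left $\bS$-semicontramodule is identified with the datum of a (c-unital) left $R$-module $P$ whose underlying (c-unital) left $K$-module structure arises from some left $\C$-contramodule structure via~$\Theta_\sF$.

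It then remains to rewrite this description in the functorial language of $\sE$-modules and $\sF$-modules. Here I would invoke Proposition~\ref{functors-as-c-unital-modules} twice: for $R=R_\sE$ it supplies an equivalence $R\Modlc\simeq\sE\Modl$, and for $K=R_\sF$ it supplies $K\Modlc\simeq\sF\Modl$. Under the first equivalence, a c-unital left $R$-module $P$ corresponds to the left $\sE$-module $\sP$ with $P=\prod_{x\in\sE}\sP(x)$, and the condition that the underlying $K$-module of $P$ lie in the essential image of $\Theta_\sF$ becomes the condition that the corresponding $\sF$-module lie in the essential image of $\Theta_\sF$.

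The one point that must be checked with care is that these two equivalences intertwine the restriction-of-scalars functor $R\Modlc\rarrow K\Modlc$ along $f\:K\rarrow R$ with the underlying left $\sF$-module functor $\sE\Modl\rarrow\sF\Modl$ induced by the inclusion $\sF\rarrow\sE$. Granting this compatibility, ``the underlying left $K$-module of $P$ arises from a left $\C$-contramodule'' translates verbatim into ``the underlying left $\sF$-module of $\sP$ arises from a left $\C$-contramodule via $\Theta_\sF$'', and the asserted equivalence of $\bS\Sicntr$ with the indicated full subcategory of $\sE\Modl$ follows at once. I expect this intertwining to be the main, albeit routine, obstacle: concretely, one unwinds Proposition~\ref{functors-as-c-unital-modules} to see that restricting the $R_\sE$-action on $\prod_{x}\sP(x)$ along $f$ reproduces precisely the $R_\sF$-action that the same proposition assigns to the composite $k$-linear functor $\sF\rarrow\sE\rarrow k\Vect$, i.e., to the underlying left $\sF$-module of $\sP$.
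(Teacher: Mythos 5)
Your proposal is correct and follows essentially the same route as the paper: the paper's proof consists precisely of citing Corollary~\ref{left-semicontramods-described-fully-faithful-case} together with the two equivalences $R\Modlc\simeq\sE\Modl$ and $K\Modlc\simeq\sF\Modl$ from Proposition~\ref{functors-as-c-unital-modules}. The compatibility of these equivalences with restriction of scalars along $f\:K\rarrow R$ versus restriction along the inclusion $\sF\rarrow\sE$, which you rightly flag as the point needing verification, is left implicit in the paper but is indeed routine to check from the explicit product formula $P=\prod_{x\in\sE}\sP(x)$.
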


\begin{proof}
 This is
Corollary~\ref{left-semicontramods-described-fully-faithful-case}.
  The equivalences of categories $R\Modlc\simeq\sE\Modl$ and
$K\Modlc\simeq\sF\Modl$ are provided by
Proposition~\ref{functors-as-c-unital-modules}.
\end{proof}

\begin{cor} \label{category-semialgebra-lower-finite-case}
 Under the list of assumptions above, assume additionally that
the $k$\+linear category\/ $\sF$ is lower finite.
 Then the natural forgetful functors are equivalences of abelian
categories\/ $\Simodr\bS\simeq\Modr\sE$ and\/
$\bS\Sicntr\simeq\sE\Modl$.
\end{cor}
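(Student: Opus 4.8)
The plan is to deduce this corollary formally from the two immediately preceding corollaries, using the fact that lower finiteness of $\sF$ promotes the fully faithful comodule and contramodule functors to genuine equivalences. First I would record that a lower finite $k$\+linear category is in particular lower strictly locally finite, so that \emph{both} Corollary~\ref{category-semialgebra-right-semimodules} and Corollary~\ref{category-semialgebra-left-semicontramodules} apply; I would also note that the right integrability of $R$ needed to form $\bS=\C\ot_KR$ is automatic here, by Proposition~\ref{upper-lower-finite-categories-prop}(b). By Corollary~\ref{category-semialgebra-right-semimodules}, the forgetful functor identifies $\Simodr\bS$ with the full subcategory of $\Modr\sE$ consisting of those right $\sE$\+modules whose underlying $\sF$\+module arises from a right $\C$\+comodule structure via $\phi_*=\Upsilon_{\sF^\sop}$; dually, Corollary~\ref{category-semialgebra-left-semicontramodules} identifies $\bS\Sicntr$ with the analogous full subcategory of $\sE\Modl$ carved out by $\phi_*=\Theta_\sF$.

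The key step is then to observe that under lower finiteness these two full subcategories exhaust the whole module categories. Indeed, Proposition~\ref{upper-lower-finite-categories-prop}(b) asserts that $\Upsilon_{\sF^\sop}\:\Comodr\C\rarrow\Modr\sF$ is an equivalence, hence essentially surjective, so the underlying $\sF$\+module structure of \emph{every} right $\sE$\+module arises from some right $\C$\+comodule structure. Therefore the full subcategory appearing in Corollary~\ref{category-semialgebra-right-semimodules} is all of $\Modr\sE$, and the natural forgetful functor $\Simodr\bS\rarrow\Modr\sE$ is an equivalence. Dually, Proposition~\ref{upper-lower-finite-categories-prop}(c) gives that $\Theta_\sF\:\C\Contra\rarrow\sF\Modl$ is an equivalence, so the underlying $\sF$\+module of every left $\sE$\+module arises from a left $\C$\+contramodule structure, the full subcategory in Corollary~\ref{category-semialgebra-left-semicontramodules} is all of $\sE\Modl$, and the forgetful functor $\bS\Sicntr\rarrow\sE\Modl$ is an equivalence.

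Finally I would line up the ambient categories via the identifications $\Modr\sE\simeq\Modrt R$ from Proposition~\ref{functors-as-t-unital-modules}(b) and $\sE\Modl\simeq R\Modlc$ from Proposition~\ref{functors-as-c-unital-modules}, in order to confirm that the equivalences just produced are literally the natural forgetful functors, sending a right $\bS$\+semimodule (resp.\ a left $\bS$\+semicontramodule) to its underlying right (resp.\ left) $R=R_\sE$\+module, i.e.\ to the corresponding right (resp.\ left) $\sE$\+module. The argument is almost entirely bookkeeping once the preceding corollaries are available; the only substantive input is the upgrade of full-and-faithfulness to essential surjectivity, which is precisely where lower finiteness (rather than mere lower strict local finiteness) enters through Proposition~\ref{upper-lower-finite-categories-prop}(b) and~(c). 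Accordingly, the main point to be careful about is not any computation but the logical dependency: verifying that the hypotheses of both earlier corollaries are genuinely met in the lower finite situation.
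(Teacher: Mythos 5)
Your proof is correct and follows exactly the paper's own argument: the paper's proof is precisely the one-line combination of Corollaries~\ref{category-semialgebra-right-semimodules}--\ref{category-semialgebra-left-semicontramodules} with Proposition~\ref{upper-lower-finite-categories-prop}(b--c), which you have spelled out in detail. Your added checks (that lower finiteness implies lower strict local finiteness, that right integrability is automatic, and that essential surjectivity of $\Upsilon_{\sF^\sop}$ and $\Theta_\sF$ makes the two full subcategories exhaust $\Modr\sE$ and $\sE\Modl$) are exactly the bookkeeping the paper leaves implicit.
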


\begin{proof}
 Follows from
Corollaries~\ref{category-semialgebra-right-semimodules}\+-%
\ref{category-semialgebra-left-semicontramodules} together with
Proposition~\ref{upper-lower-finite-categories-prop}(b\+-c).
\end{proof}

\Section{Examples}  \label{examples-secn}
 
\subsection{The trivial triangular decomposition}
 Let $X$ be a fixed set.
 Denote by $\C_X=\bigoplus_{x\in X}k$ the direct sum of $X$ copies
of the coalgebra~$k$ over~$k$ (with the natural, essentially
unique structure of counital $k$\+coalgebra on~$k$).

 By a \emph{$k$\+linear $X$\+category} we mean a $k$\+linear
category $\sE$ with the set of objects identified with~$X$.
 A \emph{$k$\+linear $X$\+functor} is a $k$\+linear functor of
$k$\+linear $X$\+categories acting by the identity map on
the objects.

 The following observation is attributed to Chase in Aguiar's
dissertation~\cite[Definition~2.3.1 and Section~9.1]{Agu}, where
the term ``internal categories'' was used for what we call
``semialgebras''.
 This observation and, slightly more generally, the language of
semialgebras over cosemisimple coalgebras were used in
the paper~\cite[Section~2]{HL}.

\begin{prop}[\cite{Agu}] \label{categories-as-semialgebras}
 For any set $X$, the category of $k$\+linear $X$\+categories\/ $\sE$,
with $k$\+linear $X$\+functors as morphisms, is equivalent to
the category of semialgebras\/ $\bS$ over the coalgebra\/~$\C_X$.
 For a semialgebra\/ $\bS$ corresponding to a $k$\+linear category\/
$\sE$, the category of right\/ $\bS$\+semimodules is equivalent to
the category of right\/ $\sE$\+modules, while both the categories
of left\/ $\bS$\+semimodules and left\/ $\bS$\+semicontramodules
are equivalent to the category of left\/ $\sE$\+modules:
$$
 \Simodr\bS\simeq\Modr\sE \quad\text{and}\quad
 \bS\Simodl\simeq\sE\Modl\simeq\bS\Sicntr.
$$
\end{prop}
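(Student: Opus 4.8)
The plan is to make the monoidal category $\C_X\Bicomod\C_X$ and its module categories completely explicit, exploiting the fact that $\C_X=\bigoplus_{x\in X}k$ is a direct sum of copies of the trivial coalgebra, and then to read off all the claimed equivalences degreewise. First I would record the structure of (co/contra)modules over $\C_X$. Writing $e_x\in\C_X$ for the group-like element spanning the $x$-th summand (so $\mu(e_x)=e_x\ot e_x$ and $\epsilon(e_x)=1$), a left $\C_X$-comodule is the same as an $X$-graded vector space $\M=\bigoplus_{x\in X}\M_x$, with coaction $m\mapsto e_x\ot m$ on $\M_x$, and similarly for right comodules. Consequently a $\C_X$-$\C_X$-bicomodule is an $X\times X$-indexed family of vector spaces $\B=\bigoplus_{x,y}\B_{y,x}$ (a ``matrix of vector spaces''), and a direct computation of the kernel defining the cotensor product gives
$$(\B'\oc_{\C_X}\B'')_{y,x}=\bigoplus_{z\in X}\B'_{y,z}\ot_k\B''_{z,x},$$
so that $\oc_{\C_X}$ is matrix multiplication, with unit object $\C_X$ the identity matrix ($(\C_X)_{y,x}=k$ for $x=y$ and $0$ otherwise). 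Dually, I would check that a left $\C_X$-contramodule is an $X$-indexed family $(\fP_x)_{x\in X}$ whose underlying space is the product $\fP=\prod_{x\in X}\fP_x$ (since $\Hom_k(\C_X,{-})$ converts the direct sum $\C_X=\bigoplus_x k$ into a product), and that $\Cohom_{\C_X}$ is computed degreewise: for a bicomodule $\B$ and contramodule $\fP$ one has $(\Cohom_{\C_X}(\B,\fP))_x=\prod_{y\in X}\Hom_k(\B_{y,x},\fP_y)$. That $\C_X\Comodl\simeq\C_X\Contra\simeq\prod_x k\Vect$ is also the discrete case of the equivalences~\eqref{four-category-equivalence}, via Lemma~\ref{t-unital-projective-over-discrete}.

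Granting these dictionaries, the first assertion is immediate: a semialgebra $\bS$ over $\C_X$, being a monoid object in $(\C_X\Bicomod\C_X,\oc_{\C_X})$, is exactly a matrix of vector spaces $\bS_{y,x}$ equipped with an associative matrix multiplication $\bS_{y,z}\ot_k\bS_{z,x}\rarrow\bS_{y,x}$ and a two-sided unit $k\rarrow\bS_{x,x}$ supplied by the semiunit $\be\:\C_X\rarrow\bS$. Setting $\Hom_\sE(x,y)=\bS_{y,x}$ turns the semimultiplication into the composition of a $k$-linear $X$-category $\sE$ and the semiunit into the identity morphisms $\id_x$, while the semiassociativity and semiunitality axioms become precisely the associativity and unitality of $\sE$. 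This correspondence is manifestly functorial --- a morphism of semialgebras is a degreewise family of maps $\bS_{y,x}\rarrow\bS'_{y,x}$ respecting multiplication and unit, which is exactly a $k$-linear $X$-functor --- so it is an equivalence of categories.

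For the module categories I would again argue degreewise. A right $\bS$-semimodule is a right $\C_X$-comodule $\bN=\bigoplus_x\bN_x$ with a right $\C_X$-comodule map $\bN\oc_{\C_X}\bS\rarrow\bN$; by the matrix formula this is a family of maps $\bN_z\ot_k\bS_{z,x}\rarrow\bN_x$, which after a harmless transposition of tensor factors is exactly the action $\Hom_\sE(x,z)\ot_k\sN(z)\rarrow\sN(x)$ of a right $\sE$-module with $\sN(x)=\bN_x$, and the semiaction axioms become the module axioms. Symmetrically, a left $\bS$-semimodule is a left $\C_X$-comodule $\bM=\bigoplus_x\bM_x$ with maps $\bS_{y,z}\ot_k\bM_z\rarrow\bM_y$, i.e.\ a left $\sE$-module $\sM(x)=\bM_x$. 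Finally, a left $\bS$-semicontramodule is a left $\C_X$-contramodule $\bP=\prod_x\bP_x$ with a semicontraaction $\bp\:\bP\rarrow\Cohom_{\C_X}(\bS,\bP)$; by the degreewise $\Cohom$ formula this is a compatible family $\bP_x\rarrow\prod_y\Hom_k(\Hom_\sE(x,y),\bP_y)$, which adjunction rewrites as the same data $\Hom_\sE(x,y)\ot_k\bP_x\rarrow\bP_y$ of a left $\sE$-module $\sP(x)=\bP_x$. Thus both $\bS\Simodl$ and $\bS\Sicntr$ land on $\sE\Modl$, the sole difference between the two descriptions being whether the underlying vector space is reassembled as $\bigoplus_x\sM(x)$ or as $\prod_x\sP(x)$, exactly as in Propositions~\ref{functors-as-t-unital-modules} and~\ref{functors-as-c-unital-modules}.

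The routine but slightly delicate points --- and where I expect the real bookkeeping to lie --- are on the contramodule side: verifying that $\C_X\Contra$ is reconstructed via products, pinning down the $\Cohom$ formula with the correct index paired off (the left comodule grading of $\bS$ must be contracted against the grading of $\fP$, leaving the right grading as that of the output contramodule), and checking that the semicontraassociativity and semicontraunitality equations unwind to the associativity and unitality of the $\sE$-action. All of these are direct verifications once the degreewise dictionaries are in place. Alternatively, the semimodule and semicontramodule descriptions can be extracted from the general Corollaries~\ref{category-semialgebra-right-semimodules} and~\ref{category-semialgebra-left-semicontramodules} applied to the discrete subcategory $\sF=\sE^\id$ (for which every module is automatically integrable), but the explicit degreewise computation is more transparent and avoids any finiteness hypotheses on~$X$.
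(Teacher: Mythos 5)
Your proposal is correct, but it takes a genuinely different route from the paper. The paper's proof deliberately runs the example through the general machinery it has just developed: it takes the \emph{trivial triangular decomposition} $\sF=\sH=\sE^\id$, $\sG=\sE$, notes that the discrete category $\sE^\id$ is lower finite with $\C_{\sE^\id}\simeq\C_X$, and invokes Corollaries~\ref{category-semialgebra-right-semimodules}--\ref{category-semialgebra-lower-finite-case} to get $\Simodr\bS\simeq\Modr\sE$ and $\bS\Sicntr\simeq\sE\Modl$; the remaining equivalence $\bS\Simodl\simeq\sE\Modl$ is then obtained either by left--right symmetry (in this degenerate case $\C\simeq K$ as a $K$\+$K$\+bimodule, so $\bS=\C\ot_KR_\sE\simeq R_\sE$) or by cosemisimplicity of $\C_X$ and the underived semico-semicontra correspondence. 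You instead compute everything from scratch: bicomodules over $\C_X$ are $X\times X$-matrices of vector spaces, $\oc_{\C_X}$ is matrix multiplication, contramodules are products $\prod_x\fP_x$, and $\Cohom_{\C_X}$ is computed degreewise, after which all four statements (including the one about $X$\+functors versus semialgebra morphisms, which the paper explicitly suppresses) are read off by unwinding definitions. Your degreewise dictionaries are all accurate --- in particular the formula $(\Cohom_{\C_X}(\B,\fP))_x=\prod_y\Hom_k(\B_{y,x},\fP_y)$ and the adjunction step turning the semicontraaction into maps $\Hom_\sE(x,y)\ot_k\bP_x\rarrow\bP_y$ are exactly right, and they are corroborated by Proposition~\ref{functors-as-c-unital-modules} and Lemma~\ref{t-unital-projective-over-discrete}. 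What each approach buys: yours is more elementary, self-contained, and complete (it needs no restriction to categories without zero objects, treats left semimodules directly rather than by a symmetry or correspondence argument, and actually establishes the equivalence of categories in the first assertion, with functoriality); the paper's serves its stated purpose of illustrating how the general triangular-decomposition theory specializes, at the cost of proving somewhat less. You even note the paper's route as your ``alternatively'' at the end, so the two arguments are consistent with one another.
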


\begin{rem}
 One can state the assertion of the proposition more generally, by
letting the set $X$ vary and considering the whole category of
small $k$\+linear categories and arbitrary $k$\+linear functors.
 Then the category of small $k$\+linear categories is embedded as
a full subcategory into the category of pairs (coalgebra~$\C$,
semialgebra~$\bS$ over~$\C$), with compatible pairs (morphism
of coalgebras, morphism of semialgebras) as morphisms.
 We refer to~\cite[Chapter~8]{Psemi} for a detailed discussion of
such morphisms of pairs (coalgebra, semialgebra).
 The full subcategory appearing here consists of all the pairs
$(\C,\bS)$ such that the coalgebra $\C$ has the form $\C=\C_X$
for some set~$X$.
 The set~$X$ serves as the set of objects of the related $k$\+linear
category.
\end{rem}

\begin{proof}[Proof of Proposition~\ref{categories-as-semialgebras}]
 The assertions are easy, and the argument below is intented to serve
as an illustration/example of the theory developed above in this paper.
 Without striving for full generality, let us restrict ourselves to
$k$\+linear $X$\+categories $\sE$ without zero objects; and suppress
the discussion of $X$\+functors, limiting ourselves to a construction
of the semialgebra associated to a $k$\+linear category.

 For any $k$\+linear $X$\+category $\sE$ (without zero objects), we
consider the \emph{trivial triangular decomposition}: $\sF=\sH=\sE^\id$
and $\sG=\sE$.
 Then the discrete $k$\+linear categories $\sF$ and $\sH$ are locally
finite, and in fact, both upper and lower finite.
 The coalgebra $\C_\sF=\C_\sH=\C$ is naturally isomorphic to~$\C_X$.
 Therefore, Corollaries~\ref{category-semialgebra-right-semimodules}\+-%
\ref{category-semialgebra-lower-finite-case} are applicable, and they
produce a semialgebra $\bS$ over~$\C_X$ such that
$\Simodr\bS\simeq\Modr\sE$ and $\bS\Sicntr\simeq\sE\Modl$.

 It remains to explain the equivalence $\bS\Simodl\simeq\sE\Modl$.
 One can observe that the construction of semialgebra in this
proposition is actually left-right symmetric: it assigns the opposite
semialgebra to the opposite category.
 Indeed, in the situation at hand, the $K$\+$K$\+bimodule $\C$ is
naturally isomorphic to $K=R_\sF=R_\sH$; so one has
$\bS=\C\ot_KR_\sE\simeq R_\sE$.
 Thus the equivalence $\bS\Simodl\simeq\sE\Modl$ is analogous to
$\Simodr\bS\simeq\Modr\sE$, which we have already proved.

 Alternatively, a coalgebra $\C$ is called \emph{cosemisimple} if
the abelian category $\C\Comodl$ is semisimple, or equivalently,
the abelian category $\Comodr\C$ is semisimple, or equivaletnly,
the abelian category $\C\Contra$ is
semisimple~\cite[Chapters~VIII\+-IX]{Swe},
\cite[Lemma~3.3]{Pksurv}, \cite[Section~A.2]{Psemi},
\cite[beginning of Section~4.5]{Pkoszul}, \cite[Theorem~6.2]{PS3}.
 For any semialgebra $\bS$ over a cosemisimple coalgebra~$\C$,
there is a natural equivalence of abelian categories
$\bS\Simodl\simeq\bS\Sicntr$; this is a particular case of
the \emph{underived semico-semicontra
correspondence}~\cite[Section~0.3.7]{Psemi}, \cite[Section~3.5]{Prev}.
\end{proof}

\subsection{Brauer diagram category} \label{brauer-subsecn}
 The following example was suggested to me by Catharina Stroppel.
 Let $k$~be a field and $\delta\in k$ be an arbitrary chosen element.
 The \emph{Brauer diagram category} $\Br(\delta)$
\,\cite[Section~5]{Bra}, \cite[Section~2]{Wen}, \cite[Example~2.10]{BM}
is the following small $k$\+linear category.

 The objects of $\Br(\delta)$ are the nonnegative integers $n\ge0$
interpreted as finite sets $x_n=\{1,\dotsc,n\}$.
 We will denote the cardinality of a finite set $x$ by
$|x|\in\boZ_{\ge0}$.
 The assertions below will sometimes depend on the assumption that
\emph{no other finite sets but $x_n=\{1,\dotsc,n\}$ are allowed
as objects of the category\/ $\Br(\delta)$}, while on other occasions
it will be harmless and convenient to pretend that the objects of
$\Br(\delta)$ are arbitrary finite sets.

 Given two finite sets $x$ and~$y$, the $k$\+vector space of
morphisms $\Hom_\Br(x,y)$ is zero when the parities of
the integers $|x|$ and~$|y|$ are different.
 When the cardinalities $|x|$ and~$|y|$ have the same parity, i.~e.,
the cardinality of the disjoint union $x\sqcup y$ is an even number,
the $k$\+vector space $\Hom_\Br(x,y)$ has a basis consisting of all
the partitions~$a$ of $x\sqcup y$ into a disjoint union of sets of
cardinality~$2$.
 The basis elements $a\in\Hom_\Br(x,y)$ are interpreted as unoriented
graphs $G_a$, without loops, with the set of vertices $x\sqcup y$ and
every vertex adjacent to exactly one edge.
 The edges connecting two vertices from~$x$ are called \emph{caps},
the edges connecting two vertices from~$y$ are called \emph{cups},
and the edges connecting a vertex from~$x$ with a vertex from~$y$
are called \emph{propagating strands}~\cite[Section~2]{DG}.
\begin{equation} \label{brauer-morphism-example}
\begin{gathered}
 \xymatrix{
  y\: & \bul \ar@{-}@/^-2.5pc/[rrrrr] 
  & \bul \ar@{-}@/^-0.5pc/[r] & \bul
  & \bul \ar@{-}@/^-1.8pc/[rrrr]
  & \bul \ar@{-}[ldd] & \bul & \bul \ar@{-}[llllldd] & \bul \\
  \quad a\:\mkern-18mu \\
  x\: & \bul \ar@{-}@/^1pc/[rr] & \bul & \bul & \bul
  & \bul \ar@{-}@/^0.5pc/[r] & \bul
 }
\end{gathered}
\end{equation}

 Given three finite sets $x$, $z$, $y$ and two basis elements
$b\in\Hom_\Br(x,z)$ and $c\in\Hom_\Br(z,y)$, the product
$cb\in\Hom_\Br(x,y)$ is defined as follows.
 Consider the graph $G=G_b\cup G_c$ with the set of vertices
$x\sqcup z\sqcup y$ and the set of edges equal to the disjoint
union of the sets of edges of $G_b$ and~$G_c$.
 Let $L$ be a connected component of~$G$.
 Then there are two possibilities.
 Either $L$ has the shape of a line segment with two ends in
$x\sqcup y$, passing through some vertices from~$z$: so there are
exactly two vertices in $x\sqcup y$ belonging to $L$, and these are
the only two vertices in $L$ which are adjacent to exactly one edge;
while all the other vertices in $L$ belong to~$z$ and are adjacent to
exactly two edges.
 Or $L$ has the shape of a circle passing only through vertices
from~$z$: so there are no vertices from $x\sqcup y$ belonging to $L$,
and every vertex in $L$ belongs to~$z$ and is adjacent to exactly
two edges.

 By the definition, one puts $cb=\delta^na\in\Hom_\Br(x,y)$, where
$\delta\in k$ is our chosen scalar parameter and $n\ge0$ is the number
of all connected components in $G$ that have the shape of a circle.
 The basis element $a\in\Hom_\Br(x,y)$ corresponds to the graph $G_a$
consisting of all the connected components $L\subset G$ which have
the shape of a line segment.
 The intermediate vertices belonging to~$z$ are removed from any such
component $L$, and it is viewed as a single edge, i.~e.,
a propagating strand connnecting a vertex from~$x$ with a vectex
from~$y$.

 For example, the composition~$cb$ of the graph/partition~$b$ depicted
in the lower half of the next
diagram~\eqref{brauer-multiplication-diagram} with
the graph/partition~$c$ depicted in the upper half of the same diagram
\begin{equation} \label{brauer-multiplication-diagram}
\begin{gathered}
 \xymatrix{
  y\: & \bul \ar@{-}@/^-1pc/[rr] & \bul \ar@{-}[ldd] & \bul
  & \bul \ar@{-}[ldd] &\bul \ar@{-}[rrrdd] \\
  \quad c\: \mkern-18mu \\
  z\: & \bul \ar@{-}@/^-0.5pc/[r] & \bul \ar@{-}@/^3pc/[rrrrrrr]
  & \bul \ar@{-}@/^-2pc/[rrrrr]
  & \bul \ar@{-}@/^-0.5pc/[r] \ar@{-}@/^1.5pc/[rrr]
  & \bul \ar@{-}@/^0.5pc/[r]
  & \bul \ar@{-}@/^-0.5pc/[r] & \bul
  & \bul & \bul \ar@{-}[llllldd] \\
  \quad b\: \mkern-18mu \\
  x\: & \bul \ar@{-}@/^1pc/[rr] & \bul \ar@{-}@/^1.5pc/[rrr]
  & \bul & \bul & \bul
 }
\end{gathered}
\end{equation}
is equal to $\delta$~times the graph/partition~$a$ depicted on
the diagram
\begin{equation}
\begin{gathered}
 \xymatrix{
  y\: & \bul \ar@{-}@/^-1pc/[rr] & \bul \ar@{-}[rrdd]
  & \bul & \bul \ar@{-}@/^-0.5pc/[r] & \bul \\
  \quad a\: \mkern-18mu \\
  x\: & \bul \ar@{-}@/^1pc/[rr] & \bul \ar@{-}@/^1.5pc/[rrr]
  & \bul & \bul & \bul
 }
\end{gathered}
\end{equation}
so $cb=\delta a$ in $\Hom_\Br(x,y)$ in this case.

 Now let us introduce notation for some $k$\+linear subcategories
in $\Br(\delta)$ which we are interested in.
 All such subcategories will have the same set of objects as
$\Br(\delta)$; so the objects are the nonnegative integers.
 The subcategory $\Br^{+=}(\delta)\subset\Br(\delta)$ consists of
all the morphisms in $\Br(\delta)$ which \emph{contain no caps}.
 So one has $\Hom_{\Br^{+=}}(z,y)=0$ if $|z|>|y|$, while if
$|y|-|z|$ is a nonnegative even number, a basis in the $k$\+vector
space $\Hom_{\Br^{+=}}(z,y)$ is formed by all the partitions~$c$
of the set $z\sqcup y$ into two-element subsets in which no
two elements of~$z$ are grouped together.

 Similarly, the subcategory $\Br^{=-}(\delta)\subset\Br(\delta)$
consists of all the morphisms in $\Br(\delta)$ which
\emph{contain no cups}.
 So $\Hom_{\Br^{=-}}(x,z)=0$ if $|z|>|x|$, while if $|x|-|z|$ is
a nonnegative even number, a basis in $\Hom_{\Br^{=-}}(x,z)$ is
formed by all the partitions~$b$ of the set $x\sqcup z$ into
pairs of elements in which no two elements of~$z$ are grouped
in one pair.
 For example, the partitions $b$ and~$c$ on
the diagram~\eqref{brauer-multiplication-diagram} are typical examples
of morphisms in $\Br(\delta)$ \emph{not} belonging to
$\Br^{=-}(\delta)$ and $\Br^{+=}(\delta)$, respectively.

 The subcategory $\Br^=(\delta)\subset\Br(\delta)$ consists of
all the morphisms containing \emph{neither caps nor cups}.
 So the vector space $\Hom_{\Br^=}(z,w)$ is only nonzero when
$|z|=|w|$, and a basis in this vector space is formed by all
bijections between~$z$ and~$w$.
 The $k$\+algebra $\Hom_{\Br^=}(z,z)$ is the group algebra of
the symmetric group, $\Hom_{\Br^=}(z,z)\simeq k[\mathbb S_n]$,
where $n=|z|$.

\begin{prop} \label{triangular-decomposition-over-symmetric-group}
 The triple of $k$\+linear subcategories\/ $\sF=\Br^{+=}(\delta)$,
\,$\sG=\Br^{=-}(\delta)$, and\/ $\sH=\Br^=(\delta)$ forms
a \emph{triangular decomposition} of the $k$\+linear category\/
$\Br(\delta)$ in the sense of
Section~\ref{tensor-and-triangular-subsecn}.
 In other words,
the map~\eqref{simplified-notation-three-subcategories-map}
$$
 \Br^{+=}(\delta)\ot_{\Br^=(\delta)}\Br^{=-}(\delta)\lrarrow
 \Br(\delta)
$$
is an isomorphism; or equivalently,
the sequence~\eqref{triangular-decomposition-right-exact-sequence}
is right exact.
\end{prop}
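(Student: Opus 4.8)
The plan is to verify directly that the three-term sequence~\eqref{triangular-decomposition-right-exact-sequence} is right exact for every pair of objects $x$, $y\in\Br(\delta)$; that is, that the composition map $\bigoplus_{z}\Hom_{\Br^{+=}}(z,y)\ot_k\Hom_{\Br^{=-}}(x,z)\rarrow\Hom_\Br(x,y)$ is surjective with kernel spanned by the $\Br^=$\+relations. Write $m=|x|$, $n=|y|$, and recall the explicit bases: a basis element of $\Hom_{\Br^{=-}}(x,x_p)$ is a diagram consisting of caps on $x$ together with a bijection $\beta$ from the $p$ uncapped vertices of $x$ onto $x_p$ (nonzero only when $m-p$ is a nonnegative even integer), and dually a basis element of $\Hom_{\Br^{+=}}(x_p,y)$ consists of cups on $y$ together with a bijection $\gamma$ from $x_p$ onto the $p$ uncupped vertices of $y$. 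Since $\Hom_{\Br^=}(x_p,x_p)=k[\mathbb S_p]$ acts on these bases by relabelling the endpoints lying in $x_p$ — from the left on the $\beta$'s and from the right on the $\gamma$'s — and this action is free (a bijection is genuinely moved by a nontrivial permutation), both $\Hom_{\Br^{=-}}(x,x_p)$ and $\Hom_{\Br^{+=}}(x_p,y)$ are free as a left, respectively right, $k[\mathbb S_p]$\+module.

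First I would prove surjectivity. Given a Brauer diagram $a\in\Hom_\Br(x,y)$, let $p$ be its number of propagating strands; then $m-p$ and $n-p$ are even, so $x_p$ is an admissible intermediate object. Factor $a$ by cutting every propagating strand at a fresh vertex of $x_p$: let $b\in\Hom_{\Br^{=-}}(x,x_p)$ carry the caps of $a$ and a chosen bijection $\beta$ of the propagating $x$\+endpoints onto $x_p$, and let $c\in\Hom_{\Br^{+=}}(x_p,y)$ carry the cups of $a$ and the matching bijection $\gamma$ onto the propagating $y$\+endpoints. The key point is that in the product $c\circ b$ every vertex of $x_p$ lies on a single propagating strand, so the graph $G_b\cup G_c$ contains \emph{no} circle; by the multiplication rule of $\Br(\delta)$ the scalar is $\delta^0=1$ and $c\circ b=a$ exactly. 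Hence every basis element of $\Hom_\Br(x,y)$ is hit.

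It remains to identify the kernel. In the tensor product over $\Br^=(\delta)$ the defining relation reads $(c\sigma)\ot b=c\ot(\sigma b)$ for $\sigma\in\mathbb S_p$, so an element $c\ot b$ with $c=(\mathrm{cups},\gamma)$ and $b=(\mathrm{caps},\beta)$ is identified with all the pairs $(\mathrm{cups},\gamma\sigma)\ot(\mathrm{caps},\sigma^{-1}\beta)$. The composite $c\circ b$ has caps and cups those of $b$ and $c$ and propagating bijection $\gamma\circ\beta$, and $(\gamma\sigma)\circ(\sigma^{-1}\beta)=\gamma\circ\beta$ is exactly the invariant distinguishing these $\mathbb S_p$\+orbits: two pairs with the same caps, the same cups, and the same composite $\gamma\circ\beta$ differ by the single permutation $\sigma=\gamma_1^{-1}\gamma_2$. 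Thus the $\mathbb S_p$\+orbits of pairs are in bijection with the Brauer diagrams having $p$ propagating strands, and the composition map carries a basis of $\bigoplus_z\Hom_{\Br^{+=}}(z,y)\ot_{\Br^=}\Hom_{\Br^{=-}}(x,z)$ bijectively onto the basis of $\Hom_\Br(x,y)$. Summing over the admissible $p$ (equivalently over the objects $z=x_p$) yields the isomorphism~\eqref{simplified-notation-three-subcategories-map}; alternatively, by freeness one may instead match dimensions, $\dim\big(\Hom_{\Br^{+=}}(x_p,y)\ot_{k[\mathbb S_p]}\Hom_{\Br^{=-}}(x,x_p)\big)=p!\cdot(\#\,\mathrm{cup\text{-}configurations})\cdot(\#\,\mathrm{cap\text{-}configurations})$, which counts the Brauer diagrams with $p$ propagating strands.

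The one delicate point — and the only place where the scalar $\delta$ could have interfered — is the no-circle observation in the factorization step: it guarantees both that the composite recovers $a$ with coefficient $1$ and that the $\mathbb S_p$\+action on the intermediate labels is free, so that the orbit invariant $\gamma\circ\beta$ is a complete invariant. Everything else is the bookkeeping of perfect matchings of $x\sqcup y$ split according to their propagating part, together with the freeness of the relevant modules over the group algebras $k[\mathbb S_p]$.
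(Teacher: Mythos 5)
Your proof is correct, and it is a genuine completion of what the paper only gestures at: the paper's ``proof'' explicitly waives a formal argument (``essentially straightforward or geometrically intuitive'') and instead draws one example of a morphism factored in two ways, noting that the two factorizations $c'\ot b'$ and $c''\ot b''$ coincide in the tensor product over $\Br^=(\delta)$ though not over $\Br(\delta)^\id$. Your write-up turns exactly that intuition into a proof, and the two points you isolate are precisely the ones that make it work: (1)~since no two vertices of the intermediate object are ever joined by an edge of either factor, the graph $G_b\cup G_c$ has no circles, so the scalar $\delta$ never enters and the composition of basis diagrams is again a basis diagram with coefficient~$1$ (this gives surjectivity and also shows the composite bijection $\gamma\circ\beta$ is computed honestly); and (2)~the hom-spaces $\Hom_{\Br^{+=}}(x_p,y)$ and $\Hom_{\Br^{=-}}(x,x_p)$ are free right, respectively left, $k[\mathbb S_p]$\+modules, so the tensor product over $\Br^=(\delta)$ has a $k$\+basis indexed by the $\mathbb S_p$\+orbits of pairs of basis diagrams, and your computation $\sigma=\gamma_1^{-1}\gamma_2$ shows that (caps, cups, composite bijection) is a complete orbit invariant, identifying these orbits with the Brauer diagrams having exactly $p$ propagating strands. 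Summing over~$p$ (images for distinct~$p$ are disjoint, as the number of propagating strands is determined by the diagram) gives a bijection on bases, hence the isomorphism; the alternative dimension count you mention is also valid given the surjectivity already established. No gaps.
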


\begin{proof}
 The assertion is essentially straightforward or geometrically
intuitive, and instead of a formal proof, we will draw two
illustrative diagrams for a specific example.
 The morphism (basis element) $a\in\Hom_\Br(x,y)$ from
diagram~\eqref{brauer-morphism-example} can be factorized into
a composition of two basis elements of the spaces of morphisms in
the subcategories $\Br^{=-}(\delta)$ and $\Br^{+=}(\delta)$
in two ways:
$$
 \mkern-16mu
 \xymatrix{
  y\:\mkern-24mu & \bul \ar@{-}@/^-2.5pc/[rrrrr] 
  & \bul \ar@{-}@/^-0.5pc/[r] & \bul
  & \bul \ar@{-}@/^-1.8pc/[rrrr]
  & \bul \ar@{-}[llldd] & \bul & \bul \ar@{-}[lllllldd] & \bul \\
  \quad c'\:\mkern-42mu \\
  z\:\mkern-24mu & \bul \ar@{-}[rdd] & \bul \ar@{-}[rrdd] \\
  \quad b'\:\mkern-42mu \\
  x\:\mkern-24mu & \bul \ar@{-}@/^1pc/[rr] & \bul & \bul & \bul
  & \bul \ar@{-}@/^0.5pc/[r] & \bul
 }
 \mkern28mu
 \xymatrix{
  y\: \mkern-24mu & \bul \ar@{-}@/^-2.5pc/[rrrrr] 
  & \bul \ar@{-}@/^-0.5pc/[r] & \bul
  & \bul \ar@{-}@/^-1.8pc/[rrrr]
  & \bul \ar@{-}[lllldd] & \bul & \bul \ar@{-}[llllldd] & \bul \\
  \quad c''\:\mkern-42mu \\
  z\: \mkern-24mu & \bul \ar@{-}[rrrdd] & \bul \ar@{-}[dd] \\
  \quad b''\:\mkern-42mu \\
  x\: \mkern-24mu & \bul \ar@{-}@/^1pc/[rr] & \bul & \bul & \bul
  & \bul \ar@{-}@/^0.5pc/[r] & \bul
 }
$$

 The expressions $c'\ot b'$ and $c''\ot b''$ represent
two different elements in the tensor product
$\Br^{+=}(\delta)\ot_{\Br(\delta)^\id}\Br^{=-}(\delta)$ over
the discrete $k$\+linear category $\Br(\delta)^\id$,
but they are one and the same element of the tensor product
$\Br^{+=}(\delta)\ot_{\Br^=(\delta)}\Br^{=-}(\delta)$
over the $k$\+linear subcategory $\Br^=(\delta)\subset\Br(\delta)$.
\end{proof}

 So far we viewed our category objects $x_n=\{1,\dotsc,n\}$ as
finite sets; now let us view them as \emph{linearly ordered}
finite sets.
 This (less invariant) point of view allows to define another two
subcategories in $\Br(\delta)$.
 The set of objects of both the subcategories is still the same as
in $\Br(\delta)$.

 The subcategory $\Br^+(\delta)\subset\Br^{+=}(\delta)$ consists
of all the morphisms in $\Br(\delta)$ which contain \emph{no cups}
and \emph{no intersecting propagatings strands}.
 So a basis in the $k$\+vector space $\Hom_{\Br^+}(z,y)$ is formed
by all the partitions~$b$ of the set $z\sqcup y$ into two-element
subsets such that
\begin{itemize}
\item no two elements of~$z$ are grouped together;
\item if an element $s\in z$ is grouped together with an element
$i\in y$ in the partition~$b$, and an element $t\in z$ is grouped
together with an element $j\in y$ in~$b$, and if $s<t$ in~$z$,
then $i<j$ in~$y$.
\end{itemize}

 Similarly, the subcategory $\Br^-(\delta)\subset\Br^{=-}(\delta)$
consists of all the morphisms in $\Br(\delta)$ which contain
\emph{no caps} and \emph{no intersecting propagating strands}.
 
\begin{prop} \label{two-small-triangular-decompositions}
\textup{(a)} The triple of $k$\+linear subcategories\/
$\sF=\Br^+(\delta)$, \,$\sG=\Br^=(\delta)$, and\/ $\sH=\Br(\delta)^\id$
forms a triangular decomposition of the $k$\+linear category\/
$\Br^{+=}(\delta)$ in the sense of
Sections~\ref{tensor-and-triangular-subsecn}\+-%
\ref{triangular-and-projectivity-subsecn}.
 In other words,
the map~\eqref{simplified-notation-three-subcategories-map}
$$
 \Br^+(\delta)\ot_{\Br(\delta)^\id}\Br^=(\delta)\lrarrow
 \Br^{+=}(\delta)
$$
is an isomorphism; or equivalently,
the map~\eqref{triangular-over-discrete-isomorphism}
is an isomorphism. \par
\textup{(b)} The triple of $k$\+linear subcategories\/
$\sF=\Br^=(\delta)$, \,$\sG=\Br^-(\delta)$, and\/ $\sH=\Br(\delta)^\id$
forms a triangular decomposition of the $k$\+linear category\/
$\Br^{=-}(\delta)$ in the sense of
Sections~\ref{tensor-and-triangular-subsecn}\+-%
\ref{triangular-and-projectivity-subsecn}.
 In other words,
the map~\eqref{simplified-notation-three-subcategories-map}
$$
 \Br^=(\delta)\ot_{\Br(\delta)^\id}\Br^-(\delta)\lrarrow
 \Br^{=-}(\delta)
$$
is an isomorphism; or equivalently,
the map~\eqref{triangular-over-discrete-isomorphism}
is an isomorphism. \qed
\end{prop}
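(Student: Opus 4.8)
The plan is to prove part~(a) by producing an explicit bijection between the evident bases of the two sides of~\eqref{triangular-over-discrete-isomorphism}, and then to deduce part~(b) from part~(a) by the self-duality of the Brauer category. Since the map in~\eqref{triangular-over-discrete-isomorphism} is $k$\+linear and given on diagrams by composition, it suffices to check that it restricts to a bijection between a basis of the source and the diagram basis of the target. First I would unravel the left-hand side for $\sF=\Br^+(\delta)$, $\sG=\Br^=(\delta)$, $\sE=\Br^{+=}(\delta)$. A morphism in $\Br^=(\delta)$ is a bijection of finite sets, so $\Hom_{\Br^=}(x,z)$ vanishes unless $|z|=|x|$; as the objects are the sets $x_n$, this forces $z=x$ and collapses the direct sum to the single summand $\Hom_{\Br^+}(x,y)\ot_k\Hom_{\Br^=}(x,x)$, where $\Hom_{\Br^=}(x,x)=k[\mathbb S_n]$ with $n=|x|$. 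A basis of this summand is given by the tensors $b\ot\sigma$ with $b$ a no-cap diagram $x\to y$ whose propagating strands do not cross and $\sigma\in\mathbb S_n$.

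The heart of the argument is a \emph{straightening} bijection. Given a basis diagram $a\in\Hom_{\Br^{+=}}(x,y)$, its propagating strands determine an injection $\phi\:x\to y$ with image $S\subset y$ of cardinality~$n$, together with a pairing of the remaining $|y|-n$ vertices of~$y$ into cups. Writing $x=\{s_1<\cdots<s_n\}$ and $S=\{t_1<\cdots<t_n\}$ in increasing order, I would record the permutation $w\in\mathbb S_n$ defined by $\phi(s_i)=t_{w(i)}$, take $\sigma$ to be the corresponding element of $\Hom_{\Br^=}(x,x)$, and let $b$ be the no-cap diagram joining $s_i$ to $t_i$ for each~$i$ and carrying exactly the cups of~$a$. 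By construction $b$ has order-preserving, hence non-crossing, strands, so $b\in\Br^+(\delta)$, and one checks directly that $b\circ\sigma=a$. Conversely, for any basis tensor $b\ot\sigma$ the composite $b\circ\sigma$ is obtained from $b$ by relabelling its source endpoints through the permutation diagram~$\sigma$; no closed loops are created, so no power of~$\delta$ intervenes, and the cups are untouched, whence $b\circ\sigma$ is again a single no-cap basis diagram. These two assignments are mutually inverse, which proves that~\eqref{triangular-over-discrete-isomorphism} is an isomorphism.

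Part~(b) I would obtain formally from part~(a) via the self-duality of $\Br(\delta)$. Reflecting every diagram across the horizontal axis defines a contravariant isomorphism $\Br(\delta)\simeq\Br(\delta)^\sop$ that is the identity on objects, interchanges caps with cups, fixes $\Br^=(\delta)$ and $\Br(\delta)^\id$, and carries $\Br^{+=}(\delta)$ and $\Br^+(\delta)$ to $\Br^{=-}(\delta)$ and $\Br^-(\delta)$. Because this duality reverses the order of composition, it transforms the composition map of part~(a), namely $\Hom_{\Br^+}(z,y)\ot_k\Hom_{\Br^=}(x,z)\rarrow\Hom_{\Br^{+=}}(x,y)$, into the composition map of part~(b) (with the two tensor factors interchanged, which is harmless for vector spaces). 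Hence the latter is an isomorphism as soon as the former is.

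I expect the main obstacle to be the careful verification that the straightening assignment is genuinely well defined and bijective: in particular, that stacking a planar no-cap diagram over a source permutation never produces a closed loop, so that basis elements map to basis elements \emph{with coefficient~$1$} rather than a power of~$\delta$, and that the factorization $a=b\circ\sigma$ is unique (which comes down to the uniqueness of the order-preserving bijection $x\to S$ onto a prescribed image). Everything else is bookkeeping with the diagram combinatorics already set up in Section~\ref{tensor-and-triangular-subsecn} and illustrated in the proof of Proposition~\ref{triangular-decomposition-over-symmetric-group}.
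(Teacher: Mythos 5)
Your proof is correct. There is nothing in the paper to compare it against: Proposition~\ref{two-small-triangular-decompositions} is stated without any proof (the statement itself ends in a~\qed), in keeping with the paper's informal treatment of the neighbouring Proposition~\ref{triangular-decomposition-over-symmetric-group}, whose ``proof'' consists of two illustrative diagrams. Your argument supplies precisely the content the paper takes for granted, and its three ingredients are exactly right: the direct sum in~\eqref{triangular-over-discrete-isomorphism} collapses to the single summand $z=x$, since $\Hom_{\Br^=}(x,z)=0$ unless $|x|=|z|$ and objects of equal cardinality coincide; the straightening bijection between basis diagrams $a\in\Hom_{\Br^{+=}}(x,y)$ and pairs $(b,\sigma)$ is a genuine bijection because composing a no-cap diagram with a permutation diagram creates no closed circles (so basis tensors map to basis diagrams with coefficient~$1$, not a power of~$\delta$) and because the order-preserving enumeration of the image of the propagating strands of~$a$ is unique; and the reflection anti-automorphism of $\Br(\delta)$, which exchanges caps with cups, fixes $\Br^=(\delta)$ and $\Br(\delta)^\id$, and carries $\Br^+(\delta)$, $\Br^{+=}(\delta)$ to $\Br^-(\delta)$, $\Br^{=-}(\delta)$, converts the composition map of part~(a) into that of part~(b) up to a harmless swap of tensor factors. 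One remark confirming your reading of the definitions: the prose sentences introducing $\Br^+(\delta)$ and $\Br^-(\delta)$ in Section~\ref{brauer-subsecn} have ``caps'' and ``cups'' inadvertently interchanged; the basis descriptions you followed (for $\Br^+(\delta)$: no caps, cups allowed, order-preserving propagating strands) are the correct ones---under the literal prose reading, $\Br^+(\delta)$ would contain only identity morphisms and part~(a) would be false.
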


 Comparing the results of
Propositions~\ref{triangular-decomposition-over-symmetric-group}
and~\ref{two-small-triangular-decompositions}, we come to
an isomorphism (in the same symbolic notation of
the formula~\eqref{simplified-notation-three-subcategories-map})
\begin{equation} \label{big-decomposition-into-small-pieces-eqn}
 \Br^+(\delta)\ot_{\Br(\delta)^\id}\Br^=(\delta)
 \ot_{\Br(\delta)^\id}\Br^-(\delta)\overset\simeq\lrarrow
 \Br(\delta).
\end{equation}
 Finally, from~\eqref{big-decomposition-into-small-pieces-eqn}
we arrive to the following corollary, spelling out two versions of
the triangular decomposition that are most relevant for us.

\begin{cor} \label{two-main-triangular-decompositions-for-brauer}
\textup{(a)} The triple of $k$\+linear subcategories\/
$\sF=\Br^+(\delta)$, \,$\sG=\Br^{=-}(\delta)$, and\/
$\sH=\Br(\delta)^\id$ forms a triangular decomposition of
the $k$\+linear category\/ $\Br(\delta)$
in the sense of Sections~\ref{tensor-and-triangular-subsecn}\+-%
\ref{triangular-and-projectivity-subsecn}.
 In other words,
the map~\eqref{simplified-notation-three-subcategories-map}
$$
 \Br^+(\delta)\ot_{\Br(\delta)^\id}\Br^{=-}(\delta)\lrarrow
 \Br(\delta)
$$
is an isomorphism; or equivalently,
the map~\eqref{triangular-over-discrete-isomorphism}
is an isomorphism. \par
\textup{(b)} The triple of $k$\+linear subcategories\/
$\sF=\Br^{+=}(\delta)$, \,$\sG=\Br^-(\delta)$, and\/
$\sH=\Br(\delta)^\id$ forms a triangular decomposition of
the $k$\+linear category\/ $\Br(\delta)$
in the sense of Sections~\ref{tensor-and-triangular-subsecn}\+-%
\ref{triangular-and-projectivity-subsecn}.
 In other words,
the map~\eqref{simplified-notation-three-subcategories-map}
$$
 \Br^{+=}(\delta)\ot_{\Br(\delta)^\id}\Br^-(\delta)\lrarrow
 \Br(\delta)
$$
is an isomorphism; or equivalently,
the map~\eqref{triangular-over-discrete-isomorphism}
is an isomorphism. \qed
\end{cor}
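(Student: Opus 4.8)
The plan is to deduce both parts purely formally from the triple decomposition~\eqref{big-decomposition-into-small-pieces-eqn} established above, by re-associating its three factors and invoking Proposition~\ref{two-small-triangular-decompositions}. The essential point is that the map~\eqref{simplified-notation-three-subcategories-map} is in every instance induced by composition of morphisms in the ambient category, and composition is associative; so the relevant composition maps fit into commuting triangles, and it will suffice to apply the two-out-of-three property for isomorphisms.

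First I would treat part~(a). The composition map of~\eqref{big-decomposition-into-small-pieces-eqn} factors as
$$
 \Br^+(\delta)\ot_{\Br(\delta)^\id}\Br^=(\delta)
 \ot_{\Br(\delta)^\id}\Br^-(\delta)\lrarrow
 \Br^+(\delta)\ot_{\Br(\delta)^\id}\Br^{=-}(\delta)\lrarrow\Br(\delta),
$$
where the first arrow is obtained by applying $\Br^+(\delta)\ot_{\Br(\delta)^\id}({-})$ to the composition map $\Br^=(\delta)\ot_{\Br(\delta)^\id}\Br^-(\delta)\lrarrow\Br^{=-}(\delta)$, and the second arrow is the composition map of part~(a). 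This factorization is precisely the associativity of composition of Brauer diagrams (lower, permute, raise): on a triple $(c,s,b)$ it sends $(c,s,b)\mapsto c\ot(sb)\mapsto c(sb)=csb$. By Proposition~\ref{two-small-triangular-decompositions}(b) the inner map $\Br^=(\delta)\ot_{\Br(\delta)^\id}\Br^-(\delta)\lrarrow\Br^{=-}(\delta)$ is an isomorphism, hence so is the first arrow, since tensoring an isomorphism of $\Br(\delta)^\id$\+$\Br(\delta)^\id$\+bimodules yields an isomorphism by functoriality. As the whole composite is an isomorphism by~\eqref{big-decomposition-into-small-pieces-eqn}, it follows that the second arrow---the desired map of part~(a)---is an isomorphism as well.

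Part~(b) will be entirely symmetric: I would factor the same triple composition map~\eqref{big-decomposition-into-small-pieces-eqn} as $\Br^+(\delta)\ot_{\Br(\delta)^\id}\Br^=(\delta)\ot_{\Br(\delta)^\id}\Br^-(\delta)\lrarrow\Br^{+=}(\delta)\ot_{\Br(\delta)^\id}\Br^-(\delta)\lrarrow\Br(\delta)$, where now the first arrow applies $({-})\ot_{\Br(\delta)^\id}\Br^-(\delta)$ to the composition map $\Br^+(\delta)\ot_{\Br(\delta)^\id}\Br^=(\delta)\lrarrow\Br^{+=}(\delta)$, which is an isomorphism by Proposition~\ref{two-small-triangular-decompositions}(a). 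The same two-out-of-three argument then shows that the composition map of part~(b) is an isomorphism.

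I do not expect any genuine obstacle here: the content is formal, the only things to verify being that the two triangles commute, which is the associativity of composition in $\Br(\delta)$, and that $\Br^+(\delta)\ot_{\Br(\delta)^\id}({-})$ and $({-})\ot_{\Br(\delta)^\id}\Br^-(\delta)$ carry isomorphisms to isomorphisms, which holds by functoriality of the bimodule tensor product over the discrete category $\Br(\delta)^\id$. The one mildly delicate bookkeeping point, which I would state explicitly, is that each arrow appearing in the two factorizations is literally a composition-of-morphisms map of the form~\eqref{simplified-notation-three-subcategories-map}; this guarantees that the conclusion concerns the intended triangular-decomposition maps rather than merely some abstract isomorphisms.
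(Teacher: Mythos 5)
Your proposal is correct and matches the paper's own (implicit) argument: the paper likewise deduces the corollary from the triple decomposition~\eqref{big-decomposition-into-small-pieces-eqn} together with Proposition~\ref{two-small-triangular-decompositions}, leaving the re-association and two-out-of-three step to the reader, which you have simply spelled out. Your explicit remark that each arrow in the factorizations is literally a composition-of-morphisms map of the form~\eqref{simplified-notation-three-subcategories-map} is exactly the right point to make precise.
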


 It is worth mentioning that all the five categories
$\Br^{+=}(\delta)$, \,$\Br^{=-}(\delta)$, \,$\Br^=(\delta)$,
\,$\Br^+(\delta)$, and $\Br^-(\delta)$, \emph{viewed as abstract
$k$\+linear  categories} (rather than subcategories in $\Br(\delta)$),
do not depend on the parameter~$\delta$.
 It is only the whole category $\Br(\delta)$ that depends on~$\delta$.
 We only keep the symbol~$\delta$ in our notation for the five
subcategories in order to make the system of notation more transparent.

 Let us now discuss the local finiteness conditions from
Section~\ref{locally-finite-categories-subsecn} in application
to the Brauer diagram category and its subcategories defined above.
 First of all, we observe that the whole category $\Br(\delta)$ has
finite-dimensional\/ $\Hom$ spaces; but it is \emph{not} locally
finite in the sense of the definition in
Section~\ref{locally-finite-categories-subsecn}, because it does not
satisfy the interval finiteness condition (as
the diagram~\eqref{brauer-multiplication-diagram} illustrates).

\begin{lem} \label{brauer-lower-upper-finiteness}
\textup{(a)} The $k$\+linear categories\/ $\Br^+(\delta)$,
$\Br^{+=}(\delta)$, and\/ $\Br^=(\delta)$ are locally finite,
and moreover, they are lower finite. \par
\textup{(b)} The $k$\+linear categories\/ $\Br^-(\delta)$,
$\Br^{=-}(\delta)$, and\/ $\Br^=(\delta)$ are locally finite,
and moreover, they are upper finite.
\end{lem}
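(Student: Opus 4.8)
The plan is to derive both the local finiteness and the stronger lower/upper finiteness conditions from a single monotonicity observation about the cardinality function. Throughout I would use the convention (in force for this lemma) that the objects of $\Br(\delta)$ are exactly the sets $x_n=\{1,\dots,n\}$, one for each $n\ge0$, so that the assignment $x_n\mapsto|x_n|=n$ is a bijection between the object set and $\boZ_{\ge0}$. This convention is the crucial hypothesis: if arbitrary finite sets were permitted as objects, there would be infinitely many mutually isomorphic objects of each cardinality and none of the five categories would be lower or upper finite. The Hom-finiteness half of local finiteness is immediate for all five categories, since each is a subcategory of $\Br(\delta)$ and the latter already has finite-dimensional $\Hom$ spaces; thus every $\Hom$ space in a subcategory is a subspace of a finite-dimensional space.

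For part (a), I would first record that in each of $\Br^+(\delta)$, $\Br^{+=}(\delta)$, and $\Br^=(\delta)$ a nonzero morphism $z\to y$ forces $|z|\le|y|$. This is read off from the basis descriptions recalled above: in $\Br^{+=}(\delta)$ one has $\Hom(z,y)=0$ whenever $|z|>|y|$, the subcategory $\Br^+(\delta)$ has even fewer morphisms, and in $\Br^=(\delta)$ a morphism is a bijection so $|z|=|y|$. Consequently each elementary comparison $z\preceq y$ (meaning $\Hom(z,y)\ne0$) entails $|z|\le|y|$, and since $\le$ on $\boZ_{\ge0}$ is transitive, the generated preorder satisfies $z\preceq y\Rightarrow|z|\le|y|$ throughout. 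Hence for a fixed object $y$ the down-set $\{z:z\preceq y\}$ is contained in the finite set of objects of cardinality at most $|y|$, which gives lower finiteness; interval finiteness (and thus local finiteness) follows at once because the interval $\{z:x\preceq z\preceq y\}$ is a subset of this down-set.

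Part (b) is the exact dual. Here I would note that in $\Br^-(\delta)$, $\Br^{=-}(\delta)$, and $\Br^=(\delta)$ a nonzero morphism $x\to z$ forces $|z|\le|x|$ --- this is the vanishing $\Hom_{\Br^{=-}}(x,z)=0$ for $|z|>|x|$, inherited by the subcategory $\Br^-(\delta)$, together with the bijection condition in $\Br^=(\delta)$. Therefore $x\preceq z\Rightarrow|z|\le|x|$, the up-set $\{z:x\preceq z\}$ is contained in the finitely many objects of cardinality at most $|x|$, and upper finiteness follows; local finiteness again follows since intervals are contained in up-sets. Equivalently, part~(b) is part~(a) applied to the opposite categories, using that passing to $\sE^\sop$ swaps caps with cups and lower with upper finiteness.

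There is no genuinely hard step here: the whole argument runs on the single fact that cardinality is monotone along the morphisms of each subcategory. The one point requiring care --- and the only place the statement could fail --- is the object-set convention isolated in the first paragraph, so I would make sure to invoke it explicitly when passing from the cardinality bound $|z|\le|y|$ to the finiteness of the relevant down- or up-set.
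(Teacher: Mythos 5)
Your proposal is correct and follows essentially the same route as the paper: the paper's proof is the one-line observation that for fixed $n$, the condition $|x_m|\le|x_n|$ holds for only finitely many $m$, which is exactly your cardinality-monotonicity argument. Your write-up merely makes explicit the surrounding details (Hom-finiteness inherited from $\Br(\delta)$, intervals sitting inside down-sets, and the object-set convention $x_n=\{1,\dotsc,n\}$) that the paper leaves implicit.
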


\begin{proof}
 This is trivial: for every given $n\ge0$, the condition
$|x_m|\le|x_n|$ only holds for a finite number of integers $m\ge0$.
\end{proof}

\begin{lem} \label{brauer-strict-local-finiteness}
\textup{(a)} The $k$\+linear categories\/ $\Br^+(\delta)$
and\/ $\Br^{+=}(\delta)$ are upper strictly locally finite. \par
\textup{(b)} The $k$\+linear categories\/ $\Br^-(\delta)$
and\/ $\Br^{=-}(\delta)$ are lower strictly locally finite.
\end{lem}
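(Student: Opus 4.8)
The plan is to deduce part~(b) from part~(a) by a duality and to prove part~(a) by an explicit ``peeling off a cup'' argument. First I would record the top--bottom reflection of Brauer diagrams: reflecting a diagram $a\colon x\to y$ across a horizontal axis produces the same combinatorial matching read as a morphism $y\to x$, which exchanges caps and cups, preserves the non-crossing condition, and reverses composition. It therefore furnishes isomorphisms of $k$\+linear categories $\Br^-(\delta)\simeq\Br^+(\delta)^\sop$ and $\Br^{=-}(\delta)\simeq\Br^{+=}(\delta)^\sop$, all acting by the identity on objects. Since ``upper strictly locally finite'' means, by definition, that the opposite category is lower strictly locally finite, the assertions of part~(b) are literally the assertions of part~(a) transported along these isomorphisms; so it suffices to prove~(a).

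Unwinding the definition, a (locally finite, by Lemma~\ref{brauer-lower-upper-finiteness}(a)) category $\sF\in\{\Br^+(\delta),\Br^{+=}(\delta)\}$ is upper strictly locally finite once, for every object $y$, there is a finite set of objects $X_y$ with $y\prec x$ for all $x\in X_y$ such that for every $z$ with $y\prec z$ the composition map
\[
 \bigoplus\nolimits_{x\in X_y}\Hom_\sF(x,z)\ot_k\Hom_\sF(y,x)\lrarrow\Hom_\sF(y,z)
\]
is surjective. The first step is to compute the preorder: in both categories every nonzero morphism contains no caps, so every source vertex lies on a propagating strand and the cardinality can never decrease; hence $x_m\preceq x_n$ if and only if $m\le n$ and $m\equiv n\pmod2$, with $x_m\prec x_n$ when moreover $m<n$. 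Thus for $y=x_n$ the objects strictly above $y$ are exactly $x_{n+2},x_{n+4},\dots$, and I would take the one-element set $X_y=\{x_{n+2}\}$, which satisfies $y\prec x_{n+2}$.

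The heart of the matter is the factorization/surjectivity claim: every basis morphism $f\colon y\to z$ with $y\prec z$ is a composite $f=c\circ b$ with $b\colon y\to x_{n+2}$ and $c\colon x_{n+2}\to z$ in $\sF$. Such an $f$ has all $n$ vertices of $y$ on propagating strands and carries $k=(|z|-n)/2\ge1$ cups in $z$. I would single out one cup $\gamma$ of $z$, let $b$ propagate $y$ to $n$ of the vertices of $x_{n+2}$ and create a single cup at the two remaining vertices, and let $c$ propagate those two vertices onto the endpoints of $\gamma$ while reproducing the $n$ propagating strands of $f$ and its remaining $k-1$ cups; then $c\circ b=f$. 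For $\Br^{+=}(\delta)$ any choice of $\gamma$ works, since crossings are allowed, and surjectivity follows because every basis diagram is obtained this way. For $\Br^+(\delta)$ the two factors must also be non-crossing, and here I would choose $\gamma$ to be an \emph{innermost} cup (one whose two endpoints are consecutive among the vertices of $z$, which exists whenever $z$ carries a cup) and place the two cup-vertices of $x_{n+2}$ adjacently at the corresponding location, so that removing $\gamma$ from the non-crossing diagram $f$ leaves both $b$ and $c$ non-crossing.

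The main obstacle is precisely this planarity bookkeeping in the $\Br^+(\delta)$ (equivalently, $\Br^-(\delta)$) case: one must check that peeling off an innermost cup keeps both factors inside the non-crossing subcategory, and that as $\gamma$ ranges over innermost cups and $c$ over admissible completions one recovers every non-crossing diagram $f\colon y\to z$. Everything else---the reflection duality reducing (b) to (a), the computation of $\preceq$, and the finiteness of $X_y=\{x_{n+2}\}$---is routine, and for $\Br^{+=}(\delta)$ and $\Br^{=-}(\delta)$ even the planarity issue disappears.
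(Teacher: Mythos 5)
Your proof is correct and takes essentially the same route as the paper: the paper also factors every diagram through a single witness object two cardinality steps away, proving part~(b) directly with the choice $X_y=\{x_{n+1}\}$ (evidently a misprint for $x_{n+2}$, since $\Hom_{\Br^{=-}}(x_{n+1},x_n)=0$ by parity) and obtaining the other part by duality, while you do the mirror image, proving~(a) with $X_y=\{x_{n+2}\}$ and dualizing to get~(b). Your explicit peeling-off-a-cup factorization, including the innermost-cup choice needed in the non-crossing case, supplies exactly the verification that the paper's one-line proof leaves implicit.
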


\begin{proof}
 Part~(b): for any object $y=x_n$ in $\Br^-(\delta)$
or $\Br^{=-}(\delta)$, setting $X_y=\{x_{n+1}\}$ satisfies
the condition in the definition of lower strict local finiteness
from Section~\ref{locally-finite-categories-subsecn}.
 Part~(b) is dual.
\end{proof}

 Finally, we come to the following theorem, which is out main
result in application to the Brauer diagram category.

\begin{thm}
\textup{(a)} Let\/ $\C^+=\C_{\Br^+(\delta)}$ be the coalgebra
corresponding to the locally finite $k$\+linear category
$\Br^+(\delta)$, as per the construction from
Section~\ref{locally-finite-categories-subsecn}.
 Let $K^+=R_{\Br^+(\delta)}$ and $R=R_{\Br(\delta)}$ be the nonunital
algebras corresponding to the $k$\+linear categories\/
$\Br^+(\delta)$ and\/ $\Br(\delta)$.
 Then there is a semialgebra\/ $\bS^+=\C^+\ot_{K^+}R$ over
the coalgebra\/ $\C^+$ such that the abelian category of right\/
$\bS^+$\+semimodules is equivalent to the category of right\/
$\Br(\delta)$\+modules, while the abelian category of left\/
$\bS^+$\+semicontramodules is equivalent to the category of left\/
$\Br(\delta)$\+modules,
$$
 \Simodr\bS^+\simeq\Modr\Br(\delta) \quad\text{and}\quad
 \bS^+\Sicntr\simeq\Br(\delta)\Modl.
$$
 The semialgebra\/ $\bS^+$ is an injective left\/ $\C^+$\+comodule.
\par
\textup{(b)} Let\/ $\C^{+=}=\C_{\Br^{+=}(\delta)}$ be the coalgebra
corresponding to the locally finite $k$\+linear category
$\Br^{+=}(\delta)$, as per the construction from
Section~\ref{locally-finite-categories-subsecn}.
 Let $K^{+=}=R_{\Br^{+=}(\delta)}$ and $R=R_{\Br(\delta)}$ be
the nonunital algebras corresponding to the $k$\+linear categories\/
$\Br^{+=}(\delta)$ and\/ $\Br(\delta)$.
 Then there is a semialgebra\/ $\bS^{+=}=\C^{+=}\ot_{K^{+=}}R$ over
the coalgebra\/ $\C^{+=}$ such that the abelian category of right\/
$\bS^{+=}$\+semimodules is equivalent to the category of right\/
$\Br(\delta)$\+modules, while the abelian category of left\/
$\bS^{+=}$\+semicontramodules is equivalent to the category of left\/
$\Br(\delta)$\+modules,
$$
 \Simodr\bS^{+=}\simeq\Modr\Br(\delta) \quad\text{and}\quad
 \bS^{+=}\Sicntr\simeq\Br(\delta)\Modl.
$$
 The semialgebra\/ $\bS^{+=}$ is an injective left\/
$\C^{+=}$\+comodule.
\end{thm}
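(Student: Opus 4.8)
The plan is to obtain both parts as direct applications of the general construction assembled in the Conclusion of Section~\ref{loc-fin-subcategories-secn}, specialized to the two triangular decompositions recorded in Corollary~\ref{two-main-triangular-decompositions-for-brauer}. The point is that in each case the relevant ``lower'' subcategory~$\sF$ is not merely lower strictly locally finite but in fact \emph{lower finite}, which is precisely the hypothesis that converts the full\+subcategory descriptions of Corollaries~\ref{category-semialgebra-right-semimodules}\+-\ref{category-semialgebra-left-semicontramodules} into the honest equivalences of Corollary~\ref{category-semialgebra-lower-finite-case}.

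For part~(a), I would set $\sE=\Br(\delta)$, \,$\sF=\Br^+(\delta)$, \,$\sG=\Br^{=-}(\delta)$, \,$\sH=\Br(\delta)^\id$, and then $K^+=R_{\Br^+(\delta)}$, \,$R=R_{\Br(\delta)}$, \,$\C^+=\C_{\Br^+(\delta)}$, \,$\phi=\phi_{\Br^+(\delta)}$. Corollary~\ref{two-main-triangular-decompositions-for-brauer}(a) supplies the triangular decomposition $\sF\ot_\sH\sG\simeq\sE$ over the discrete category $\sH=\sE^\id$, and Lemma~\ref{brauer-lower-upper-finiteness}(a) shows $\Br^+(\delta)$ to be locally finite and lower finite. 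This verifies every item in the list of assumptions preceding Corollary~\ref{category-semialgebra-right-semimodules}: the algebra $K^+$ is left projective and t\+unital (Proposition~\ref{category-algebras-t-unital-and-projective}); the induced $f\:K^+\rarrow R$ is t\+unital and makes $R$ a left projective $K^+$\+$K^+$\+bimodule (Propositions~\ref{t-unital-functors} and~\ref{triangular-decomposition-projectivity-prop}); the pairing $\phi$ is right t\+unital (Lemma~\ref{category-pairings-t-unital}); the comodule functor $\phi_*=\Upsilon_{\sF^\sop}$ is fully faithful (Lemma~\ref{Upsilon-fully-faithful}); and right integrability of $R$ is automatic since $\Br^+(\delta)$ is lower finite (Proposition~\ref{upper-lower-finite-categories-prop}(b)). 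Corollary~\ref{category-semialgebra-right-semimodules} then delivers the semialgebra $\bS^+=\C^+\ot_{K^+}R$ and its injectivity as a left $\C^+$\+comodule, and lower finiteness of $\Br^+(\delta)$ lets Corollary~\ref{category-semialgebra-lower-finite-case} promote the descriptions to $\Simodr\bS^+\simeq\Modr\Br(\delta)$ and $\bS^+\Sicntr\simeq\Br(\delta)\Modl$.

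For part~(b), I would run the identical argument with $\sF=\Br^{+=}(\delta)$ and $\sG=\Br^-(\delta)$: Corollary~\ref{two-main-triangular-decompositions-for-brauer}(b) gives the triangular decomposition, Lemma~\ref{brauer-lower-upper-finiteness}(a) again gives lower finiteness of~$\sF$, and Corollaries~\ref{category-semialgebra-right-semimodules} and~\ref{category-semialgebra-lower-finite-case} produce the semialgebra $\bS^{+=}=\C^{+=}\ot_{K^{+=}}R$, its injectivity as a left $\C^{+=}$\+comodule, and the equivalences $\Simodr\bS^{+=}\simeq\Modr\Br(\delta)$ and $\bS^{+=}\Sicntr\simeq\Br(\delta)\Modl$.

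I do not anticipate a real obstacle: the substantive work lies entirely in the already\+established triangular decompositions (Propositions~\ref{triangular-decomposition-over-symmetric-group} and~\ref{two-small-triangular-decompositions}, assembled into Corollary~\ref{two-main-triangular-decompositions-for-brauer}) and the finiteness bookkeeping of Lemma~\ref{brauer-lower-upper-finiteness}. The single point I would be careful about is checking that these decompositions really place a lower\+finite category in the role of~$\sF$, since it is lower finiteness, rather than only lower strict local finiteness, that yields the full \emph{equivalences} of categories instead of merely the embeddings as full subcategories.
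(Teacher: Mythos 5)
Your proposal is correct and follows essentially the same route as the paper's own proof: both parts invoke the triangular decompositions of Corollary~\ref{two-main-triangular-decompositions-for-brauer}, lower finiteness of the respective category $\sF$ from Lemma~\ref{brauer-lower-upper-finiteness}(a), and then Corollaries~\ref{category-semialgebra-right-semimodules}--\ref{category-semialgebra-lower-finite-case}. Your explicit verification of the hypotheses (left projectivity and t\+unitality of $K^+$, the t\+unital homomorphism, the right t\+unital pairing, full faithfulness of $\Upsilon_{\sF^\sop}$, and automatic right integrability via Proposition~\ref{upper-lower-finite-categories-prop}(b)) simply spells out the checklist that the paper carries out once and for all in its Conclusion subsection.
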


 Let us emphasize that \emph{we do not know} whether the semialgebra
$\bS^+$ is an injective right $\C^+$\+comodule, or whether
the semialgebra $\bS^{+=}$ is an injective right $\C^{+=}$\+comodule.

\begin{proof}
 Part~(a): put $\sF=\Br^+(\delta)$, \,$\sG=\Br^{=-}(\delta)$,
and $\sE=\Br(\delta)$.
 Then, by
Corollary~\ref{two-main-triangular-decompositions-for-brauer}(a),
we have a triangular decomposition $\sF\ot_\sH\sG\simeq\sE$,
where $\sH=\Br(\delta)^\id$.
 By Lemma~\ref{brauer-lower-upper-finiteness}(a), the $k$\+linear
category $\sF=\Br^+(\delta)$ is lower finite.
 Therefore, Corollaries~\ref{category-semialgebra-right-semimodules}\+-%
\ref{category-semialgebra-lower-finite-case} are applicable and
provide the desired assertions.

 Part~(b): put $\sF=\Br^{+=}(\delta)$, \,$\sG=\Br^-(\delta)$,
and $\sE=\Br(\delta)$.
 Then, by
Corollary~\ref{two-main-triangular-decompositions-for-brauer}(b),
we have a triangular decomposition $\sF\ot_\sH\sG\simeq\sE$,
where $\sH=\Br(\delta)^\id$.
 By Lemma~\ref{brauer-lower-upper-finiteness}(a), the $k$\+linear
category $\sF=\Br^{+=}(\delta)$ is lower finite.
 Thus Corollaries~\ref{category-semialgebra-right-semimodules}\+-%
\ref{category-semialgebra-lower-finite-case} are applicable. 
\end{proof}

 Inverting the roles of the left and right sides in the discussion
above, one can consider the coalgebras $\C^-=\C_{\Br^-(\delta)}$
and $\C^{=-}=\C_{\Br^{=-}(\delta)}$, as well as the algebras
$K^-=R_{\Br^-(\delta)}$ and $K^{=-}=R_{\Br^{=-}(\delta)}$.
 Then, applying the left-right opposite version of our discussion
in Sections~\ref{flat-integrable-bimodules-secn}\+-%
\ref{description-of-semimod-semicontra-secn}, one can construct two
semialgebras $\bS^-=R\ot_{K^-}\C^-$ and $\bS^{=-}=R\ot_{K^{=-}}\C^{=-}$,
which are injective as right comodules over their respective
coalgebras $\C^-$ and $\C^{=-}$.
 One obtains equivalences of abelian categories
$$
 \bS^-\Simodl\simeq\bS^{=-}\Simodl\simeq\Br(\delta)\Modl
 \ \ \text{and}\ \
 \SicntrR\bS^-\simeq\SicntrR\bS^{=-}\simeq\Modr\Br(\delta),
$$
where $\SicntrR\bS$ denotes the category of right semicontramodules
over a semialgebra~$\bS$.

\subsection{Temperley--Lieb diagram category}
\label{temperley-lieb-subsecn}
 The following example was also suggested to me by Catharina Stroppel.
 We keep the notation from the beginning of
Section~\ref{brauer-subsecn}; so $k$~is a field and
$\delta\in k$ is an element.
 The \emph{Temperley--Lieb diagram category} $\TL(\delta)$
\,\cite{TL}, \cite[Section~4]{Kau}, \cite[Section~2]{BM},
\cite[Section~2]{RS}, \cite[Section~2]{DG}
is the following subcategory in $\Br(\delta)$.

 The objects of $\TL(\delta)$ are the nonnegative integers $n\ge0$
interpreted as linearly ordered finite sets $x_n=\{1,\dotsc,n\}$.
 Given two linearly ordered finite sets $x$ and~$y$,
the $k$\+vector subspace $\Hom_\TL(x,y)\subset\Hom_\Br(x,y)$ consists
of all the morphisms which contain \emph{no intersecting arcs}.

 More precisely, the vector subspace $\Hom_\TL(x,y)\subset\Hom_\Br(x,y)$
is spanned by all the basis vectors, i.~e., partitions~$a$ of
the set $x\sqcup y$ into a disjoint union of sets of cardinality~$2$,
satisfying the following condition.
 Let $n=|x|$ and $m=|y|$.
 Let us interpret the elements of~$x$ as the points $(0,1)$,~\dots,
$(0,n)$ in the real plane $\boR^2$ with coordinates $(u,v)$, and
the elements of~$y$ as the points $(1,1)$,~\dots, $(1,m)$ in
the same real plane.
 So we have $u=0$ for all the points of~$x$ and $u=1$ for all
the points of~$y$.
 Then the condition is that it must be possible to represent
the edges of the graph~$G_a$ by nonintersecting arcs connecting
our points in $\boR^2$, in such a way that all the arcs lie in
the region $0\le u\le 1$ in $\boR^2$ (i.~e., between the two lines
on which the points of $x$ and~$y$ are situated).
\begin{equation} \label{temperley-lieb-morphism-example}
\begin{gathered}
 \xymatrix{
  y\: & \bul \ar@{-}[dd] & \bul \ar@{-}@/^-1.5pc/[rrr] 
  & \bul \ar@{-}@/^-0.5pc/[r] & \bul & \bul & \bul \ar@{-}[lllldd]
  & \bul \ar@{-}[rrrrdd] &\bul \ar@{-}@/^-0.5pc/[r] & \bul \\
  \quad a\:\mkern-18mu \\
  x\: & \bul & \bul & \bul \ar@{-}@/^2.5pc/[rrrrr] 
  & \bul \ar@{-}@/^0.5pc/[r] & \bul
  & \bul \ar@{-}@/^0.5pc/[r] & \bul & \bul 
  & \bul \ar@{-}@/^0.5pc/[r] & \bul &\bul
 }
\end{gathered}
\end{equation}

 Now let us define two $k$\+linear subcategories in $\TL(\delta)$.
 Both the subcategories have the same set of objects as $\TL(\delta)$;
so the objects are the nonnegative integers.

 The subcategory $\TL^+(\delta)\subset\TL(\delta)$ consists of
all the morphisms in $\TL(\delta)$ which \emph{contain no caps}.
 Formally, we put
$$
 \TL^+(\delta)=\TL(\delta)\cap\Br^+(\delta)=
 \TL(\delta)\cap\Br^{+=}(\delta).
$$

 Similarly, the subcategory $\TL^-(\delta)\subset\TL(\delta)$
consists of all the morphisms in $\TL(\delta)$ which \emph{contain
no cups}.
 Formally,
$$
 \TL^-(\delta)=\TL(\delta)\cap\Br^-(\delta)=
 \TL(\delta)\cap\Br^{=-}(\delta).
$$

 It is worth mentioning that the categories $\TL^+(\delta)$ and
$\TL^-(\delta)$, \emph{viewed as abstract $k$\+linear categories}
(rather than subcategories in $\TL(\delta)$), do not depend on
the parameter~$\delta$.
 It is only the whole category $\TL(\delta)$ that depends on~$\delta$.
 We only keep the symbol~$\delta$ in our notation for the two
subcategories in order to make the system of notation more transparent.

\begin{prop} \label{triangular-decomposition-for-temperley-lieb}
 The triple of $k$\+linear subcategories\/ $\sF=\TL^+(\delta)$,
\,$\sG=\TL^-(\delta)$, and\/ $\sH=\TL(\delta)^\id$ forms a triangular
decomposition of the $k$\+linear category\/ $\TL(\delta)$
in the sense of Sections~\ref{tensor-and-triangular-subsecn}\+-%
\ref{triangular-and-projectivity-subsecn}.
 In other words,
the map~\eqref{simplified-notation-three-subcategories-map}
$$
 \TL^+(\delta)\ot_{\TL(\delta)^\id}\TL^-(\delta)\lrarrow
 \TL(\delta)
$$
is an isomorphism; or equivalently,
the map~\eqref{triangular-over-discrete-isomorphism}
is an isomorphism. 
\end{prop}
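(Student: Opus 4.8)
The plan is to establish the isomorphism~\eqref{triangular-over-discrete-isomorphism} directly, by producing a bijection between the standard monomial bases of its two sides and checking that the map~\eqref{simplified-notation-three-subcategories-map} carries one basis onto the other with all structure constants equal to $1\in k$. Recall that, for fixed objects $x=x_n$ and $y=x_m$, the space $\Hom_\TL(x,y)$ has a basis indexed by the planar (non-crossing) perfect matchings $a$ of $x\sqcup y$; each such matching decomposes into \emph{caps} (arcs joining two points of $x$), \emph{cups} (arcs joining two points of $y$), and a number $p\ge0$ of \emph{propagating strands} (arcs joining a point of $x$ to a point of $y$). Likewise $\bigoplus_z\Hom_{\TL^+}(z,y)\ot_k\Hom_{\TL^-}(x,z)$ has a basis consisting of the triples $(z,c,b)$, where $b$ is a cup-free planar matching of $x\sqcup z$ and $c$ is a cap-free planar matching of $z\sqcup y$.

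First I would describe the factorization ``at the waist''. Given a basis diagram $a\in\Hom_\TL(x,y)$ with $p$ propagating strands, set $z=x_p$, keep all the caps of $a$ together with the lower halves of its propagating strands in a diagram $b\colon x\rarrow z$, and keep all the cups of $a$ together with the upper halves of the propagating strands in a diagram $c\colon z\rarrow y$, connecting the $j$\+th propagating strand (counted in the left-to-right linear order of its endpoints) to the $j$\+th point of the ordered object $z$. Since every point of $z$ is then an endpoint of a propagating strand, the diagram $b$ has no cups and the diagram $c$ has no caps, so $b\in\Hom_{\TL^-}(x,z)$ and $c\in\Hom_{\TL^+}(z,y)$; planarity of $a$ together with the chosen ordering guarantees that $b$ and $c$ are themselves non-crossing. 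Conversely, for any basis triple $(z,c,b)$ the composite $c\circ b$ is computed in $\Br(\delta)$, and I would argue that it is again a single planar basis diagram with coefficient $1$.

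The main obstacle, and the only point requiring genuine care, is to verify that the composition $c\circ b$ introduces \emph{no closed loops}, so that the scalar $\delta^n$ in the Brauer composition rule has $n=0$ and the structure constant is exactly $1$, while simultaneously the number of propagating strands of $c\circ b$ equals $|z|$. Both facts follow from the observation that, by construction, every vertex of the middle object $z$ is adjacent to a propagating strand in $b$ (an edge to $x$) and to a propagating strand in $c$ (an edge to $y$); hence neither factor contains a $z$\+$z$ arc, a closed loop passing only through $z$ can never form, and each point of $z$ is forced onto a through-strand of the composite. Consequently $c\circ b$ has exactly $|z|$ propagating strands, from which $z$ is recovered, and reading off the ordered endpoints of these strands recovers $b$ and $c$; this makes $(z,c,b)\mapsto c\circ b$ a two-sided inverse to the waist factorization on the level of bases. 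It follows that~\eqref{triangular-over-discrete-isomorphism}, equivalently~\eqref{simplified-notation-three-subcategories-map}, is an isomorphism for all $x$ and $y$, which is the assertion. Alternatively, one could deduce the result by intersecting the Brauer triangular decomposition~\eqref{big-decomposition-into-small-pieces-eqn} with the planarity condition and using that the only non-crossing permutation is the identity, so that the middle factor $\TL(\delta)\cap\Br^=(\delta)$ collapses to the discrete category $\TL(\delta)^\id$; but the direct combinatorial argument above seems cleaner.
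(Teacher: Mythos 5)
Your proof is correct and takes essentially the same route as the paper: the unique factorization of a planar diagram at its ``waist'' $z=x_p$ (with $p$ the number of propagating strands) into a cup-free morphism $b\in\Hom_{\TL^-}(x,z)$ and a cap-free morphism $c\in\Hom_{\TL^+}(z,y)$. The paper merely illustrates this factorization by one example picture and the identity $a=cb$, whereas you carry out the same idea in full detail (bijection of bases, absence of closed loops so the coefficient is $1$, and recovery of $(z,c,b)$ from the composite), which is a legitimate completion of the paper's sketch rather than a different argument.
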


\begin{proof}
 We restrict ourselves to drawing a diagram for the factorization
of the morphism (basis element) $a\in\Hom_\TL(x,y)$ from
diagram~\eqref{temperley-lieb-morphism-example} into a composition
of two basis elements in the spaces of morphisms
$b\in\Hom_{\TL^-}(x,z)$ and $c\in\Hom_{\TL^+}(z,y)$:
$$
 \xymatrix{
  y\: & \bul \ar@{-}[dd] & \bul \ar@{-}@/^-1.5pc/[rrr] 
  & \bul \ar@{-}@/^-0.5pc/[r] & \bul & \bul & \bul \ar@{-}[lllldd]
  & \bul \ar@{-}[lllldd] &\bul \ar@{-}@/^-0.5pc/[r] & \bul \\
  \quad c\:\mkern-18mu \\
  z\: & \bul \ar@{-}[dd] & \bul \ar@{-}[dd]
  & \bul \ar@{-}[rrrrrrrrdd] \\
  \quad b\:\mkern-18mu \\
  x\: & \bul & \bul & \bul \ar@{-}@/^2.5pc/[rrrrr] 
  & \bul \ar@{-}@/^0.5pc/[r] & \bul
  & \bul \ar@{-}@/^0.5pc/[r] & \bul & \bul 
  & \bul \ar@{-}@/^0.5pc/[r] & \bul &\bul
 }
$$
 Here we have $a=cb$.
\end{proof}

 Let us discuss the local finiteness conditions from
Section~\ref{locally-finite-categories-subsecn} in application to
the Temperley--Lieb diagram category and its two subcategories
defined above.
 Similarly to the discussion in Section~\ref{brauer-subsecn},
we start with observing that the whole category $\TL(\delta)$ has
finite-dimensional\/ $\Hom$ spaces; but it is \emph{not} locally
finite in the sense of the definition in
Section~\ref{locally-finite-categories-subsecn}, because it does not
satisfy the interval finiteness condition.
 The two subcategories, however, are locally finite.

\begin{lem} \label{temperley-lieb-lower-upper-finiteness}
\textup{(a)} The $k$\+linear category\/ $\TL^+(\delta)$ is locally
finite, and moreover, it is lower finite. \par
\textup{(b)} The $k$\+linear category\/ $\TL^-(\delta)$ is locally
finite, and moreover, it is upper finite.
\end{lem}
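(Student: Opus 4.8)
The plan is to reduce both parts to the single observation that the preorder~$\preceq$ on the objects of these subcategories is controlled by the cardinalities of the underlying sets, exactly as in the proof of Lemma~\ref{brauer-lower-upper-finiteness}. Throughout I use that the objects are the $x_n=\{1,\dotsc,n\}$, so there is exactly one object of each cardinality.

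First I would record the basic monotonicity for part~(a). A nonzero morphism in $\TL^+(\delta)$ from $x$ to~$y$ contains no caps, so no vertex of the source~$x$ is paired with another vertex of~$x$; since the only other vertices available are those of~$y$, every vertex of~$x$ lies on a propagating strand to~$y$. Hence such a morphism can exist only when $|x|\le|y|$, i.e.\ $\Hom_{\TL^+}(x,y)\ne0$ forces $|x|\le|y|$. Running this inequality along a chain of composable nonzero morphisms, I obtain $x\preceq y\ \Rightarrow\ |x|\le|y|$ for the preorder in $\TL^+(\delta)$.

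Next I would deduce the finiteness statements. Hom-finiteness is immediate, since each space $\Hom_{\TL^+}(x,y)$ is a subspace of the finite-dimensional space $\Hom_\Br(x,y)$, which is spanned by finitely many diagrams. For lower finiteness, fix an object $y=x_n$; by the monotonicity above, any $x$ with $x\preceq y$ satisfies $|x|\le n$, and since there is exactly one object $x_m$ for each $m\le n$, the set $\{x:x\preceq y\}$ is finite. Interval finiteness, and hence local finiteness (which the very notion of ``lower finite'' presupposes, cf.\ Section~\ref{locally-finite-categories-subsecn}), then follows a fortiori, as $\{z:x\preceq z\preceq y\}\subseteq\{z:z\preceq y\}$ is already finite.

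Part~(b) is entirely dual: a nonzero morphism in $\TL^-(\delta)$ contains no cups, so every vertex of the target lies on a propagating strand from the source, giving $\Hom_{\TL^-}(x,y)\ne0\Rightarrow|y|\le|x|$ and therefore $x\preceq y\Rightarrow|y|\le|x|$; Hom-finiteness, upper finiteness, and local finiteness follow as before. I expect no genuine obstacle here---the argument is as trivial as that of Lemma~\ref{brauer-lower-upper-finiteness}---the only point that must actually be checked being the cardinality monotonicity forced by the absence of caps (respectively cups), everything else being a formal consequence of having one object per cardinality together with Hom-finiteness.
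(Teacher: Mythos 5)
Your proof is correct and is essentially the paper's argument: the paper disposes of this lemma by citing Lemma~\ref{brauer-lower-upper-finiteness}, whose proof is exactly your cardinality observation that $|x_m|\le|x_n|$ holds for only finitely many~$m$, combined with there being one object per cardinality. You merely inline the reduction (deriving the monotonicity $x\preceq y\Rightarrow|x|\le|y|$ from the absence of caps, resp.\ cups, rather than inheriting it from the ambient Brauer subcategories $\Br^{+=}(\delta)$ and $\Br^{=-}(\delta)$), which changes nothing of substance.
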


\begin{proof}
 Follows from Lemma~\ref{brauer-lower-upper-finiteness}.
\end{proof}

\begin{lem} \label{temperley-lieb-strict-local-finiteness}
\textup{(a)} The $k$\+linear category\/ $\TL^+(\delta)$ is upper
strictly locally finite. \par
\textup{(b)} The $k$\+linear category\/ $\TL^-(\delta)$ is lower
strictly locally finite.
\end{lem}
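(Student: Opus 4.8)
The plan is to establish part~(b) directly and then obtain part~(a) by duality. First I would unwind the definitions in the case of $\TL^-(\delta)$: every nonzero morphism in $\TL^-(\delta)$ has no cups, so all of its target points lie on propagating strands; consequently a nonzero morphism $z\to y$ forces $|y|\le|z|$ with $|z|-|y|$ even, and $z\prec y$ holds exactly when $|z|>|y|$ and $|z|-|y|$ is even. Thus for $y=x_n$ the objects $z\prec y$ are the $x_m$ with $m>n$ and $m-n$ even, and in particular $x_{n+2}\prec x_n$. I would then take $X_y=\{x_{n+2}\}$ as the finite set required in the definition of lower strict local finiteness from Section~\ref{locally-finite-categories-subsecn}, in parallel with Lemma~\ref{brauer-strict-local-finiteness}.

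The core step is to show that the composition map
\[
 \Hom_{\TL^-}(x_{n+2},x_n)\ot_k\Hom_{\TL^-}(z,x_{n+2})\lrarrow
 \Hom_{\TL^-}(z,x_n)
\]
is surjective for every $z=x_m\prec x_n$. Since all three spaces have bases consisting of non-crossing cup-free diagrams, it suffices to write each basis morphism $a\:z\to x_n$ as a single composite $c\circ b$ with $b\:z\to x_{n+2}$ and $c\:x_{n+2}\to x_n$ in $\TL^-(\delta)$. I would do this by choosing an \emph{innermost} cap of~$a$: because $a$ is non-crossing and has no cups, a propagating strand cannot have its source point strictly between the two endpoints of a cap, so an innermost cap joins two \emph{adjacent} source points $p$ and $p+1$. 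I then let $c$ be the elementary diagram capping a single adjacent pair of points of $x_{n+2}$ and propagating the remaining $n$ points identically, and let $b$ be obtained from~$a$ by replacing the chosen cap with two propagating strands running to that adjacent pair. A routine check shows $c\circ b=a$ with coefficient~$1$ (no closed loop is formed, so no factor of~$\delta$ appears) and that both $b$ and~$c$ are again non-crossing and cup-free, hence lie in $\TL^-(\delta)$; surjectivity then follows by linearity.

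Finally I would deduce part~(a) by duality: reflecting a diagram interchanges caps and cups, so $(\TL^+(\delta))^\sop\cong\TL^-(\delta)$ as $k$\+linear categories, and upper strict local finiteness of $\TL^+(\delta)$ is by definition lower strict local finiteness of its opposite, already proved. I expect the main obstacle to be the non-crossing bookkeeping in the factorization: in contrast with the Brauer category (Lemma~\ref{brauer-strict-local-finiteness}), where an arbitrary cap may be split off, here one must split off a cap whose removal keeps both factors non-crossing, and the observation that an innermost cap is automatically adjacent is precisely what guarantees this.
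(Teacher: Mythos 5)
Your proof is correct and takes essentially the same route as the paper, whose proof just says ``similar to the Brauer case'' and there exhibits a one-element set $X_y$ together with the evident cap-splitting factorization $a=c\circ b$; your innermost-cap observation supplies precisely the non-crossing bookkeeping that the paper leaves implicit, and your duality argument for part~(a) matches the paper's. Note also that your choice $X_y=\{x_{n+2}\}$ is the parity-correct one: the paper's proof of Lemma~\ref{brauer-strict-local-finiteness} writes $X_y=\{x_{n+1}\}$, but since all morphisms in these diagram categories preserve the parity of $|x_n|$, one has $\Hom_{\Br^{=-}}(x_{n+1},x_n)=0$, so $x_{n+2}$ is what is actually needed there as well.
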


\begin{proof}
 Similar to Lemma~\ref{brauer-strict-local-finiteness}.
\end{proof}

 So we come to the following theorem, which is out main result in
application to the Temperley--Lieb diagram category.

\begin{thm}
 Let\/ $\C^+=\C_{\TL^+(\delta)}$ be the coalgebra corresponding to
the locally finite $k$\+linear category $\TL^+(\delta)$, as per
the construction from Section~\ref{locally-finite-categories-subsecn}.
 Let $K^+=R_{\TL^+(\delta)}$ and $R=R_{\TL(\delta)}$ be the nonunital
algebras corresponding to the $k$\+linear categories\/
$\TL^+(\delta)$ and\/ $\TL(\delta)$.
 Then there is a semialgebra\/ $\bS^+=\C^+\ot_{K^+}R$ over
the coalgebra\/ $\C^+$ such that the abelian category of right\/
$\bS^+$\+semimodules is equivalent to the category of right\/
$\TL(\delta)$\+modules, while the abelian category of left\/
$\bS^+$\+semicontramodules is equivalent to the category of left\/
$\TL(\delta)$\+modules,
$$
 \Simodr\bS^+\simeq\Modr\TL(\delta) \quad\text{and}\quad
 \bS^+\Sicntr\simeq\TL(\delta)\Modl.
$$
 The semialgebra\/ $\bS^+$ is an injective left\/ $\C^+$\+comodule.
\end{thm}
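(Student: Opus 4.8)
The plan is to reduce this statement to the general machinery of Section~\ref{loc-fin-subcategories-secn} exactly as in the Brauer case, by feeding in the two category-specific inputs that have already been established for $\TL(\delta)$. First I would set $\sF=\TL^+(\delta)$, \,$\sG=\TL^-(\delta)$, \,$\sE=\TL(\delta)$, and $\sH=\TL(\delta)^\id$. Proposition~\ref{triangular-decomposition-for-temperley-lieb} then supplies the triangular decomposition $\sF\ot_\sH\sG\simeq\sE$ over the discrete subcategory $\sH=\sE^\id$, which is precisely the input required by the ``Conclusion'' discussion preceding Corollary~\ref{category-semialgebra-right-semimodules}. With $K=R_{\TL^+(\delta)}=K^+$ and $R=R_{\TL(\delta)}$, this already makes $K$ a left projective t\+unital $k$\+algebra and $f\:K\rarrow R$ a t\+unital homomorphism exhibiting $R$ as a left projective $K$\+$K$\+bimodule.

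Next I would invoke local finiteness. By Lemma~\ref{temperley-lieb-lower-upper-finiteness}(a) the category $\sF=\TL^+(\delta)$ is lower finite, hence in particular locally finite, so the coalgebra $\C^+=\C_{\TL^+(\delta)}$ and the right t\+unital multiplicative pairing $\phi=\phi_{\TL^+(\delta)}\:\C^+\times K^+\rarrow k$ are defined, and the functor $\phi_*=\Upsilon_{\sF^\sop}\:\Comodr\C^+\rarrow\Modr\TL^+(\delta)$ is fully faithful. Since $\sF$ is lower finite, right integrability of $R$ holds automatically by Proposition~\ref{upper-lower-finite-categories-prop}(b), so all the hypotheses of Corollaries~\ref{category-semialgebra-right-semimodules}\+-\ref{category-semialgebra-lower-finite-case} are met. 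Corollary~\ref{category-semialgebra-right-semimodules} then produces the semialgebra $\bS^+=\C^+\ot_{K^+}R$ together with its injectivity as a left $\C^+$\+comodule, while Corollary~\ref{category-semialgebra-lower-finite-case}, using that lower finite implies lower strictly locally finite, yields the two equivalences $\Simodr\bS^+\simeq\Modr\TL(\delta)$ and $\bS^+\Sicntr\simeq\TL(\delta)\Modl$ directly.

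In fact all the substantive work is already done: the only ingredients peculiar to the Temperley--Lieb category are Proposition~\ref{triangular-decomposition-for-temperley-lieb} and Lemma~\ref{temperley-lieb-lower-upper-finiteness}(a), and once these are in hand the theorem is a purely formal application of the conclusion of Section~\ref{loc-fin-subcategories-secn}. There is therefore no genuine obstacle to overcome in the proof itself; the only subtlety to watch is the bookkeeping, namely that the roles are arranged so that the \emph{lower} finite factor $\sF=\TL^+(\delta)$ is the one whose coalgebra we form, so that $\Upsilon_{\sF^\sop}$ is the relevant fully faithful comodule inclusion functor and right integrability is guaranteed. Unlike the Brauer theorem, where $\Br^+(\delta)$ and $\Br^{+=}(\delta)$ furnished two genuinely different subcategories and hence a two-part statement, here $\TL^+(\delta)=\TL(\delta)\cap\Br^+(\delta)=\TL(\delta)\cap\Br^{+=}(\delta)$, so a single triangular decomposition suffices and the theorem has only one part. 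As with the Brauer case, the argument yields no information about whether $\bS^+$ is injective as a \emph{right} $\C^+$\+comodule, which is why that question is left open.
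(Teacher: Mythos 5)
Your proposal is correct and is essentially the paper's own proof: the paper likewise sets $\sF=\TL^+(\delta)$, $\sG=\TL^-(\delta)$, $\sE=\TL(\delta)$, $\sH=\TL(\delta)^\id$, cites Proposition~\ref{triangular-decomposition-for-temperley-lieb} and Lemma~\ref{temperley-lieb-lower-upper-finiteness}(a), and then applies Corollaries~\ref{category-semialgebra-right-semimodules}--\ref{category-semialgebra-lower-finite-case}. Your additional verification of the hypotheses (left projectivity and t\+unitality via the triangular decomposition, full faithfulness of $\Upsilon_{\sF^\sop}$, and automatic right integrability from lower finiteness) is exactly the content the paper delegates to the ``Conclusion'' discussion of Section~\ref{loc-fin-subcategories-secn}.
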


 Let us emphasize that \emph{we do not know} whether the semialgebra
$\bS^+$ is an injective right $\C^+$\+comodule.

\begin{proof}
 Put $\sF=\TL^+(\delta)$, \,$\sG=\TL^-(\delta)$, and
$\sE=\TL(\delta)$.
 Then, by
Proposition~\ref{triangular-decomposition-for-temperley-lieb},
we have a triangular decomposition $\sF\ot_\sH\sG\simeq\sE$,
where $\sH=\TL(\delta)^\id$.
 By Lemma~\ref{temperley-lieb-lower-upper-finiteness}(a),
the $k$\+linear category $\sF=\TL^+(\delta)$ is lower finite.
 Therefore, Corollaries~\ref{category-semialgebra-right-semimodules}\+-%
\ref{category-semialgebra-lower-finite-case} are applicable and
provide the desired assertions.
\end{proof}

 Inverting the roles of the left and right sides in the discussion
above, one can consider the coalgebra $\C^-=\C_{\TL^-(\delta)}$ and
the algebra $K^-=R_{\TL^-(\delta)}$.
 Then, applying the left-right opposite version of our discussion
in Sections~\ref{flat-integrable-bimodules-secn}\+-%
\ref{description-of-semimod-semicontra-secn}, one can construct
a semialgebra $\bS^-=R\ot_{K^-}\C^-$, which is injective as a right
comodule over~$\C^-$.
 One obtains equivalences of abelian categories
$$
 \bS^-\Simodl\simeq\TL(\delta)\Modl
 \quad\text{and}\quad
 \SicntrR\bS^-\simeq\Modr\TL(\delta).
$$

\subsection{The Reedy category of a simplicial set}
 The following example was suggested to me by Jan \v St\!'ov\'\i\v cek.
 Without going into the details of the general definition of
a \emph{Reedy category}~\cite[Section~15.1]{Hir}, \cite[Section~2]{RV},
let us spell out a specific example relevant in our context.

 The very standard definition of the \emph{cosimplicial indexing
category} $\Delta$ is as follows~\cite[Definition~15.1.7]{Hir}.
 The objects of $\Delta$ are the nonempty finite linearly ordered
sets $[n]=\{0,\dotsc,n\}$, where $n\ge0$ are nonnegative integers.
 The morphisms $\sigma\:[n]\rarrow[m]$ in $\Delta$ are
the nonstrictly monotone maps $[n]\rarrow[m]$, i.~e., the functions
$\sigma\:\{0,\dotsc,n\}\rarrow\{0,\dotsc,m\}$ such that $i\le j$
implies $\sigma(i)\le\sigma(j)$.

 Two full subcategories $\Delta^+$ and $\Delta^-\subset\Delta$ are
defined by the following obvious rules.
 Both $\Delta^+$ and $\Delta^-$ have the same objects as~$\Delta$.
 The morphisms in $\Delta^+$ (known as \emph{face maps} and their
compositions) are those morphisms in $\Delta$ that correspond to
\emph{injective} maps of linearly ordered finite sets
$\sigma\:[n]\rarrow[m]$.
 The morphisms in $\Delta^-$ (known as \emph{degeneracy maps} and
their compositions) are those morphisms in $\Delta$ that correspond
to \emph{surjective} maps $\sigma\:[n]\rarrow[m]$.

 Clearly, any morphism~$\sigma$ in $\Delta$ can be uniquely
factorized as $\sigma=\tau\pi$, where $\tau$ is a morphism
in~$\Delta^+$ and $\pi$~is a morphism in~$\Delta^-$.
 In other words, the composition map
\begin{equation} \label{delta-triangular-decomposition}
 \coprod\nolimits_{[l]\in\Delta}\Mor_{\Delta^+}([l],[m])\times
 \Mor_{\Delta^-}([n],[l])\lrarrow\Mor_\Delta([n],[m])
\end{equation}
is bijective for all pairs of objects $[n]$, $[m]\in\Delta$.
 Here we denote by $\Mor_\sC(x,y)$ the set of morphisms $x\rarrow y$
in a nonadditive category~$\sC$.

 A \emph{simplicial set} $S$ is a contravariant functor
$S\:\Delta^\sop\lrarrow\Sets$, where $\Sets$ denotes the category
of sets.
 Let us denote by $S_n\in\Sets$ the set assigned to an object
$[n]\in\Delta$ by the functor~$S$.

 Given a simplicial set $S$, the \emph{category} $\Delta S$
\emph{of simplices of~$S$}~\cite[Example~15.1.14 and
Definition~15.1.16]{Hir} is defined as follows.
 The objects of $\Delta S$ are the simplices of~$S$; so the set of
objects of $\Delta S$ is the disjoint union $\coprod_{n\ge0}S_n$.
 Given two simplices $x\in S_n$ and $y\in S_m$, the set of
morphisms $\Mor_{\Delta S}(x,y)$ is, by the definition, bijective
to the set of all morphisms $\sigma\in\Hom_\Delta([n],[m])$ such
that $\sigma(y)=x$ in~$S$.
 The composition of morphisms in $\Delta S$ is induced by
the composition of morphisms in~$\Delta$.

 Two full subcategories $\Delta^+S$ and $\Delta^-S\subset\Delta S$
arise from the two full subcategories $\Delta^+$ and $\Delta^-
\subset\Delta$.
 By definition, both $\Delta^+S$ and $\Delta^-S$ have the same objects
as $\Delta S$.
 The morphisms in $\Delta^+S$ are those morphisms in $\Delta S$
that correspond to morphisms from $\Delta^+\subset\Delta$.
 The morphisms in $\Delta^-S$ are those morphisms in $\Delta S$
that correspond to morphisms from $\Delta^-\subset\Delta$.

 The unique factorization~\eqref{delta-triangular-decomposition}
of morphisms in $\Delta$ induces a similar unique factorization
of morphisms in~$\Delta S$.
 Any morphism~$\sigma$ in $\Delta S$ can be uniquely
factorized as $\sigma=\tau\pi$, where $\tau$ is a morphism
in~$\Delta^+S$ and $\pi$~is a morphism in~$\Delta^-S$.
 In other words, the composition map
\begin{equation} \label{category-of-simplices-triangular-decomposition}
 \coprod\nolimits_{z\in\Delta S}\Mor_{\Delta^+S}(z,y)\times
 \Mor_{\Delta^-S}(x,z)\lrarrow\Mor_{\Delta S}(x,y)
\end{equation}
is bijective for all pairs of objects $x$, $y\in\Delta S$.

 Now we want to pass from the categories enriched in sets to
the categories enriched in $k$\+vector spaces.
 The functor assigning to a set $C$ the $k$\+vector space
$k[C]$ freely spanned by $C$ is a monoidal functor $\Sets\rarrow
k\Vect$ from the monoidal category of sets (with the Cartesian
product) to the monoidal category of $k$\+vector spaces (with
the tensor product).
 Applying this monoidal functor to a category $\sC$, one obtains
a $k$\+linear category $k[\sC]$.
 Explicitly, the objects of $k[\sC]$ are the same as the objects
of~$\sC$.
 For any two objects $x$ and $y\in\sC$, the elements of the set
$\Hom_\sC(x,y)$ form a basis in the $k$\+vector space
$\Hom_{k[\sC]}(x,y)$.
 The composition of morphisms in $k[\sC]$ is induced by the composition
of morphisms in~$\sC$.

 Applying this construction to the category $\Delta S$ and its
subcategories $\Delta^+S$ and $\Delta^-S$, we obtain a $k$\+linear
category $k[\Delta S]$ with two $k$\+linear subcategories
$k[\Delta^+S]$ and $k[\Delta^-S]\subset k[\Delta S]$.
 These subcategories form the triangular decomposition which we
are interested in.

\begin{prop} \label{triangular-decomposition-for-simplicial}
 The triple of $k$\+linear subcategories\/ $\sF=k[\Delta^+S]$,
\,$\sG=k[\Delta^-S]$, and\/ $\sH=k[\Delta S]^\id$ forms a triangular
decomposition of the $k$\+linear category $k[\Delta S]$
in the sense of Sections~\ref{tensor-and-triangular-subsecn}\+-%
\ref{triangular-and-projectivity-subsecn}.
 In other words,
the map~\eqref{simplified-notation-three-subcategories-map}
$$
 k[\Delta^+S]\ot_{k[\Delta S]^\id}k[\Delta^-S]\lrarrow
 k[\Delta S]
$$
is an isomorphism; or equivalently,
the map~\eqref{triangular-over-discrete-isomorphism}
is an isomorphism. 
\end{prop}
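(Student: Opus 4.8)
The plan is to verify the defining condition of a triangular decomposition with the discrete category $\sH = k[\Delta S]^\id$, which by the discussion in Section~\ref{triangular-and-projectivity-subsecn} amounts to showing that the map~\eqref{triangular-over-discrete-isomorphism}
$$
 \bigoplus\nolimits_{z\in k[\Delta S]}\Hom_{k[\Delta^+S]}(z,y)\ot_k
 \Hom_{k[\Delta^-S]}(x,z)\lrarrow\Hom_{k[\Delta S]}(x,y)
$$
is an isomorphism for every pair of objects $x$, $y$. First I would observe that, because $\sH$ is discrete, the cokernel defining the tensor product $\sF\ot_\sH\sG$ degenerates: the only nonzero morphisms in $\sH$ are the scalar multiples of the identities $\id_z$, so the two parallel maps in the definition of $\ot_\sH$ coincide and introduce no relations. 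Hence $(\sF\ot_\sH\sG)(x,y)$ is literally the direct sum on the left-hand side above, and the whole proposition reduces to this single displayed isomorphism.

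The key input is the already-established set-level bijection~\eqref{category-of-simplices-triangular-decomposition}, expressing the unique factorization $\sigma=\tau\pi$ of a morphism in $\Delta S$ into a face map followed by a degeneracy map, which in turn descends from the classical epi-mono factorization~\eqref{delta-triangular-decomposition} in $\Delta$. The strategy is then to apply the $k$\+linearization functor $k[{-}]\:\Sets\rarrow k\Vect$ to this bijection. Recall from the construction of $k[\sC]$ that $\Hom_{k[\sC]}(x,y)=k[\Mor_\sC(x,y)]$ for any category $\sC$ enriched in sets, so each of the three Hom-spaces in the displayed formula is the free vector space on the corresponding morphism set in $\Delta^+S$, $\Delta^-S$, or $\Delta S$.

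Since $k[{-}]$ is monoidal, it carries Cartesian products of sets to tensor products of vector spaces and disjoint unions to direct sums: one has natural isomorphisms $k[A\times B]\simeq k[A]\ot_k k[B]$ and $k[\coprod_i C_i]\simeq\bigoplus_i k[C_i]$. Applying these identifications term by term to the bijection~\eqref{category-of-simplices-triangular-decomposition} turns it into precisely the required isomorphism, and I would check that under these identifications the induced linear map agrees with the composition map defining the triangular-decomposition morphism~\eqref{simplified-notation-three-subcategories-map}. There is no serious obstacle here: all the geometric content is already packaged in the unique factorization of simplicial morphisms, and the passage from sets to vector spaces is purely formal. The only points requiring a moment's care are the bookkeeping that the summand indexed by the simplex $z$ on the set side matches the summand indexed by the object $z$ on the vector-space side, and the verification that the degenerate form of $\ot_\sH$ over the discrete $\sH$ contributes no extra relations—both of which I expect to be routine.
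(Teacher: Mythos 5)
Your proof is correct and is essentially the paper's own argument: the paper's proof consists precisely of applying the free $k$\+vector space functor to the bijection~\eqref{category-of-simplices-triangular-decomposition}, which is your key step. Your preliminary observation that the tensor product over the discrete category $k[\Delta S]^\id$ degenerates to a plain direct sum is just the reduction to the isomorphism~\eqref{triangular-over-discrete-isomorphism} already recorded in Section~\ref{triangular-and-projectivity-subsecn}, so nothing beyond the paper's one-line argument is needed.
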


\begin{proof}
 Apply the freely generated $k$\+vector space functor to
the bijection~\eqref{category-of-simplices-triangular-decomposition}.
\end{proof}

 Now it is the turn to discuss the local finiteness conditions
from Section~\ref{locally-finite-categories-subsecn} in application to
the category $k[\Delta S]$ and its two subcategories defined above.
 Similarly to Sections~\ref{brauer-subsecn}\+-%
\ref{temperley-lieb-subsecn}, the whole category $k[\Delta S]$ has
finite-dimensional\/ $\Hom$ spaces; but it is \emph{not} locally
finite in the sense of the definition in
Section~\ref{locally-finite-categories-subsecn}, because it does not
satisfy the interval finiteness condition.

\begin{lem} \label{morphism-order-in-freely-generated}
 Let\/ $\sC$ be a category and\/ $\sE=k[\sC]$ be the related
$k$\+linear category.
 Then one has $x\preceq y$ in\/ $\sE$ if and only if there exists
a morphism $x\rarrow y$ in\/~$\sC$.
\end{lem}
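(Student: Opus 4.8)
The plan is to reduce everything to a single elementary observation about the free-vector-space functor $k[-]$: for any two objects $x$, $y \in \sC$, the $k$-vector space $\Hom_\sE(x,y) = k[\Hom_\sC(x,y)]$ is nonzero precisely when the set $\Hom_\sC(x,y)$ is nonempty, since a free vector space vanishes if and only if its spanning set is empty. This translates the generating elementary comparisons of the preorder $\preceq$ on $\sE$ (namely, the condition $\Hom_\sE(x,y) \neq 0$) directly into the existence of morphisms in $\sC$, and the two directions of the lemma follow by feeding this equivalence into the definition of $\preceq$ recalled in Section~\ref{locally-finite-categories-subsecn}.

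For the ``if'' direction, I would note that a morphism $x \to y$ in $\sC$ makes $\Hom_\sC(x,y)$ nonempty, whence $\Hom_\sE(x,y) \neq 0$; this is exactly an elementary comparison, so $x \preceq y$ holds by definition. For the ``only if'' direction, I would unfold the definition of $x \preceq y$ as the existence of a finite chain of objects $x = z_0$, $z_1$,~\dots, $z_n = y$ with $\Hom_\sE(z_{i-1}, z_i) \neq 0$ for each $1 \le i \le n$. By the key observation each set $\Hom_\sC(z_{i-1}, z_i)$ is nonempty, so I may choose morphisms $g_i \colon z_{i-1} \to z_i$ in $\sC$ and form their composite $g_n \cdots g_1 \colon x \to y$, which is the desired morphism in $\sC$. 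When $n = 0$, i.e.\ $x = y$, the identity $\id_x$ serves.

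There is no serious obstacle here beyond bookkeeping; the one point worth flagging is the closure step in the ``only if'' direction. The relation ``there exists a morphism $x \to y$ in $\sC$'' is itself reflexive and transitive, thanks to the identity morphisms and the composition law of $\sC$, so it is already closed under precisely the two operations used to generate $\preceq$ from its elementary comparisons. This is exactly what allows the lifting-and-composing argument to terminate in a single morphism of $\sC$ without my having to pass to any further closure, and it is the reason the generated preorder on $\sE$ coincides on the nose with the morphism-existence preorder on $\sC$.
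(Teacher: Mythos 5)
Your proof is correct and is precisely the elementary argument the paper has in mind (its own ``proof'' consists of the single remark that the claim is straightforward): the free-vector-space observation $\Hom_{k[\sC]}(x,y)=k[\Hom_\sC(x,y)]\ne0$ iff $\Hom_\sC(x,y)\ne\varnothing$, combined with the fact that morphism-existence in $\sC$ is already reflexive and transitive, so the generated preorder collapses onto it. Nothing is missing; the closure point you flag is exactly the right thing to check.
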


\begin{proof}
 The proof is straightforward.
\end{proof}

\begin{lem} \label{simplicial-lower-upper-finiteness}
\textup{(a)} The $k$\+linear category $k[\Delta^+S]$ is locally
finite, and moreover, it is lower finite. \par
\textup{(b)} The $k$\+linear category $k[\Delta^-S]$ is locally
finite, and moreover, it is upper finite.
\end{lem}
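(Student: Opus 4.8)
The plan is to reduce the preorder relation $\preceq$ in each of these $k$\+linear categories to the mere existence of a morphism in the underlying (non-additive) category of simplices, by means of Lemma~\ref{morphism-order-in-freely-generated}, and then to translate morphisms into face and degeneracy operators of the simplicial set $S$ and count. Recall that a morphism $x\to y$ in $\Delta S$, with $x\in S_n$ and $y\in S_m$, is recorded by a monotone map $\sigma\:[n]\to[m]$ satisfying $S(\sigma)(y)=x$; it belongs to $\Delta^+S$ precisely when $\sigma$ is injective (forcing $n\le m$) and to $\Delta^-S$ precisely when $\sigma$ is surjective (forcing $n\ge m$). Since between any two fixed objects there are only finitely many monotone maps $[n]\to[m]$, Hom-finiteness of $k[\Delta^+S]$ and $k[\Delta^-S]$ is immediate; thus in each part it remains only to establish the lower- resp.\ upper-finiteness condition, and this in turn subsumes interval finiteness (an interval is contained in a down-set resp.\ up-set) and hence yields local finiteness.

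For part~(a) I would fix $y\in S_m$ and bound $\{x:x\preceq y\}$. By Lemma~\ref{morphism-order-in-freely-generated}, any such $x$ is the source of a morphism $x\to y$ in $\Delta^+S$, so $x=S(\sigma)(y)\in S_n$ for some injective monotone $\sigma\:[n]\to[m]$. The injective monotone maps with target $[m]$ are finite in number (namely $2^{m+1}-1$ of them, one for each nonempty subset of $[m]$), and each determines $x$ uniquely upon evaluation at~$y$. Hence $\{x:x\preceq y\}$ is finite and $k[\Delta^+S]$ is lower finite.

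For part~(b) I would dually fix $x\in S_n$ and bound $\{y:x\preceq y\}$. Here $x\preceq y$ means $S(\sigma)(y)=x$ for a surjective monotone $\sigma\:[n]\to[m]$, and the roles are reversed: I must now solve for the \emph{preimage} $y$ rather than merely evaluate. The key input is that every surjective monotone map $\sigma\:[n]\to[m]$ admits a monotone section $\tau\:[m]\to[n]$ with $\sigma\tau=\id_{[m]}$ (for instance, the choice of the least element of each fibre $\sigma^{-1}(j)$). Contravariant functoriality of $S$ then gives $S(\tau)S(\sigma)=\id_{S_m}$, so the degeneracy operator $S(\sigma)\:S_m\to S_n$ is injective and admits at most one $y$ with $S(\sigma)(y)=x$. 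As there are only finitely many surjective monotone maps with source $[n]$, the set $\{y:x\preceq y\}$ is finite and $k[\Delta^-S]$ is upper finite.

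The face-operator count in~(a) is entirely routine; the one genuinely substantive point is the injectivity of the degeneracy operators used in~(b). I expect this splitness of degeneracies to be the crux of the argument, precisely because the individual sets $S_m$ may be infinite here, so the naive dimension-counting used for the Brauer and Temperley--Lieb categories in Lemmas~\ref{brauer-lower-upper-finiteness} and~\ref{temperley-lieb-lower-upper-finiteness} is unavailable; finiteness must instead be forced by the algebra of the simplicial operators.
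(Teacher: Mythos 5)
Your proposal is correct and follows essentially the same route as the paper's own proof: both parts reduce $\preceq$ to the existence of a morphism in $\Delta^{\pm}S$ via Lemma~\ref{morphism-order-in-freely-generated}, part~(a) is settled by the finiteness of injective monotone maps into $[m]$ with $x$ recovered by evaluation, and part~(b) hinges on exactly the same key point — a surjective monotone map $\sigma\:[n]\rarrow[m]$ admits a monotone section $\tau$, so that $y$ is uniquely recovered from~$x$ (your phrasing ``$S(\sigma)$ is injective'' and the paper's ``$y=\tau(x)$'' are the same observation). Your closing remark correctly identifies the splitness of degeneracies as the crux, which is precisely what the paper emphasizes by contrasting $\Delta^-S$ with the failure of uniqueness for general morphisms in $\Delta S$.
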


\begin{proof}
 Part~(a): let $y\in S_m$ be an object of~$\Delta S$.
 Notice that the datum of an element $x\in S_n$ together with
a morphism $\sigma\:x\rarrow y$ in $\Delta S$ is uniquely determined
by the related morphism $\sigma\:[n]\rarrow[m]$ in~$\Delta$: one
recovers the element $x\in S_n$ as $x=\sigma(y)$.
 Furthermore, for any fixed $m\ge0$, there is only a finite number
of morphisms $[n]\rarrow[m]$ in $\Delta^+$ (as one necessarily
has $n\le m$).
 Consequently, for any fixed $y\in S_m$, the set of all objects~$x$
in $\Delta S$ for which there exists a morphism $x\rarrow y$
in $\Delta^+S$ is finite.
 By Lemma~\ref{morphism-order-in-freely-generated}, this set coincides
with the set of all objects~$x$ in $k[\Delta^+S]$ such that
$x\preceq y$ in $k[\Delta^+S]$.

 Part~(b): let $x\in S_n$ be an object of~$\Delta S$.
 Then the datum of an element $y\in S_m$ together with
a morphism $\sigma\:x\rarrow y$ in $\Delta S$ is \emph{not}
uniquely determined by the related morphism
$\sigma\:[n]\rarrow[m]$ in~$\Delta$, generally speaking (one simplex
can be a face of arbitrarily many other simplices of given dimension).

 However, let us restrict ourselves to morphisms in~$\Delta^-S$.
 Then the datum of an element $y\in S_m$ together with a morphism
$\pi\:x\rarrow y$ in $\Delta^-S$ is still uniquely determined
by the related morphism $\pi\:[n]\rarrow[m]$ in~$\Delta^-$.
 Indeed, $\pi$~is a surjective nonstrictly order-preserving map
of finite sets $\{0,\dotsc,n\}\rarrow\{0,\dotsc,m\}$.
 Therefore, $\pi$~admits an order-preserving section
$\tau\:\{0,\dotsc,m\}\rarrow\{0,\dotsc,n\}$.
 So there exists a morphism $\tau\:[m]\rarrow[n]$ in~$\Delta$
such that $\pi\tau=\id_{[m]}$.
 Now we have $x=\pi(y)$ in $S_n$, and we can recover the element
$y\in S_m$ as $y=\tau(x)$.

 The rest of the argument is similar to part~(a).
 For any fixed $n\ge0$, there is only a finite number of
morphisms $[n]\rarrow[m]$ in $\Delta^-$ (as one necessarily has
$m\le n$).
 Consequently, for any fixed $x\in S_n$, the set of all objects~$y$
in $\Delta S$ for which there exists a morphism $x\rarrow y$
in $\Delta^-S$ is finite.
 By Lemma~\ref{morphism-order-in-freely-generated}, this set coincides
with the set of all objects~$y$ in $k[\Delta^-S]$ such that
$x\preceq y$ in $k[\Delta^-S]$.
\end{proof}

\begin{lem} \label{simplicial-strict-local-finiteness}
\textup{(a)} Let $S$ be a simplicial set such that the set $S_n$ is
finite for every $n\ge0$.
 Then the $k$\+linear category $k[\Delta^+S]$ is upper strictly
locally finite. \par
\textup{(b)} For any simplicial set $S$, the $k$\+linear category
$k[\Delta^-S]$ is lower strictly locally finite.
\end{lem}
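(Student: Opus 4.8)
The plan is to reduce both assertions to elementary combinatorics of the indexing category $\Delta$, using two reductions. First, since $\sF=k[\Delta^-S]$ and $\sG=k[\Delta^+S]$ are $k$\+linearizations, the morphisms of $\Delta^-S$ (resp.\ $\Delta^+S$) form bases of the $\Hom$\+spaces; hence surjectivity of the composition map in the definition of (lower/upper) strict local finiteness is equivalent to the purely combinatorial statement that every morphism with the relevant target (resp.\ source) factors through one of finitely many chosen intermediate simplices. Second, by Lemma~\ref{morphism-order-in-freely-generated} the relation $x\preceq y$ in these categories is detected by the existence of an actual morphism $x\rarrow y$ in $\Delta^-S$ (resp.\ $\Delta^+S$); combined with the fact that every morphism in $\Delta^-S$ corresponds to a surjection $[\dim x]\rarrow[\dim y]$ (so is nonincreasing in dimension) and every morphism in $\Delta^+S$ to an injection (so is nondecreasing in dimension), strictness $x\prec y$ becomes automatic as soon as the dimensions differ.

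For part~(b), for an object $y\in S_m$ I would take $X_y$ to be the finite set $\{s_j(y):0\le j\le m\}\subseteq S_{m+1}$ of degeneracies of $y$; this is finite for \emph{any} simplicial set~$S$. Each $x=s_j(y)$ admits the elementary codegeneracy morphism $x\rarrow y$, so $x\preceq y$, while $y\not\preceq x$ because $\dim x=m+1>m=\dim y$ and every morphism in $\Delta^-S$ is dimension-nonincreasing; hence $x\prec y$. For the factorization, given $z\prec y$, a morphism $z\rarrow y$ in $\Delta^-S$ corresponds to a surjection $\pi\:[p]\rarrow[m]$ in $\Delta^-$ with $S(\pi)(y)=z$ and $p=\dim z>m$ (the case $p=m$ would force $\pi=\id$ and $z=y$). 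Peeling off one elementary codegeneracy at the target, write $\pi=s^j\circ\pi'$ with $s^j\:[m+1]\rarrow[m]$ and $\pi'\:[p]\rarrow[m+1]$ surjective; then $z\rarrow y$ factors as $z\xrightarrow{\pi'}s_j(y)\xrightarrow{s^j}y$ through an object of $X_y$, giving the required surjectivity.

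Part~(a) is the dual construction, carried out after passing to $\sG^\sop$, under which upper strict local finiteness of $\sG$ becomes lower strict local finiteness of $\sG^\sop$. For $y\in S_m$ I would take $X_y=\{x\in S_{m+1}:S(d^i)(x)=y\text{ for some }i\}$, the set of $(m+1)$\+simplices having $y$ as an elementary face; morphisms \emph{out of} $y$ then factor through elementary cofaces $d^i$ at the source, by the same peeling argument applied now to monotone injections. The point where the hypotheses genuinely diverge, and what I expect to be the only real subtlety, is the finiteness of $X_y$: a simplex has at most $m+1$ degeneracies regardless of~$S$ (which is what makes part~(b) unconditional), whereas a single face $y$ can be shared by arbitrarily many higher simplices, so $X_y\subseteq S_{m+1}$ is finite only because $S_{m+1}$ is assumed finite. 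This is precisely why the hypothesis ``$S_n$ finite for every $n\ge0$'' is imposed in~(a) but not in~(b). The remaining ingredient—the factorization of an arbitrary monotone injection (resp.\ surjection) in $\Delta$ through an elementary coface (resp.\ codegeneracy)—is the standard generators-and-relations description of the indexing category \cite{Hir} and requires no new work.
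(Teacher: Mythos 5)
Your proof is correct and takes essentially the same route as the paper's: in part~(b) the paper also takes $X_y$ to be the at most $m+1$ degeneracies of $y$ (equivalently, the $x\in S_{m+1}$ admitting a morphism $x\to y$ in $\Delta^-S$, indexed by the codegeneracies $[m+1]\to[m]$), obtained by peeling one elementary codegeneracy off the surjection $[p]\to[m]$, and in part~(a) it likewise factors any morphism out of an $n$-simplex through an $(n+1)$-simplex containing it as a face, with finiteness supplied by the hypothesis that $S_{n+1}$ is finite. Your closing remark on why the finiteness hypothesis is needed in~(a) but not in~(b) is exactly the asymmetry the paper's two finiteness arguments exploit.
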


\begin{proof}
 Part~(a): let $x\in S_n$ and $w\in S_m$ be two objects of
$\Delta^+S$.
 Then one has $x\prec w$ in $k[\Delta^+S]$ if and only if $n<m$
and there exists a morphism $x\rarrow w$ in $\Delta^+S$.
 If this is the case, then such morphism $x\rarrow w$ factorizes
through some object $y\in S_{n+1}$ for which $x\prec y$
in~$k[\Delta^+S]$.
 By assumption, there is only a finite set of objects~$y$
that can occur here as $x$~is fixed and $w$~varies.

 Part~(b): let $z\in S_n$ and $y\in S_m$ be two objects of
$\Delta^-S$.
 Then one has $z\prec y$ in $k[\Delta^-S]$ if and only if $n>m$
and there exists a morphism $z\rarrow y$ in $\Delta^-S$.
 If this is the case, then such morphism $z\rarrow y$ factorizes through
some object $x\in S_{m+1}$ for which $x\prec y$ in $k[\Delta^-S]$.
 In other words, there exists a morphism $x\rarrow y$
in~$\Delta^-S$.
 There is only a finite set of $m+1$ such objects~$x$,
indexed by the morphisms $[m+1]\rarrow[m]$ in~$\Delta^-$,
that can occur here as $y$~is fixed and $z$~varies.
\end{proof}

 We come to the following theorem, which is out main result in
application to category of simplices of a simplicial set.

\begin{thm}
 Let $S$ be a simplicial set.  In this context: \par
\textup{(a)} Let\/ $\C^+=\C_{k[\Delta^+S]}$ be the coalgebra
corresponding to the locally finite $k$\+linear category
$k[\Delta^+S]$, as per the construction from
Section~\ref{locally-finite-categories-subsecn}.
 Let $K^+=R_{k[\Delta^+S]}$ and $R=R_{k[\Delta S]}$ be the nonunital
algebras corresponding to the $k$\+linear categories\/
$k[\Delta^+S]$ and $k[\Delta S]$.
 Then there is a semialgebra\/ $\bS^+=\C^+\ot_{K^+}R$ over
the coalgebra\/ $\C^+$ such that the abelian category of right\/
$\bS^+$\+semimodules is equivalent to the category of right
$k[\Delta S]$\+modules, while the abelian category of left\/
$\bS^+$\+semicontramodules is equivalent to the category of left
$k[\Delta S]$\+modules,
$$
 \Simodr\bS^+\simeq\Modr k[\Delta S] \quad\text{and}\quad
 \bS^+\Sicntr\simeq k[\Delta S]\Modl.
$$
 The semialgebra\/ $\bS^+$ is an injective left\/ $\C^+$\+comodule.
\par
\textup{(b)} Let\/ $\C^-=\C_{k[\Delta^-S]}$ be the coalgebra
corresponding to the locally finite $k$\+linear category
$k[\Delta^-S]$, as per the construction from
Section~\ref{locally-finite-categories-subsecn}.
 Let $K^-=R_{k[\Delta^-S]}$ and $R=R_{k[\Delta S]}$ be the nonunital
algebras corresponding to the $k$\+linear categories\/
$k[\Delta^-S]$ and $k[\Delta S]$.
 Then there is a semialgebra\/ $\bS^-=R\ot_{K^-}\C^-$ over
the coalgebra\/ $\C^-$ such that the abelian category of left\/
$\bS^-$\+semimodules is equivalent to the category of left
$k[\Delta S]$\+modules, while the abelian category of right\/
$\bS^-$\+semicontramodules is equivalent to the category of right
$k[\Delta S]$\+modules,
$$
 \bS^-\Simodl\simeq k[\Delta S]\Modl \quad\text{and}\quad
 \SicntrR\bS^-\simeq\Modr k[\Delta S].
$$
 The semialgebra\/ $\bS^-$ is an injective right\/ $\C^-$\+comodule.
\end{thm}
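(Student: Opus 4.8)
The plan is to reduce both parts to the general machinery assembled in Sections~\ref{flat-integrable-bimodules-secn}\+-\ref{loc-fin-subcategories-secn}, exactly as in the Brauer and Temperley--Lieb theorems above. The essential inputs are the triangular decomposition furnished by Proposition~\ref{triangular-decomposition-for-simplicial} and the finiteness properties established in Lemma~\ref{simplicial-lower-upper-finiteness}.

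For part~(a), I would set $\sF=k[\Delta^+S]$, \,$\sG=k[\Delta^-S]$, \,$\sE=k[\Delta S]$, and $\sH=k[\Delta S]^\id$. Proposition~\ref{triangular-decomposition-for-simplicial} then gives the triangular decomposition $\sF\ot_\sH\sG\simeq\sE$ with discrete~$\sH$. Since $k[\Delta^+S]$ is lower finite by Lemma~\ref{simplicial-lower-upper-finiteness}(a), the full list of hypotheses preceding Corollary~\ref{category-semialgebra-right-semimodules} is in force, and Corollaries~\ref{category-semialgebra-right-semimodules}\+-\ref{category-semialgebra-lower-finite-case} apply verbatim. They produce the semialgebra $\bS^+=\C^+\ot_{K^+}R$, injective as a left $\C^+$\+comodule, together with the two asserted equivalences $\Simodr\bS^+\simeq\Modr k[\Delta S]$ and $\bS^+\Sicntr\simeq k[\Delta S]\Modl$.

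Part~(b) I would obtain by running the left-right opposite version of the same argument, as was done after the Brauer and Temperley--Lieb theorems. Passing to opposite categories, the decomposition of Proposition~\ref{triangular-decomposition-for-simplicial} becomes a triangular decomposition of $k[\Delta S]^\sop$ in which $k[\Delta^-S]^\sop$ is the distinguished ``left'' factor; the relevant finiteness hypothesis is now that $k[\Delta^-S]^\sop$ be lower finite, that is, that $k[\Delta^-S]$ be upper finite, which is precisely Lemma~\ref{simplicial-lower-upper-finiteness}(b). The opposite form of Corollaries~\ref{category-semialgebra-right-semimodules}\+-\ref{category-semialgebra-lower-finite-case} then yields the semialgebra $\bS^-=R\ot_{K^-}\C^-$ over $\C^-=\C_{k[\Delta^-S]}$, injective as a right $\C^-$\+comodule, and the equivalences $\bS^-\Simodl\simeq k[\Delta S]\Modl$ and $\SicntrR\bS^-\simeq\Modr k[\Delta S]$.

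The one point deserving care, and the place I would scrutinize most, is the left-right flip used in part~(b): one must verify that under the passage $\sE\mapsto\sE^\sop$ the coalgebra attached to the chosen factor is genuinely $\C^-=\C_{k[\Delta^-S]}$ (rather than its co-opposite), that right integrability of $R$ over $K^-$ follows from upper finiteness of $k[\Delta^-S]$ via the opposite of Proposition~\ref{upper-lower-finite-categories-prop}(b), and that the injectivity conclusion lands on the right comodule side. Each of these is a formal consequence of dualizing Sections~\ref{flat-integrable-bimodules-secn}\+-\ref{description-of-semimod-semicontra-secn} uniformly, so no genuinely new difficulty arises and the remaining work is purely bookkeeping.
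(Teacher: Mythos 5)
Your proposal is correct and follows essentially the same route as the paper's own proof: part~(a) invokes Proposition~\ref{triangular-decomposition-for-simplicial}, Lemma~\ref{simplicial-lower-upper-finiteness}(a), and Corollaries~\ref{category-semialgebra-right-semimodules}\+-\ref{category-semialgebra-lower-finite-case} verbatim, and part~(b) is the left-right opposite version of the same argument, where the paper likewise applies the corollaries to $\sF=k[\Delta^-S]^\sop$, $\sG=k[\Delta^+S]^\sop$, $\sE=k[\Delta S]^\sop$ (lower finiteness of $\sF$ being exactly upper finiteness of $k[\Delta^-S]$ from Lemma~\ref{simplicial-lower-upper-finiteness}(b)) ``up to a passage to the left-right opposite coalgebra and semialgebra.'' The bookkeeping point you flag about the co-opposite coalgebra is precisely what that closing phrase of the paper's proof absorbs, so nothing is missing.
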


 Let us emphasize that \emph{we do not know} whether the semialgebra
$\bS^+$ is an injective right $\C^+$\+comodule, or whether
the semialgebra $\bS^-$ is an injective left $\C^-$\+comodule.

\begin{proof}
 Part~(a): put $\sF=k[\Delta^+S]$, \,$\sG=k[\Delta^-S]$,
and $\sE=k[\Delta S]$.
 Then, by
Proposition~\ref{triangular-decomposition-for-simplicial},
we have a triangular decomposition $\sF\ot_\sH\sG\simeq\sE$,
where $\sH=k[\Delta S]^\id$.
 By Lemma~\ref{simplicial-lower-upper-finiteness}(a), the $k$\+linear
category $\sF=k[\Delta^+S]$ is lower finite.
 Therefore, Corollaries~\ref{category-semialgebra-right-semimodules}\+-%
\ref{category-semialgebra-lower-finite-case} are applicable and
provide the desired assertions.

 Part~(b): $\sF=k[\Delta^-S]^\sop$, \,$\sG=k[\Delta^+S]^\sop$,
and $\sE=k[\Delta S]^\sop$.
 Then, by
Proposition~\ref{triangular-decomposition-for-simplicial},
we have a triangular decomposition $\sF\ot_\sH\sG\simeq\sE$,
where $\sH=k[\Delta S]^{\sop,\id}$.
 By Lemma~\ref{simplicial-lower-upper-finiteness}(b), the $k$\+linear
category $k[\Delta^-S]$ is upper finite; so the opposite $k$\+linear
category $\sF=k[\Delta^-S]^\sop$ is lower finite.
 Therefore, Corollaries~\ref{category-semialgebra-right-semimodules}\+-%
\ref{category-semialgebra-lower-finite-case} are applicable and
provide the desired assertions, up to a passage to the left-right
opposite coalgebra and semialgebra.
\end{proof}

\bigskip

\end{document}